\renewcommand\@biblabel[1]{}
\numberwithin{equation}{section}
\newcommand{\beq}{\begin{equation}}
\newcommand{\eeq}{\end{equation}}
\newcommand{\beqs}{\begin{eqnarray*}}
\newcommand{\eeqs}{\end{eqnarray*}}
\newcommand{\beqn}{\begin{eqnarray}}
\newcommand{\eeqn}{\end{eqnarray}}
\newcommand{\beqa}{\begin{array}}
\newcommand{\eeqa}{\end{array}}
\DeclareMathOperator{\AVR}{AVR}
\DeclareMathOperator{\Rm}{Rm}
\DeclareMathOperator{\Ric}{Ric}
\def\lra{\longrightarrow}
\def\bc{\begin{center}}
\def\ec{\end{center}}
\def\d#1#2{\frac{\displaystyle #1}{\displaystyle #2}}
\def\d{\delta}
\def\R{{\mathbb R}}
\def\R{{\mathfrak R}}
\def\ZZ{{\mathbb Z}}
\def\begeq{\begin{equation}}
\def\endeq{\end{equation}}
\def\and{\quad{\rm and}\quad}
\let\lra=\longrightarrow
\def\mapright\#1{\,\smash{\mathop{\lra}\limits^{\#1}}\,}
\newtheorem{prop}{Proposition}[section]
\newtheorem{theo}[prop]{Theorem}
\newtheorem{lem}[prop]{Lemma}
\newtheorem{claim}[prop]{Claim}
\newtheorem{cor}[prop]{Corollary}
\newtheorem{rem}[prop]{Remark}
\newtheorem{defi}[prop]{Definition}
\newtheorem{conj}[prop]{Conjecture}
\begin{document}

\date{}
\author {Bennett Chow}
\author {Yuxing $\text{Deng}^*$ }
\author {Zilu Ma}

\thanks {* Supported by the NSFC Grant 12022101 and 11971056.}
\subjclass[2000]{Primary: 53C25; Secondary: 53C55,
58J05}
\keywords { Ricci flow, Ricci soliton, $\kappa$-solution}

\address{ Bennett Chow\\ Department of Mathematics, University of California, San Diego, 9500 Gilman Drive \#0112, La Jolla, CA 92093-0112, USA\\ bechow@ucsd.edu.}

\address{ Yuxing Deng\\School of Mathematics and Statistics, Beijing Institute of Technology,
Beijing, 100081, China\\
6120180026@bit.edu.cn}

\address{ Zilu Ma\\ Department of Mathematics, University of California, San Diego, 9500 Gilman Drive \#0112, La Jolla, CA 92093-0112, USA\\ zim022@ucsd.edu.}



\title[Steady gradient Ricci $4$-solitons that dimension reduce]{On four-dimensional steady gradient Ricci solitons that dimension reduce}
\maketitle

\section*{\ }

\begin{abstract} In this paper, we will study the asymptotic geometry of 4-dimensional steady gradient Ricci solitons under the condition that they dimension reduce to $3$-manifolds. We will show that such
solitons either
strongly
dimension reduce to a spherical space form $\mathbb{S}^3/\Gamma$ or weakly dimension reduce to the $3$-dimensional Bryant soliton. We also show that
4-dimensional steady gradient Ricci
soliton \emph{singularity models} with nonnegative Ricci curvature outside a compact set either are  Ricci-flat ALE $4$-manifolds or  dimension reduce to $3$-dimensional manifolds. As a
further
application, we prove that any steady gradient K\"{a}hler-Ricci soliton singularity models on complex surfaces with  nonnegative Ricci curvature outside a compact set must be hyperk\"{a}hler ALE $4$-manifolds.
\end{abstract}

\tableofcontents

\section{Introduction}

\emph{Steady} gradient Ricci solitons arise as singularity models in the case of Type II singular solutions of the Ricci flow \cite{H2}. In dimension $4$, Ricci flow singularity formation may be quite complicated, with a
given
forming singularity possibly having different
associated singularity
models at different
curvature
scales. 

For example, Appleton's work \cite{Appleton2}
proves the existence
of $4$-dimensional Ricci flow singularity
formation
where the associated singularity model at the highest curvature scale is the Eguchi--Hanson Ricci flat ALE,
which is a steady soliton with trivial potential function,
and where
at a lower scale
the model is
either flat $\mathbb{R}^4/\mathbb{Z}_2$ or
the $\mathbb{Z}_2$-quotient of the Bryant soliton, which is a steady soliton with nontrivial potential function.

By Bamler's recent works \cite{Bam1,Bam2,Bam3}, which
solved
a version of a conjecture of Hamilton, there is a definite sense in which most singularity models in all dimensions are shrinking gradient Ricci solitons.
In particular, Bamler's theory proves that for any forming singularity one always has that at the 
appropriate (parabolic)
scale and approach to the singularity the associated singularity model is a shrinking gradient Ricci soliton or a Ricci flat cone.  

In dimension $4$, steady Ricci solitons may also be relevant to the study of \emph{shrinking} gradient Ricci solitons with quadratic curvature growth (which is the maximum possible growth) via a limit argument; see Proposition 10 in \cite{CFSZ}.
The reason why steady and shrinking gradient solitons may be related to each other in dimension 4 is that the shrinking soliton equation $\Ric - \text{Hess}  f =\frac{1}{2}g$ degenerates to $\Ric = \text{Hess}  f$ as a limit of dilations provided convergence holds, which is true in the quadratic curvature growth case.
Bamler has asked the question of whether $4$-dimensional quadratic curvature growth shrinking solitons analogous to flying wings may exist.

Throughout this paper, we will use the following notations.
A triple $(M^n,g,f)$ of a smooth manifold, a complete Riemannian metric, and a function is an $n$-dimensional steady gradient Ricci soliton, $\{\phi_t\}_{t\in(-\infty,\infty)}$ is the $1$-parameter group of diffeomorphisms generated by $-\nabla f$, and $g(t)=\phi_t^{*}g$. By definition,
the Ricci curvature  ${\rm Ric}$ of $g$ on $M$ satisfies
\begin{align}
{\rm Ric}={\rm Hess} f.
\end{align}
Defining $f(t)=f \circ \phi_t$, we have an eternal solution to the Ricci flow:
\begin{equation}
    \frac{\partial}{\partial t}g(t) = - 2 {\rm Ric}_{g(t)}= -2 {\rm Hess}_{g(t)} f(t).
\end{equation}

Regarding the qualitative study of steady gradient Ricci solitons with relatively mild conditions on curvature, save positivity, there are a number of important results, including \cite{Br1},  \cite{Br2}, \cite{Chan}, \cite{CLY11},  \cite{DZ5}, \cite{Deruelle2012}, \cite{Ham95}, \cite{MS13}, \cite{MSW19}, \cite{MW}, and \cite{Na}.

In this paper we will study the asymptotic geometry of 4-dimensional steady gradient Ricci solitons
under the assumption that they
``dimension reduce'' to
$3$-manifolds (see below for definitions). 
We also prove dimension reduction must hold only assuming that the Ricci curvature is nonnegative outside a compact set.
We first introduce the definition of dimension reduction on steady gradient Ricci solitons.

\begin{defi}\label{def-1}
We say that $(M^n,g,f)$ \textbf{dimension reduces to $(n-1)$-manifolds} if for any sequence $\{p_i\}_{i\in \mathbb{N}}$ tending to infinity,
a subsequence of
$(M,K_ig(K_i^{-1}t),p_i)$ converges to $(N^{n-1}\times \mathbb{R},g_N(t)+ds^2,p_\infty)$ in the $C^\infty$ pointed Cheeger--Gromov sense, where $(N,g_{N}(t))$, $t\in(-\infty,0]$, is an $(n-1)$-dimensional complete ancient Ricci flow with bounded curvature and where $K_i=|{\Rm}(p_i)|>0$.
In this definition, $(N,g_{N}(t))$ may depend on the choice of the base points $\{p_i\}$ and the subsequence.
We call any such $(N,g_{N}(t))$ a \textbf{dimension reduction} of $(M,g,f)$.

We say $(M^n,g,f)$ \textbf{strongly dimension reduces to} $(N^{n-1},g_{N}(t))$
provided that
$(M,g,f)$ dimension reduces to $(n-1)$-manifolds, where the dimension reduction of $(M,g,f)$ is always $(N,g_{N}(t))$ and hence is independent of the choice of $\{p_i\}$ and subsequence.
\end{defi}

\begin{defi}
We say that $(M^n,g,f)$ \textbf{weakly dimension reduces to} $(N^{n-1},g_{N}(t))$ if there exists a sequence of points $\{p_i\}_{i\in \mathbb{N}}$ tending to infinity such that $(M,K_ig(K_i^{-1}t),p_i)$ converges to $(N\times \mathbb{R},g_N(t)+ds^2,p_\infty)$, where $K_i=|{\Rm}(p_i)|>0$.
\end{defi}

Observe that in the definitions above, $(N,g_N(t))$ is always nonflat since $|{\Rm}_N|(p_\infty,0)=1$.

From now on
we assume that $(M^4,g,f)$ is $\kappa$\textbf{-noncollapsed}. By this we mean that if $|{\Rm}|\leq r^{-2}$ in $B(p,r)$, where $p\in M$ and $r\in(0,\infty)$, then $\operatorname{vol}B(p,r)\geq \kappa r^4$. If $(M^4,g)$ is a singularity model of the Ricci flow (see Definition \ref{def-singularity model}), then it satisfies the stronger property that the above holds with $|{\Rm}|$ replaced by the scalar curvature $R$ (this is a theorem of Perelman \cite[Theorems 28.6 and 28.9]{Cetc}); in this case we say that $(M^4,g)$ is \textbf{strongly $\kappa$-noncollapsed}.



According to the works of Hamilton and Perelman on ancient solutions, Brendle's remarkable solution to Perelman's conjecture on the classification of noncompact $3$-dimensional $\kappa$-solutions \cite{Br3},
and the recent classification of compact $3$-dimensional $\kappa$-solutions by Brendle, Daskalopoulos, and Sesum \cite{BDS} (see Bamler and Kleiner \cite{BamKle} for a later, alternative treatment stemming from their fundamental proof of the generalized Smale conjecture \cite{BamKle2} using the strong stability of the $3$-dimensional Ricci flow),
for any
given
sequence of points $\{p_i\}_{i\in \mathbb{N}}$, $(N^3,g_{N}(t))$ in Definition \ref{def-1}
is
one the following solutions:
\begin{enumerate}
\item quotients of $\kappa$-solutions on
${S}^3$ (such as shrinking spherical space forms $\mathbb{S}^3/\Gamma$ and Perelman's ancient solutions on $S^3$ and $\mathbb{RP}^3$);
\item the ancient Ricci flow generated by the $3$-dimensional Bryant soliton (which we henceforth abbreviate as the Bryant $3$-soliton);
\item quotients of shrinking round cylinders on $\mathbb{S}^2\times\mathbb{R}$ (the only orientable $\kappa$-noncollapsed quotient is by $\ZZ_2$
acting diagonally).
\end{enumerate}

In this paper, we
obtain
further restrictions on
the possible asymptotic geometries of  $4$-dimensional  steady gradient Ricci solitons that  dimension reduce to $3$-manifolds. Precisely, we prove the following result.

\begin{theo}\label{theorem-1}
If $(M,g,f)$ is a $4$-dimensional
steady gradient Ricci soliton that dimension reduces to $3$-manifolds, then either it
strongly
dimension reduces to $\mathbb{S}^3/\Gamma$ or it weakly dimension reduces to the
Bryant $3$-soliton.\footnote{More precisely, we mean it strongly dimension reduces to $(\mathbb{S}^3/\Gamma, g_{\mathbb{S}^3/\Gamma}(t))$, where $(\mathbb{S}^3/\Gamma, g_{\mathbb{S}^3/\Gamma}(t))$ is a group of shrinking spherical space forms.}
\end{theo}

Observe that if $(M^4,g,f)$ weakly dimension reduces to the Bryant $3$-soliton, then it also weakly dimension reduces to $\mathbb{S}^2\times\mathbb{R}$.
A key content of Theorem \ref{theorem-1} is that, under its hypothesis,
if dimension reduction to $\mathbb{S}^2\times\mathbb{R}$ occurs for a given steady soliton, then conversely dimension reduction to the Bryant $3$-soliton \emph{must} occur for that steady soliton.
Although this agrees with one's intuitive picture of the landscape of $4$-dimensional steady solitons, this is technically difficult to prove.

    Examples of steady gradient Ricci solitons that
strongly
dimension reduce to $\mathbb{S}^3/\Gamma$ are the Bryant $4$-soliton (to $\mathbb{S}^3$) and Appleton's cohomogeneity one examples on real plane bundles over $S^2$ (to $\mathbb{S}^3/\mathbb{Z}_k$, $k\geq3$); see \cite{Appleton}.
The first named author also conjectured that there exist similar steady gradient Ricci solitons on plane bundles over $\mathbb{RP}^2$. Among these solitons, only the Bryant $4$-soliton has positive Ricci curvature (it in fact has positive curvature operator).
We note that Hamilton has conjectured that there exists a family of 4-dimensional $\kappa$-noncollapsed steady gradient Ricci solitons, called \textbf{flying wings}, with positive curvature operator that weakly dimension reduce to the Bryant $3$-soliton. Lai \cite{Lai20} recently confirmed Hamilton's conjecture in dimension $3$ by proving the existence of $3$-dimensional flying wings, which reduce to the cigar soliton.


When $(M^4,g,f)$ weakly dimension reduces to the Bryant $3$-soliton and its scalar curvature has no decay, one may conjecture that $(M,g,f)$ either is the product of the Bryant $3$-soliton and a line or is a flying wing. It is also unknown whether there exists a steady gradient Ricci soliton which weakly dimension reduces to the Bryant $3$-soliton and has
scalar curvature uniformly decaying to zero.
If such a steady gradient Ricci soliton exists, we 
expect
that the asymptotic behavior of its level set flow\footnote{One may see Section 3 in \cite{DZ5} for the definition of the level set flow of a steady gradient Ricci soliton.} should be similar to that of the $3$-dimensional $\kappa$-solution constructed by Perelman.

A key idea of Theorem \ref{theorem-1} is that (\ref{positive lower bound of derivative}) holds if the steady Ricci soliton doesn't reduce to  a $3$-dimensional steady gradient Ricci soliton. Condition (\ref{positive lower bound of derivative}) actually implies that the steady Ricci soliton has linear curvature decay and hence strongly
dimension reduces to $\mathbb{S}^3/\Gamma$. Although Brendle, Daskalopoulos and Sesum \cite{BDS} have obtained the classification of $3$-dimensional compact $\kappa$-solutions, which was based on  the asymptotic behavior of $3$-dimensional compact $\kappa$-solutions studied in \cite{ABDS}, our proof does not rely on their results.


In most cases,
one is
interested in Ricci solitons which are singularity models. The definition of a singularity model is as follows.
\begin{defi}\label{def-singularity model}
Let $(M^n,g(t))$, $t\in[0,T)$, be a finite time singular solution to Ricci flow on a closed oriented manifold such that $\sup_{M\times[0,T)}|{\Rm}|=\infty$ and $T<\infty$. An associated \textbf{singularity model} $(M_{\infty}^n,g_{\infty}(t))$, $t\in(-\infty,0]$, is a complete ancient solution which is a limit of pointed rescalings. More precisely, there exists a sequence of space-time points $(x_i,t_i)$ in $M\times[0,T)$ with $K_i\triangleq |{\Rm}|(x_i,t_i)\to\infty$ such that the sequence of pointed solutions $(M,g_i(t),x_i)$, where $g_i(t)=K_ig(K_i^{-1}
t 
+t_i)$ and $t\in[-K_it_i,0]$, converges in the Cheeger-Gromov sense to the complete ancient solution $(M_{\infty},g_{\infty}(t),x_{\infty})$, $t\in (-\infty,0]$.
\end{defi}

As an application of Theorem \ref{theorem-1}, we can give a description of  the asymptotic behavior of any $4$-dimensional steady gradient Ricci soliton singularity model $(M,g,f)$ whose Ricci curvature is nonnegative outside a compact set $K$, i.e., its Ricci curvature ${\rm Ric}(x)$ satisfies
 \begin{align}\label{Ricci nonnegative outside K}
 {\rm Ric}(x)\ge0,\quad\forall~x\in M\setminus K.
 \end{align}
 Here, dimension reduction is not one of the hypotheses.

\begin{theo}\label{theo-singularity model}
Let $(M,g,f)$ be a $4$-dimensional steady gradient Ricci soliton singularity model satisfying \eqref{Ricci nonnegative outside K}. Then, one of the following holds:

(1) $(M,g,f)$ is a Ricci flat ALE $4$-manifold;

(2) $(M,g,f)$ strongly dimension reduces to $\mathbb{S}^3/\Gamma$;

(3) $(M,g,f)$ weakly dimension reduces to the Bryant $3$-soliton.
\end{theo}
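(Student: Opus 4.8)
\emph{Proof strategy.} Being a singularity model, $(M,g,f)$ is strongly $\kappa$-noncollapsed and has bounded curvature; moreover on a steady gradient Ricci soliton $R+|\nabla f|^2$ is a constant $R_{\max}$, so $R\le R_{\max}$, and on $M\setminus K$, where $\mathrm{Ric}\ge0$, we have $|\mathrm{Ric}|\le R\le R_{\max}$. Also $\Delta R+\langle\nabla f,\nabla R\rangle=-2|\mathrm{Ric}|^2\le0$, so by the strong maximum principle either $R\equiv0$ (equivalently $\mathrm{Ric}\equiv0$) or $R>0$ everywhere. The proof splits along this dichotomy.

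\emph{The Ricci-flat case.} If $\mathrm{Ric}\equiv0$ then $\mathrm{Hess}\,f\equiv0$, so $\nabla f$ is parallel; were $\nabla f\not\equiv0$, $(M,g)$ would split isometrically as $\mathbb R\times N^3$ with $N^3$ Ricci flat, hence flat, hence $(M,g)$ flat --- impossible for a singularity model, for which $|Rm|(x_\infty,0)=1$. Thus $f$ is constant and $(M^4,g)$ is a complete, non-flat, Ricci flat, strongly $\kappa$-noncollapsed $4$-manifold of bounded curvature that is a blow-up limit of closed manifolds. It cannot have $\mathrm{ASCR}=\infty$ (asymptotic scalar curvature ratio), for otherwise it would dimension reduce and a blow-up at infinity would be a flat product, contradicting $|Rm|\equiv1$ at the basepoint; hence its curvature decays at least quadratically, which together with $\kappa$-noncollapsing forces Euclidean volume growth, while the singularity-model hypothesis bounds $\int|Rm|^2$ via Chern--Gauss--Bonnet. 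The ALE classification of four-dimensional Ricci flat spaces (Bando--Kasue--Nakajima, Anderson, Tian) then gives conclusion (1).

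\emph{The non-Ricci-flat case.} Now $R>0$ everywhere and $f$ is non-constant. It suffices to prove that $(M,g,f)$ dimension reduces to $3$-manifolds in the sense of Definition \ref{def-1}, since Theorem \ref{theorem-1} then yields conclusion (2) or (3). Fix any $p_i\to\infty$ and set $K_i=|Rm(p_i)|$. First, $\AVR(g)=0$: a non-Einstein steady gradient Ricci soliton with $\mathrm{Ric}\ge0$ outside a compact set and bounded scalar curvature has sub-Euclidean volume growth (using that $f$ is proper with at most linear growth, is superharmonic, $\Delta f=-R<0$, and $|\nabla f|\le\sqrt{R_{\max}}$); since $(M,g)$ is $\kappa$-noncollapsed with bounded curvature, $\AVR(g)=0$ is equivalent to $\mathrm{ASCR}=\infty$. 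Next, Shi's estimates together with the soliton bound $|\nabla R|\le2|\mathrm{Ric}|\,|\nabla f|\le2R\sqrt{R_{\max}}$ (so $\sqrt R$ is globally Lipschitz) and the nonnegativity of curvature near infinity allow one to control $|Rm|$ on the balls $B_g(p_i,LK_i^{-1/2})$ and extract a subsequence along which $(M,K_ig(K_i^{-1}t),p_i)$ converges in the $C^\infty$ pointed Cheeger--Gromov sense to a complete ancient solution $(M_\infty,g_\infty(t),p_\infty)$ that is four-dimensional, strongly $\kappa$-noncollapsed, of bounded curvature, with $\mathrm{Ric}\ge0$ and $|Rm|(p_\infty,0)=1$. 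Because $\AVR(g)=0$, $\mathrm{Ric}\ge0$ at infinity and $M$ is $\kappa$-noncollapsed, Perelman's dimension reduction argument applies and $(M_\infty,g_\infty(0))$ contains a line --- realized as a limit of arcs of the $-\nabla f$ flow line through $p_i$, whose rescaled length diverges in both directions (forward since $f\to-\infty$ with $|\nabla f|$ bounded, backward since the distance from $p_i$ to $\{\nabla f=0\}$ is at least $c\,(-f(p_i))\to\infty$). By Cheeger--Gromoll, $g_\infty(t)=ds^2+g_N(t)$ with $(N^3,g_N(t))$ a complete $3$-dimensional ancient Ricci flow of bounded curvature, non-flat since $|Rm_{M_\infty}|(p_\infty,0)=1$. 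As $p_i\to\infty$ was arbitrary, $(M,g,f)$ dimension reduces to $3$-manifolds, and the proof is complete.

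\emph{Main obstacle.} The principal difficulty lies in the non-Ricci-flat case: establishing the $C^\infty$ Cheeger--Gromov convergence of the blow-ups when $K_i\to0$ --- i.e.\ when $p_i$ sits where the scalar curvature degenerates --- requires controlling the full curvature tensor (including its Weyl part) on the rescaled unit balls, forcing a genuine interplay between the soliton identities and the nonnegativity of curvature at infinity, and the splitting must be produced for \emph{every} sequence $p_i\to\infty$, as Definition \ref{def-1} demands, not merely for well-chosen basepoints. A subsidiary point is verifying $\AVR(g)=0$ in the non-Einstein case and Euclidean volume growth in the Ricci-flat case.
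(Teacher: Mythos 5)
Your outline reproduces the theorem's shape (Ricci-flat ALE case versus dimension reduction plus Theorem \ref{theorem-1}), but the two steps you defer to parentheticals are exactly the hard content of the paper, and your sketches of them do not work. First, your claim that $\AVR(g)=0$ whenever $(M,g,f)$ is not Ricci flat is essentially Theorem \ref{theo-maximal volume growth}, which the paper proves in Section \ref{section-volume growth} via Lemmas \ref{lem-curvature fast decay}--\ref{lem-small linear decay constant}, the lower linear decay bound (Theorem \ref{theo-linear decay lower bound}, from \cite{DZ6}), and the no-decay dimension reduction (Theorem \ref{theo-dimension reduce without decay}); your one-line justification is not a proof, and it even has the wrong sign: with ${\rm Ric}={\rm Hess}\,f$ one has $\Delta f=R>0$, so $f$ is subharmonic, and properness/linear growth of $f$ requires uniform scalar curvature decay (Theorem \ref{theo-linear of f}), which you have not established at that point. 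Second, the compactness step for arbitrary $p_i\to\infty$ cannot be obtained from Shi's estimates plus the Lipschitz bound on $\sqrt R$: that bound gives $R(x)\le\bigl(\sqrt{R(p_i)}+C\,d(x,p_i)\bigr)^2$, which after rescaling by $R(p_i)$ (or $K_i$) degenerates precisely when $R(p_i)\to0$, i.e.\ in the relevant regime. This ``bounded curvature at bounded distance'' is the content of Theorem \ref{theo-compactness}, whose proof needs Proposition \ref{prop-section 7} (which itself uses Theorem \ref{theo-maximal volume growth}), a Perelman-type point-picking argument (Lemma \ref{lem-local bound}), and Lemma \ref{lem-positive lower bound} as a substitute for the Harnack inequality. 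Your splitting argument is also not a proof: an integral curve of $-\nabla f$ is not a geodesic, and divergent rescaled length does not produce a line; the paper instead shows that $R(p_i)^{-1/2}\nabla f$ limits to a nontrivial parallel field (as in Claim \ref{claim-split}), using $|{\rm Ric}|\le CR$, $R(p_i)\to0$, and $|\nabla f|^2=R_{\max}-R\to R_{\max}$.

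Two further points. You never treat the case in which the scalar curvature does not decay uniformly: there the paper does not prove dimension reduction for every sequence (your plan requires this before invoking Theorem \ref{theorem-1}), but only weak dimension reduction to a $3$-dimensional steady soliton via Theorem \ref{theo-dimension reduce without decay}, which then yields conclusion (3); your line construction implicitly assumes decay through the linear growth of $f$. In the Ricci-flat case your detour is both unjustified and unnecessary: the exclusion of ${\rm ASCR}=\infty$ by ``dimension reduction'' uses only ${\rm Ric}\equiv0$, which gives no line, and the claimed $\int|Rm|^2$ bound via Chern--Gauss--Bonnet for a noncompact blow-up limit is not substantiated. The paper's route is immediate: strong $\kappa$-noncollapsing (Perelman, \cite{Cetc}) with $R\equiv0$ gives Euclidean volume growth at all scales, and then \cite[Corollary 8.86]{CN} gives ALE; also note that bounded curvature of the singularity model, which you assert at the outset, is itself a theorem (\cite{CFSZ}) and must be cited, since Theorem \ref{theo-nonnegative Ricci-bounded curvature} and the whole compactness machinery depend on it.
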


The aforementioned Appleton's cohomogeneity-one steady soliton examples have positive curvature operator outside a compact set. Hamilton's conjectured flying wings are expected to have at least positive Ricci curvature outside a compact set.
At the present time, all known (non-splitting) noncollapsed $4$-dimensional steady solitons have positive curvature outside a compact set.

ALE $4$-manifolds are defined as follows.

\begin{defi}
A complete noncompact Riemannian $4$-manifold $(M^4, g)$ is an \textbf{asymptotically
locally Euclidean (ALE) space of order $\tau > 0$} if there exist a finite subgroup
$\Gamma$ of $SO(4)$, a compact subset $K$ of $M$, and a diffeomorphism
\begin{align}
\Phi:(\mathbb{R}^4-B_1( \textbf{0}))/\Gamma\to M\setminus K
\end{align}
such that $\widetilde{g}_{ij}=\pi^{\ast}\Phi^{\ast}g$, where $\pi:\mathbb{R}^4-B_1(\textbf{0})\to (\mathbb{R}^4-B_1(\textbf{0}))/\Gamma$ is the projection, which satisfies $|\widetilde{g}_{ij}-\delta_{ij}|\le O(r^{-\tau})$ and $|\partial^{\textbf{k}}g_{ij}|\le O(r^{-\tau-|\textbf{k}\,|})$ for multi-indices $\textbf{k}$.
\end{defi}

When the steady Ricci solitons in Theorem \ref{theo-singularity model} are K\"{a}hler-Ricci solitons, we can \emph{classify} any steady gradient K\"{a}hler-Ricci soliton singularity model $(M,g,f)$ of complex dimension $2$, whose Ricci curvature is nonnegative outside a compact set. Precisely, we have:

\begin{theo}\label{theo-Kaehler-2}
Steady gradient K\"{a}hler-Ricci soliton singularity models on complex surfaces must be hyperk\"{a}hler ALE Ricci-flat $4$-manifolds if they satisfy condition \eqref{Ricci nonnegative outside K}.
\end{theo}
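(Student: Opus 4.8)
The plan is to combine Theorem \ref{theo-singularity model} with the existing structure theory of steady gradient K\"{a}hler-Ricci solitons on complex surfaces, and then eliminate the non-Ricci-flat alternatives (2) and (3) using the rigidity of dimension reduction in the K\"{a}hler category. By Theorem \ref{theo-singularity model}, such a soliton $(M,g,f)$ falls into one of three cases: it is a Ricci-flat ALE $4$-manifold, it strongly dimension reduces to $\mathbb{S}^3/\Gamma$, or it weakly dimension reduces to the Bryant $3$-soliton. In case (1), I would invoke the classification of Ricci-flat K\"{a}hler ALE surfaces: a Ricci-flat K\"{a}hler surface with one ALE end is hyperk\"{a}hler (this follows from the fact that on a complex surface Ricci-flat K\"{a}hler implies the canonical bundle is flat, hence, after passing to the relevant cover and using Kronheimer's/Bando--Kasue--Nakajima structure theory together with holonomy reduction to $SU(2)=Sp(1)$, the metric is hyperk\"{a}hler). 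So the only real content is to rule out cases (2) and (3).

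To rule out (2) and (3), the key point is that a dimension reduction of a K\"{a}hler manifold should itself carry a compatible structure that is incompatible with $\mathbb{S}^3/\Gamma$ or the Bryant $3$-soliton. Concretely, I would argue as follows. If $(M^4,g,f)$ is K\"{a}hler and weakly or strongly dimension reduces, then blowing up at points $p_i\to\infty$ with $K_i=|Rm(p_i)|$, the limit $(N^3\times\mathbb{R},g_N(t)+ds^2)$ inherits a parallel complex structure $J_\infty$ from the (rescaled, hence still K\"{a}hler) approximating metrics. A $4$-dimensional K\"{a}hler Ricci flow limit that splits off a line as $N^3\times\mathbb{R}$ forces $N^3$ to be locally a product $\Sigma^2\times\mathbb{R}$ with $\Sigma^2$ K\"{a}hler (one-complex-dimensional), i.e. the limit is actually $\Sigma^2\times\mathbb{R}^2$ up to local isometry; this is the standard phenomenon that K\"{a}hler manifolds do not dimension reduce by exactly one real dimension. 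Neither $\mathbb{S}^3/\Gamma$ nor the Bryant $3$-soliton splits off another line locally (both have, at generic points, a $2$-dimensional factor of nonconstant curvature or positive curvature and no local $\mathbb{R}$ factor), so cases (2) and (3) are impossible, and we are forced into case (1).

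The main obstacle is making the inheritance of the complex structure and the "extra splitting" argument rigorous in the Cheeger--Gromov limit: one must check that the complex structures $J_i$ of the rescaled solitons converge (subsequentially) to a parallel almost-complex structure $J_\infty$ compatible with $g_N(t)+ds^2$, which requires uniform bounds on $J_i$ and its derivatives — these follow from $\nabla J_i=0$ and the curvature bounds that are built into the definition of dimension reduction (bounded curvature on the ancient limit). Then one must invoke a de Rham-type splitting: a K\"{a}hler manifold (or K\"{a}hler Ricci flow) that metrically splits off $\mathbb{R}$ must in fact split off $\mathbb{R}^2$ holomorphically and isometrically, because the line field $\partial_s$ together with $J_\infty\partial_s$ generates a parallel flat $2$-plane distribution; here I would cite the K\"{a}hler de Rham decomposition (the parallel distribution $\langle\partial_s,J_\infty\partial_s\rangle$ is $J_\infty$-invariant and parallel, hence integrable with totally geodesic flat leaves). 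After that, the contradiction with the known geometry of $\mathbb{S}^3/\Gamma$ and the Bryant $3$-soliton is immediate. A secondary subtlety is the "weakly dimension reduces" case, where only one sequence $p_i$ is given; but the same local splitting argument applies at the limit regardless of how many sequences realize the reduction, so the argument is uniform across (2) and (3).
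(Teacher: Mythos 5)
Your proposal is correct and takes essentially the same route as the paper: the paper deduces Ricci-flatness from Theorem \ref{theo-Kaehler-1}, whose proof is precisely your argument that the K\"ahler structure passes to the blow-down limit as a parallel $J_\infty$, so $J_\infty V$ gives a second parallel field and the limit splits off $\mathbb{R}^2$, which is incompatible with $\mathbb{S}^3/\Gamma$ and the Bryant $3$-soliton, and then concludes via maximal volume growth, the ALE structure, and Kronheimer's classification that the metric is hyperk\"ahler. The only difference is packaging: you run the K\"ahler splitting argument directly on the alternatives of Theorem \ref{theo-singularity model} rather than citing Theorem \ref{theo-Kaehler-1}.
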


By Bando, Kasue, and Nakajima \cite{BKN}, for any $4$-dimensional ALE, there exists $\Phi$ so
that the order of the ALE is $4$. It is conjectured that \textit{any ALE Ricci
flat 4-manifold must be hyperk\"{a}hler}. The conjecture is true when the $4$-manifold is a K\"{a}hler manifold  by  \cite{Kr1,Kr2}.
 Simply-connected
hyperk\"{a}hler ALE 4-manifolds have been classified by Kronheimer \cite{Kr1,Kr2}.  The non-simply-connected hyperk\"{a}hler ALE $4$-manifolds  have also been classified by Suvaina \cite{Su} and Wright \cite{Wr}. We
remark  that K\"{a}hler-Ricci
flat non-ALE spaces  may have infinite type; see \cite{AKL}.
Note also that Appleton's cohomogeneity-one examples are non-K\"{a}hler.

 Now, we explain the ideas in  proving Theorem \ref{theo-singularity model} and Theorem \ref{theo-Kaehler-2}. When the scalar curvature has no uniform decay, we will prove a dimension reduction theorem for steady Ricci solitons. In \cite{DZ4}, Xiaohua Zhu and the second named author proved the dimension reduction for steady gradient K\"{a}hler-Ricci solitons with nonnegative bisectional curvature. Under (\ref{Ricci nonnegative outside K}), we can find a geodesic line in the limit of a sequence of steady gradient Ricci solitons.
 In this paper we will prove the following dimension reduction theorem.

\begin{theo}\label{theo-dimension reduce without decay}
 Let $(M,g,f)$ be an $n$-dimensional
 $\kappa$-noncollapsed
 steady gradient Ricci soliton with bounded curvature. If $(M,g,f)$ satisfies \eqref{Ricci nonnegative outside K} and does not have uniform scalar curvature decay, then it weakly dimension reduces to an  $(n-1)$-dimensional steady gradient Ricci soliton.
 \end{theo}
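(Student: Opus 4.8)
The plan is to run a Hamilton-type dimension reduction at infinity: select a sequence of ``necklike'' points escaping to infinity, blow up the metric by the curvature scale there, use ${\rm Ric}\ge 0$ on $M\setminus K$ to promote this to ${\rm Ric}\ge 0$ on the whole blow-up limit, exhibit a line in the limit, and conclude with the Cheeger--Gromoll splitting theorem. First I would normalize Hamilton's identity to $R+|\nabla f|^2=1$; since $R\ge 0$ on complete ancient Ricci flows, $0\le R\le 1$, $f$ is $1$-Lipschitz, and ${\rm Ric}={\rm Hess}\,f\ge 0$ on $M\setminus K$ makes $f$ convex, critical-point-free outside a compact set, and proper. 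The failure of uniform scalar curvature decay gives $\delta>0$ and $q_j\to\infty$ with $R(q_j)\ge\delta$, so the asymptotic scalar curvature ratio is infinite. Fixing $o\in M$ and $r_j=d(o,q_j)\to\infty$, I would choose $p_j$ maximizing $x\mapsto R(x)d(o,x)^2$ over $\overline{B(o,2r_j)}$; then $R(p_j)d(o,p_j)^2\ge\delta r_j^2\to\infty$, and since $d(o,p_j)\le 2r_j$ and $R\le 1$ this forces $R(p_j)\ge\delta/4$ and $d(o,p_j)\ge\sqrt\delta\,r_j\to\infty$, hence also $d(o,p_j)\sqrt{R(p_j)}\to\infty$. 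The point of this selection is that $R(p_j)$ stays bounded below, so $K_j:=|Rm(p_j)|$ lies in a fixed compact subinterval of $(0,\infty)$ (using $|Rm|\le C$ and $|Rm|\ge c_n R$ for a dimensional constant $c_n>0$); this bounded rescaling is exactly what will let the steady soliton structure survive in the limit, and it is here that the no-decay hypothesis is genuinely used.

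Next, by Shi's local estimates and $\kappa$-noncollapsing (which I assume, as it is needed for the conclusion to have content), a subsequence of the pointed eternal solutions $(M,K_j g(K_j^{-1}t),p_j)$ converges in the pointed Cheeger--Gromov sense to a complete eternal solution which, $K_j$ being bounded and $f-f(p_j),\nabla f$ being uniformly controlled, is a steady gradient Ricci soliton $(M_\infty,g_\infty,f_\infty)$, the potential passing to the limit without rescaling by scale-invariance of the soliton equation. Since $d(p_j,K)\to\infty$, the one-sided bound improves to ${\rm Ric}_\infty\ge 0$ on all of $M_\infty$ (so $f_\infty$ is convex there); the limit is nonflat because $|Rm_\infty|(p_\infty)=1$ and $R_\infty(p_\infty)=\lim R(p_j)/K_j\ge\delta/(4C)>0$; and it is noncompact, being a noncollapsed complete limit at points going to infinity. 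In fact $R_\infty>0$ everywhere, by the strong minimum principle applied to $\Delta R_\infty-\langle\nabla f_\infty,\nabla R_\infty\rangle=-2|{\rm Ric}_\infty|^2\le 0$.

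The core step is to produce a line in $M_\infty$. Let $\sigma_j$ be a unit-speed minimizing geodesic from $o$ to $p_j$, extended as a geodesic beyond $p_j$. After rescaling by $K_j$ the base point $o$ recedes to distance $d(o,p_j)\sqrt{K_j}\to\infty$, so the backward segments converge to a ray $\gamma^-$ from $p_\infty$ of infinite length. For the forward direction one must show that, rescaled, $\sigma_j$ stays almost minimizing over rescaled distances tending to infinity; here the maximality of $p_j$ should be used — beyond $p_j$ the quantity $R\,d(o,\cdot)^2$ cannot exceed its value at $p_j$, which together with ${\rm Ric}\ge 0$ controls the excess $d(o,\cdot)+d(p_j,\cdot)-d(o,p_j)$ along the extension and, via the Abresch--Gromoll excess estimate (the substitute for Toponogov's theorem available under a lower Ricci bound), forces the rescaled spaces to almost split on ever larger balls, yielding in the limit a forward ray $\gamma^+$ with $\gamma^-\cup\gamma^+$ a line. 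I expect this forward-ray/excess analysis to be the main obstacle: promoting ${\rm Ric}\ge 0$ to hold globally and getting $R_\infty>0$ are essentially automatic, but showing that the non-decaying scalar curvature really forces a cylindrical direction in the bounded-scale blow-up — rather than, say, a Bryant-type cap — is the delicate point and the one place where the no-uniform-decay hypothesis must be exploited quantitatively.

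Finally, with a line in hand, the Cheeger--Gromoll splitting theorem and ${\rm Ric}_\infty\ge 0$ give an isometric product $M_\infty=N^{n-1}\times\mathbb{R}$, with $g_\infty(t)=g_N(t)+ds^2$ a Ricci flow on $N$. Since ${\rm Hess}\,f_\infty={\rm Ric}_\infty$ annihilates $\partial_s$, one gets $\partial_s^2 f_\infty=0$ and $\partial_s\partial_j f_\infty=0$, hence $f_\infty=cs+b(y)$ with $c$ a constant and ${\rm Hess}_N\,b={\rm Ric}_N$, so $(N,g_N,b)$ is a complete $(n-1)$-dimensional steady gradient Ricci soliton of bounded curvature, nonflat because $R_N(p_\infty)=R_\infty(p_\infty)>0$. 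As $K_j=|Rm(p_j)|$ and $(M,K_j g(K_j^{-1}t),p_j)\to(N\times\mathbb{R},g_N(t)+ds^2,p_\infty)$ in the pointed Cheeger--Gromov sense, this is precisely the asserted weak dimension reduction of $(M,g,f)$.
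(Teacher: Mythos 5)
Your reduction to a blow-up limit at bounded scale, the promotion of ${\rm Ric}\ge 0$ to the whole limit, the persistence of the steady soliton structure in the limit (your analogue of Lemma \ref{lem-convergence when R no decay}; the noncollapsing you add is also implicitly used there), and the final splitting of the potential once a line exists are all fine and parallel the paper. The genuine gap is exactly where you flag it: the construction of the second (forward) ray, hence of the line. Maximality of $R\,d(o,\cdot)^2$ over $\overline{B(o,2r_j)}$ gives no information beyond that ball and does not control the excess of the geodesic $\sigma_j$ extended past $p_j$; extensions of minimizing geodesics are in general far from minimizing, and the Abresch--Gromoll inequality estimates the excess of a point lying between two far-apart points whose excess is already known to be small --- it cannot manufacture the second far-away point, which is precisely what is missing. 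Nothing in your sketch uses the soliton structure to force the cylindrical direction, and with only ${\rm Ric}\ge 0$, bounded curvature, noncollapsing and non-decaying $R$, there is no purely comparison-geometric mechanism in your outline that produces the splitting.

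The paper produces the line from the soliton structure itself, with a different point selection: it takes $p_i\to\infty$ with $R(p_i)\to A:=\lim_{r\to\infty}\sup_{M\setminus B(x_0,r)}R>0$, so the limit steady soliton attains the maximum of its scalar curvature at the basepoint. Then either $|\nabla f_\infty|\neq 0$ at that point, in which case the integral curve of $\nabla f_\infty$ through it is shown to be a minimizing geodesic line (Lemma \ref{lem-splitting-1}, using $R+|\nabla f|^2=C$, the vanishing of ${\rm Ric}(\nabla f,\cdot)$ where $R$ is maximal, and monotonicity of distances under the flow when ${\rm Ric}\ge0$), or all such maximum points are critical points of $f$, and a line is obtained from long minimal geodesics joining far-apart critical points, along which $R\equiv R_{\max}$ (Lemmas \ref{lem-splitting-2}, \ref{lem-shrinking-nonnegative}, \ref{lem-geodesic}); the two situations are organized through a second limiting step and a decay/no-decay dichotomy for the limit (Lemma \ref{lem-nonnegative case-spliting} and the Case 1/Case 2 argument in the proof of Theorem \ref{theo-dimension reduce without decay}). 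To repair your argument you would need either to actually carry out the excess estimate you allude to (which does not follow from your point selection) or to switch to the paper's selection, which is designed so that the soliton-theoretic line construction applies.
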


Recall that in dimension $4$, bounded curvature holds in the case that the steady gradient Ricci soliton is a singularity model.

By Theorem \ref{theo-dimension reduce without decay}, to prove Theorem \ref{theo-singularity model} it suffices to deal with $4$-dimensional  $\kappa$-noncollapsed steady gradient Ricci solitons satisfying (\ref{Ricci nonnegative outside K}) and uniform scalar curvature decay, i.e., $R(x)\rightarrow0$ as $x\rightarrow\infty$.
We prove the following.

\begin{theo}\label{theo-nonnegative Ricci}
Let $(M,g,f)$ be a $4$-dimensional $\kappa$-noncollapsed steady gradient Ricci soliton with uniform scalar curvature decay. If it satisfies condition \eqref{Ricci nonnegative outside K}, then one of the following holds:

(1) $(M,g,f)$ is Ricci flat; 

(2) $(M,g,f)$ strongly dimension reduces to $\mathbb{S}^3/\Gamma$;

(3) $(M,g,f)$ weakly dimension reduces to the Bryant $3$-soliton.
\end{theo}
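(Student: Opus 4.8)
The plan is to show that, unless $(M,g,f)$ is Ricci flat, it dimension reduces to $3$-manifolds in the sense of Definition \ref{def-1}, and then to invoke Theorem \ref{theorem-1}, which upgrades this to alternatives (2) and (3). To begin, recall that for a steady gradient Ricci soliton one has $R\ge 0$ and $\Delta R-\langle\nabla f,\nabla R\rangle=-2|{\rm Ric}|^{2}\le 0$, so by the strong maximum principle either $R\equiv 0$ — in which case $|{\rm Ric}|\equiv 0$ and we are in case (1) — or $R>0$ on all of $M$. Assume the latter and normalize so that $R+|\nabla f|^{2}\equiv 1$. Then $|\nabla f|\to 1$ at infinity, hence $f$ is proper and $f\to-\infty$ at infinity, the set $\{R\ge\varepsilon\}$ is compact for each $\varepsilon>0$, and $0\le {\rm Ric}\le Rg$ outside $K$ forces ${\rm Ric}\to 0$ at infinity as well.

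Fix $O\in M$, let $\{p_{i}\}$ tend to infinity and put $r_{i}=d(O,p_{i})$ and $K_{i}=|Rm(p_{i})|>0$ (note $R>0$ everywhere makes $Rm(p_i)\neq0$). The first key point is that $K_{i}r_{i}^{2}\to\infty$: if instead $K_{i}r_{i}^{2}\le C$ along a subsequence, then the local curvature estimate together with $\kappa$-noncollapsing and Bishop--Gromov volume comparison (valid since ${\rm Ric}\ge 0$ outside the fixed set $K$) would force $(M,g)$ to have positive asymptotic volume ratio; but a $\kappa$-noncollapsed steady gradient Ricci soliton with positive asymptotic volume ratio is Ricci flat, contradicting $R>0$. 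Hence $K_{i}^{-1/2}=o(r_{i})$, so for large $i$ the balls $B_{g}(p_{i},\rho K_{i}^{-1/2})$ lie in $M\setminus K$, where ${\rm Ric}\ge 0$; combining this with $\kappa$-noncollapsing and the local curvature estimate, the rescaled eternal solutions $(M,K_{i}g(K_{i}^{-1}t),p_{i})$ subconverge in the $C^{\infty}$ pointed Cheeger--Gromov topology to a complete ancient solution $(M_{\infty},g_{\infty}(t),p_{\infty})$, $t\in(-\infty,0]$, with bounded curvature, ${\rm Ric}\ge 0$, $\kappa$-noncollapsed at all scales, and $|Rm_{\infty}|(p_{\infty},0)=1$.

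To produce a line in $M_{\infty}$ I would exploit the potential function directly. Let $\sigma_{i}$ be the unit-speed integral curve of $-\nabla f/|\nabla f|$ with $\sigma_{i}(0)=p_{i}$. Since $|s|\le\rho K_{i}^{-1/2}=o(r_{i})$ forces $d(O,\sigma_{i}(s))\ge r_{i}-|s|\to\infty$, the arc $\sigma_{i}([-\rho K_{i}^{-1/2},\rho K_{i}^{-1/2}])$ stays inside $\{R<\varepsilon_{i}\}$ with $\varepsilon_{i}:=\sup_{M\setminus B_{g}(O,r_{i}/2)}R\to 0$. For $a<b$ in this window one has $|f(\sigma_{i}(b))-f(\sigma_{i}(a))|=\int_{a}^{b}\sqrt{1-R(\sigma_{i})}\,du\ge (b-a)\sqrt{1-\varepsilon_{i}}$, while $|\nabla f|\le 1$ gives ${\rm length}(\tau)\ge |f(\sigma_{i}(b))-f(\sigma_{i}(a))|$ for every curve $\tau$ joining $\sigma_{i}(a)$ to $\sigma_{i}(b)$; hence the $g$-length defect of $\sigma_{i}|_{[a,b]}$ is at most $(b-a)\varepsilon_{i}\le 2\rho K_{i}^{-1/2}\varepsilon_{i}$, which after rescaling by $K_{i}$ becomes at most $2\rho\varepsilon_{i}\to 0$, whereas the rescaled length of the window is $2\rho$. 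Therefore the $\sigma_{i}$ converge to a minimizing geodesic segment of length $2\rho$ through $p_{\infty}$, and letting $\rho\to\infty$ yields a line in $M_{\infty}$. By the Cheeger--Gromoll splitting theorem and uniqueness of the Ricci flow, $(M_{\infty},g_{\infty}(t))$ is isometric to a product $(N^{3}\times\mathbb{R},h(t)+ds^{2})$, $t\in(-\infty,0]$, where $(N^{3},h(t))$ is a complete ancient solution with bounded curvature, $\kappa$-noncollapsed, and — by the Hamilton--Ivey pinching estimate in dimension three — of nonnegative sectional curvature, i.e.\ a $3$-dimensional $\kappa$-solution (with $|Rm_{N}|(p_\infty,0)=1$, so nonflat). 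This verifies Definition \ref{def-1}, and Theorem \ref{theorem-1} then delivers case (2) or case (3).

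The main obstacle is controlling the asymptotic curvature scale: proving that $|Rm|$ cannot decay quadratically or faster along an \emph{arbitrary} sequence $p_{i}\to\infty$, together with the local curvature estimate far out that this argument relies on; this is where one is forced to combine the structure theory of steady solitons (no positive asymptotic volume ratio unless Ricci flat) with $\kappa$-noncollapsing and ${\rm Ric}\ge 0$ outside $K$. Once the scales are under control, extracting the line from the potential function and the Cheeger--Gromoll/Hamilton--Ivey/Brendle identification of the $3$-dimensional factor are comparatively routine.
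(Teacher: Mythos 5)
Your overall strategy is the same as the paper's: show that, unless the soliton is Ricci flat, the rescaled flows based at \emph{every} sequence $p_i\to\infty$ subconverge to an ancient solution that splits off a line (this is exactly Theorem \ref{theo-compactness}), and then feed this into Theorem \ref{theorem-1}. Your length--defect construction of the line from the potential function (using $|\nabla f|^2=1-R$ and the uniform decay of $R$) is a nice, clean variant of the splitting arguments in the paper (Claim \ref{claim-split} and the end of the proof of Theorem \ref{theo-compactness}). However, the two inputs on which your argument pivots are precisely the hard technical core of the paper, and as written they are gaps, not citations.

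First, the step ``$K_ir_i^2\le C$ along a subsequence $\Rightarrow \AVR(g)>0$'' is not justified. The $\kappa$-noncollapsing hypothesis yields ${\rm vol}\,B(p,r)\ge\kappa r^4$ only when $|Rm|\le r^{-2}$ holds on the \emph{whole} ball $B(p,r)$; a curvature bound at the single point $p_i$ gives no volume lower bound at scale $r_i$, and Chan's estimate $|Rm|\le CR$ does not propagate a pointwise bound to a macroscopic ball. Producing curvature control on balls of radius comparable to the distance (``bounded curvature at bounded distance'') is exactly what the paper has to work for: Proposition \ref{prop-section 7}, Lemma \ref{lem-local bound}, Lemma \ref{lem-stay outside K} and the Harnack substitute Lemma \ref{lem-positive lower bound}, proved by point-picking and splitting arguments. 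The same unproved input reappears when you assert subconvergence of $(M,K_ig(K_i^{-1}t),p_i)$ ``by the local curvature estimate'': you need $R\le C(\rho)K_i$ on $B(p_i,\rho K_i^{-1/2})$, which is again Lemma \ref{lem-local bound}/Proposition \ref{prop-section 7}, not something available a priori. Note also that your claim is stronger than what the paper ever establishes directly: the paper only proves $\limsup R\,d^2=\infty$ (Lemma \ref{lem-ASR}), and then works around the absence of a lower curvature bound along arbitrary sequences via the point $q_i$ with $d(p_i,q_i)^2R(q_i)=1$ and Lemma \ref{lem-positive lower bound}.

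Second, the rigidity statement you invoke as ``structure theory'' --- a $\kappa$-noncollapsed steady gradient Ricci soliton with ${\rm Ric}\ge0$ outside a compact set and positive asymptotic volume ratio must be Ricci flat --- is not a known external fact; it is Theorem \ref{theo-maximal volume growth} of this very paper, whose proof occupies Section \ref{section-volume growth} and relies on Lemma \ref{lem-curvature fast decay}, Lemma \ref{lem-small linear decay constant}, the linear lower decay bound of Theorem \ref{theo-linear decay lower bound} (via \cite{DZ6}), and the dimension-reduction Theorem \ref{theo-dimension reduce without decay}. Assuming it, together with the bounded-curvature-at-bounded-distance step above, amounts to assuming the substance of Sections \ref{section-volume growth} and \ref{section-compactness theorem on solitons}. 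So while the skeleton of your proposal is right and the line construction is fine, the proof as proposed has a genuine gap at exactly the places you flag as the ``main obstacle''; those obstacles are the theorem's actual content. (Minor points: with the paper's conventions $f$ grows like $+C\rho$, not $-\infty$; and Definition \ref{def-1} rescales by $|Rm(p_i)|$ while the paper rescales by $R(p_i)$, which is harmless only after Chan's two-sided comparability is invoked.)
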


It turns out that it is important to study steady gradient Ricci solitons with maximal volume growth in order to prove Theorem \ref{theo-nonnegative Ricci}. We say that an $n$-dimensional steady gradient Ricci soliton $(M,g,f)$ satisfying condition (\ref{Ricci nonnegative outside K}) has \textbf{maximal volume growth}
if the \textbf{asymptotic volume ratio (AVR)}
\[
	\AVR(g) := \lim_{r\to \infty} \frac{V_x(r)}{r^n} >0,
\]
where $V_x(r) = {\rm vol}\, B(x,r)$. We will show in Lemma \ref{AVR well defined} in the appendix that the limit exists and does not depend on the basepoint $x$. For such steady gradient Ricci solitons with uniform curvature decay,
we will prove in Proposition \ref{uniform vol ratio} that if $\AVR(g)>0$, then there is
a uniform constant $c>0$ depending only
on the Ricci lower bound over $K$ 
such that
\[
	\frac{V_y(r)}{r^n} \ge c\AVR(g),\quad \forall y\in M\setminus K, \; r>0.
\]

For $4$-dimensional steady gradient Ricci solitons with maximal volume growth, we have the following rigidity theorem.

\begin{theo}\label{theo-maximal volume growth}
Let $(M,g,f)$ be a $4$-dimensional steady gradient Ricci soliton
with uniform scalar curvature decay.
If $(M,g,f)$ satisfies condition \eqref{Ricci nonnegative outside K} and has maximal volume growth, then $(M,g)$ must be Ricci flat.
\end{theo}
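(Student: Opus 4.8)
The plan is to show that a $4$-dimensional steady gradient Ricci soliton with $\operatorname{Ric}\ge 0$ outside a compact set and maximal volume growth must in fact be Ricci-flat, by producing an asymptotic cone, showing it is smooth and flat, and then using the soliton structure to rule out any curvature at all. First I would establish the basic consequences of maximal volume growth: by Lemma \ref{AVR well defined} the AVR is well defined, and by Proposition \ref{uniform vol ratio} we have a uniform lower volume ratio bound $V_y(r)/r^4\ge c>0$ for all $y$ and all $r$. Combined with $\kappa$-noncollapsing (which I expect follows from maximal volume growth here, or is part of the standing hypotheses) and the steady soliton identities $R+|\nabla f|^2=\text{const}$ and $\Delta f = -R$, together with $\operatorname{Ric}\ge 0$ far out, one knows the scalar curvature decays: indeed for steady solitons with $\operatorname{Ric}\ge 0$ outside $K$, the potential function is proper and asymptotically linear along minimizing geodesics, which forces $R\to 0$ at infinity (otherwise volume comparison would be violated, contradicting maximal volume growth). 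So we may assume uniform scalar curvature decay.

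Next I would blow down: for a sequence $r_i\to\infty$, consider $(M, r_i^{-2}g, p)$. By the uniform volume ratio bound and $\operatorname{Ric}\ge 0$ outside a set whose rescaled size shrinks to a point, Gromov compactness gives a subsequential pointed Gromov--Hausdorff limit $(C, d_\infty, o)$, a metric cone (by the Cheeger--Colding almost-rigidity theory, since the rescaled Ricci lower bounds $\to 0$ and the manifold is volume-noncollapsed and asymptotically volume-ratio constant). The maximal-volume-growth hypothesis is exactly what guarantees this limit is a genuine $4$-dimensional metric cone with positive-measure cross section, not a lower-dimensional object. On $M\setminus K$ the curvature decays (scalar curvature decay plus $\operatorname{Ric}\ge 0$, and in dimension $4$ one can upgrade to $|Rm|\to 0$ using the soliton equation and the decay of $R$ together with Shi-type estimates and the evolution of the full curvature tensor — this is where I would lean on the standard dimension-$4$ machinery for solitons), so the convergence is in fact smooth away from the vertex, and $C$ is a smooth Ricci-flat cone away from $o$. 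A smooth Ricci-flat cone on $(0,\infty)\times Y^3$ forces the link $Y^3$ to be a spherical space form $\mathbb S^3/\Gamma$ with the round metric, hence $C = \mathbb R^4/\Gamma$ and the tangent cone at infinity is flat.

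Having identified the asymptotic cone as flat $\mathbb R^4/\Gamma$, I would then argue that $(M,g)$ itself is flat, and then that it is Ricci-flat — actually once it is flat and complete with maximal volume growth it is a quotient of $\mathbb R^4$, hence Ricci-flat, so the real content is flatness. For this I would use that $(M,g)$ is a steady soliton asymptotic to a flat cone: by the work on asymptotic geometry of steady solitons (and the scalar curvature decay), the convergence to the cone is polynomial, and one can run a Cheeger--Colding/Bando--Kasue--Nakajima ALE-type argument to conclude $(M,g)$ is ALE of order determined by the geometry. An ALE steady gradient Ricci soliton: now I would invoke the integrated soliton identity. On an ALE manifold, integrating $R = \Delta f$ (and using $\operatorname{Ric}=\operatorname{Hess} f$, $\nabla R = 2\operatorname{Ric}(\nabla f)$, and the decay of $R$ and of $\nabla f$ at the ALE rate) over large geodesic balls and applying the divergence theorem, the boundary terms vanish because of the ALE decay, forcing $\int_M R\, dV = 0$; combined with $R\ge 0$ eventually and the strong maximum principle applied to the equation $\Delta R = 2|\operatorname{Ric}|^2 - 2\langle\nabla f,\nabla R\rangle \ge -2\langle\nabla f,\nabla R\rangle$ for $R$ (which for steady solitons holds globally), this yields $R\equiv 0$, hence $\operatorname{Ric}=\operatorname{Hess}f\equiv 0$ on all of $M$, so $f$ is affine and $(M,g)$ is Ricci-flat. (If $\nabla f$ is nontrivial and parallel one splits off a line, and the remaining factor is a $3$-dimensional Ricci-flat, hence flat, manifold; all cases give Ricci-flat.)

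The main obstacle I expect is the step of upgrading the Gromov--Hausdorff/Cheeger--Colding information to smooth control: establishing that $|Rm|\to 0$ at infinity (not merely $R\to 0$ with $\operatorname{Ric}\ge 0$) and that the asymptotic cone is smooth and therefore has round link $\mathbb S^3/\Gamma$, and then that $(M,g)$ is genuinely ALE with a definite decay rate so that the integration-by-parts argument closes. In dimension $4$ this should follow from $\varepsilon$-regularity (the curvature $L^2$-bound from Chern--Gauss--Bonnet combined with local volume-noncollapsing gives pointwise curvature decay where the scale-invariant energy is small), but marshalling the soliton structure to feed these inputs — in particular controlling the potential $f$ and its gradient at the ALE rate — is the delicate part, and is where I would expect to spend most of the work.
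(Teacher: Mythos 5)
Your route (blow down to an asymptotic cone, show the cone is flat, conclude $(M,g)$ is ALE, then integrate the traced soliton equation) is genuinely different from the paper's argument, but it has gaps at exactly the places where the real work lies. First, the claim that maximal volume growth forces uniform scalar curvature decay ``otherwise volume comparison would be violated'' is not an argument: nonnegative Ricci curvature together with Euclidean volume growth does not by itself force pointwise curvature decay, and the paper treats the non-decay case by a separate mechanism, namely the dimension reduction Theorem \ref{theo-dimension reduce without decay} combined with Perelman's theorem that non-flat $3$-dimensional $\kappa$-solutions have zero asymptotic volume ratio. Second, and more seriously, smooth convergence of the blow-downs $(M,r_i^{-2}g,p)$ away from the vertex requires scale-invariant curvature decay $|Rm|=O(d^{-2})$ (indeed $R=o(d^{-2})$ if the limit cone is to be Ricci-flat), and this is precisely the heart of the theorem: a priori a non-Ricci-flat soliton could satisfy $R\sim c/\rho$, in which case the rescaled curvatures blow up and there is no smooth cone at all. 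Your proposed source of this decay --- a Chern--Gauss--Bonnet $L^2$ curvature bound plus $\varepsilon$-regularity --- is circular for a noncompact soliton not yet known to be ALE, since the global $L^2$ bound itself requires controlling boundary terms, i.e.\ the asymptotics you are trying to establish. The paper instead produces the decay quantitatively: rescaling at the maximum of $R$ over level sets of $f$, using the uniform volume ratio of Proposition \ref{uniform vol ratio}, the splitting off of a line, and the vanishing AVR of $3$-dimensional $\kappa$-solutions, it proves $R\le 4\epsilon/f$ for every $\epsilon>0$ (Lemmas \ref{lem-curvature fast decay} and \ref{lem-small linear decay constant}), and then contradicts the lower bound $R\ge c_1/\rho$ valid for any non-Ricci-flat $\kappa$-noncollapsed steady soliton with linear curvature decay (Theorem \ref{theo-linear decay lower bound}, from [DZ6]).

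Third, your closing integration by parts fails as stated: on a non-Ricci-flat steady soliton with decaying scalar curvature one has $|\nabla f|^2=R_{\max}-R\to R_{\max}>0$, so $\nabla f$ does not decay at any ALE rate; the boundary term $\int_{\partial B_r}\langle\nabla f,\nu\rangle$ in $\int_{B_r}R=\int_{\partial B_r}\langle\nabla f,\nu\rangle$ grows like $r^{3}$, and the conclusion $\int_M R\,dV=0$ does not follow. One could try to salvage a contradiction by comparing the growth rates of the two sides, but that requires the precise asymptotic radiality and normalization of $\nabla f$ on an ALE end, which again depends on the asymptotic analysis skipped in the second step. So while the cone/ALE strategy is attractive, as written it assumes (smooth flat blow-downs, ALE structure, decaying $\nabla f$) essentially what must be proved, whereas the paper's two-case argument (non-decay handled by dimension reduction; decay handled by the $\epsilon/f$ upper bound versus the $c_1/\rho$ lower bound) closes these loops.
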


 It is interesting to know whether $n$-dimensional steady gradient Ricci solitons should be Ricci flat if their volume growth satisfies
 \begin{align}
     {\rm vol}\,B(x_0,r_i)\ge cr_i^n,\notag
      \end{align}
 where $c$ is a positive constant and  $x_0$ is a fixed point on $M$. Moreover, $r_i$ is a sequence of positive constants such that $r_i\to\infty$ as $i\to\infty$.

With the help of  Theorem \ref{theo-maximal volume growth}, we are able to show the following convergence result.
\begin{theo}\label{theo-compactness}
Let $(M^4,g,f)$ be a steady gradient Ricci soliton which is not Ricci flat and satisfies the hypotheses of Theorem \ref{theo-nonnegative Ricci}. Then, for any $p_i$ tending to infinity, $\big(M, R(p_i)g(R(p_i)^{-1}t),p_i\big)$ converges subsequentially to a complete limit $(M_{\infty},g_{\infty}(t),p_{\infty})$. Moreover, $(M_{\infty},g_{\infty}(t),p_{\infty})$ has uniformly bounded curvature and  splits off a line.
\end{theo}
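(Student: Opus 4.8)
The plan is to work with the rescaled flows $g_{i}(t):=R(p_{i})\,g(R(p_{i})^{-1}t)$, to establish uniform curvature and injectivity-radius bounds for them on parabolic neighbourhoods of $p_{i}$, to extract a limit by Hamilton's compactness theorem, and to produce the line from a rescaled potential function.

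Normalize $f$ so that $R+|\nabla f|^{2}\equiv 1$; if this constant were $0$, then $R\equiv 0$, $f$ is constant, and $(M,g)$ is Ricci flat, which is excluded. Since $(M,g,f)$ is not Ricci flat, Theorem~\ref{theo-maximal volume growth} gives $\AVR(g)=0$. We will use $R\ge 0$ and the steady-soliton identities $|\nabla R|=2|\mathrm{Ric}(\nabla f)|$ and $\Delta_{f}R=-2|\mathrm{Ric}|^{2}$; on $M\setminus K$, $\mathrm{Ric}\ge 0$ yields $|\mathrm{Ric}|\le \mathrm{tr}\,\mathrm{Ric}=R$, hence $|\nabla R|\le 2R$, and $R$ is monotone along the trajectories of $-\nabla f$, so the backward $\phi$-trajectory of $p_{i}$ runs outward to infinity with $R\le R(p_{i})$ along it. Since $|\nabla f|\le 1$ while $|\nabla f|^{2}=1-R\to 1$ at infinity, $f$ is proper with $f(x)=d(x,o)(1+o(1))$, so $\varepsilon(r):=\sup_{\{f\ge r\}}R\to 0$ as $r\to\infty$; moreover the rigidity behind Theorem~\ref{theo-maximal volume growth} forces $R(x)d(x,o)^{2}\to\infty$ (otherwise a blow-down would converge to a nonflat $\kappa$-noncollapsed metric cone of zero asymptotic volume ratio, which is impossible), so that for each $A$ the balls $B_{g}(p_{i},AR(p_{i})^{-1/2})$ lie deep in the region where $R$ is small, and $K$ recedes to infinity after rescaling.

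The heart of the proof is the \emph{uniform curvature estimate}: for every $A>0$ there exist $C(A)<\infty$ and $i_{0}$ with $|Rm|_{g}\le C(A)R(p_{i})$ on $B_{g}(p_{i},AR(p_{i})^{-1/2})$ for $i\ge i_{0}$, from which, using the local derivative estimates together with the monotonicity of $R$ along the potential flow, one deduces the analogous bound on the backward parabolic neighbourhood of $p_{i}$ for the flow $g_{i}$. I would prove the spatial estimate by contradiction with Perelman's point-selection: a failure yields space-time points $(q_{i},\tau_{i})$, $q_{i}\to\infty$, with $Q_{i}:=|Rm|(q_{i},\tau_{i})\to\infty$, $Q_{i}/R(p_{i})\to\infty$, and $|Rm|\le 4Q_{i}$ on suitable parabolic neighbourhoods; by scale-invariance of $\kappa$-noncollapsing, $Q_{i}\,g(Q_{i}^{-1}t+\tau_{i})$ subconverges to a complete, $\kappa$-noncollapsed ancient solution $(N^{4},h(t),q_{\infty})$ of bounded curvature with $|Rm|_{h}(q_{\infty},0)=1$ and $\mathrm{Ric}_{h}\ge 0$ (the set $K$ having receded); since rescaling by $Q_{i}$ turns $|\nabla f|^{2}$ into $Q_{i}^{-1}(1-R)\to 0$, the limiting potential is constant and $h$ is a static Ricci-flat metric. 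Thus a nonflat, Ricci-flat, $\kappa$-noncollapsed $4$-manifold of bounded curvature would bubble off at a scale $Q_{i}^{-1/2}\ll R(p_{i})^{-1/2}$ out near infinity of $(M,g)$; one rules this out by appealing to the structure of such Ricci-flat $4$-manifolds (they are ALE, hence of positive asymptotic volume ratio) and to $\AVR(g)=0$, via a volume-comparison argument showing that the Euclidean volume carried by the bubble is incompatible with $\AVR(g)=0$. I expect this no-bubbling step, together with its propagation across the long backward time-interval, to be the main obstacle; it is precisely here that $\kappa$-noncollapsing and the reduction $\AVR(g)=0$ are indispensable.

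Granting the curvature estimate, $g_{i}$ has curvature $\le C(A)$ and is $\kappa$-noncollapsed on $B_{g_{i}(0)}(p_{i},A)\times[-\tau(A),0]$, so $\mathrm{inj}_{g_{i}(0)}(p_{i})$ is uniformly bounded below; Hamilton's compactness theorem then yields a subsequential smooth pointed limit $(M_{\infty},g_{\infty}(t),p_{\infty})$, a complete ancient solution of uniformly bounded curvature with $R_{g_{\infty}}(p_{\infty},0)=1$ (hence nonflat) and $\mathrm{Ric}_{g_{\infty}}\ge 0$ everywhere, since $K$ recedes. To split off the line, set $\tilde f_{i}:=\sqrt{R(p_{i})}\,(f-f(p_{i}))$; because the Hessian and the Levi-Civita connection are unchanged by scaling,
\[
\mathrm{Hess}_{g_{i}(t)}\tilde f_{i}=\sqrt{R(p_{i})}\,\mathrm{Ric}_{g_{i}(t)},\qquad \bigl|\nabla^{g_{i}(t)}\tilde f_{i}\bigr|^{2}_{g_{i}(t)}=1-R .
\]
The curvature estimate bounds $\mathrm{Ric}_{g_{i}}$, so $\mathrm{Hess}_{g_{i}}\tilde f_{i}\to 0$, while on the relevant regions $|\nabla^{g_{i}}\tilde f_{i}|\to 1$. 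Since $\tilde f_{i}(p_{i})=0$ and $|\nabla^{g_{i}}\tilde f_{i}|\le 1$, a further subsequence (with higher-order convergence from Shi's estimates) gives $\tilde f_{i}\to\tilde f_{\infty}$ with $\mathrm{Hess}_{g_{\infty}(t)}\tilde f_{\infty}\equiv 0$ and $|\nabla\tilde f_{\infty}|\equiv 1$ for every $t\le 0$. Hence $\nabla\tilde f_{\infty}(\cdot,t)$ is a globally defined parallel unit vector field whose level sets are complete totally geodesic hypersurfaces, and integrating it exhibits $(M_{\infty},g_{\infty}(t))=(N^{3},g_{N}(t))\times\mathbb{R}$, which is the assertion.
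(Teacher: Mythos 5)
Your overall skeleton (rescale by $R(p_i)$, prove a ``bounded curvature at bounded distance'' estimate, apply Hamilton compactness, then split using $\tilde f_i=\sqrt{R(p_i)}(f-f(p_i))$, whose Hessian is $O(\sqrt{R(p_i)})$ while $|\nabla\tilde f_i|\to 1$) is the same as the paper's, and the final splitting step is essentially the paper's Claim \ref{claim-split}. The genuine gap is in the step you yourself flag as the main obstacle: your no-bubbling argument for the curvature estimate does not work as described. First, in this setting the curvature is globally bounded ($R\le R_{\max}$ by \eqref{identity} and $|Rm|\le CR$ by \cite{Chan}), so your selected scales satisfy $Q_i=|Rm|(q_i,\tau_i)\not\to\infty$; what blows up is only the ratio $Q_i/R(p_i)$, and typically $Q_i\to 0$ since $q_i\to\infty$ and $R$ decays. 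Consequently $Q_i^{-1}(1-R)$ does \emph{not} tend to $0$, and the deduction that the limiting potential is constant and the bubble is a static Ricci-flat metric is false. With the correct normalization ($Q_i^{1/2}(f-f(q_i))$) the blow-up limit instead acquires a parallel unit vector field and splits as (a $3$-dimensional $\kappa$-solution)$\times\mathbb{R}$; the contradiction then has to come from volume ratios ($3$-dimensional $\kappa$-solutions have zero asymptotic volume ratio), which requires a definite volume lower bound ${\rm vol}\,B\ge\gamma r^4$ at the contradiction scale as an input --- exactly the hypothesis of the paper's Proposition \ref{prop-section 7} --- and your sketch provides no mechanism to supply it. Second, your fallback contradiction ``Ricci-flat $\Rightarrow$ ALE $\Rightarrow$ positive AVR, incompatible with $\AVR(g)=0$'' fails on both ends: Ricci-flatness plus $\kappa$-noncollapsing in the $|Rm|$-sense does not give ALE or positive AVR (the ALE conclusion in \cite{CN} needs maximal volume growth, which here comes only from the \emph{strong} noncollapsing of singularity models, not from the hypotheses of Theorem \ref{theo-nonnegative Ricci}); and even a genuinely Euclidean bubble at scale $Q_i^{-1/2}$, which is infinitesimal compared with $d(p_i,o)$, says nothing about the asymptotic volume ratio of $(M,g)$, so it cannot contradict $\AVR(g)=0$.

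There are two further unsupported assertions on which your setup leans. You claim $R(x)d(x,o)^2\to\infty$, but Lemma \ref{lem-ASR} (and what the rigidity of Theorem \ref{theo-maximal volume growth} yields) is only the $\limsup$ statement $\mathcal{R}(M,g)=\infty$; the liminf version along an arbitrary sequence is not established, so ``$K$ recedes after rescaling'' is not automatic. In the paper this is Lemma \ref{lem-stay outside K}, whose proof needs the curvature bound of Lemma \ref{lem-local bound} first (to get the $\kappa$-noncollapsing volume bound at scale $\sqrt{R^{-1}(q_i)}$) and then Theorem \ref{theo-maximal volume growth}; invoking it before any curvature control is circular. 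Finally, you do not address the absence of a Harnack inequality: the paper's route goes through auxiliary points $q_i$ with $d_{g_i(0)}(p_i,q_i)^2R_{g_i(0)}(q_i)=1$ (possible because $\mathcal{R}=\infty$), proves the curvature bound near $q_i$ (Lemma \ref{lem-local bound}), and transfers it back to $p_i$ via the maximum-principle substitute Lemma \ref{lem-positive lower bound}, before bootstrapping to all scales with Bishop--Gromov and Proposition \ref{prop-section 7}. Some replacement for this chain is needed in any correct proof, and it is missing from your proposal.
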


Theorem \ref{theo-nonnegative Ricci} is then a corollary of Theorem \ref{theorem-1} and Theorem \ref{theo-compactness}.  Combining Theorem \ref{theo-nonnegative Ricci} with Theorem \ref{theo-dimension reduce without decay}, we obtain:
\begin{theo}\label{theo-nonnegative Ricci-bounded curvature}
Let $(M,g,f)$ be a $4$-dimensional $\kappa$-noncollapsed steady gradient Ricci soliton with bounded curvature. If it satisfies \eqref{Ricci nonnegative outside K}, then one of the following holds:

(1) $(M,g,f)$ is Ricci flat;

(2) $(M,g,f)$ strongly dimension reduces to $\mathbb{S}^3/\Gamma$;

(3) $(M,g,f)$ weakly dimension reduces to the Bryant $3$-soliton.
\end{theo}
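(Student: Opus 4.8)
The plan is to deduce Theorem~\ref{theo-nonnegative Ricci-bounded curvature} as a formal consequence of the two results that bracket it in the development, namely Theorem~\ref{theo-dimension reduce without decay} and Theorem~\ref{theo-nonnegative Ricci}. The logical structure is a dichotomy on the asymptotic behavior of the scalar curvature $R$ of $(M^4,g,f)$. Either $R$ has uniform decay, meaning $R(x)\to 0$ as $x\to\infty$, or it does not. These two cases cover all possibilities, so it suffices to reach one of the three stated conclusions in each case; this is the only nontrivial step, and it is not really an obstacle so much as a bookkeeping argument.

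First I would dispose of the case where $R$ does \emph{not} have uniform scalar curvature decay. Here the hypotheses of Theorem~\ref{theo-dimension reduce without decay} are met: $(M,g,f)$ is an $n$-dimensional ($n=4$) steady gradient Ricci soliton with bounded curvature satisfying \eqref{Ricci nonnegative outside K} and lacking uniform scalar curvature decay. That theorem gives that $(M,g,f)$ weakly dimension reduces to a $3$-dimensional steady gradient Ricci soliton $(N^3,g_N(t))$. Now $(M,g,f)$ is $\kappa$-noncollapsed by hypothesis, so the rescaled limit $(N\times\RR, g_N(t)+ds^2)$ inherits $\kappa$-noncollapsing, hence $(N^3,g_N)$ is a $3$-dimensional $\kappa$-noncollapsed steady gradient Ricci soliton. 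By the classification of $3$-dimensional $\kappa$-noncollapsed steady gradient Ricci solitons (any such soliton of positive curvature is the Bryant soliton, and in the nonpositive or non-strictly-positive case one gets a flat product, which is excluded because the dimension reduction is nonflat—recall the observation after Definition~1.2 that $|Rm_N|(p_\infty,0)=1$), $(N^3,g_N)$ must be the Bryant $3$-soliton. Hence $(M,g,f)$ weakly dimension reduces to the Bryant $3$-soliton, which is alternative (3). (One must also check the $3$-dimensional steady soliton produced cannot be, say, $\mathbb{S}^2\times\RR$ as a steady soliton—but the only $3$-dimensional complete steady gradient Ricci solitons that are $\kappa$-noncollapsed and nonflat is the Bryant soliton, by Brendle's earlier work in dimension $3$, so this is automatic.)

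Second, in the case where $R$ \emph{does} have uniform scalar curvature decay, the hypotheses of Theorem~\ref{theo-nonnegative Ricci} are exactly met: $(M^4,g,f)$ is a $4$-dimensional $\kappa$-noncollapsed steady gradient Ricci soliton with uniform scalar curvature decay satisfying \eqref{Ricci nonnegative outside K}. That theorem immediately yields one of: (1) $(M,g,f)$ is Ricci flat; (2) it strongly dimension reduces to $\mathbb{S}^3/\Gamma$; (3) it weakly dimension reduces to the Bryant $3$-soliton. These are precisely the three alternatives in the statement to be proved, so we are done in this case. Combining the two cases, every $4$-dimensional $\kappa$-noncollapsed steady gradient Ricci soliton with bounded curvature satisfying \eqref{Ricci nonnegative outside K} falls into one of (1)--(3).

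The part that requires the most care—and the place where a reader would want detail—is verifying that the $3$-dimensional steady gradient Ricci soliton $(N^3,g_N)$ coming out of Theorem~\ref{theo-dimension reduce without decay} is genuinely the Bryant $3$-soliton and not some other steady soliton, i.e. that $\kappa$-noncollapsing passes to the limit and that the $3$-dimensional classification applies. The $\kappa$-noncollapsing of the limit follows since Cheeger--Gromov convergence preserves the noncollapsing constant on the scale where curvature is controlled, and the product structure $N\times\RR$ then forces $N$ itself to be $\kappa'$-noncollapsed for a dimension-adjusted constant. The $3$-dimensional classification of complete noncompact $\kappa$-noncollapsed nonflat steady gradient Ricci solitons as the Bryant soliton is standard (Brendle). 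Everything else is a direct appeal to the cited theorems, so no genuinely new obstacle arises here.
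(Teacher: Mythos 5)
Your proposal is correct and follows essentially the same route as the paper, which obtains this theorem precisely by splitting into the two cases of uniform scalar curvature decay (apply Theorem \ref{theo-nonnegative Ricci}) and no uniform decay (apply Theorem \ref{theo-dimension reduce without decay}). Your extra step identifying the resulting $3$-dimensional steady soliton as the Bryant soliton — via nonflatness of the limit, inheritance of $\kappa$-noncollapsing under Cheeger--Gromov convergence, and Brendle's $3$-dimensional classification — is exactly the detail the paper leaves implicit in its one-line deduction, and it is carried out correctly.
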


When the steady gradient Ricci solitons in Theorem \ref{theo-nonnegative Ricci-bounded curvature}  are K\"{a}hler-Ricci solitons, we have:

\begin{theo}\label{theo-Kaehler-1}  
$\kappa$-noncollapsed steady gradient K\"{a}hler-Ricci solitons with bounded curvature on complex surfaces must be Ricci flat if they satisfy \eqref{Ricci nonnegative outside K}.
\end{theo}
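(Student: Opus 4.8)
The plan is to deduce Theorem \ref{theo-Kaehler-1} directly from Theorem \ref{theo-nonnegative Ricci-bounded curvature} by ruling out alternatives (2) and (3), so that only the Ricci flat case survives. So suppose $(M,g,f)$ is a $\kappa$-noncollapsed steady gradient K\"{a}hler-Ricci soliton with bounded curvature on a complex surface satisfying \eqref{Ricci nonnegative outside K}. By Theorem \ref{theo-nonnegative Ricci-bounded curvature}, if $(M,g,f)$ is not Ricci flat, then it either strongly dimension reduces to $\mathbb{S}^3/\Gamma$ or weakly dimension reduces to the Bryant $3$-soliton. In either case $(M,g,f)$ weakly dimension reduces to some nonflat $3$-manifold, so there is a sequence $p_i\to\infty$ with $(M,K_ig(K_i^{-1}t),p_i)$, $K_i=|Rm|(p_i)$, converging to a splitting limit $(N^3\times\mathbb{R},g_N(t)+ds^2,p_\infty)$ with $|Rm_N|(p_\infty,0)=1$.

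The key point is that the limit of a sequence of rescalings of a K\"{a}hler manifold carries a parallel complex structure, hence is K\"{a}hler, and a K\"{a}hler $4$-manifold cannot split off a line: if $(X^4,h)=(Y^3\times\mathbb{R},h_Y+ds^2)$ is K\"{a}hler, then the $\mathbb{R}$-factor gives a parallel vector field, and applying the parallel complex structure $J$ to it produces a second parallel unit vector field; together these span a parallel (hence flat, totally geodesic) complex line, forcing $Y$ to further split as $Z^2\times\mathbb{R}$ with $Z^2$ flat, so $X$ is flat — contradicting $|Rm_N|(p_\infty,0)=1$. (Equivalently, one invokes the de Rham decomposition: a $4$-dimensional K\"{a}hler manifold whose universal cover splits off a $\mathbb{R}$-factor must be flat.) Therefore cases (2) and (3) are impossible, and $(M,g,f)$ must be Ricci flat.

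Concretely, I would carry out: (i) invoke Theorem \ref{theo-nonnegative Ricci-bounded curvature} to get the trichotomy; (ii) observe that both non-Ricci-flat alternatives entail weak dimension reduction to a nonflat $3$-dimensional Ricci flow, hence a nontrivial line-splitting in a Cheeger--Gromov limit; (iii) check that the complex structure passes to the limit — since $\nabla J=0$ for K\"{a}hler metrics and this is a pointwise tensorial bound stable under $C^\infty$ Cheeger--Gromov convergence of the rescaled metrics, the limit $N^3\times\mathbb{R}$ is a K\"{a}hler $4$-manifold (after checking orientation/covering issues, possibly passing to a double cover, which does not affect flatness); (iv) derive the contradiction via the parallel-vector-field argument above; (v) conclude Ricci flatness.

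The main obstacle is step (iii): one must make sure the complex structure survives the limit even though the convergence is only up to diffeomorphisms and the base points and subsequence are not canonical. Since $J$ is parallel and uniformly bounded, standard Cheeger--Gromov compactness arguments (as used already in Theorem \ref{theo-compactness}) give subsequential convergence $J_i\to J_\infty$ with $\nabla^{g_\infty}J_\infty=0$; the only subtlety is a possible need to pass to an orientation double cover of $N$ so that the limiting $4$-manifold is genuinely K\"{a}hler rather than merely locally K\"{a}hler, but this is harmless for the flatness conclusion. Once the limit is known to be a K\"{a}hler $4$-manifold that splits off a line, the rigidity is immediate from de Rham, giving the desired contradiction.
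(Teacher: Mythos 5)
Your steps (i)--(iii) --- invoking Theorem \ref{theo-nonnegative Ricci-bounded curvature}, observing that both non-Ricci-flat alternatives give a splitting Cheeger--Gromov limit $N^3\times\mathbb{R}$, and passing the parallel complex structure to the limit so that $J_\infty V$ is a second parallel vector field --- are exactly the paper's argument. The gap is in step (iv): the rigidity statement you rely on, that a K\"ahler $4$-manifold whose (universal cover) splits off an $\mathbb{R}$-factor must be flat, is false. The two parallel fields $V$ and $J_\infty V$ only force the limit to split off a complex line, i.e. $N\times\mathbb{R}\cong Z^2\times\mathbb{R}^2$ with $Z^2$ some K\"ahler surface; nothing forces $Z^2$ to be flat. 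Indeed $\mathbb{S}^2\times\mathbb{R}^2=\mathbb{CP}^1\times\mathbb{C}$, the shrinking round sphere times the flat plane, is a non-flat K\"ahler ancient $\kappa$-solution that splits off a line, so your contradiction with $|Rm_N|(p_\infty,0)=1$ does not materialize --- this is precisely the case that still has to be excluded, and your de Rham appeal silently skips it.

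The paper closes that case by a different comparison: the $2$-dimensional factor is an oriented $2$-dimensional $\kappa$-solution, hence the family of shrinking round spheres, so the limit would have to be a quotient of $\mathbb{S}^2\times\mathbb{R}^2$ with the product flow. On the other hand, Theorem \ref{theo-nonnegative Ricci-bounded curvature} says the $3$-dimensional factor $N$ is either $\mathbb{S}^3/\Gamma$ or the Bryant $3$-soliton (diffeomorphic to $\mathbb{R}^3$), and in neither case can $N\times\mathbb{R}$ be such a quotient: the universal covers $\mathbb{S}^3\times\mathbb{R}$ and $\mathbb{R}^4$ have trivial $\pi_2$ while $\mathbb{S}^2\times\mathbb{R}^2$ does not (equivalently, neither the round flow on $\mathbb{S}^3/\Gamma$ nor the Bryant soliton splits off a further line). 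This topological/geometric mismatch, not flatness of the limit, is what yields the contradiction; with that step substituted for your flatness claim, the rest of your outline (including the convergence of $J$ under Cheeger--Gromov limits) goes through as in the paper.
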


Theorem \ref{theo-singularity model} is in fact a direct corollary of Theorem \ref{theo-nonnegative Ricci-bounded curvature} by Theorem 1 in \cite{CFSZ}, Perelman's no local collapsing theorem and the work of Cheeger and Naber \cite[Corollary 8.86]{CN}. Similarly, Theorem \ref{theo-Kaehler-2} is a direct corollary of Theorem \ref{theo-Kaehler-1}.

\begin{conj}
If $(M,g,f)$ is a $4$-dimensional steady gradient Ricci soliton singularity model, then it dimension reduces to $3$-manifolds.
\end{conj}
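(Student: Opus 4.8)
\noindent\emph{Sketch of a possible approach.}
The plan is to reduce the conjecture to a splitting statement at infinity and to establish that statement by a point-picking blow-up together with a rigidity argument, in the spirit of the proofs of Theorems \ref{theorem-1} and \ref{theo-compactness} but without the hypothesis \eqref{Ricci nonnegative outside K}. Since a singularity model is an ancient solution, $R\ge 0$ on $M$, and by the strong maximum principle either $(M,g)$ is Ricci flat or $R>0$ everywhere. The Ricci flat case is genuinely exceptional --- a Ricci flat ALE $4$-manifold such as Eguchi--Hanson does \emph{not} dimension reduce to $3$-manifolds, since rescaling at the curvature scale at infinity produces the flat cone $\RR^4/\Gamma$ rather than a product $N^3\times\RR$ --- so this case should be read as excluded, or one must separately argue that such spaces do not occur as singularity models. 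Assuming $(M^4,g,f)$ is not Ricci flat, we normalise the steady identity to $R+|\nabla f|^2\equiv 1$, so $0<R<1$ and $|\nabla f|\le 1$; by \cite{CFSZ} and Perelman's no local collapsing theorem, $(M^4,g)$ has bounded curvature and is strongly $\kappa$-noncollapsed. With this in hand, ``dimension reduces to $3$-manifolds'' is equivalent to showing: for every $p_i\to\infty$, a subsequence of $(M,K_ig(K_i^{-1}t),p_i)$ with $K_i=|Rm(p_i)|$ converges in the $C^\infty_{\mathrm{loc}}$ Cheeger--Gromov sense to a complete, nonflat, bounded-curvature ancient Ricci flow that splits off a line, since any such limit is automatically $N^3\times\RR$ with $N^3$ a $3$-dimensional complete ancient Ricci flow of bounded curvature.

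The first task is \textbf{existence of the blow-up limit with curvature bounded at the curvature scale}. If $K_i$ stays bounded below, the global curvature bound and $\kappa$-noncollapsing give the limit at once. The difficult case is $K_i\to0$ (scalar curvature decay), where one must preclude $|Rm|$ spiking at a scale $\ll K_i^{-1/2}$ near $p_i$. Here I would run a Perelman--Hamilton point-picking argument: from the failure of a uniform estimate $\sup_{B_g(p_i,AK_i^{-1/2})}|Rm|\le C(A)K_i$ extract a diagonal blow-up that is a complete, nonflat, strongly $\kappa$-noncollapsed ancient solution in dimension $4$ carrying the trace Harnack inequality and $R\ge0$, and derive a contradiction using the structure and compactness theory for $4$-dimensional Ricci flows (Bamler; Cheeger--Naber \cite{CN}) together with Perelman-type gradient estimates $|\nabla R|\le\eta R^{3/2}$, $|\partial_t R|\le\eta R^2$ on the model. \textbf{I expect this to be the principal obstacle}: there is no classification of $4$-dimensional ancient solutions and no a priori pinching $|Rm|\le CR$ on a general steady $4$-soliton, and this is the essential gap that keeps the statement a conjecture.

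Granting the blow-up limit $(M_\infty,g_\infty(t),p_\infty)$, the second task is to \textbf{produce a line}. I would choose the $p_i$ on a fixed minimising geodesic ray $\sigma$ from a base point $o$; reparametrising $\sigma$ near $p_i$ by $K_i^{1/2}$ gives a minimising segment through $p_\infty$ whose two halves have lengths $K_i^{1/2}d(o,p_i)$ and $+\infty$. If $K_i^{1/2}d(o,p_i)\to\infty$ the limit contains a line, whence $M_\infty=N^3\times\RR$ by a splitting theorem --- Cheeger--Gromoll on the time-$0$ slice after verifying $\mathrm{Ric}\ge0$ there, or Hamilton's Ricci-flow splitting --- and checking that the limit carries enough curvature positivity to run this is itself part of the work. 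The alternative $K_i^{1/2}d(o,p_i)\not\to\infty$ is the fast-decay regime; one would show, via \eqref{Ricci nonnegative outside K}-free versions of the arguments behind Theorems \ref{theo-maximal volume growth} and \ref{theorem-1}, that $(M,g)$ is then forced to be Ricci flat ALE, so that --- outside the exceptional case above --- it does not arise. Finally, to pass from ``some sequence dimension reduces'' to ``every sequence dimension reduces'' one argues as in Theorem \ref{theorem-1}: once one splitting (or one neck) appears at infinity, $\kappa$-noncollapsing and the soliton equation propagate a uniform, line-splitting cylindrical structure over the whole end, so the limit $N^3\times\RR$ appears for every choice of $\{p_i\}$, which is the conjecture.
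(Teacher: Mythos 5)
The statement you were given is not proved in the paper at all: it appears there verbatim as a \emph{conjecture}, so there is no proof of the authors' to compare yours against, and your sketch does not close it either. The decisive gap is the one you yourself flag: the compactness/blow-up step. Everything the paper can actually prove in this direction (Theorem \ref{theo-nonnegative Ricci-bounded curvature}, Theorem \ref{theo-singularity model}) is obtained \emph{only} under the extra hypothesis \eqref{Ricci nonnegative outside K}, because the whole chain of arguments --- monotonicity of $R$ along the flow lines $\phi_t$, Bishop--Gromov comparison, the volume rigidity of Theorem \ref{theo-maximal volume growth}, the curvature-at-bounded-distance estimates of Proposition \ref{prop-section 7} and Lemmas \ref{lem-local bound}--\ref{lem-positive lower bound}, and the splitting in Claim \ref{claim-split} and Theorem \ref{theo-compactness} --- uses ${\rm Ric}\ge 0$ outside a compact set in an essential way. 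Your proposed substitute (Bamler/Cheeger--Naber structure theory plus Perelman-type estimates $|\nabla R|\le\eta R^{3/2}$, $|\partial_t R|\le \eta R^2$ and a trace Harnack inequality) is not available here: in dimension $4$ the Harnack inequality requires nonnegative curvature operator, there is no classification of ancient solutions, and the pinching $|Rm|\le CR$ from \cite{Chan} that the paper invokes is itself conditional on uniform scalar curvature decay. Likewise, even granting a blow-up limit, producing a \emph{split} line needs ${\rm Ric}\ge 0$ on the limit (Cheeger--Gromoll) or nonnegative curvature operator (Hamilton); Chen's result \cite{Ch} supplies curvature nonnegativity only for limits that are already products with a $3$-dimensional factor, so ``checking that the limit carries enough curvature positivity'' is not a verification but the same missing ingredient. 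Finally, dimension reduction in Definition \ref{def-1} demands the splitting for \emph{every} sequence $p_i\to\infty$, not just for points on a fixed ray, and your closing appeal to Theorem \ref{theorem-1} to upgrade ``some sequence'' to ``every sequence'' is circular: that theorem takes dimension reduction as a hypothesis, it does not produce it.

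One point in your sketch is genuinely well taken and worth keeping: a Ricci-flat ALE $4$-manifold does not dimension reduce to $3$-manifolds (the rescaling at the curvature scale degenerates toward the flat cone $\mathbb{R}^4/\Gamma$ rather than a nonflat $N^3\times\mathbb{R}$), and the paper's own Theorem \ref{theo-singularity model} retains exactly this Ricci-flat ALE alternative. So the conjecture as stated implicitly presupposes that Ricci-flat ALE $4$-manifolds do not arise as singularity models, which is itself a well-known open problem; any honest attack on the conjecture must either prove that or build the exclusion of the Ricci-flat case into the statement, as you suggest.
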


Regarding Ricci flow analysis, compactness theory, and singularity models, particularly striking are the recent breakthroughs of Bamler \cite{Bam1,Bam2,Bam3}.
In \cite{BCDMZ} by Bamler, Zhang, and the three authors, Bamler's theory is applied to obtain results regarding the tangent flows at infinity of $4$-dimensional steady soliton singularity models.

This paper is organized as follows. In Section \ref{section 3}, we study the linear growth of the potential function. In Section \ref{section-linear decay}, we study the linear curvature decay of steady gradient Ricci solitons. Sections \ref{section-AG}, \ref{section-Uniqueness of limit}, \ref{section-limit soliton is Bryant} are devoted to proving Theorem \ref{theorem-1}. Theorem \ref{theo-maximal volume growth} is proved in Section \ref{section-volume growth}. In Section \ref{section-compactness theorem on solitons}, we prove Theorem \ref{theo-compactness}. Theorem \ref{theo-dimension reduce without decay} is proved in Section \ref{section-dimension reduction}. Theorem \ref{theo-singularity model}, Theorem \ref{theo-Kaehler-1} and Theorem \ref{theo-Kaehler-2} will be proved in Section \ref{final section}.\smallskip

\noindent\textbf{Acknowledgment.} The third-named author would like to thank Professors Lei Ni and Peng Lu for helpful discussions.
We would like to thank the anonymous referee for many helpful suggestions which improved the paper, including pointing out a mistake in one of the proofs. 


\section{Linear growth of the potential function}\label{section 3}
The growth of the potential function $f$ is important for studying the rotational symmetry of steady gradient Ricci solitons (see \cite{Br1,Br2}, \cite{DZ5}). It is
known
that $f$ grows linearly when the Ricci curvature is nonnegative and there exists an equilibrium point $o$ of $f$ on $M$, i.e., $\nabla f(o)=0$ (see \cite{CaCh}, \cite{CaNi}). Let $\phi_t$ be the $1$-parameter group of diffeomorphisms generated by $-\nabla f$. Then the nonnegativity of the Ricci curvature implies that $\frac{\rm d}{{\rm d} t}R(\phi_t(p))\ge 0$. In this section we will show that $f$ grows linearly under the assumption that $\frac{\rm d}{{\rm d} t}R(\phi_t(p))\ge 0$ outside some compact set $K$ and that the scalar curvature decays uniformly. Precisely, we have:

\begin{theo}\label{theo-linear of f}
Let $(M^n,g,f)$ be a non-Ricci-flat steady gradient Ricci soliton. Suppose that the scalar curvature decays uniformly and
\begin{align}\label{eq: dRdt geq 0 on M minus K}
\left.\frac{\rm d}{{\rm d} t}\right|_{t=0}R(\phi_t(p))\ge 0 ~\text{ for all }p\in M\setminus K,
\end{align}
where $K$ is a compact subset of $M$.
Then there exist
positive
constants $r_0$, $C_1$ and $C_2$ such that
\begin{align}
C_1\rho(x)\le f(x)\le C_2\rho(x) ~\text{ for all }x\in M\text{ such that }\rho(x)\ge r_0,
\end{align}
where $\rho(x)$ is the distance function from a fixed point $x_0\in M$.
That is, the potential function is uniformly equivalent to the distance (to a fixed point) function.
\end{theo}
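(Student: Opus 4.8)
The starting point is the two basic soliton identities $R+|\nabla f|^2 = C_0$ (a constant, normalized so that $C_0 = \sup R$ when it is attained) and $\Delta f = -R \le 0$ — wait, we don't know $R \ge 0$ globally here, only the weaker monotonicity hypothesis \eqref{eq: dRdt geq 0 on M minus K}. So the plan has to route around the lack of global $R \ge 0$. First I would record that $\frac{d}{dt}R(\phi_t(p)) = \langle \nabla R, -\nabla f\rangle = -2\,\mathrm{Ric}(\nabla f,\nabla f)$ by the contracted second Bianchi/soliton identity $\nabla R = 2\,\mathrm{Ric}(\nabla f,\cdot)$, so the hypothesis \eqref{eq: dRdt geq 0 on M minus K} says exactly $\mathrm{Ric}(\nabla f,\nabla f) \le 0$ on $M\setminus K$. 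Combined with $R + |\nabla f|^2 = C_0$, this gives $\frac{d}{dt}|\nabla f|^2(\phi_t(p)) = -\frac{d}{dt}R(\phi_t(p)) \le 0$ along flow lines outside $K$, i.e. $|\nabla f|$ is nonincreasing along the flow of $-\nabla f$ once it enters the region $M\setminus K$ — but the flow $\phi_t$ moves points in the direction of $-\nabla f$, hence toward larger $f$... I need to be careful about whether trajectories escape to infinity. Since $R \to 0$ uniformly at infinity and $R$ is (eventually along trajectories) nonincreasing while $|\nabla f|^2 = C_0 - R$ is correspondingly nondecreasing and bounded below away from $0$ (because $M$ is not Ricci-flat, so $C_0 = \sup R > 0$ and $R\to 0$ forces $|\nabla f|^2 \to C_0 > 0$ near infinity), the integral curves of $-\nabla f$ are nontrivial at infinity and, as is standard for steadies with uniform $R$-decay, every point is eventually pushed to infinity by $\phi_t$.

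The \emph{upper bound} $f(x) \le C_2\rho(x)$ is the easy direction: from $R + |\nabla f|^2 = C_0$ we get $|\nabla f| \le \sqrt{C_0}$ everywhere, so integrating along a minimal geodesic from $x_0$ to $x$ gives $f(x) - f(x_0) \le \sqrt{C_0}\,\rho(x)$, hence $f(x) \le C_2 \rho(x)$ for $\rho(x) \ge r_0$ with $C_2$ slightly larger than $\sqrt{C_0}$ (absorbing the constant $f(x_0)$, and after checking $f$ is bounded below, which follows because $f$ increases along $-\nabla f$ trajectories and $|\nabla f|$ is bounded, so $f$ has no $-\infty$ values reachable in finite "distance"). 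The substantive content is the \emph{lower bound}. The strategy is the one of Cao--Chen / Cao--Ni adapted to the ``outside a compact set'' setting: show that $|\nabla f|$ is bounded \emph{below} by a positive constant outside a large compact set. Indeed, fix a point $q$ far out; flow it by $\phi_t$; along the trajectory $|\nabla f|^2 = C_0 - R$ and once the trajectory is in $M\setminus K$ we have shown $R$ is nonincreasing along it, so $|\nabla f|^2$ is nondecreasing along it; since $R\to 0$ along the (escaping) trajectory, $|\nabla f|^2(q) \ge C_0 - R(q) \ge$ ... no — I want a lower bound at $q$, and $|\nabla f|^2(q) = C_0 - R(q)$, which is only good if $R(q)$ is small, i.e. if $q$ is \emph{already} far out. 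So: by uniform decay, there is $r_1$ with $R \le C_0/2$ on $\{\rho \ge r_1\}$, giving $|\nabla f|^2 \ge C_0/2$, i.e. $|\nabla f| \ge \sqrt{C_0/2} =: c_0 > 0$ on $\{\rho\ge r_1\}$. Then for $\rho(x) \ge r_0 := r_1 + 1$, integrate $|\nabla f| \ge c_0$ along the integral curve of $\nabla f / |\nabla f|$ (equivalently, along a minimal geodesic, using that $\langle \nabla f, \text{unit tangent}\rangle$ has controlled sign far out): the cleanest route is to use that $\phi_{-t}$ decreases $f$ at rate $|\nabla f|^2 \ge c_0^2$ as long as the trajectory stays in $\{\rho \ge r_1\}$, run it backward from $x$ until it first hits $\partial B(x_0, r_1)$ at some time $T$, note $f(x) - \inf_{\partial B(x_0,r_1)} f \ge c_0^2 T \ge c_0 \cdot (\text{length of the trajectory}) \ge c_0(\rho(x) - r_1)$ since the trajectory joins $x$ to $\partial B(x_0,r_1)$, and conclude $f(x) \ge c_0 \rho(x) - (\text{const}) \ge C_1 \rho(x)$ for $\rho(x)\ge r_0$.

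The main obstacle, and the step I would spend the most care on, is the claim that the backward (or forward) $\nabla f$-trajectory through a far-out point $x$ \emph{stays in} $\{\rho\ge r_1\}$ for all relevant time and in fact escapes to infinity — i.e. that trajectories do not wander back into the compact core $K$ where we have no curvature sign. This requires showing $f$ (or $\rho$) is proper-like along trajectories and that the region $\{\rho \ge r_1\}$ is, up to enlarging $r_1$, \emph{forward-invariant} under $-\nabla f$: this is where one uses that on $M\setminus K$ the function $R$ decays and $|\nabla f|$ is bounded below, combined with $|\nabla f|$ being nondecreasing along trajectories inside $M\setminus K$, to rule out a trajectory doubling back; a barrier/maximum-principle argument on $f$ restricted to annuli, together with the fact that critical points of $f$ all lie in a compact set (since $|\nabla f|\ge c_0$ outside $B(x_0,r_1)$), should close this. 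Two secondary technical points to handle: (i) verifying $C_0 = \sup_M R > 0$ under ``not Ricci-flat,'' which uses that a steady soliton with $R \equiv 0$ is Ricci-flat (from $\Delta f = -R = 0$ and $|\nabla f|^2 = $ const, a standard argument), so non-Ricci-flat $\Rightarrow$ $R$ is somewhere positive $\Rightarrow$ (by $\frac{d}{dt}R \ge 0$ pushing $R$ up along flow, or simply $\sup R > 0$) $C_0 > 0$; and (ii) making sure the ``uniform scalar curvature decay'' hypothesis is strong enough — it gives $R \to 0$ at spatial infinity, which is exactly what forces $|\nabla f|^2 \to C_0$ and supplies the lower bound $|\nabla f| \ge c_0$ outside a compact set. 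Once forward-invariance of $\{\rho \ge r_1\}$ and escape to infinity are established, the two integration arguments above give $C_1\rho \le f \le C_2\rho$ on $\{\rho \ge r_0\}$.
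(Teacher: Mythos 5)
Your overall strategy is the same as the paper's: the upper bound $f\le C_2\rho$ by integrating $|\nabla f|\le\sqrt{C}$ along minimal geodesics, and the lower bound by showing $|\nabla f|\ge c_0>0$ outside a compact set (which, as you say, follows already from $R+|\nabla f|^2=C>0$ and uniform decay) and then integrating $|\nabla f|^2$ along the gradient trajectory through $x$ until it meets the compact core, so that $f(x)-\inf_{\rm core}f\ge c_0\cdot(\text{length})\ge c_0(\rho(x)-r_1)$. However, there is a genuine gap exactly at the step you flag as the ``main obstacle'': you never prove that the trajectory through a far-out point, followed in the direction in which $f$ \emph{decreases} (this is the forward flow $\phi_t$, since $\frac{d}{dt}f(\phi_t)=-|\nabla f|^2$), reaches the compact set in finite time. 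Worse, your sketch points in the wrong direction: you propose to show that $\{\rho\ge r_1\}$ is forward-invariant and that ``every point is eventually pushed to infinity by $\phi_t$,'' whereas what the integration argument needs is the opposite statement — that the $f$-decreasing trajectory \emph{must return} to a fixed compact set (in the Bryant model the forward flow converges to the critical point of $f$, it does not escape). This finite hitting-time claim is precisely where the hypothesis \eqref{eq: dRdt geq 0 on M minus K} is load-bearing, and your plan never actually uses it: the paper's Lemma \ref{lem-shrinking} proves it by contradiction — if the forward trajectory never entered the sublevel region $S(\varepsilon_0)=\{R\ge R_{\max}-\varepsilon_0\}$, then $f$ would drop at a definite rate, forcing $d(p,\phi_t(p))\to\infty$ via $|f(p)-f(\phi_t(p))|\le\sqrt{C}\,d(p,\phi_t(p))$, so the trajectory escapes to infinity; but \eqref{eq: dRdt geq 0 on M minus K} makes $R$ nondecreasing along it, so $R(\phi_t(p))\ge R(p)>0$, contradicting uniform decay. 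A ``barrier/maximum-principle on annuli'' as you suggest will not substitute for this, since without the monotonicity of $R$ along trajectories nothing prevents $f$ from being unbounded below along an escaping flow line.

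Two further sign slips feed this confusion and should be fixed, though they are not by themselves fatal: $\phi_t$ is generated by $-\nabla f$, so it moves toward \emph{smaller} $f$ (and $\phi_{-t}$ increases $f$), contrary to what you wrote; and with the paper's convention ${\rm Ric}={\rm Hess}\,f$ one has $\nabla R=-2\,{\rm Ric}(\nabla f,\cdot)$, so \eqref{eq: dRdt geq 0 on M minus K} is equivalent to ${\rm Ric}(\nabla f,\nabla f)\ge 0$ on $M\setminus K$ (as the paper's Remark states), not $\le 0$. With these corrected and the finite hitting-time lemma supplied as above, the rest of your outline (the two integrations and the verification that $C=\sup R>0$) does go through and coincides with the paper's proof.
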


\textbf{Remark.}
Condition \eqref{eq: dRdt geq 0 on M minus K} is equivalent to $\langle \nabla R,\nabla f \rangle \leq 0$ on $M\setminus K$, which in turn is equivalent to $\Ric(\nabla f,\nabla f)\geq 0$ on $M\setminus K$.\smallskip

Now we fix some notations in this section.
Let $(M^n,g,f)$ be a non-Ricci-flat steady gradient Ricci soliton with scalar curvature decaying uniformly.
It is well known that the following identity holds:
\begin{align}\label{identity}
R(x)+|\nabla f|^2(x)=C,
\end{align}
where $C$ is a positive constant.
Since
$(M,g,f)$ is non-Ricci-flat, $R(x)$ must be positive by B.-L.~Chen \cite{Ch}. In particular, \eqref{identity} implies that
\begin{align}
|\nabla f|^2(x)\le C~\text{ for all }x\in M.
\end{align}
Let $R_{\max}=\sup_{x\in M} R(x)$ and define
$$S(\varepsilon)=\{x\in M : R(x)\ge R_{\max}-\varepsilon\}.$$
Note that $R_{\max}\le C$ and $S(\varepsilon)=\{x\in M : |\nabla f|^2\le \varepsilon+C-R_{\max}\}$,  where $C$ is the constant in (\ref{identity}).
Since the scalar curvature decays uniformly, $S(\varepsilon)$ is compact for each $\varepsilon\in [0,R_{\max})$. Moreover, $M$ is exhausted by
the family of sets
$\{S(\varepsilon) \}_{\varepsilon\in (0,R_{\max}]}$. Hence, there exists a positive constant $\varepsilon_0<R_{\max}$ such that
\begin{align}
K\subseteq S(\varepsilon) ~\text{ for all }~\varepsilon\in [\varepsilon_0,R_{\max}),
\end{align}
where $K$ is as in the hypothesis of Theorem \ref{theo-linear of f}.

\begin{lem}\label{lem-remain for t negative}
Under the hypotheses of Theorem \ref{theo-linear of f}, for any $p\in M\setminus S(\varepsilon_0)$, we have
\begin{align}
\phi_{t}(p)\in M\setminus S(\varepsilon_0)~\text{ for all }t \in (-\infty,0]. 
\end{align}
Hence,
\begin{align}
\frac{\rm d}{{\rm d} t}R(\phi_t(p))\ge 0 ~\text{ for all }(p,t)\in \left( M\setminus S(\varepsilon_0) \right) \times(-\infty,0].
\end{align}
\end{lem}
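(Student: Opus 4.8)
The plan is to show that the open set $M\setminus S(\varepsilon_0)=\{x\in M:R(x)<R_{\max}-\varepsilon_0\}$ is invariant under the backward flow of $-\nabla f$. The mechanism is simple: along $-\nabla f$ the scalar curvature is nondecreasing \emph{outside} $K$, and $K$ was arranged to sit inside $S(\varepsilon_0)$, so a trajectory starting in $M\setminus S(\varepsilon_0)$ cannot reach $S(\varepsilon_0)$ going backward in time.

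First I would record two preliminaries. Since $|\nabla f|^2=C-R\le C$ by \eqref{identity} and $(M,g)$ is complete, the vector field $-\nabla f$ is complete, so $\phi_t$ is defined for all $t\in\mathbb R$. Next, by the semigroup property $\phi_{s+t}=\phi_s\circ\phi_t$, for any $p$ and $t$ one has $\frac{\rm d}{{\rm d}t}R(\phi_t(p))=\frac{\rm d}{{\rm d}s}\big|_{s=0}R(\phi_s(q))$ with $q=\phi_t(p)$; hence hypothesis \eqref{eq: dRdt geq 0 on M minus K} upgrades to
\[
\frac{\rm d}{{\rm d}t}R(\phi_t(p))\ge 0\qquad\text{whenever }\phi_t(p)\in M\setminus K,
\]
and, since $K\subseteq S(\varepsilon_0)$, in particular whenever $\phi_t(p)\in M\setminus S(\varepsilon_0)$.

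Now fix $p\in M\setminus S(\varepsilon_0)$ and set $h(t)=R(\phi_t(p))$, so $h(0)=R(p)<R_{\max}-\varepsilon_0$. Because $M\setminus S(\varepsilon_0)$ is open and $s\mapsto\phi_s(p)$ is continuous, the set $\{\tau\le 0:\phi_s(p)\in M\setminus S(\varepsilon_0)\text{ for all }s\in[\tau,0]\}$ contains a neighborhood of $0$; let $t^*\in[-\infty,0)$ be its infimum. On $(t^*,0]$ the trajectory lies in $M\setminus S(\varepsilon_0)\subseteq M\setminus K$, so by the upgraded hypothesis $h'\ge 0$ there; thus $h$ is nondecreasing on $(t^*,0]$ and $h(t)\le h(0)<R_{\max}-\varepsilon_0$ for all $t\in(t^*,0]$. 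Suppose for contradiction $t^*>-\infty$. By continuity $h(t^*)=\lim_{t\downarrow t^*}h(t)\le h(0)<R_{\max}-\varepsilon_0$, so $\phi_{t^*}(p)\in M\setminus S(\varepsilon_0)$; since this set is open there is $\delta>0$ with $\phi_s(p)\in M\setminus S(\varepsilon_0)$ for all $s\in[t^*-\delta,t^*]$, and combined with the above this yields $\phi_s(p)\in M\setminus S(\varepsilon_0)$ for all $s\in[t^*-\delta,0]$, contradicting the definition of $t^*$. Hence $t^*=-\infty$, i.e. $\phi_t(p)\in M\setminus S(\varepsilon_0)$ for all $t\in(-\infty,0]$, which is the first assertion. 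The second ("Hence\,\dots") then follows at once: for $(p,t)\in(M\setminus S(\varepsilon_0))\times(-\infty,0]$ we have $\phi_t(p)\in M\setminus S(\varepsilon_0)\subseteq M\setminus K$, so the upgraded hypothesis gives $\frac{\rm d}{{\rm d}t}R(\phi_t(p))\ge 0$.

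The only point requiring a little care — though it is not deep — is the backward-invariance (barrier) argument: one must combine the monotonicity of $h$ on the a priori only relatively open backward time-interval with the openness of $M\setminus S(\varepsilon_0)$ and continuity of the trajectory to preclude the orbit from touching $S(\varepsilon_0)$ in finite backward time. The conceptual input making it all work is just the inclusion $K\subseteq S(\varepsilon_0)$, which is exactly why $\varepsilon_0$ was selected as it was.
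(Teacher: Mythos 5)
Your proof is correct and follows essentially the same barrier/first-exit-time argument as the paper: the paper phrases the monotonicity via $|\nabla f|^2$ (using the identity $R+|\nabla f|^2=C$ and the threshold $\varepsilon_0+C-R_{\max}$), whereas you work directly with $R$, but the structure — define the infimum backward time of containment in the open set $M\setminus S(\varepsilon_0)$, use monotonicity along the trajectory there, and rule out a finite exit time by continuity — is identical. Your added remarks on completeness of the flow and on upgrading the $t=0$ hypothesis via the group property are points the paper leaves implicit.
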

\begin{proof}
Let $p\in M\setminus S(\varepsilon_0)$.
By the definition of $S(\varepsilon_0)$, we have that $|\nabla f|^2(p)>\varepsilon_0+C-R_{\max}$. Therefore, for
$t_0<0$ sufficiently small, we have that $|\nabla f|^2(\phi_t(p))>\varepsilon_0+C-R_{\max}$ for all $t\in[t_0,0]$. Let
$$T=\inf\{t\le0 :\phi_t(p)\in  M\setminus S(\varepsilon_0)\}.$$
If $T$ is finite, then $|\nabla f|^2(\phi_t(p))
>
\varepsilon_0+C-R_{\max}$ for $T<t\le0$ and $|\nabla f|^2(\phi_T(p))=\varepsilon_0+C-R_{\max}$. Note that
\begin{align}
\frac{{\rm d}}{{\rm d}t}|\nabla f|^2(\phi_{t}(p))=-\frac{{\rm d}}{{\rm d}t}R(\phi_{t}(p))\le 0 ~\text{ for all }~(p,t)\in ( M\setminus S(\varepsilon_0) ) \times(T,0]. \notag
\end{align}
Hence,
\begin{align}
|\nabla f|^2(\phi_T(p))\ge |\nabla f|^2(\phi_t(p))>\varepsilon_0+C-R_{\max}
\end{align}
for $t\in(T,0]$.
This
contradicts the fact that $\phi_T(p)\in S(\varepsilon_0)$. Hence $T=-\infty$, i.e., $\phi_t(p)\in M\setminus S(\varepsilon_0)$ for all $t\le0$.
We
have
completed the proof.
\end{proof}
 When $t\to+\infty$, we
have the following lemma.

\begin{lem}\label{lem-shrinking}
Under the hypotheses of Theorem \ref{theo-linear of f}, for any $p\in M\setminus S(\varepsilon_0)$, there exists
$t_p\in(0,\infty)$
such that $\phi_{t_p}(p)\in S(\varepsilon_0)$ and $\phi_{t}(p)\in M\setminus S(\varepsilon_0)$ for all
$t\in(-\infty,t_p)$.
\end{lem}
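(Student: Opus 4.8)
The statement asserts that every backward-invariant orbit segment in $M\setminus S(\varepsilon_0)$ must eventually enter $S(\varepsilon_0)$ in forward time. The plan is to argue by contradiction: suppose $p\in M\setminus S(\varepsilon_0)$ and $\phi_t(p)\in M\setminus S(\varepsilon_0)$ for all $t\in(-\infty,\infty)$. By Lemma \ref{lem-remain for t negative} we already know the backward orbit stays outside $S(\varepsilon_0)$, so the content is purely about forward time. Along the orbit we have $\frac{\rm d}{{\rm d}t}|\nabla f|^2(\phi_t(p)) = -\frac{\rm d}{{\rm d}t}R(\phi_t(p)) \le 0$ for all $t\le 0$ by Lemma \ref{lem-remain for t negative}, but for $t>0$ the sign of $\frac{\rm d}{{\rm d}t}R$ is not controlled directly by the hypothesis \eqref{eq: dRdt geq 0 on M minus K} in the wrong direction — actually it is: hypothesis \eqref{eq: dRdt geq 0 on M minus K} gives $\frac{\rm d}{{\rm d}t}R(\phi_t(p))\ge 0$ at $t=0$ for points outside $K$, and since $K\subseteq S(\varepsilon_0)$, this holds for all points of $M\setminus S(\varepsilon_0)$, hence along the entire (assumed) eternal orbit. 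Therefore $R(\phi_t(p))$ is nondecreasing in $t$ on all of $\mathbb{R}$, so $|\nabla f|^2(\phi_t(p)) = C - R(\phi_t(p))$ is nonincreasing, and in particular $R(\phi_t(p))\nearrow L \le R_{\max}$ as $t\to+\infty$.

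The key step is to extract a contradiction from the scalar curvature tending upward along an escaping-to-infinity orbit. First I would observe that $\phi_t(p)\to\infty$ in $M$ as $t\to+\infty$: since the orbit stays in the complement of the compact set $S(\varepsilon_0)$, if it did not leave every compact set it would have a subsequential limit, and one then runs the standard argument (as in Lemma \ref{lem-remain for t negative}) that the $\omega$-limit set consists of equilibrium points where $\nabla f = 0$, forcing $R = R_{\max}$ there, i.e. those limit points lie in $S(\varepsilon_0)$ — contradiction. Hence $\rho(\phi_t(p))\to\infty$. But then uniform scalar curvature decay gives $R(\phi_t(p))\to 0$ as $t\to+\infty$, contradicting $R(\phi_t(p))\ge R(p) > 0$ (recall $R>0$ everywhere by B.-L.~Chen's result, and $R$ is nondecreasing along the orbit). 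This contradiction shows the forward orbit must meet $S(\varepsilon_0)$, so the set $\{t\ge 0 : \phi_t(p)\in S(\varepsilon_0)\}$ is nonempty; let $t_p$ be its infimum. By closedness of $S(\varepsilon_0)$ and continuity of the flow, $\phi_{t_p}(p)\in S(\varepsilon_0)$, and by minimality together with Lemma \ref{lem-remain for t negative} we get $\phi_t(p)\in M\setminus S(\varepsilon_0)$ for all $t\in(-\infty,t_p)$; also $t_p>0$ since $p\notin S(\varepsilon_0)$ and $S(\varepsilon_0)$ is closed.

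The main obstacle I anticipate is rigorously justifying the dichotomy "the forward orbit either escapes to infinity or its $\omega$-limit set lies in $S(\varepsilon_0)$." This requires care: one needs that on any compact set disjoint from the equilibrium set $\{\nabla f = 0\}$ the quantity $|\nabla f|^2$ is bounded below by a positive constant, so that along a non-escaping orbit segment $\frac{\rm d}{{\rm d}t}f = -|\nabla f|^2$ would decrease at a definite rate, which is impossible since $f$ is bounded below on compact sets; hence any accumulation point of the forward orbit is an equilibrium point, and at an equilibrium point $|\nabla f|^2 = 0$, so $R = C = R_{\max}$ (using $R_{\max}\le C$ and $R+|\nabla f|^2=C$), placing it in $S(0)\subseteq S(\varepsilon_0)$. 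Combined with continuity this contradicts the orbit staying in the open set $M\setminus S(\varepsilon_0)$. Everything else is a routine repackaging of the monotonicity argument already used in Lemma \ref{lem-remain for t negative}, now run in forward time using \eqref{eq: dRdt geq 0 on M minus K} and uniform scalar curvature decay.
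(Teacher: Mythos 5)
Your proposal is correct and follows essentially the same contradiction scheme as the paper: assuming the forward orbit never meets $S(\varepsilon_0)$, it must escape to infinity while $R(\phi_t(p))$ is nondecreasing, which is incompatible with uniform scalar curvature decay (the paper phrases this final step as the orbit being trapped in the compact set $S(R_{\max}-R(p))$). The only real difference is the escape-to-infinity sub-step, which the paper obtains more directly from $f(p)-f(\phi_t(p))\ge\varepsilon_0 t$ combined with the Lipschitz bound $|f(p)-f(\phi_t(p))|\le\sqrt{C}\,d(p,\phi_t(p))$, so your $\omega$-limit/equilibrium-point discussion, while valid, is not needed.
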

\begin{proof}
The proof is by contradiction. If the lemma is not true, then there exists a point $p\in M\setminus S(\varepsilon_0)$ such that $\phi_t(p)\in M\setminus S(\varepsilon_0)$ for all $t\ge0$. Then
\begin{align}
|\nabla f|^2(\phi_t(p))\ge \varepsilon_0~\text{ for all }~t\ge 0.
\end{align}
It follows that
\begin{align}\label{lower estimate of difference}
f(p)-f(\phi_t(p))=\int_{0}^t|\nabla f|^2(\phi_s(p))ds\ge \varepsilon_0t.
\end{align}
For any fixed $t\ge 0$, let $\gamma_t:[0,L_t]\to M$ be a minimal geodesic with $\gamma_t(0)=p$ and $\gamma_t(L_t)=\phi_t(p)$, where $L_t=d(p,\phi_t(p))$, and let $s$ be the arc length parameter. Then we have
\begin{align}\label{upper estimate of difference}
|f(p)-f(\phi_t(p))|=&\left|\int_{0}^{L_t}\langle\nabla f(\gamma_t(s)),\gamma_t^{\prime}(s)\rangle ds\right|\\
\le& \int_{0}^{L_t}|\nabla f|ds\notag\\
\le& \sqrt{C}d(p,\phi_t(p)), \notag
\end{align}
where we have used the identity (\ref{identity}). Combining (\ref{lower estimate of difference}) with (\ref{upper estimate of difference}), we see that $\phi_t(p)$ tends to infinity as $t\to+\infty$.

On the other hand,  $\frac{\rm d}{{\rm d} t}R(\phi_t(p))\ge 0$ since $\phi_t(p)\in M\setminus S(\varepsilon_0)$ for all $t\ge0$. It follows that $\phi_t(p)\in S(R_{\max}-R(p))$ for $t\ge0$. Note that $S(R_{\max}-R(p))$ is compact. Hence, $\phi_t(p)$ cannot tend to infinity. We obtain a contradiction. Hence the proof of the lemma is complete.

\end{proof}

Now, we are ready to prove Theorem \ref{theo-linear of f}.

\begin{proof}[Proof of Theorem \ref{theo-linear of f}]
For any $x\in M\setminus S(\varepsilon_0)$, by Lemma \ref{lem-shrinking} there exists a positive constant $t_x>0$ such that $\phi_{t_x}(x)\in S(\varepsilon_0)$ and $\phi_{t}(x)\in M\setminus S(\varepsilon_0)$ for all
$t\in(-\infty,t_x)$.
Note that
\begin{align}
f(x)-f(\phi_{t_x}(x))=\int_{0}^{t_x}|\nabla f|^2(\phi_{s}(x))\, {\rm d}s \notag
\end{align}
and
\begin{align}
d(x,\phi_{t_x}(x))\le{\rm Length}(\phi_s(x)|_{s\in[0,t_x]},g)=\int_{0}^{t_x}|\nabla f|(\phi_{s}(x)) {\rm d}s.\notag
\end{align}
By (\ref{identity}), we have
\begin{align}
\frac{{\rm d}}{{\rm d}t}|\nabla f|^2(\phi_{t}(x))=-\frac{{\rm d}}{{\rm d}t}R(\phi_{t}(x))\le 0\,\text{ for all }t\in [0,t_x], \notag
\end{align}
\begin{align}
|\nabla f|(\phi_{s}(x))\ge |\nabla f|(\phi_{t_x}(x))\,\text{ for all }s\in[0,t_x].\notag
\end{align}
Consequently,
\begin{align}
f(x)-f(\phi_{t_x}(x))=&\int_{0}^{t_x}|\nabla f|^2(\phi_{s}(x)) {\rm d}s \notag\\
\ge\,& |\nabla f|(\phi_{t_x}(x))\int_{0}^{t_x}|\nabla f|(\phi_{s}(x)) {\rm d}s  \notag\\
\ge\,& \varepsilon_0d(x,\phi_{t_x}(x)).\notag
\end{align}
Now we fix a point $x_0\in S(\varepsilon_0)$.
We then obtain for all $x\in M\setminus S(\varepsilon_0)$,
\begin{align}\label{f-lower estimate}
f(x)-f(x_0) & \ge \varepsilon_0d(x,\phi_{t_x}(x)) + f(\phi_{t_x}(x)) - f(x_0)  \\
& \ge
\varepsilon_0 d(x,x_0)- (2+\varepsilon_0)A , \nonumber
\end{align}
where $$A=\sup_{x\in S(\varepsilon_0)}|f(x)|+\operatorname{diam}(S(\varepsilon_0))$$
and $\operatorname{diam}(S(\varepsilon_0))=\sup_{x,y\in S(\varepsilon_0)}d(x,y)$
.

On the other hand, for any $x\in M$ we have the following. Let $\gamma:[0,L]\to M$ be a minimal geodesic with $\gamma(0)=x_0$ and $\gamma(L)=x$, where $L=d(x,x_0)$, and let $s$ be the arc length parameter. Then we have
\begin{align}\label{f-upper estimate}
|f(x)-f(x_0)|=
\left|
\int_{0}^{L}\langle\nabla f(\gamma(s)),\gamma^{\prime}(s)\rangle ds
\right|
\le \int_{0}^{L}|\nabla f|ds
\le \sqrt{C}d(x_0,x).
\end{align}
Combining (\ref{f-lower estimate}) with (\ref{f-upper estimate}) completes the proof of Theorem \ref{theo-linear of f}.
\end{proof}

\begin{rem}
\label{rem: linear f}
Under the hypotheses of Theorem \ref{theo-linear of f}, it is easy to see that the constant $C$ in \eqref{identity} satisfies $C=R_{\max}=\sup_{x\in M}R(x)$. Moreover, the linear growth estimate of $f$ can be improved to:
\begin{align}\label{limit constant}
 \frac{f(x)}{\rho(x)}\to \sqrt{R_{\max}} ~\text{ as }\rho(x)\to\infty.
\end{align}
The details can be found in the proof of Lemma 2.2 in \cite{DZ6}.
\end{rem}

\section{Linear curvature decay of steady GRS}\label{section-linear decay}

Linear curvature decay is an important condition in the study of the asymptotic geometry of steady gradient Ricci solitons (see \cite{Br1,Br2}, \cite{DZ2,DZ3,DZ4,DZ5,DZ6}). Originally, linear curvature decay of steady gradient Ricci solitons were obtained on positively curved steady gradient Ricci solitons in dimension $3$ by Guo (see \cite{Guo}).

In this section, we prove linear curvature decay under
conditions stronger than
that of Theorem \ref{theo-linear of f}. Let $\rho(x)$ denote the distance function from a fixed point $x_0\in M$. Let $S(\varepsilon_0)$ be the set defined as in Section \ref{section 3}.

\begin{theo}\label{theo-decay}
Let $(M^n,g,f)$ be a non-Ricci-flat steady gradient Ricci soliton. Suppose that the scalar curvature decays uniformly and that
\begin{align}\label{positive lower bound of derivative}
\frac{1}{R^2(p)}\cdot\left.\frac{\rm d}{{\rm d} t}\right|_{t=0 }R(\phi_t(p))\ge \epsilon>0\,\text{ for all }\,p\in M\setminus K,
\end{align}
where $K$ is a compact subset of $M$,
$\phi_t$ is the $1$-parameter group of diffeomorphisms generated by $-\nabla f$,
and $\epsilon$ is independent of $p,t$. Then there exist constants $r_0$ and $c$ such that
\begin{align}\label{linear decay}
 R(x)\le \frac{c}{\rho(x)}\,\text{ for all } x\in M \text{ such that }\rho(x)\ge r_0.
\end{align}
That is, the scalar curvature decays linearly.
\end{theo}

\begin{proof}
Note that $(M,g,f)$ satisfies the hypotheses of Theorem \ref{theo-linear of f}.
By Lemma \ref{lem-remain for t negative}, we see that
\begin{align}\label{inequality for t negative-1}
\frac{1}{R^2(p)}\cdot\frac{\rm d}{{\rm d} t}R(\phi_t(p))\ge \epsilon>0\,\text{ for all }\,p\in (M\setminus S(\varepsilon_0))\times(-\infty,0].
\end{align}
By Lemma \ref{lem-shrinking}, for any $x\in M\setminus S(\varepsilon_0)$ there exists
$t_x>0$ such that $\phi_{t_x}(x)\in S(\varepsilon_0)$ and $\phi_{t}(x)\in M\setminus S(\varepsilon_0)$ for all
$t\in(-\infty,t_x)$.
By (\ref{inequality for t negative-1}), we have
\begin{align}
-\frac{\rm d}{{\rm d}t}\big[R^{-1}(\phi_t(x))\big]\ge \epsilon\,\text{ for all }\,t\in[0,t_x].
\end{align}
Integrating this formula over $t\in[0,t_x]$, we obtain
\begin{align}\label{upper-1}
R(x)\le\frac{1}{\epsilon t_x+R^{-1}(\phi_{t_x}(x))}.
\end{align}
On the other hand,
\begin{align}\label{upper-2}
f(x)-f(\phi_{t_x}(x))=\int_{0}^{t_x}|\nabla f|^2(\phi_{s}(x))\, {\rm d}s\le Ct_x.
\end{align}
By (\ref{upper-1}) and (\ref{upper-2}), we have
\begin{align}\label{decay according f}
R(x)\le \frac{C}{\epsilon(f(x)-A)+C(R_{\max})^{-1}},
\end{align}
where $A=\sup_{x\in S(\varepsilon_0)}f(x)$. Hence the curvature decay estimate (\ref{linear decay}) follows from (\ref{decay according f}) and Theorem \ref{theo-linear of f}.
\end{proof}

Similarly, we
have the same linear decay estimate
under an upper bound rather than a lower bound for $-\frac{\rm d}{{\rm d}t}(R^{-1}\circ\phi_t)$.

\begin{theo}\label{theo-decay lower bound}
Let $(M^n,g,f)$ be a non-Ricci-flat steady gradient Ricci soliton. Suppose that the scalar curvature decays uniformly and
\begin{align}
0\le\frac{1}{R^2(p)}\cdot\left.\frac{\rm d}{{\rm d} t}\right|_{t=0}R(\phi_t(p))\le C^{\prime}\,\text{ for all }\,p\in M\setminus K,
\end{align}
where $K$ is a compact subset of $M$
and $\epsilon$ is independent of $p,t$. Then there exist
positive
constants $r_0$ and $c^{\prime}$ such that
\begin{align}\label{lower linear estimate}
 R(x)\ge \frac{c^{\prime}}{\rho(x)}\,\text{ for all }\rho(x)\ge r_0.
\end{align}
\end{theo}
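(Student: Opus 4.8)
The plan is to mirror the proof of Theorem \ref{theo-decay}, but this time integrating the \emph{upper} bound for $-\frac{\rm d}{{\rm d}t}(R^{-1}\circ\phi_t)$ along the flow lines of $-\nabla f$ in order to get a lower bound on $R$. First I would observe, exactly as in the proof of Theorem \ref{theo-decay}, that the hypotheses here are stronger than those of Theorem \ref{theo-linear of f} (the condition implies $\frac{\rm d}{{\rm d}t}R(\phi_t(p))\ge 0$ on $M\setminus K$), so both the linear growth of $f$ and Lemmas \ref{lem-remain for t negative} and \ref{lem-shrinking} are available. In particular, by Lemma \ref{lem-remain for t negative} the inequality $\frac{1}{R^2(p)}\cdot\frac{\rm d}{{\rm d}t}R(\phi_t(p))\le C'$ propagates to all $p\in (M\setminus S(\varepsilon_0))$ and $t\in(-\infty,0]$, which rewrites as $-\frac{\rm d}{{\rm d}t}\big[R^{-1}(\phi_t(p))\big]\le C'$ there.

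Next I would pick $x\in M\setminus S(\varepsilon_0)$ and use Lemma \ref{lem-shrinking} to get $t_x>0$ with $\phi_{t_x}(x)\in S(\varepsilon_0)$ and $\phi_t(x)\in M\setminus S(\varepsilon_0)$ for $t\in(-\infty,t_x)$. Integrating $-\frac{\rm d}{{\rm d}t}\big[R^{-1}(\phi_t(x))\big]\le C'$ over $[0,t_x]$ gives
\begin{align}
R^{-1}(\phi_{t_x}(x))-R^{-1}(x)\le C' t_x, \notag
\end{align}
hence $R(x)\ge \big(C' t_x + R^{-1}(\phi_{t_x}(x))\big)^{-1}\ge \big(C' t_x + R_{\min}(S(\varepsilon_0))^{-1}\big)^{-1}$, where $R_{\min}(S(\varepsilon_0))=\inf_{S(\varepsilon_0)}R>0$ since $S(\varepsilon_0)$ is compact and $R>0$ everywhere by B.-L.~Chen \cite{Ch}. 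So it remains to bound $t_x$ from above linearly in $\rho(x)$.

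For the upper bound on $t_x$ I would compare it with the potential. Along $\phi_s(x)$, $s\in[0,t_x]$, we have $\phi_s(x)\in M\setminus S(\varepsilon_0)$, so $|\nabla f|^2(\phi_s(x))\ge \varepsilon_0+C-R_{\max}=:\delta_0>0$ by the definition of $S(\varepsilon_0)$ and \eqref{identity}. Therefore
\begin{align}
f(x)-f(\phi_{t_x}(x))=\int_0^{t_x}|\nabla f|^2(\phi_s(x))\,{\rm d}s\ge \delta_0 t_x, \notag
\end{align}
so $t_x\le \delta_0^{-1}\big(f(x)-f(\phi_{t_x}(x))\big)\le \delta_0^{-1}\big(f(x)+\sup_{S(\varepsilon_0)}|f|\big)$. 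Combined with the lower bound for $R(x)$ above, this gives $R(x)\ge \big(C'\delta_0^{-1}(f(x)+A')+R_{\min}(S(\varepsilon_0))^{-1}\big)^{-1}$ for a constant $A'$, and then Theorem \ref{theo-linear of f} (the upper bound $f(x)\le C_2\rho(x)$) yields $R(x)\ge c'/\rho(x)$ for $\rho(x)$ large, after enlarging $r_0$ so that $M\setminus S(\varepsilon_0)$ contains $\{\rho\ge r_0\}$.

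The routine points are all parallel to Theorem \ref{theo-decay}; the only place requiring a slight twist is that here one needs a \emph{lower} bound on $|\nabla f|^2$ along the flow line (to bound $t_x$ above), whereas Theorem \ref{theo-decay} needed only the trivial upper bound $|\nabla f|^2\le C$. That lower bound is exactly $\delta_0>0$, which is positive precisely because $x\notin S(\varepsilon_0)$ — so the main (minor) obstacle is simply making sure $\delta_0=\varepsilon_0+C-R_{\max}>0$, which holds since $\varepsilon_0\in(0,R_{\max})$ and $C\ge R_{\max}$ by \eqref{identity} (indeed $C=R_{\max}$ under these hypotheses).
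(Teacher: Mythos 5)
Your proposal is correct and follows essentially the same route as the paper's proof: propagate the hypothesis via Lemma \ref{lem-remain for t negative}, integrate $-\frac{\rm d}{{\rm d}t}\big[R^{-1}(\phi_t(x))\big]\le C'$ up to the hitting time $t_x$ from Lemma \ref{lem-shrinking}, bound $t_x$ by $f(x)$ using the lower bound on $|\nabla f|^2$ outside $S(\varepsilon_0)$, and conclude with Theorem \ref{theo-linear of f}. Your extra care with $\delta_0=\varepsilon_0+C-R_{\max}\ge\varepsilon_0$ and with the constants $A'$, $R_{\min}(S(\varepsilon_0))^{-1}$ only sharpens what the paper does with $\varepsilon_0$ and $B=\sup_{S(\varepsilon_0)}R^{-1}$.
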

\begin{proof}
Note that $(M,g,f)$ satisfies the hypotheses of Theorem \ref{theo-linear of f}.
By Lemma \ref{lem-remain for t negative}, we have
\begin{align}\label{inequality for t negative-2}
0\le\frac{1}{R^2(p)}\cdot\frac{\rm d}{{\rm d} t}R(\phi_t(p))\le C^{\prime}\,\text{ for all }\,(p,t)\in (M\setminus S(\varepsilon_0))\times(-\infty,0].
\end{align}
Let $x\in M\setminus S(\varepsilon_0)$.
By Lemma \ref{lem-shrinking}, there exists a constant $t_x>0$ such that $\phi_{t_x}(x)\in S(\varepsilon_0)$ and $\phi_{t}(x)\in M\setminus S(\varepsilon_0)$ for all $t\in(-\infty,t_x)$. By (\ref{inequality for t negative-2}), we have
\begin{align}
-\frac{\rm d}{{\rm d}t}\big[R^{-1}(\phi_t(x))\big]\le C^{\prime}\,\text{ for all }\,t\in[0,t_x].
\end{align}
Integrating the formula above over $t\in[0,t_x]$, we obtain
\begin{align}\label{upper-1-0}
R(x)\ge\frac{1}{C^{\prime} t_x+R^{-1}(\phi_{t_x}(x))}.
\end{align}
On the other hand,
\begin{align}\label{upper-2-0}
f(x)-f(\phi_{t_x}(x))=\int_{0}^{t_x}|\nabla f|^2(\phi_{s}(x)) {\rm d}s\ge \varepsilon_0t_x.
\end{align}
By (\ref{upper-1-0}) and (\ref{upper-2-0}), we have
\begin{align}\label{lower linear estimate according f}
R(x)\ge \frac{\varepsilon_0}{C^{\prime}f(x)+\varepsilon_0B},
\end{align}
where $B=\sup_{x\in S(\varepsilon_0)}R^{-1}(x)$. Hence (\ref{lower linear estimate}) follows from (\ref{lower linear estimate according f}) and (\ref{theo-linear of f}).
\end{proof}

\section{Asymptotic geometry}\label{section-AG}

By the
methods
in \cite{DZ5,DZ6}, one can prove Theorem \ref{theorem-1} with the help of Theorem \ref{theo-decay} and Theorem \ref{theo-decay lower bound}.

\begin{lem}\label{lem-estimates from asymptotical geometry}
Let $(M,g,f)$ be a $4$-dimensional
steady gradient Ricci soliton that dimension reduces to $3$-manifolds.
Then there
exists a positive constant $C_3$
such that
\begin{align}\label{inequality-1}
\frac{|{\Rm}|(x)}{R(x)}\le C_3\,\text{ for all }\,x\in M^4\setminus K.
\end{align}
If no dimension reduction of $(M^4,g,f)$ is
a
steady gradient Ricci $3$-soliton, then there
exist positive constants $\epsilon$ and $C_4$
such that
\begin{align}\label{inequality-2}
\epsilon\le \frac{\Delta R(x)+2|{\Ric}|^2(x)}{R^2(x)}\le C_4\,\text{ for all }\,x\in M\setminus K.
\end{align}
\end{lem}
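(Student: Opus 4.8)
\textbf{Proof proposal for Lemma \ref{lem-estimates from asymptotical geometry}.}
The plan is to extract the two pointwise estimates from the Cheeger--Gromov limits guaranteed by the dimension reduction hypothesis, using a standard contradiction/blow-up argument together with the classification of $3$-dimensional $\kappa$-solutions recalled in the introduction. For \eqref{inequality-1}, suppose it fails; then there is a sequence $x_i\to\infty$ with $|Rm|(x_i)/R(x_i)\to\infty$. Rescale by $K_i=|Rm|(x_i)$ and pass to the limit: the dimension reduction hypothesis says a subsequence of $(M,K_ig(K_i^{-1}t),x_i)$ converges to $(N^3\times\mathbb{R},g_N(t)+ds^2,x_\infty)$ with $(N,g_N(t))$ a nonflat $3$-dimensional ancient Ricci flow with bounded curvature. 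Since $|Rm|(x_i)=K_i$, the limit has $|Rm|(x_\infty,0)=1$, so $(N,g_N(t))$ is a genuine $3$-dimensional $\kappa$-solution (one uses $\kappa$-noncollapsing of $(M^4,g)$, which passes to the limit). For a product $N^3\times\mathbb{R}$ the scalar curvatures satisfy $R_{N\times\mathbb{R}}=R_N$, and on a $3$-dimensional $\kappa$-solution one has $R_N>0$ with $|Rm_N|\le c\,R_N$ for a universal constant $c$ (this is a standard consequence of the a priori curvature bounds / Hamilton--Ivey pinching and compactness of the space of $3$-dimensional $\kappa$-solutions; see \cite[Theorems 28.6, 28.9]{Cetc}). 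Hence $|Rm|(x_\infty,0)/R(x_\infty,0)\le c<\infty$, contradicting $|Rm|(x_i)/R(x_i)\to\infty$. This gives \eqref{inequality-1} on $M^4\setminus K$ after possibly enlarging $K$.

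For \eqref{inequality-2}, first note $\Delta R+2|Ric|^2$ is a natural scale-$(-2)$ quantity: under $g\mapsto\lambda g$ it scales like $R^2$, so $\big(\Delta R+2|Ric|^2\big)/R^2$ is scale-invariant and hence behaves well under blow-up. In the limit $N^3\times\mathbb{R}$ the $\mathbb{R}$-factor contributes nothing, so this ratio at $x_\infty$ equals $\big(\Delta_N R_N+2|Ric_N|^2\big)/R_N^2$ evaluated on the $3$-dimensional $\kappa$-solution $(N,g_N(0))$ at the point where $|Rm_N|=1$. The upper bound $C_4$ follows exactly as above: by the compactness of the space of $3$-dimensional $\kappa$-solutions and the universal derivative estimates (Shi's estimates give $\Delta R$ bounds, $|Ric_N|\le c R_N$ gives the rest), this ratio is universally bounded above on any $3$-dimensional $\kappa$-solution, at any point, in terms of $\kappa$ alone. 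For the lower bound $\epsilon>0$ one uses the extra hypothesis: \emph{no} dimension reduction is a steady gradient Ricci $3$-soliton. By Brendle's classification the only $3$-dimensional $\kappa$-solutions that are steady solitons (or have $\Delta R+2|Ric|^2\equiv 0$ along the flow, i.e.\ $\partial_t R\equiv 0$, which characterizes when the rescaled flows degenerate) are precisely the Bryant $3$-soliton — every other possibility (quotients of the round shrinking $S^3$, Perelman's solution, shrinking cylinders $S^2\times\mathbb{R}/\Gamma$) is a shrinker or has $\partial_t R>0$ strictly. Thus after excluding the Bryant $3$-soliton, every dimension reduction is a shrinking-type $\kappa$-solution, on which $\Delta R_N+2|Ric_N|^2=\partial_t R_N>0$; by the compactness of this (now strictly smaller) class of $\kappa$-solutions and the fact that we evaluate at a point with $|Rm_N|=1$, the ratio $\big(\Delta_N R_N+2|Ric_N|^2\big)/R_N^2$ is bounded below by a universal $\epsilon>0$. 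Running the contradiction argument once more yields \eqref{inequality-2}.

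The main obstacle is the lower bound in \eqref{inequality-2}, for two reasons. First, one must be careful about which $3$-dimensional ancient flows can actually occur as dimension reductions: the hypothesis excludes the Bryant $3$-soliton as a \emph{strong} dimension reduction, but a priori different basepoint sequences could yield different limits, so one needs that \emph{some} dimension reduction being the Bryant soliton already triggers the weak-dimension-reduction conclusion — here one invokes that ruling out Bryant for \emph{all} choices is exactly the standing hypothesis, and under it every limit is a quotient shrinker. Second, and more delicate, is the \emph{uniformity} of $\epsilon$: we need a single $\epsilon$ working at every point of every possible limit, which requires knowing that the family of $3$-dimensional $\kappa$-solutions arising here is compact in $C^\infty_{loc}$ (Perelman) \emph{and} that $\partial_t R$ has a universal positive lower bound at points where $|Rm|=1$ on non-Bryant, non-flat $\kappa$-solutions; this last point should follow from the strong maximum principle applied to the evolution of $R$ together with the classification, but it is the step that genuinely uses Brendle's theorem \cite{Br3} rather than just Perelman's structure theory. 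The upper bounds in \eqref{inequality-1}–\eqref{inequality-2} and the whole blow-up mechanism are routine; the work is in the classification input.
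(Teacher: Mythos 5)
Your blow-up framework, the proof of \eqref{inequality-1}, and the upper bound in \eqref{inequality-2} are essentially the paper's argument, though heavier than necessary: the paper obtains $|Rm_N|\le c\,R_N$ directly from \cite{Ch} (a $3$-dimensional complete ancient flow with bounded curvature has nonnegative sectional curvature), so neither $\kappa$-noncollapsing nor compactness of the space of $\kappa$-solutions is invoked there; it then concludes the limit is flat from $R_\infty(p_\infty,0)=0$, contradicting $|Rm_{g_\infty(0)}|(p_\infty)=1$. For the upper bound the paper simply notes that the scale-invariant ratio $(\Delta R+2|Ric|^2)/|Rm|^2$ is bounded along any blow-up (the limit is smooth with normalized curvature) and converts via \eqref{inequality-1}.

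The genuine gap is in your lower bound. You reduce it to the claim that over the class of non-Bryant $3$-dimensional $\kappa$-solutions the quantity $(\Delta_N R_N+2|Ric_N|^2)/R_N^2=\partial_t R_N/R_N^2$ has a universal positive lower bound at points normalized by $|Rm_N|=1$, ``by compactness of this (now strictly smaller) class''. That class is not closed under pointed Cheeger--Gromov limits: Perelman's ancient ovals (admissible quotients of $\kappa$-solutions on $S^3$) contain regions near their tips, at times far from extinction, modeled on the Bryant soliton, where $\partial_t R/R^2$ at a point with $|Rm|=1$ is arbitrarily small; hence the infimum of your ratio over the non-Bryant class is $0$ and no universal $\epsilon$ exists. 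Note also that the hypothesis only excludes steady solitons as dimension reductions of the given $(M,g,f)$; it does not exclude Bryant-like regions inside other $\kappa$-solutions, which is what your uniformity claim would require. The correct and simpler route — the one the paper takes — needs no uniformity and no classification: if the lower bound fails along $p_i\to\infty$, the rescaled limit is (a line times) a dimension reduction which, by \cite{Ch}, is a complete ancient solution with bounded nonnegative curvature operator and satisfies $\left.\partial_t\right|_{t=0} R_{g_\infty}(p_\infty)=0$; Hamilton's eternal-solutions rigidity \cite{H2} (valid for ancient solutions with bounded nonnegative curvature operator and $\partial_t R=0$ at one point) then forces the limit to split as a line times a steady gradient Ricci $3$-soliton, contradicting the hypothesis directly. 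So, contrary to your closing remark, the step that does the work is Hamilton's Harnack rigidity, not Brendle's theorem \cite{Br3}.
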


\begin{proof}
We first show that (\ref{inequality-1}) is true. If it is not true, then there exists a sequence of points $\{p_i\}$ tending to infinity such that $\frac{R(p_i)}{|{\Rm}|(p_i)}\to0$. Let $K_i=|{\Rm}|(p_i)$.
Then $(M,K_i g(K_i^{-1}t),p_i)$ converges to $(M_{\infty}^4,g_{\infty}(t),p_{\infty})$,
where
$(M_{\infty},g_{\infty}(t))$ is the product of a $3$-dimensional ancient Ricci flow and a line. By Chen \cite{Ch}, $(M_{\infty},g_{\infty}(t))$
has
nonnegative sectional curvature.
Since
$\frac{R(p_i)}{|{\Rm}|(p_i)}\to0$,
the scalar curvature of $(M_{\infty},g_{\infty}(t))$ is zero for all $t$. Hence, $(M_{\infty},g_{\infty}(t))$ must be flat. However,  we have $|{\Rm}_{g_{\infty}(0)}|(p_{\infty})=1$ by the definition of the sequence. Hence, we obtain a contradiction. Thus (\ref{inequality-1}) is proved.

Next, we show that the
second inequality in
(\ref{inequality-2}) is true. By the convergence assumption, it is easy to see by a contradiction argument that
\begin{align}\label{inequality-3}
\frac{\Delta R(x)+2|{\Ric}|^2(x)}{|{\Rm}|^2(x)}\le C_4\,\text{ for all }x\in M\setminus K.
\end{align}
Then, the inequality on the right-hand side of (\ref{inequality-2}) follows from (\ref{inequality-1}) and (\ref{inequality-3}).

Finally,
we prove the
first inequality in
(\ref{inequality-2}). If the inequality is not true, then there exists a sequence of points $p_i$ tending to infinity such that
\begin{align}\label{limit-1}
\frac{\Delta R(p_i)+2|{\Ric}|^2(p_i)}{R^2(p_i)}\to 0~{\rm as}~i\to\infty
\end{align}
(by (\ref{inequality-1}), $|{\Rm}|$ and $R$ are comparable).
By hypothesis, we may assume that
\begin{align}
(M,R(p_i)g(R^{-1}(p_i)t),p_i)\to(M_{\infty}^4,g_{\infty}(t),p_{\infty}).
\end{align}
Also, by hypothesis,
we
have
that $(M_{\infty},g_{\infty}(t))$ is a product of a line and a complete ancient Ricci flow with bounded
nonnegative curvature operator. We also note that (\ref{limit-1}) implies that
\begin{align}
\left.\frac{\partial }{\partial t}\right|_{t=0}R_{g_{\infty}(t)}(p_{\infty})=\frac{\Delta_{g_{\infty}(0)} R_{g_{\infty}(0)}(p_{\infty})+2|{\Ric}_{g_{\infty}(0)}|_{g_{\infty}(0)}^2(p_{\infty})}{R^2_{g_{\infty}(0)}(p_{\infty})}=0.
\end{align}
Hence, by Hamilton's eternal solutions result \cite{H2}, we have that $(M_{\infty},g_{\infty}(t))$ is a product of a line and a steady gradient Ricci $3$-soliton.\footnote{Hamilton's eternal solutions result holds for complete \emph{ancient} solutions to the Ricci flow with bounded nonnegative curvature operator and with $\frac{\partial R}{\partial t}=0$ at a point.}
This contradicts our assumption. Hence, we have completed the proof of (\ref{inequality-2}).

\end{proof}

As a corollary, we have:

\begin{cor}\label{cor-non decay case}
Let $(M,g,f)$ be a $4$-dimensional
steady gradient Ricci soliton that dimension reduces to $3$-manifolds.
If the scalar curvature does not have uniform decay, then $(M^4,g,f)$ weakly dimension reduces to a steady gradient Ricci $3$-soliton.
\end{cor}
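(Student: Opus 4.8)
The plan is to argue by contradiction, combining the dichotomy in Lemma \ref{lem-estimates from asymptotical geometry} with the two linear-decay theorems of the previous section. Suppose that $(M^4,g,f)$ dimension reduces to $3$-manifolds but does \emph{not} weakly dimension reduce to any steady gradient Ricci $3$-soliton. Then in particular no dimension reduction of $(M^4,g,f)$ is a steady gradient Ricci $3$-soliton (a strongly-convergent limit is a fortiori a weak limit), so the second alternative of Lemma \ref{lem-estimates from asymptotical geometry} applies: there are positive constants $\epsilon, C_4$ with
\begin{align}\label{eq-cor-sandwich}
\epsilon\le \frac{\Delta R(x)+2|Ric|^2(x)}{R^2(x)}\le C_4\quad\text{for all }x\in M\setminus K.
\end{align}

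Next I would translate \eqref{eq-cor-sandwich} into a statement about $\frac{\rm d}{{\rm d}t}R(\phi_t(p))$. Using the soliton identities $\frac{\partial}{\partial t}R_{g(t)} = \Delta R + 2|Ric|^2$ along the Ricci flow $g(t)=\phi_t^*g$, and the fact that $R(\phi_t(p))$ is precisely the evolving scalar curvature read off at the fixed point, one gets
\begin{align*}
\left.\frac{\rm d}{{\rm d}t}\right|_{t=0}R(\phi_t(p)) = \Delta R(p) + 2|Ric|^2(p),
\end{align*}
so \eqref{eq-cor-sandwich} says exactly that
\begin{align*}
\epsilon \le \frac{1}{R^2(p)}\left.\frac{\rm d}{{\rm d}t}\right|_{t=0}R(\phi_t(p)) \le C_4\quad\text{for all }p\in M\setminus K.
\end{align*}
The lower bound is the hypothesis of Theorem \ref{theo-decay}, so $R(x)\le c/\rho(x)$ for $\rho(x)$ large; in particular the scalar curvature decays uniformly. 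But the scalar curvature decaying uniformly contradicts our standing assumption that it does \emph{not} have uniform decay, which completes the contradiction.

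The only delicate point is making sure the hypotheses of Theorem \ref{theo-decay} are genuinely met; the sandwich \eqref{eq-cor-sandwich} gives both the lower bound $\epsilon>0$ and, via the upper bound together with \eqref{inequality-1}, the nonnegativity of $\frac{\rm d}{{\rm d}t}R(\phi_t(p))$ needed to invoke Lemma \ref{lem-remain for t negative} and Lemma \ref{lem-shrinking} along the way — so in fact Theorem \ref{theo-linear of f} and Theorem \ref{theo-decay} apply verbatim. The main obstacle, such as it is, is purely bookkeeping: verifying that ``dimension reduces to $3$-manifolds but no dimension reduction is a steady gradient Ricci $3$-soliton'' is the correct hypothesis to feed into the second half of Lemma \ref{lem-estimates from asymptotical geometry}, and that its negation is precisely ``weakly dimension reduces to a steady gradient Ricci $3$-soliton,'' which is what we want to conclude. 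No further estimates are required; this corollary is essentially a one-line consequence of Lemma \ref{lem-estimates from asymptotical geometry} and Theorem \ref{theo-decay}.
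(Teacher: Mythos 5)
Your reduction to Lemma \ref{lem-estimates from asymptotical geometry} is fine (and the identity $\left.\frac{\rm d}{{\rm d}t}\right|_{t=0}R(\phi_t(p))=\Delta R(p)+2|Ric|^2(p)$ is exactly the one the paper records right after the corollary), but the final step has a genuine gap: Theorem \ref{theo-decay} cannot be applied here. Both Theorem \ref{theo-linear of f} and Theorem \ref{theo-decay} carry ``the scalar curvature decays uniformly'' as a standing hypothesis, and that hypothesis is precisely what the corollary assumes to be false. It is not a removable convenience: the proofs in Section \ref{section 3} use uniform decay to guarantee that the sublevel sets $S(\varepsilon)$ are compact and exhaust $M$, which is what makes Lemma \ref{lem-remain for t negative} and Lemma \ref{lem-shrinking} (existence of the hitting time $t_x$, compactness of $S(R_{\max}-R(p))$) work; the lower bound $\epsilon\le(\Delta R+2|Ric|^2)/R^2$ by itself does not produce the linear decay conclusion. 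So your contradiction argument in effect assumes uniform decay in order to derive uniform decay; the claim that the theorems ``apply verbatim'' is where the proof breaks. Indeed the paper's own logic runs in the opposite direction: Corollary \ref{cor-non decay case} is proved first, and only then, in Proposition \ref{prop-basic property}, is its contrapositive used to supply the uniform-decay hypothesis under which Theorems \ref{theo-linear of f}, \ref{theo-decay}, \ref{theo-decay lower bound} are invoked.

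The paper's proof is direct and does not touch Section \ref{section 3} at all: set $A=\lim_{r\to\infty}\sup_{x\in M\setminus B(x_0,r)}R(x)>0$ (no uniform decay), pick $p_i\to\infty$ with $R(p_i)\to A$, and rescale by $R(p_i)$. The dimension-reduction hypothesis together with \eqref{inequality-1} gives subconvergence of $(M,R(p_i)g(R^{-1}(p_i)t),p_i)$ to a product $(N^3\times\mathbb{R},g_N(t)+ds^2,p_\infty)$, and by the choice of $p_i$ the scalar curvature of the limit attains its space-time maximum at $(p_\infty,0)$. Hamilton's eternal solutions theorem (in the form recorded in the footnote, for ancient solutions with bounded nonnegative curvature operator and $\partial R/\partial t=0$ at one point) then forces $(N,g_N(t))$ to be a steady gradient Ricci $3$-soliton. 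If you want to keep a contradiction-style argument, you would have to replace the appeal to Theorem \ref{theo-decay} with this maximum-point/Hamilton argument; as written, the proposal does not close.
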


\begin{proof}
 Let $A=\lim_{r\to\infty}\sup_{x\in M\setminus B(x_0,r)}R(x)$, where $x_0$ is a fixed point. If the scalar curvature does not have uniform decay, then $A>0$. We can choose a sequence of points $\{p_i\}$ tending to infinity such that $R(p_i)\to A$ as $i\to \infty$. By Definition \ref{def-1} and by (\ref{inequality-1}), there exists a constant $C(A)$ such that
 \begin{align*}
     |{\rm Rm}|_{g_{i}(0)}(x,t)\le C(A)\quad\forall~x\in M,~t\in(-\infty,+\infty),
 \end{align*}
 where $g_{i}(t)=R(p_i)g(R^{-1}(p_i)t)$. Hence, $(M^4,g_i(t),p_i)$
subconverges
 to a limit $(N^3\times\mathbb{R},g_{N}(t)+ds^2,p_{\infty})$, where  $g_{N}(t)$ is an eternal flow.   By our choice of the sequence $\{p_i\}$, we have that $$R_{N}(p_{\infty},0)=1=\sup_{(x,t)\in N\times(-\infty,+\infty)}R_N(x,t),$$
 i.e., $R_N(x,t)$ attains its maximum in space-time at the space-time point $(p_{\infty},0)$. Hence, $(N,g_N(t))$ must admit a steady gradient Ricci soliton structure (see \cite{H2}).
\end{proof}

The steady gradient Ricci $3$-soliton in the corollary above must in fact be the Bryant $3$-soliton; see Theorem \ref{theo-uniqueness of limit soliton} below.

Note that
\begin{align}
\frac{1}{R^2(p)}\cdot\left.\frac{\rm d}{{\rm d} t}\right|_{t=0}R(\phi_t(p))=\frac{\Delta R(p)+2|{\Ric}|^2(p)}{R^2(p)}.
\end{align}

By Lemma \ref{lem-estimates from asymptotical geometry}, Corollary \ref{cor-non decay case}, Theorem \ref{theo-linear of f}, Theorem \ref{theo-decay}
 and Theorem \ref{theo-decay lower bound}, we have:

 \begin{prop}\label{prop-basic property}
Let $(M,g,f)$ be a $4$-dimensional
steady gradient Ricci soliton that dimension reduces to $3$-manifolds.
If  $(M,g,f)$ does not weakly dimension reduce to a steady gradient Ricci $3$-soliton, then there exist positive constants $r_0$, $C_1$, $C_2$, $C_3$, $c_1$ and $c_2$ such that
 \begin{align}
 C_1\rho(x)&\le f(x)\le C_2\rho(x)\,\text{ for all }\,\rho(x)\ge r_0,\\
 \frac{c_1}{\rho(x)}&\le R(x)\le \frac{c_2}{\rho(x)}\,\text{ for all }\,\rho(x)\ge r_0,\\
 |{\Rm}|(x)&\le C_3 R(x)\,\text{ for all }\,x\in M.
 \end{align}
 \end{prop}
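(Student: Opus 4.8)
The plan is to assemble Proposition~\ref{prop-basic property} by feeding the outputs of Lemma~\ref{lem-estimates from asymptotical geometry} into Theorems~\ref{theo-linear of f}, \ref{theo-decay}, and \ref{theo-decay lower bound}, so almost no new work is needed beyond verifying hypotheses. First I would record the identity
\begin{align}\label{dRdt-identity}
\frac{1}{R^2(p)}\cdot\left.\frac{\rm d}{{\rm d} t}\right|_{t=0}R(\phi_t(p))=\frac{\Delta R(p)+2|Ric|^2(p)}{R^2(p)},
\end{align}
which is the standard evolution equation $\partial_t R=\Delta R+2|Ric|^2$ specialized to the soliton vector field. Under the standing hypothesis that $(M,g,f)$ does not weakly dimension reduce to a steady gradient Ricci $3$-soliton, Corollary~\ref{cor-non decay case} forces the scalar curvature to have uniform decay; combined with \eqref{inequality-2} of Lemma~\ref{lem-estimates from asymptotical geometry} and the identity \eqref{dRdt-identity}, this shows that on $M\setminus K$ we have the two-sided bound $\epsilon\le \frac{1}{R^2(p)}\cdot\frac{\rm d}{{\rm d}t}\big|_{t=0}R(\phi_t(p))\le C_4$. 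In particular $(M,g,f)$ is non-Ricci-flat (otherwise $R\equiv0$ and the quotient is undefined, or one invokes B.-L.~Chen's positivity of $R$), so the hypotheses of all three theorems of Sections~\ref{section 3} and~\ref{section-linear decay} are verified.

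Next I would apply the three theorems in turn. The lower bound $\epsilon\le \frac{1}{R^2}\frac{\rm d}{{\rm d}t}\big|_{t=0}R\circ\phi_t$ on $M\setminus K$ is exactly the hypothesis of Theorem~\ref{theo-decay}, which yields $R(x)\le c_2/\rho(x)$ for $\rho(x)\ge r_0$. The two-sided bound $0\le \frac{1}{R^2}\frac{\rm d}{{\rm d}t}\big|_{t=0}R\circ\phi_t\le C_4$ is the hypothesis of Theorem~\ref{theo-decay lower bound} (with $C'=C_4$), giving $R(x)\ge c_1/\rho(x)$ for $\rho(x)\ge r_0$; after possibly enlarging $r_0$ and $K$ we may use a common $r_0$ for both estimates. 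Since \eqref{eq: dRdt geq 0 on M minus K} is implied by the lower bound just established, Theorem~\ref{theo-linear of f} applies and gives $C_1\rho(x)\le f(x)\le C_2\rho(x)$ for $\rho(x)\ge r_0$. Finally, the curvature pinching $|Rm|(x)\le C_3 R(x)$ for all $x\in M$ comes directly from \eqref{inequality-1} of Lemma~\ref{lem-estimates from asymptotical geometry} on $M\setminus K$, and is automatic (after enlarging $C_3$) on the compact set $K$ where $R$ is bounded below by a positive constant because $R>0$ everywhere.

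Honestly, there is no serious obstacle here: the proposition is a bookkeeping corollary, and the only point requiring a moment's care is the logical chain showing that \emph{not} weakly dimension reducing to a steady Ricci $3$-soliton activates both halves of \eqref{inequality-2} — this is precisely where Corollary~\ref{cor-non decay case} is used to rule out the no-uniform-decay case, and where the "no dimension reduction is a steady Ricci $3$-soliton" branch of Lemma~\ref{lem-estimates from asymptotical geometry} supplies the constant $\epsilon$. I would also make sure the constants are named consistently and that the set $K$ (and hence $r_0$) is chosen large enough to simultaneously accommodate all four estimates; with that the proof is a two-line citation of the preceding results.

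\begin{proof}[Proof of Proposition~\ref{prop-basic property}]
Recall the standard evolution identity
\begin{align}
\frac{1}{R^2(p)}\cdot\left.\frac{\rm d}{{\rm d} t}\right|_{t=0}R(\phi_t(p))=\frac{\Delta R(p)+2|Ric|^2(p)}{R^2(p)}.
\end{align}
Since $(M,g,f)$ does not weakly dimension reduce to a steady gradient Ricci $3$-soliton, Corollary~\ref{cor-non decay case} shows that the scalar curvature has uniform decay; in particular $(M,g,f)$ is non-Ricci-flat. By the second part of Lemma~\ref{lem-estimates from asymptotical geometry}, there are positive constants $\epsilon$ and $C_4$ with
\begin{align}
\epsilon\le \frac{1}{R^2(x)}\cdot\left.\frac{\rm d}{{\rm d} t}\right|_{t=0}R(\phi_t(x))\le C_4\,\text{ for all }\,x\in M\setminus K.
\end{align}
Thus the hypotheses of Theorem~\ref{theo-linear of f}, Theorem~\ref{theo-decay}, and Theorem~\ref{theo-decay lower bound} (the last with $C^{\prime}=C_4$) are all satisfied. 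Applying Theorem~\ref{theo-decay} and Theorem~\ref{theo-decay lower bound} and enlarging $r_0$ if necessary, we obtain positive constants $c_1,c_2,r_0$ with $\frac{c_1}{\rho(x)}\le R(x)\le \frac{c_2}{\rho(x)}$ for all $\rho(x)\ge r_0$. Applying Theorem~\ref{theo-linear of f} (whose hypothesis \eqref{eq: dRdt geq 0 on M minus K} follows from the lower bound above) and enlarging $r_0$ again, we get $C_1\rho(x)\le f(x)\le C_2\rho(x)$ for all $\rho(x)\ge r_0$. Finally, \eqref{inequality-1} of Lemma~\ref{lem-estimates from asymptotical geometry} gives $|Rm|(x)\le C_3 R(x)$ on $M\setminus K$; since $R>0$ on $M$ and $K$ is compact, after enlarging $C_3$ this holds on all of $M$.
\end{proof}
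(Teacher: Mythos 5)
Your proposal is correct and follows exactly the paper's route: the paper derives Proposition \ref{prop-basic property} precisely by combining the identity $\frac{1}{R^2}\frac{\rm d}{{\rm d}t}\big|_{t=0}R(\phi_t(p))=\frac{\Delta R+2|Ric|^2}{R^2}$ with Lemma \ref{lem-estimates from asymptotical geometry}, Corollary \ref{cor-non decay case}, Theorem \ref{theo-linear of f}, Theorem \ref{theo-decay}, and Theorem \ref{theo-decay lower bound}, which is what you do. Your bookkeeping (uniform decay via the contrapositive of Corollary \ref{cor-non decay case}, activating \eqref{inequality-2}, and extending \eqref{inequality-1} over the compact set $K$ using $R>0$) is a faithful and slightly more explicit write-up of the same argument.
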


 Proposition \ref{prop-basic property} implies that the level set of potential function $f(x)$ is compact and $|\nabla f|^2(x)>0$ when $\rho(x)\ge r_0$. Hence, we are able to use the level set flow (see Section 3 in \cite{DZ5}) to give an estimate of the diameter of the level set $\{x\in M:f(x)=r\}$ when $r$ is large enough. In \cite{DZ5}, the nonnegativity of the sectional curvatures is only used to get the following estimate
 \begin{align}\label{estimate-1}
\left| \Rm(\frac{\nabla f}{|\nabla f|},Y,Y,\frac{\nabla f}{|\nabla f|}) \right| \le \; & {\rm Ric}(\frac{\nabla f}{|\nabla f|},\frac{\nabla f}{|\nabla f|})
=\frac{|\langle \nabla R,\nabla f\rangle|}{|\nabla f|^2}\le C_0R^{\frac{3}{2}}\notag.
\end{align}
where $Y$ is a unit vector tangent to the level set. By Lemma 3.1 in \cite{DZ6}, we have
\begin{align}
{\rm Rm}(\nabla f,e_j,e_k,\nabla f)=-\frac{1}{2}({\rm Hess}R)_{jk}-R_{jl}R_{kl}+\Delta R_{jk}+2R_{ijkl}R_{il} .
\end{align}
Under the hypotheses of Theorem \ref{theorem-1}, it is easy to see by contradiction argument that
\begin{align}\label{estimate for gradient}
\frac{|\nabla^k{\Rm}|(x)}{R^{\frac{k+2}{2}}(x)}\le C(k)\,\text{ for all }\,\rho(x)\ge r_0.
\end{align}
Hence, we have
\begin{align}\label{estimate-2}
\left| {\Rm}(\frac{\nabla f}{|\nabla f|},Y,Y,\frac{\nabla f}{|\nabla f|})\right|\le\frac{|{\rm Hess }R|+|\Delta {\rm Ric}|+|{\rm Ric}|^2+2|{\rm Rm}|\cdot|{\rm Ric}|}{|\nabla f|^2}\le C_0R^2,
\end{align}
where $Y$ is a unit vector tangent to the level set.

Hence, under the hypotheses of Theorem \ref{theorem-1}, we can replace (\ref{estimate-1}) with (\ref{estimate-2}) to get the following diameter estimate for the level sets.

\begin{prop}\label{prop-diameter estimate}
Let $(M,g,f)$ be a $4$-dimensional
steady gradient Ricci soliton that dimension reduces to $3$-manifolds.
If  $(M,g,f)$ does not weakly dimension reduce to a
steady gradient Ricci $3$-soliton, then
there exists a constant $C_5$ independent of $r$ such that
\begin{align}
{\rm diam}(\Sigma_r,g)\le C_5\sqrt{r}\,\text{ for all }\,r\ge r_0,
\end{align}
where $\Sigma_r=\{x\in M:f(x)=r\}$.
\end{prop}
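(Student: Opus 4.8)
The plan is to run the level set flow argument of \cite{DZ5}, now powered by the improved second fundamental form bound coming from the curvature estimate \eqref{estimate-2}. First I would recall the geometry of the foliation by the level sets $\Sigma_r=\{f=r\}$: by Proposition \ref{prop-basic property}, for $\rho(x)\ge r_0$ we have $|\nabla f|^2(x)>0$ (since $R<|\nabla f|^2$ once $R$ is small), so $f$ has no critical points outside a compact set and the $\Sigma_r$ are smooth compact hypersurfaces for $r\ge r_0$. One then flows by the normalized gradient, i.e. considers the map $\psi_r$ obtained by following $\nabla f/|\nabla f|^2$, which moves $\Sigma_{r_0}$ diffeomorphically onto $\Sigma_r$; equivalently one uses $f$ itself as the flow parameter. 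The quantity to control is how the intrinsic diameter of $\Sigma_r$ grows, and this is governed by the evolution of the induced metric along the flow, which involves the second fundamental form $h$ of $\Sigma_r$ and the normal Hessian of $f$.

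Next I would set up the evolution equations. Writing $\nu=\nabla f/|\nabla f|$ for the unit normal, the second fundamental form satisfies $h_{jk}=\frac{1}{|\nabla f|}\,\mathrm{Hess}\,f(e_j,e_k)=\frac{1}{|\nabla f|}\,\mathrm{Ric}(e_j,e_k)$ for $e_j,e_k$ tangent to $\Sigma_r$, so $|h|\le \frac{|\mathrm{Ric}|}{|\nabla f|}\le C R^{3/2}/\sqrt{C}\le C' R$ by Proposition \ref{prop-basic property} and \eqref{inequality-1}. Under the level set flow with parameter $r$, the induced metric $g_r$ on $\Sigma_r$ evolves by $\frac{\partial}{\partial r}g_r = \frac{2}{|\nabla f|^2}\,\mathrm{Hess}\,f|_{T\Sigma_r} = \frac{2}{|\nabla f|}\,h$, so $\left|\frac{\partial}{\partial r}g_r\right| \le \frac{2|h|}{|\nabla f|}\le \frac{C'' R}{\sqrt{C}} \le \frac{C'''}{\rho}\le \frac{C''''}{f}$ using again Proposition \ref{prop-basic property} (linear decay of $R$ and linear growth of $f$). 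Integrating $\frac{d}{dr}L(\gamma_r)\le \tfrac12\sup_{\Sigma_r}\left|\frac{\partial}{\partial r}g_r\right|\,L(\gamma_r)$ along the image under the flow of a minimal geodesic $\gamma_{r_0}$ in $\Sigma_{r_0}$, one gets $L(\gamma_r)\le L(\gamma_{r_0})\exp\!\big(\int_{r_0}^{r}\tfrac{C''''}{2s}\,ds\big)=L(\gamma_{r_0})\,(r/r_0)^{C''''/2}$; this only yields a power of $r$, not $\sqrt r$, so the crude metric-distortion bound is not enough and one must instead control the diameter through a genuinely geometric comparison using the curvature bound \eqref{estimate-2}.

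The main point—and the main obstacle—is to upgrade the bound from ``some power of $r$'' to ``$C_5\sqrt r$'' by exploiting that $\Sigma_r$, rescaled by $R\sim 1/r$, is almost the cross-section of a three-dimensional steady soliton or of the cylinder, which has bounded diameter. Concretely, one shows that the rescaled hypersurfaces $(\Sigma_r, R(x_r)g_r)$ with $x_r$ a suitable point have uniformly bounded diameter: this is where one invokes the dimension reduction hypothesis together with \eqref{inequality-1}, which force the ambient rescalings to converge to $N^3\times\mathbb R$ with $N^3$ either $\mathbb S^3/\Gamma$ or the Bryant $3$-soliton, in all cases with cross-sections of bounded diameter. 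The tangential sectional curvatures of $\Sigma_r$ are bounded by $R^2$ via \eqref{estimate-2} (replacing the weaker $R^{3/2}$ bound \eqref{estimate-1} used in \cite{DZ5}), so the rescaled $\Sigma_r$ have curvature bounded by a fixed constant, and one runs the ODE/comparison argument of \cite{DZ5} verbatim: comparing the evolution of the diameter with the (now scale-invariant) curvature and second fundamental form bounds produces $\mathrm{diam}(\Sigma_r,g_r)\le C_5/\sqrt{R}\le C_5\sqrt r$ for $r\ge r_0$, after enlarging $r_0$ and $C_5$ if necessary. The delicate step is checking that the argument of \cite{DZ5}, which was written assuming nonnegative sectional curvature, goes through using only the estimates \eqref{estimate-2}, \eqref{estimate for gradient}, and Proposition \ref{prop-basic property}; but since nonnegativity was used there \emph{only} to obtain the analog of \eqref{estimate-1}, and we have the stronger \eqref{estimate-2}, the substitution is legitimate and the conclusion follows.
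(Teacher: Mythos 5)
Your closing remark is, in effect, the paper's entire proof: in \cite{DZ5} the nonnegativity of the sectional curvature enters the level-set-flow argument of Section 3 there only through the bound \eqref{estimate-1} on the curvatures $Rm(\nu,Y,Y,\nu)$ of planes containing $\nabla f$, and under the hypotheses of Theorem \ref{theorem-1} this input is supplied instead by \eqref{estimate-2}, which comes from the identity of Lemma 3.1 in \cite{DZ6} combined with the scale-invariant derivative estimates \eqref{estimate for gradient} and the lower bound on $|\nabla f|$; together with Proposition \ref{prop-basic property} nothing else changes. The step you present as the ``main point,'' however, has a genuine gap: it is circular. You propose to bound ${\rm diam}(\Sigma_r,R(x_r)g)$ by invoking the dimension-reduction convergence together with the statement that the limits are $N^3\times\mathbb{R}$ with $N^3$ either $\mathbb{S}^3/\Gamma$ or the Bryant $3$-soliton. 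That classification is Theorem \ref{theorem-1} itself, which this proposition is a step toward (and at this stage $N$ could a priori also be a noncompact cylinder quotient; the Bryant soliton certainly does not have bounded diameter). More importantly, pointed Cheeger--Gromov convergence of the ambient rescalings does not by itself control the diameter of a level set: convergence on balls of any fixed rescaled radius around $x_r$ says nothing about the portion of $\Sigma_r$ outside those balls. Indeed, the containment $\Sigma_{f(q_i)}\subseteq B(q_i,C_0^{\prime};R(q_i)g)$ used in Section \ref{section-Uniqueness of limit} (via Corollary 3.4 of \cite{DZ5}) is a consequence of Proposition \ref{prop-diameter estimate}, not an admissible input to it.

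A second, smaller error: \eqref{estimate-2} does not bound the tangential sectional curvatures of $\Sigma_r$ by $R^2$; it bounds the mixed components $Rm(\nu,Y,Y,\nu)$ with $Y$ tangent, i.e.\ the sectional curvatures of planes containing the normal. The tangential curvatures are only $O(R)$, via \eqref{inequality-1}, which is in any case the correct scale-invariant order at the scale $\sqrt{r}$. Your observation that the naive metric-distortion ODE (driven by the tangential Ricci divided by $|\nabla f|^2$) only yields $r^{C}$ rather than $\sqrt{r}$ is fair, but the remedy is not a convergence argument; it is the finer structure of the level-set-flow argument in \cite{DZ5}, which the paper reuses verbatim after substituting \eqref{estimate-2} for \eqref{estimate-1}. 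So the correct proof is the one sentence you defer to the end as a ``delicate step to check,'' while the mechanism you actually propose to carry the $\sqrt{r}$ bound would not stand on its own.
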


 With the help of Proposition \ref{prop-basic property}, Proposition \ref{prop-diameter estimate} and (\ref{estimate for gradient}), we can use the argument in Sections 2 through 4 of \cite{DZ5} to prove
a weak version of Theorem 1.3 in \cite{DZ5}.

\begin{theo}\label{theo-convergence}
Let $(M,g,f)$ be a $4$-dimensional
steady gradient Ricci soliton that dimension reduces to $3$-manifolds.
Suppose that $(M,g,f)$ does not weakly dimension reduce to a
steady gradient Ricci $3$-soliton. Then for any $p_{i}\rightarrow \infty$ the
 rescaled Ricci flows $(M,R(p_i)g(R^{-1}(p_i)t),p_{i})$ converge   subsequentially to
$(\mathbb{R}\times \Sigma,ds^2+g_{\Sigma}(t))$,  $t\in (-\infty,0]$, in the Cheeger--Gromov topology, where   $\Sigma$ is diffeomorphic to  a  level set $\Sigma_{r_0}$  and
 $(\Sigma,$ $g_{\Sigma}(t))$ is a $3$-dimensional compact ancient solution to the Ricci flow. Moreover, the scalar curvature $R_{\Sigma}(x,t)$ of $(\Sigma,g_{\Sigma}(t))$ satisfies
 \begin{align}\label{decay of t}
 R_{\Sigma}(x,t)\le\frac{C}{|t|}\,\text{ for all }\,x\in \Sigma,~t<0,
 \end{align}
 where $C$ is a
 constant.
\end{theo}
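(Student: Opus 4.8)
The plan is to follow the blueprint of \cite{DZ5}, but systematically replace every invocation of nonnegative sectional curvature there by the estimates we have just assembled: the linear growth of $f$ (Proposition~\ref{prop-basic property}), the two-sided linear scalar curvature decay (Proposition~\ref{prop-basic property}), the bound $|Rm|\le C_3 R$ (Lemma~\ref{lem-estimates from asymptotical geometry}), the derivative bounds \eqref{estimate for gradient}, and the diameter estimate of Proposition~\ref{prop-diameter estimate}. First I would fix the basepoints: given $p_i\to\infty$, set $r_i=f(p_i)$ (so $r_i\to\infty$ by Proposition~\ref{prop-basic property}) and consider the rescaled Ricci flows $g_i(t)=R(p_i)g(R^{-1}(p_i)t)$. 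Using $R(p_i)\asymp \rho(p_i)^{-1}\asymp r_i^{-1}$ from the linear decay, the curvature of $g_i$ is uniformly bounded on fixed parabolic neighborhoods of $p_i$ (by $|Rm|\le C_3 R$ together with \eqref{estimate for gradient} for derivative bounds), and Perelman's no-local-collapsing for steady solitons (or $\kappa$-noncollapsing, which holds here) gives a uniform injectivity radius lower bound. Hamilton's compactness theorem then produces a subsequential limit $(M_\infty,g_\infty(t),p_\infty)$, $t\in(-\infty,0]$, which is a complete ancient solution with bounded curvature.

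The heart of the matter is to show $M_\infty\cong\mathbb{R}\times\Sigma$ with $\Sigma$ a closed $3$-manifold diffeomorphic to $\Sigma_{r_0}$, and that the splitting is isometric. Here I would use the level set flow of $f$ as in Section~3 of \cite{DZ5}: Proposition~\ref{prop-basic property} guarantees $|\nabla f|^2>0$ and $\Sigma_r$ compact for $\rho\ge r_0$, so $\{\phi_t\}$ flows level sets to level sets and gives a diffeomorphism $M\setminus S(\varepsilon_0)\cong \Sigma_{r_0}\times(r_0,\infty)$ via $f$. The diameter estimate $\mathrm{diam}(\Sigma_r,g)\le C_5\sqrt r$, combined with $R\le c_2/r$ on $\Sigma_r$, shows that after rescaling by $R(p_i)\asymp 1/r_i$ the level set $\Sigma_{r_i}$ has uniformly bounded diameter and curvature; so in the limit the compact factor $\Sigma$ appears as a bounded-geometry closed $3$-manifold, and the normalized gradient direction $\nabla f/|\nabla f|$ converges to a parallel unit vector field on $M_\infty$ (the second fundamental form of the level sets, controlled by $\mathrm{Hess}\,f=\mathrm{Ric}$ and hence by $R$, scales to zero), which forces a metric splitting by de~Rham. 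One then reads off $R_\Sigma(x,t)\le C/|t|$ from the ancient solution together with the soliton identity: going backward in time along the Ricci flow corresponds to flowing by $\phi_t$ toward infinity, and the linear decay $R\le c_2/\rho$ translates into the $1/|t|$ bound for the limit (this is exactly the mechanism by which type-I-like decay in space becomes type-I decay in backward time for a steady soliton that dimension reduces).

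The main obstacle I anticipate is verifying that the replacement of \eqref{estimate-1} by \eqref{estimate-2}—i.e., the weaker bound $|Rm(\tfrac{\nabla f}{|\nabla f|},Y,Y,\tfrac{\nabla f}{|\nabla f|})|\le C_0 R^2$ instead of $C_0 R^{3/2}$—still suffices to run the level-set ODE comparison of \cite{DZ5} and yield $\mathrm{diam}(\Sigma_r,g)\le C_5\sqrt r$ rather than something worse; this is precisely why Proposition~\ref{prop-diameter estimate} is stated as a separate input, and once it is in hand the rest of the argument is a careful but routine transcription of Sections~2–4 of \cite{DZ5}. A secondary technical point is ensuring the limit is an \emph{ancient} solution defined for all $t\le 0$ (not merely eternal or defined on a finite interval): this follows because $|Rm|$ on $g_i$ is bounded uniformly on $(-\infty,0]$ after rescaling, using that the unrescaled soliton has bounded curvature and the rescaling factors $R(p_i)\to 0$, so the rescaled time intervals $[-K_i t_i,0]$—or here simply the soliton's own eternal existence—cover $(-\infty,0]$ in the limit.
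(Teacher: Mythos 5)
Your proposal follows essentially the same route as the paper: the paper's own ``proof'' of Theorem \ref{theo-convergence} is precisely to feed Proposition \ref{prop-basic property}, Proposition \ref{prop-diameter estimate} and the derivative bounds \eqref{estimate for gradient} (which replace the nonnegative-sectional-curvature estimate \eqref{estimate-1} by \eqref{estimate-2}) into the level-set-flow argument of Sections 2--4 of \cite{DZ5}, and your sketch of the compact factor $\Sigma$, the splitting along $\nabla f/|\nabla f|$, and the mechanism producing $R_\Sigma\le C/|t|$ (flowing backward in time pushes the basepoint outward, and the two-sided linear decay of $R$ in terms of $f$ converts into $1/|t|$ decay after rescaling, exactly as in the computation leading to \eqref{curvature estimate in level set}) matches what the paper does.

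One step of your justification, however, is not available as stated: you invoke ``Perelman's no-local-collapsing for steady solitons (or $\kappa$-noncollapsing, which holds here)'' to get the injectivity radius bound for Hamilton--Cheeger--Gromov compactness. The theorem does not assume $\kappa$-noncollapsing, Perelman's no-local-collapsing theorem applies to finite-time singularities (hence to singularity models), not to arbitrary steady solitons, and general steady solitons can be collapsed (the cigar-type examples), so this appeal is a genuine gap in your argument as written. It is easily repaired, and this is exactly how the paper avoids it: the subsequential convergence of $(M,K_ig(K_i^{-1}t),p_i)$ at the curvature scale is \emph{part of the hypothesis} that $(M,g,f)$ dimension reduces to $3$-manifolds (Definition \ref{def-1}), and the comparability $|Rm|\le C_3R$ from Lemma \ref{lem-estimates from asymptotical geometry} transfers this to the scale $R(p_i)$; in the same spirit, the paper later derives the noncollapsing estimate \eqref{volume of geodesic ball} at the curvature scale directly from the dimension-reduction hypothesis (Lemma \ref{lem-volume of geodesic ball}), precisely so that no $\kappa$-noncollapsing assumption is needed. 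With that substitution your outline coincides with the paper's argument.
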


We note that $\Sigma$ in Theorem \ref{theo-convergence} is independent of the choice of the sequence $\{p_i\}$ and the subsequence since it is diffeomorphic to the level set $\Sigma_{r_0}$ of the potential $f$ for some $r_0$. We also note that $\Sigma$ is connected. This is due to the following theorem in \cite{MW}.
\begin{theo}[Munteanu and Wang] 
A complete noncompact steady gradient Ricci soliton is either connected at infinity (i.e., has exactly one end) or splits as the product of $\mathbb{R}$ with a compact Ricci flat manifold. Hence, a complete noncompact non-Ricci-flat  steady gradient Ricci soliton must be connected at infinity.
\end{theo}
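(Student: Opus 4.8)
The plan is to prove the dichotomy by showing that a complete noncompact steady gradient Ricci soliton $(M^n,g,f)$ which is \emph{not} connected at infinity must split isometrically as $\mathbb{R}\times N$ with $N$ compact and Ricci flat; the concluding sentence of the statement is then immediate, since such a product is Ricci flat. Recall the elementary identities for a steady soliton: tracing ${\rm Ric}={\rm Hess}\,f$ gives $\Delta f=R$, and $R+|\nabla f|^2\equiv C$ for a constant $C$. By B.-L.~Chen \cite{Ch} one has $R\ge 0$, hence $C\ge 0$ and $|\nabla f|\le\sqrt{C}$ everywhere. If $M$ has at least two ends, then, joining points escaping to infinity in two different ends by minimizing geodesics (all of which cross a fixed compact set separating two of the ends) and passing to a subsequential limit, $M$ contains a geodesic line $\gamma$.

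The heart of the matter is to split $M$ off this line using the soliton structure. The key point is that the drift Laplacian $\Delta_{-f}u:=\Delta u+\langle\nabla f,\nabla u\rangle$ has \emph{vanishing} Bakry--\'{E}mery Ricci tensor, ${\rm Ric}_{-f}:={\rm Ric}-{\rm Hess}\,f\equiv 0\ge 0$, and that $|\nabla f|$ is bounded. This is exactly the setting of the Bakry--\'{E}mery version of the Cheeger--Gromoll splitting theorem (cf.\ Wei--Wylie and others): a complete manifold carrying a function $\phi$ with ${\rm Ric}_\phi\ge 0$, $|\nabla\phi|$ bounded, and a line splits isometrically as $\mathbb{R}\times N$. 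To reprove it directly: let $b^+,b^-$ be the Busemann functions of the two rays of $\gamma$; the weighted Riccati/mean-curvature comparison (valid because ${\rm Ric}_{-f}=0$) controls $\Delta_{-f}$ of the distance functions, and although each $b^\pm$ is only almost $\Delta_{-f}$-superharmonic (the first-order terms $\langle\nabla f,\nabla b^\pm\rangle$ leave a nondecaying error), the errors from the two sides of the line cancel in the limit, so $b^++b^-$ is genuinely $\Delta_{-f}$-superharmonic; since $b^++b^-\ge 0$ and vanishes on $\gamma$, the strong maximum principle forces $b^++b^-\equiv 0$, whence $b^+$ is $\Delta_{-f}$-harmonic with $|\nabla b^+|\equiv 1$, and the weighted Bochner identity
\[
\Delta_{-f}\tfrac12|\nabla b^+|^2=|{\rm Hess}\,b^+|^2+{\rm Ric}_{-f}(\nabla b^+,\nabla b^+)=|{\rm Hess}\,b^+|^2
\]
gives ${\rm Hess}\,b^+\equiv 0$. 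Thus $\nabla b^+$ is a parallel unit field and $M=\mathbb{R}\times N$ isometrically, with $b^+$ the coordinate on the $\mathbb{R}$-factor.

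It remains to identify $N$. Chasing the soliton equation on $\mathbb{R}\times N$: since $\partial_s$ (the $\mathbb{R}$-direction) is parallel, ${\rm Hess}\,f(\partial_s,\cdot)={\rm Ric}(\partial_s,\cdot)=0$ forces $f(s,y)=as+B(y)$, and then ${\rm Hess}_N B={\rm Ric}_N$, i.e.\ $(N,g_N,B)$ is again a steady gradient Ricci soliton. Since $M$ is connected, so is $N$, and $\mathbb{R}\times N$ has at least two ends exactly when $N$ is compact; hence $N$ is compact. On a closed manifold a steady soliton is Ricci flat: $\Delta_N B=R_N\ge 0$ (using \cite{Ch}) forces $R_N\equiv 0$ and $B$ constant on $N$, so ${\rm Ric}_N={\rm Hess}_N B\equiv 0$. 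Therefore $M\cong\mathbb{R}\times N$ with $N$ compact Ricci flat. In particular, a non-Ricci-flat complete noncompact steady gradient Ricci soliton cannot be of this form, so it has exactly one end.

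The main obstacle is the splitting step of the second paragraph: with ${\rm Ric}_{-f}$ only equal to $0$ (not strictly positive) and $f$ merely of bounded gradient (not bounded), the Busemann function of a single ray need not be $\Delta_{-f}$-superharmonic -- the weighted Laplacian comparison leaves an $O(\sqrt{C})$ error -- so one must carefully exploit the two-sided geometry of the line to see that the error terms from the two rays cancel. Equivalently, one needs the Bakry--\'{E}mery splitting theorem under the hypothesis ``${\rm Ric}_\phi\ge 0$ and $|\nabla\phi|$ bounded'' rather than ``$\phi$ bounded'', and this is precisely where the soliton structure, beyond the bare identity ${\rm Ric}={\rm Hess}\,f$, enters.
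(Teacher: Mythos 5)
Your skeleton (two ends $\Rightarrow$ a geodesic line; split $M$ off the line; identify the cross-section as a compact steady soliton, hence Ricci flat) is the natural one, and the first and third steps are fine. But there is a genuine gap at exactly the step you yourself flag as ``the main obstacle'': splitting off the line. Note first that the paper offers no proof of this statement at all --- it is quoted from Munteanu--Wang \cite{MW}, whose argument is entirely different from yours (a weighted Li--Tam-type analysis of ends via $f$-harmonic functions and the weighted Laplacian, combined with a Bochner argument), precisely because the Busemann-function route does not go through with only ${\rm Ric}_{-f}\ge 0$ and $|\nabla f|$ bounded. The ``Bakry--\'Emery Cheeger--Gromoll splitting theorem'' you invoke is available in the literature (Lichnerowicz, Fang--Li--Zhang, Wei--Wylie) under the hypothesis that the potential $\phi$ itself is bounded (or suitably controlled), not merely $|\nabla\phi|$; with only a gradient bound the hypothesis is of the same nature as ``${\rm Ric}\ge 0$ up to an error whose integral along every minimal geodesic is at most $2\sup|\nabla\phi|$,'' a setting (compare ${\rm Ric}\ge 0$ outside a compact set) in which splitting is known to fail. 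In the equality case ${\rm Ric}_{-f}=0$ the hypothesis of the theorem you cite is literally ``steady gradient Ricci soliton,'' so invoking it is circular: it is the statement to be proved.

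Your proposed repair --- that the comparison errors for $b^{+}$ and $b^{-}$ ``cancel in the limit,'' making $b^{+}+b^{-}$ genuinely $\Delta_{-f}$-superharmonic --- is asserted rather than proved, and the mechanism does not work as described. Writing $\phi=-f$, $a=\sup|\nabla f|\le\sqrt{C}$, the weighted Riccati comparison under ${\rm Ric}_{\phi}=0$ gives for the barrier functions only an estimate of the form
\begin{align}
\Delta_{\phi}\, d(\cdot,\gamma(\pm t))(x)\;\le\; \frac{n-1}{r_{0}}\;+\;\big\langle \nabla f,\sigma'\big\rangle\big|_{\,\text{at distance }r_{0}\text{ from }\gamma(\pm t)} \;\le\; \frac{n-1}{r_{0}}+a ,\notag
\end{align}
the first-order term at the base point $x$ having already cancelled inside each single comparison. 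The residual $O(a)$ error is the radial derivative of $f$ along the connecting geodesic near the far centers $\gamma(\pm t)$; the two errors live at two different infinities, along two unrelated families of minimal geodesics, and there is no reason for them to cancel when the two barriers are added --- cancellation of the kind you describe would require matching \emph{lower} bounds on $\Delta_{\phi}$ of the distance functions, which Ricci-type comparison cannot provide (in the flat model $\mathbb{R}^{n}$ with linear $f$ the cancellation is an exact computation, not a consequence of the one-sided bounds). So the maximum-principle step collapses, and with it the proof; to close it one must use the soliton structure beyond ${\rm Ric}_{-f}\ge0$ and $|\nabla f|\le\sqrt{C}$, which is exactly what \cite{MW} do by a different method.
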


By Theorem \ref{theo-linear of f}, we know that $M=S(\varepsilon_0)\cup M\setminus S(\varepsilon_0)$ and $M\setminus S(\varepsilon_0)$ is diffeomorphic to $\Sigma\times (A,+\infty)$.  Since $(M,g)$ has only one end, $\Sigma$ must be connected.

To prove Theorem \ref{theorem-1}, we are left to show that $\Sigma$ in Theorem \ref{theo-convergence} is diffeomorphic to $\mathbb{S}^3/\Gamma$ and that
$g_{\Sigma}(t)$ is a
family
of round metrics. Moreover, we need to show that the limit steady gradient Ricci soliton must be the product of a line and the Bryant $3$-soliton if it weakly dimension reduces to a steady gradient Ricci $3$-soliton. These results will be proved in the next two sections (see Theorem \ref{theo-uniqueness of limit ancient flow} and Theorem \ref{theo-uniqueness of limit soliton}).

\section{Uniqueness of the limit ancient Ricci flow}\label{section-Uniqueness of limit}

\begin{theo}\label{theo-uniqueness of limit ancient flow}
The $3$-manifold
$\Sigma$ in Theorem \ref{theo-convergence} is diffeomorphic to $\mathbb{S}^3/\Gamma$ and $g_{\Sigma}(t)$ is a
family
of shrinking round metrics.
\end{theo}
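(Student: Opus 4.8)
The plan is to recognize $(\Sigma,g_\Sigma(t))$ as a compact three-dimensional $\kappa$-solution, apply the known classification of such solutions, and then use the estimates already established to eliminate the non-round case. To begin, I would record that $(\Sigma,g_\Sigma(t))$, $t\in(-\infty,0]$, is a non-flat compact three-dimensional $\kappa$-solution. Compactness is immediate from Theorem~\ref{theo-convergence}, since $\Sigma$ is diffeomorphic to the level set $\Sigma_{r_0}$. By the argument in the proof of Lemma~\ref{lem-estimates from asymptotical geometry}, the limit of the rescaled flows is the product of a line with a complete ancient Ricci flow of bounded nonnegative curvature operator, so $(\Sigma,g_\Sigma(t))$ has bounded nonnegative curvature operator. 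Since $(M^4,g,f)$ is $\kappa$-noncollapsed, so is the limit, hence so is $(\Sigma,g_\Sigma(t))$; and because the rescaling in Theorem~\ref{theo-convergence} is by the scalar curvature, the scalar curvature of $g_\Sigma(0)$ equals $1$ at the basepoint, so $g_\Sigma$ is non-flat.

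Next I would invoke the classification of compact three-dimensional $\kappa$-solutions: by Perelman, together with the uniqueness theorem of Angenent--Brendle--Daskalopoulos--Sesum \cite{ABDS}, any such solution is either a shrinking round spherical space form $\mathbb{S}^3/\Gamma$ or a finite isometric quotient of Perelman's ancient solution on $S^3$, so that its underlying manifold is $S^3$ or $\mathbb{RP}^3$. In every case $\Sigma$ is diffeomorphic to a spherical space form $\mathbb{S}^3/\Gamma$, which is the first assertion; it remains to rule out Perelman's ancient solution, which will give the second assertion.

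To exclude Perelman's ancient solution I would combine the two quantitative inputs. Theorem~\ref{theo-convergence} gives $R_\Sigma(x,t)\le C/|t|$ for $t<0$. I would then transport the diameter estimate of Proposition~\ref{prop-diameter estimate} to every time slice: using $g(t)=\phi_t^*g$, the time-$t$ slice of $(M,R(p_i)g(R^{-1}(p_i)t),p_i)$ is isometric to $(M,R(p_i)g)$ based at $q_i=\phi_{R^{-1}(p_i)t}(p_i)$, which still tends to infinity; along the trajectory $|\nabla f|^2=C-R$ is close to $C$ by \eqref{identity} and the uniform decay of $R$, so $f(q_i)\approx f(p_i)+C|t|/R(p_i)$, and since $R(p_i)f(p_i)$ is bounded by Proposition~\ref{prop-basic property}, Proposition~\ref{prop-diameter estimate} yields $\operatorname{diam}(\Sigma,g_\Sigma(t))^2\le C'(1+|t|)$ for all $t\le 0$. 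Together with $R_\Sigma\le C/|t|$ this gives $\sup_{t\le 0}\bigl(\operatorname{diam}(g_\Sigma(t))^2\cdot\sup_\Sigma R_\Sigma(\cdot,t)\bigr)<\infty$, i.e.\ the diameter of $(\Sigma,g_\Sigma(t))$ stays comparable to its curvature scale for all $t$. Perelman's ancient solution fails this, since as $t\to-\infty$ it contains necks whose length grows faster than their radius, so $\operatorname{diam}(g(t))^2R_{\max}(t)\to\infty$. Hence $(\Sigma,g_\Sigma(t))$ must be the shrinking round $\mathbb{S}^3/\Gamma$, and $g_\Sigma(t)$ is a family of shrinking round metrics.

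The main obstacle is this last step, the exclusion of Perelman's ancient solution. Concretely it requires (i) propagating Proposition~\ref{prop-diameter estimate} to negative times, which rests on the soliton identities and the linear growth and decay estimates for $f$ and $R$, and (ii) the input that Perelman's ancient solution is not uniformly neck-free, i.e.\ that its diameter outgrows its curvature scale backward in time (equivalently, its backward asymptotic soliton is the noncompact round cylinder). An alternative for (ii), bypassing diameter bounds, is to observe that if $(\Sigma,g_\Sigma(t))$ were Perelman's ancient solution, then rescaling $\mathbb{R}\times\Sigma$ at points lying in a cap region of $\Sigma$ (where it is modeled on the Bryant $3$-soliton) and diagonalizing would show that $(M,g,f)$ weakly dimension reduces to the Bryant $3$-soliton, contradicting the standing hypothesis of Theorem~\ref{theo-convergence}.
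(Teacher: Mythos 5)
Your argument is essentially correct, but it takes a genuinely different route from the paper. The paper never invokes the classification of compact three-dimensional $\kappa$-solutions: in its proof of Theorem \ref{theo-uniqueness of limit ancient flow} it performs a backward blow-down $(\Sigma,\tau_i^{-1}g_\Sigma(\tau_i t),q)$ in the style of Naber \cite{Na}, using the curvature bound \eqref{decay of t} together with the volume lower bound \eqref{volume lower bound}; the limit is a gradient shrinking $3$-soliton, and the diameter bound \eqref{diameter estimate of sigma} forces it to be compact, hence a round $\mathbb{S}^3/\Gamma$, which already yields the diffeomorphism type of $\Sigma$. The roundness of $g_\Sigma(t)$ itself is then obtained by an entropy argument: Perelman's $\nu$-entropy is monotone and bounded below by that of the asymptotic soliton, while positive sectional curvature (via \cite{Ch}) and Hamilton's theorem \cite{H1} give forward convergence to the round metric with the same entropy value, so $\nu$ is constant and $g_\Sigma(t)$ is itself a compact shrinking soliton, hence round. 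You instead view $(\Sigma,g_\Sigma(t))$ as a compact $\kappa$-solution, appeal to the external classification (round space forms or Perelman's ancient solution and its quotient), and exclude Perelman's solution via the scale-invariant bound $\operatorname{diam}^2\cdot\sup R\le C$ for all $t\le 0$; note that your step (i), propagating Proposition \ref{prop-diameter estimate} to negative times, is precisely the estimate \eqref{diameter estimate of sigma} that the paper proves in Section \ref{section-Uniqueness of limit}, so that part coincides. What the two approaches buy: yours is shorter once the classification is granted, but it rests on two external inputs the paper deliberately avoids, namely (a) the uniqueness theorem for compact $3$-dimensional $\kappa$-solutions, which is due to Brendle--Daskalopoulos--Sesum rather than the cited asymptotics paper \cite{ABDS} (the latter assumes rotational symmetry), together with the fact that Perelman's solution has the noncompact cylinder as its backward asymptotic soliton (equivalently $\operatorname{diam}^2\sup R\to\infty$), and (b) the $\kappa$-noncollapsedness of $\Sigma$, which you inherit from the blanket assumption on $(M^4,g,f)$, whereas the paper's Section \ref{section-Uniqueness of limit} derives its volume noncollapsing \eqref{volume of geodesic ball} from the dimension-reduction hypothesis alone and so its argument does not need that assumption. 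The paper's route is thus more self-contained (Perelman entropy, Naber's blow-down, Chen's positivity, Hamilton 1982), at the cost of the extra entropy step; your alternative exclusion via cap points limiting to the Bryant soliton would also work, but again only after invoking the cap asymptotics of Perelman's solution and a careful space-time diagonal argument.
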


We first note that the following lemma holds
because of the dimension reduction assumption.

\begin{lem}\label{lem-volume of geodesic ball}
Let $(M,g,f)$ be a $4$-dimensional
steady gradient Ricci soliton that dimension reduces to $3$-manifolds.
Then there exist positive constants $\kappa$ and $r_0$ such that (noncollapsing at the curvature scale of a point)
\begin{align}\label{volume of geodesic ball}
{\rm vol}\,B(p,1;R(p)g)\ge \kappa\,\text{ for all }p\in M\text{ such that }\rho(p)\ge r_0.
\end{align}
For any positive
number
$\bar{r}$, there exists a positive constant $C(\bar{r})$ such that (bounded curvature at bounded distance)
\begin{align}\label{curvature estimate in geodesic ball}
\frac{R(x)}{R(p)}\le C(\bar{r})\,\text{ for all }\,x\in B(p,\bar{r};R(p)g)
\text{ and }p\text{ such that }
\rho(p)\ge r_0.
\end{align}
\end{lem}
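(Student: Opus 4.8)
The plan is to deduce both statements from the dimension reduction hypothesis by a standard contradiction (point-picking) argument, using that the reduced $3$-dimensional flows are $\kappa$-solutions in Brendle's list. First I would address \eqref{volume of geodesic ball}. Suppose it fails: then there is a sequence $p_i$ with $f(p_i)\to\infty$ (equivalently $\rho(p_i)\to\infty$, by Proposition \ref{prop-basic property}) but ${\rm vol}\,B(p_i,1;R(p_i)g)\to 0$. By \eqref{inequality-1} in Lemma \ref{lem-estimates from asymptotical geometry}, $|Rm|(p_i)$ and $R(p_i)$ are comparable, so after rescaling by $K_i=|Rm|(p_i)$ we still have volume collapse at the unit scale. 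By Definition \ref{def-1}, a subsequence of $(M,K_ig(K_i^{-1}t),p_i)$ converges in $C^\infty$ Cheeger--Gromov sense to $(N^3\times\RR,g_N(t)+ds^2,p_\infty)$; the $C^\infty$ convergence forces ${\rm vol}\,B(p_\infty,1)=0$, which is impossible since the limit is a smooth Riemannian manifold. This gives \eqref{volume of geodesic ball}. (One should note the basepoint-dependence issue: the constant $\kappa$ obtained this way depends only on the collection of possible reduced limits, which by Brendle \cite{Br3} are drawn from a fixed list of $\kappa$-solutions with a uniform noncollapsing constant, so $\kappa$ is genuinely uniform.)

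For \eqref{curvature estimate in geodesic ball}, I would argue similarly. Fix $\bar r>0$ and suppose there are $p_i$ with $f(p_i)\to\infty$ and points $x_i\in B(p_i,\bar r;R(p_i)g)$ with $R(x_i)/R(p_i)\to\infty$. Pass to the reduced limit $(N^3\times\RR,g_N(t)+ds^2,p_\infty)$ of $(M,R(p_i)g(R(p_i)^{-1}t),p_i)$ — here I use \eqref{inequality-1} to replace $|Rm(p_i)|$ by $R(p_i)$ in the rescaling, and the fact that the limit has bounded curvature (it is an ancient Ricci flow with bounded curvature in Definition \ref{def-1}). Since $x_i$ stays within bounded distance $\bar r$ of the basepoint in the rescaled metric, a subsequence of $x_i$ converges to a point $x_\infty$ in the limit, and by $C^\infty$ convergence $R_{g_N(0)+ds^2}(x_\infty)=\lim R(x_i)/R(p_i)=\infty$, contradicting the boundedness of curvature of the limit. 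Hence \eqref{curvature estimate in geodesic ball} holds, with $C(\bar r)$ depending only on $\bar r$ and the curvature bound of the (finitely many types of) reduced limits.

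The step requiring the most care is the \emph{uniformity} of the constants $\kappa$, $r_0$, and $C(\bar r)$ over all admissible basepoint sequences. The contradiction arguments as phrased produce, for a \emph{fixed} sequence, a limit with the desired property; to get a single $\kappa$ (respectively $C(\bar r)$) valid for all $p$ with $f(p)\ge r_0$, I would run the contradiction argument directly on a sequence $p_i$ with $f(p_i)\to\infty$ chosen to witness the failure of the uniform bound, i.e. ${\rm vol}\,B(p_i,1;R(p_i)g)<1/i$ (resp. $R(x_i)/R(p_i)>i$ for some $x_i$ in the ball), and then derive the contradiction from the single reduced limit of \emph{that} sequence. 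This is clean precisely because Definition \ref{def-1} guarantees \emph{every} such sequence subconverges to a nonflat $3$-dimensional ancient flow times a line; no a priori uniformity in the reduced limits is needed beyond their existence and their being smooth with bounded curvature. The hypothesis that $(M,g,f)$ does not weakly dimension reduce to a steady gradient Ricci $3$-soliton is not actually needed for this lemma — it is carried along because the lemma is applied in that regime — but including it does no harm.
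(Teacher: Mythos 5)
Your proposal is correct and follows essentially the same route as the paper: a contradiction/point-picking argument in which a sequence witnessing failure of the uniform bound is rescaled by $R(p_i)$ (legitimate since \eqref{inequality-1} makes $R$ and $|Rm|$ comparable) and the smooth pointed Cheeger--Gromov limit guaranteed by the dimension reduction hypothesis forces either positive unit-ball volume or bounded curvature at bounded distance, contradicting the choice of the sequence. The extra discussion of uniformity via Brendle's classification is unnecessary, as you yourself note when you settle on running the contradiction directly on a failure-witnessing sequence, which is exactly what the paper does.
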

\begin{proof}
The proof is
by contradiction. Suppose that the lemma is not true. Then, there exists a sequence of points $p_i$ tending to infinity such that
\begin{align}\label{volume to zero}
{\rm vol}\,B(p_i,1;R(p_i)g)\to0 ~{\rm as}~p_i\to\infty.
\end{align}
On the other hand, we may assume that $(M,R(p_i)g(R^{-1}(p_i)t),p_i)$
subconverges
to $(M_{\infty},g_{\infty}(t),p_{\infty})$. By taking $t=0$, $(M,R(p_i)g,p_i)$ converges to $(M_{\infty},g_{\infty}(0),p_{\infty})$. Therefore, for $i$ large, we have
\begin{align}
{\rm vol}\,B(p_i,1;R(p_i)g)\ge \frac{1}{2}{\rm vol}\,B(p_{\infty},1;g_{\infty}(0))>0.
\end{align}
This contradicts (\ref{volume to zero}).

Similarly, one can prove (\ref{curvature estimate in geodesic ball}) by a contradiction argument.
This completes the proof.

\end{proof}

In \cite{DZ5}, X.-H.~Zhu and the second author
used the noncollapsing condition to obtain (\ref{volume of geodesic ball}). By Lemma \ref{lem-volume of geodesic ball}, we obtain (\ref{volume of geodesic ball}) without assuming the noncollapsing condition. Therefore, we can follow the argument of Lemma 4.2 and Lemma 4.3 in \cite{DZ5} to obtain the following volume  estimate for level sets.

\begin{lem}\label{lem-lower volume bound of level set}
Let $(M,g,f)$ be a $4$-dimensional
steady gradient Ricci soliton that dimension reduces to $3$-manifolds, but does not weakly dimension reduce to a
steady gradient Ricci $3$-soliton.
Then
there exists a positive constant $\kappa_1$ independent of $r$ such that
\begin{align}
{\rm vol}(\Sigma_r,g)\ge \kappa_1(\sqrt{r})^{3}\,\text{ for all }\,r\ge r_0,
\end{align}
where $\Sigma_r=\{x\in M:f(x)=r\}$.

\end{lem}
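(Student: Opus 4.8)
The plan is to derive the lower volume bound for the level sets $\Sigma_r = \{f = r\}$ by combining the noncollapsing at the curvature scale, furnished by \eqref{volume of geodesic ball} in Lemma \ref{lem-volume of geodesic ball}, with the linear equivalences between $f$, $\rho$, and $R^{-1}$ established in Proposition \ref{prop-basic property}, following the argument of Lemmas 4.2 and 4.3 in \cite{DZ5}. First I would fix $r$ large and a point $p \in \Sigma_r$. By Proposition \ref{prop-basic property} we have $R(p) \asymp \rho(p)^{-1} \asymp f(p)^{-1} = r^{-1}$, so the curvature scale at $p$ is comparable to $\sqrt{r}$; i.e.\ $R(p)^{-1/2} \ge c\sqrt{r}$ for a uniform $c>0$. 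The noncollapsing estimate \eqref{volume of geodesic ball} then rescales to ${\rm vol}\, B(p, c\sqrt{r}; g) \ge \kappa' (\sqrt{r})^{n}$ for a uniform $\kappa'$, using also the bounded-curvature-at-bounded-distance estimate \eqref{curvature estimate in geodesic ball} to control the geometry on such a ball.

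Next I would relate the volume of this metric ball to the volume of the level set $\Sigma_r$. The key geometric fact is that the gradient flow of $f$ foliates the region $\{f \ge r_0\}$ by the hypersurfaces $\Sigma_\tau$, and by the coarea formula ${\rm vol}\,\{a \le f \le b\} = \int_a^b \big(\int_{\Sigma_\tau} |\nabla f|^{-1}\,dA\big)\,d\tau$. Since $|\nabla f|^2 = C - R$ is bounded above and below by positive constants on $\{f \ge r_0\}$ (using $R \to 0$ and \eqref{identity}), the volume of the slab $\{r - \Lambda\sqrt{r} \le f \le r + \Lambda\sqrt{r}\}$ is comparable to $\sqrt{r}$ times $\sup_\tau {\rm vol}(\Sigma_\tau)$ over that range, and also bounded below by a constant times $\sqrt{r}$ times $\inf_\tau {\rm vol}(\Sigma_\tau)$. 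Choosing $\Lambda$ so that the metric ball $B(p, c\sqrt{r})$ is contained in such a slab — which follows because $f$ is Lipschitz with constant $\sqrt{C}$ along any path, so $|f - r| \le \sqrt{C}\, c\sqrt{r}$ on $B(p, c\sqrt{r})$ — we get that $(\sqrt r)^n \lesssim {\rm vol}\, B(p,c\sqrt r) \le {\rm vol}\,\{|f-r| \le \sqrt{C}c\sqrt r\} \lesssim \sqrt{r}\cdot \sup_{|\tau - r| \le \sqrt{C}c\sqrt r} {\rm vol}(\Sigma_\tau)$.

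To conclude, I would upgrade this bound on the supremum of nearby slice volumes to a bound on ${\rm vol}(\Sigma_r)$ itself. This is done as in \cite{DZ5}: since the diffeomorphisms $\phi_t$ push $\Sigma_\tau$ to $\Sigma_{\tau'}$ and the metric distortion along the bounded gradient-flow time between two comparable levels is controlled (again by \eqref{identity} and the Ricci bounds, ${\rm Ric} \le {\rm Hess} f$ and the linear curvature decay), the volumes of $\Sigma_\tau$ for $\tau$ in a window of width $\asymp \sqrt{r}$ around $r$ are all mutually comparable up to a uniform multiplicative constant. Hence $\sup_{|\tau-r|\le \sqrt C c\sqrt r}{\rm vol}(\Sigma_\tau) \le C' {\rm vol}(\Sigma_r)$, which combined with the previous display yields ${\rm vol}(\Sigma_r, g) \ge \kappa_1 (\sqrt{r})^{n-1}$. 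The main obstacle I expect is the last step: controlling the metric (and hence volume) distortion of the level sets under the gradient flow over a flow-time window of size $\asymp \sqrt r$ — one needs the linear curvature decay $R \le c/\rho \asymp c/\sqrt{r}$ together with the estimate \eqref{estimate for gradient} on covariant derivatives of curvature so that the induced metrics $g_{\Sigma_\tau}$ stay uniformly equivalent across the relevant range of $\tau$; this is where the hypotheses accumulated in Proposition \ref{prop-basic property} and \eqref{estimate for gradient} are genuinely used.
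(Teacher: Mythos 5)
Your proposal is correct and follows essentially the route the paper takes: the paper proves this lemma only by pointing to the argument of Lemmas 4.2--4.3 in \cite{DZ5}, with the sole new input being that the noncollapsing at the curvature scale \eqref{volume of geodesic ball} now comes from Lemma \ref{lem-volume of geodesic ball} (dimension reduction) rather than from an assumed noncollapsing hypothesis, and your reconstruction (two-sided comparability $R\asymp f^{-1}\asymp r^{-1}$ from Proposition \ref{prop-basic property}, ball-in-slab containment via $|\nabla f|\le\sqrt C$, coarea with $|\nabla f|$ bounded below, comparison of nearby level-set volumes) is exactly that argument. Two harmless imprecisions: $\phi_t$ does not literally map level sets to level sets (one should flow along $\nabla f/|\nabla f|^2$, parametrized by the level, so the induced metric distorts at rate $2\,{\rm Ric}/|\nabla f|^2$), and on $\Sigma_r$ one has $\rho\asymp r$, hence $R\lesssim 1/r$ rather than $1/\sqrt r$; either way the distortion integral over a level window of width $\asymp\sqrt r$ is bounded, so the volumes of nearby level sets are uniformly comparable and the conclusion stands.
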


As a corollary, we have the following lemma.

\begin{lem}
Let $(M,g,f)$ be a $4$-dimensional
steady gradient Ricci soliton that dimension reduces to $3$-manifolds, but does not weakly dimension reduce to a
steady gradient Ricci $3$-soliton.
Then there exist positive constants $\kappa_2$ and $C_6$ such that
\begin{align}\label{volume lower bound}
{\rm vol }(\Sigma,g_{\Sigma}(t))\ge \kappa_2(-t)^{\frac{3}{2}}.
\end{align}
and
\begin{align}\label{diameter estimate of sigma}
{\rm diam }(\Sigma,g_{\Sigma}(t))\le C_6\sqrt{1-t}.
\end{align}
\end{lem}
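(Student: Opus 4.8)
The plan is to transfer the level-set volume lower bound of Lemma~\ref{lem-lower volume bound of level set} and the level-set diameter upper bound of Proposition~\ref{prop-diameter estimate} to the time slices of the limit ancient flow $g_{\Sigma}(t)$ of Theorem~\ref{theo-convergence}, using a time-translation of the base sequence. First I would settle the slice at $t=0$. Fix a sequence $p_i\to\infty$ realizing the convergence in Theorem~\ref{theo-convergence}; the slice $\{0\}\times\Sigma$ is the limit of the level set $\Sigma_{f(p_i)}$ through $p_i$ measured in the rescaled metric $R(p_i)g$, so ${\rm vol}(\Sigma,g_{\Sigma}(0))=\lim_i R(p_i)^{\frac{n-1}{2}}{\rm vol}(\Sigma_{f(p_i)},g)$ and ${\rm diam}(\Sigma,g_{\Sigma}(0))=\lim_i R(p_i)^{\frac12}{\rm diam}(\Sigma_{f(p_i)},g)$. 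Combining Lemma~\ref{lem-lower volume bound of level set} and Proposition~\ref{prop-diameter estimate} with Proposition~\ref{prop-basic property}, which in particular forces $c_1C_1\le R(x)f(x)\le c_2C_2$ for $\rho(x)\ge r_0$, gives $R(p_i)^{\frac{n-1}{2}}{\rm vol}(\Sigma_{f(p_i)},g)\ge\kappa_1\bigl(R(p_i)f(p_i)\bigr)^{\frac{n-1}{2}}\ge\kappa_1(c_1C_1)^{\frac{n-1}{2}}$ and $R(p_i)^{\frac12}{\rm diam}(\Sigma_{f(p_i)},g)\le C_5\bigl(R(p_i)f(p_i)\bigr)^{\frac12}\le C_5(c_2C_2)^{\frac12}$, hence ${\rm vol}(\Sigma,g_{\Sigma}(0))\ge\kappa_2$ and ${\rm diam}(\Sigma,g_{\Sigma}(0))\le C_6$ for suitable constants; the same bounds hold for the limit of the rescalings based at any sequence tending to infinity.

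For a fixed $t_0<0$ I would relate $g_{\Sigma}(t_0)$ to a time-$0$ slice of a shifted sequence. Writing $g_i(t_0)=R(p_i)g\bigl(R(p_i)^{-1}t_0\bigr)=R(p_i)\,\phi_{\tau_i}^{*}g$ with $\tau_i=R(p_i)^{-1}t_0<0$, the diffeomorphism $\phi_{\tau_i}$ identifies $(M,g_i(t_0),p_i)$ isometrically with $(M,R(p_i)g,q_i)$, where $q_i=\phi_{\tau_i}(p_i)$ (and $q_i\to\infty$ since $f(q_i)>f(p_i)$). Along the orbit segment $\{\phi_s(p_i):s\in[\tau_i,0]\}$ the values of $f$ all exceed $f(p_i)$, so by Proposition~\ref{prop-basic property} the scalar curvature there is $O(1/f(p_i))\to0$; with the identity \eqref{identity} this yields $f(q_i)-f(p_i)=\int_{\tau_i}^{0}\bigl(C-R(\phi_s(p_i))\bigr)\,ds=(1+o(1))\,C\,R(p_i)^{-1}|t_0|$, and since $R(p_i)f(p_i)\in[c_1C_1,c_2C_2]$ this gives, along a subsequence, $f(q_i)/f(p_i)\to L_{t_0}$ with $c(1-t_0)\le L_{t_0}\le C'(1-t_0)$. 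Because $R\asymp1/f$ by Proposition~\ref{prop-basic property}, after passing to a further subsequence $\mu_{t_0}:=\lim_i R(p_i)/R(q_i)$ exists and also satisfies $c(1-t_0)\le\mu_{t_0}\le C'(1-t_0)$ after adjusting the constants. Passing to the limit in $(M,R(p_i)g,q_i)$, using $R(p_i)g=(1+o(1))\,\mu_{t_0}\,R(q_i)g$ and the first paragraph applied to $\{q_i\}$ (whose rescalings subconverge to $(\mathbb R\times\Sigma,ds^2+\widetilde g_{\Sigma}(t))$), I conclude $(\Sigma,g_{\Sigma}(t_0))\cong(\Sigma,\mu_{t_0}\,\widetilde g_{\Sigma}(0))$. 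Therefore ${\rm vol}(\Sigma,g_{\Sigma}(t_0))=\mu_{t_0}^{\frac{n-1}{2}}{\rm vol}(\Sigma,\widetilde g_{\Sigma}(0))\ge\bigl(c(1-t_0)\bigr)^{\frac{n-1}{2}}\kappa_2$ and ${\rm diam}(\Sigma,g_{\Sigma}(t_0))=\mu_{t_0}^{\frac12}{\rm diam}(\Sigma,\widetilde g_{\Sigma}(0))\le\bigl(C'(1-t_0)\bigr)^{\frac12}C_6$, which are \eqref{volume lower bound} and \eqref{diameter estimate of sigma} after renaming the constants and noting $-t\le1-t$ for $t\le0$.

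The main obstacle is the precise identification, used throughout, of the slice $\{0\}\times\Sigma$ of the Cheeger--Gromov limit with the rescaled level set $\Sigma_{f(p_i)}$ (resp.\ $\Sigma_{f(q_i)}$): one must check that in the metric $R(p_i)g$ the level set $\Sigma_{f(p_i)}$ is $C^{\infty}$-close to a section of the $\mathbb R$-factor and that the induced metrics converge smoothly on the closed manifold $\Sigma$, so that volume and diameter pass to the limit. This is exactly the level-set-flow construction underlying Theorem~\ref{theo-convergence} (Sections~2--4 of \cite{DZ5}), so it needs no new input here; the only additional point is the uniform bound $\sup_{s\in[\tau_i,0]}R(\phi_s(p_i))\to0$ invoked in the second paragraph, which holds because $f$, and hence $\rho$, varies only by a bounded multiplicative factor along that orbit segment, so the linear curvature decay of Proposition~\ref{prop-basic property} applies there uniformly.
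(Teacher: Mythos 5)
Your proposal is correct and follows essentially the same route as the paper: both use the shifted base points $q_i=\phi_{R(p_i)^{-1}t}(p_i)$, the relations $R f\asymp 1$ and $R(p_i)f(q_i)\asymp 1+|t|$ from Proposition \ref{prop-basic property}, the level-set volume and diameter bounds of Lemma \ref{lem-lower volume bound of level set} and Proposition \ref{prop-diameter estimate}, and the level-set convergence machinery of \cite{DZ5}. The only difference is bookkeeping in the final limit: the paper passes the estimates for $(\Sigma_{f(q_i)},R(p_i)g)$ directly to $(\Sigma,g_{\Sigma}(t))$, whereas you reduce to the $t=0$ bounds for an auxiliary limit based at $q_i$ and rescale by $\mu_{t}=\lim_i R(p_i)/R(q_i)\asymp 1-t$, which is an equivalent and equally valid packaging.
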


\begin{proof}
Fix $t<0$. Under the hypotheses of Theorem \ref{theo-convergence}, for $p_i$ tending to infinity, we assume the following convergence
\begin{align}
(M,R(p_i)g(R^{-1}(p_i)t),p_i)\to (\mathbb{R}\times \Sigma,ds^2+g_{\Sigma}(t),p_{\infty}).
\end{align}
Therefore, we have
\begin{align}
(M,R(p_i)g,\phi_{(R^{-1}(p_i)t)}(p_i))\to (\mathbb{R}\times \Sigma,ds^2+g_{\Sigma}(t),p_{\infty}).
\end{align}

Let $q_i=\phi_{R^{-1}(p_i)t}(p_i)$. Similar to Corollary 3.4 in \cite{DZ5}, there exists a positive constant $C_0^{\prime}$ such that
\begin{align}
\Sigma_{f(q_i)}\subseteq B(q_i,C_0^{\prime};R(q_i)g).
\end{align}
By (\ref{curvature estimate in geodesic ball}), we get
\begin{align}
\frac{R(x)}{R(q_i)}\le C(C_0^{\prime})\,\text{ for all }\,x\in \Sigma_{f(q_i)}.
\end{align}
Note that
\begin{align}
R(p_i)f(q_i)=&R(p_i)\left(f(p_i)+\int_{0}^{R^{-1}(p_i)|t|}|\nabla f|^2(\phi_s(p_i)){\rm d}s\right)\ge c_1C_1+\varepsilon_0|t|.
\end{align}
Hence,
\begin{align}\label{curvature estimate in level set}
\frac{R(x)}{R(p_i)}=&\frac{R(x)}{R(q_i)}\frac{R(q_i)f(q_i)}{R(p_i)f(q_i)}\notag\\
\le &\frac{C_1^{\prime}}{c_1C_1+\varepsilon_0|t|}\notag\\
\le&\frac{C_2^{\prime}}{1+|t|}\,\text{ for all }\,x\in \Sigma_{f(q_i)}.
\end{align}
By Lemma \ref{lem-lower volume bound of level set}, we have the following volume estimate:
\begin{align}\label{volume bound of level set}
{\rm vol}\left({ \Sigma_{f(q_i)}},R(p_i)g\right)=&{\rm vol}\left({ \Sigma_{f(q_i)}},f^{-1}(q_i)g\right)\cdot\big(R(q_i)f(q_i)\big)^{\frac{3}{2}}\cdot\Big(\frac{R(p_i)}{R(q_i)}\Big)^{\frac{3}{2}}\notag\\
\ge&\kappa_1\cdot(c_1C_1)^{\frac{3}{2}}\cdot\Big(\frac{1+|t|}{C_2^{\prime}}\Big)^{\frac{3}{2}}\notag\\
\ge &\kappa_2(-t)^{\frac{3}{2}}.
\end{align}

Moreover, by Proposition \ref{prop-diameter estimate}, we have the diameter estimate for the level set:
\begin{align}
{\rm diam}(\Sigma_{f(q_i)},g)\le C_5\sqrt{f(q_i)}.\notag
\end{align}
Therefore,
\begin{align}\label{diameter of level set}
{\rm diam}(\Sigma_{f(q_i)},R(p_i)g)\le& \, C_5\sqrt{R(p_i)f(q_i)}\notag\\
= & \, C_5\sqrt{R(p_i)\left(f(p_i)+\int_{0}^{R^{-1}(p_i)|t|}|\nabla f|^2(\phi_s(p_i)){\rm d}s\right)}\notag\\
\le & \, C_5\sqrt{c_2C_2+R_{\max}(-t)}.
\end{align}

Similar to \cite{DZ5}, we can use (\ref{curvature estimate in level set}), (\ref{volume bound of level set}) and (\ref{diameter of level set}) to show that
\begin{align}\label{convergence of level set}
(\Sigma_{f(q_i)},R(p_i)g)\to (\Sigma,g_{\Sigma}(t)).
\end{align}
 Hence, we get the following estimates by (\ref{volume bound of level set}), (\ref{diameter of level set}) and the convergence (\ref{convergence of level set}):
\begin{align}
{\rm vol}(\Sigma,g_{\Sigma}(t))=\lim_{i\to \infty}{\rm vol}({ \Sigma_{f(q_i)}},R(p_i)g)\ge \kappa_2(-t)^{\frac{3}{2}} \notag
\end{align}
and
\begin{align}
{\rm diam}(\Sigma,g_{\Sigma}(t))=\lim_{i\to \infty}{\rm diam}({ \Sigma_{f(q_i)}},R(p_i)g)\le C_5\sqrt{c_2C_2+R_{\max}(-t)}.\notag
\end{align}

\end{proof}

Now, we are ready to prove Theorem \ref{theo-uniqueness of limit ancient flow}.
\begin{proof}[Proof of Theorem  \ref{theo-uniqueness of limit ancient flow}]
For fixed $q\in M$, by the curvature estimate (\ref{decay of t}) and the volume estimate (\ref{volume lower bound}), we can use the argument in \cite{Na}
 to show that there exists $\tau_i\to+\infty$ such that
 \begin{align}
 (\Sigma,\tau_i^{-1}g(\tau_i t),q)\to(\Sigma_{\infty},h(t),q_{\infty}),~t<0.
 \end{align}
 Moreover, $(\Sigma_{\infty},h(t),q_{\infty})$ is a gradient shrinking Ricci soliton. Hence, $\Sigma_{\infty}$ must be a finite quotient of $\mathbb{R}^3$, $\mathbb{S}^2\times\mathbb{R}$ or $\mathbb{S}^3$.  By the diameter estimate (\ref{diameter estimate of sigma}), we know that $\Sigma_{\infty}$ is diffeomorphic to $\Sigma$. Hence, $\Sigma_{\infty}$ is compact. Therefore, $\Sigma_{\infty}$ must be a finite quotient of $\mathbb{S}^3$ and $h(t)$ is a round metric. Hence, $\Sigma=\mathbb{S}^3/\Gamma$. Let $\nu(\Sigma,g_{\Sigma}(t))$ be the entropy introduced by Perelman \cite{Pe1}. Note that it is non-decreasing in $t$. Hence, we know $\nu(\Sigma,g(t))\ge \nu(\Sigma_{\infty},h(t))$. By \cite{Ch} and the topology of $\Sigma$, $(\Sigma,g_{\Sigma}(t))$ has positive sectional curvature.  By \cite{H1}, $(\Sigma,g_{\Sigma}(t))$ blows up at time $T$ and the flow converges to a round metric under rescaling when $t\to T$. Therefore,  $\nu(\Sigma,g_{\Sigma}(t))= \nu(\Sigma_{\infty},h(t))$. Hence, $(\Sigma,g_{\Sigma}(t))$ is a three-dimensional compact gradient shrinking Ricci soliton. We have completed the proof.
\end{proof}

\section{The limit soliton is the Bryant 3-soliton}\label{section-limit soliton is Bryant}

\begin{theo}\label{theo-uniqueness of limit soliton}
Let $(M,g,f)$ be a $4$-dimensional
steady gradient Ricci soliton that dimension reduces to $3$-manifolds.
Suppose that
$(M,g,f)$ weakly dimension reduces to a
steady gradient Ricci $3$-soliton $(N_{\infty}^3,h_{\infty})$.
Then $(N_{\infty},h_{\infty})$ must be isometric to the Bryant $3$-soliton.
\end{theo}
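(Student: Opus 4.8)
The plan is to show that a 3-dimensional steady gradient Ricci soliton $(N_\infty^3, h_\infty)$ arising as a weak dimension reduction of a 4-dimensional steady soliton that dimension reduces to 3-manifolds must be the Bryant soliton, which by Brendle's theorem amounts to ruling out the flat $\mathbb{R}^3$ and the round cylinder $\mathbb{S}^2 \times \mathbb{R}$. First I would recall that $(N_\infty, h_\infty)$ is nonflat (since $|Rm_N|(p_\infty, 0) = 1$ by the normalization), so $\mathbb{R}^3$ is immediately excluded. Moreover, by B.-L.~Chen's theorem the limit, being a blow-up limit of the 4-soliton, has nonnegative sectional curvature, and since it is a 3-dimensional steady gradient Ricci soliton with nonnegative curvature that is nonflat, by the classification of such solitons (Perelman's theorem, see also \cite{Br1}) it must be either the Bryant 3-soliton or the product $\mathbb{S}^2 \times \mathbb{R}$ of the round cylinder with a line... but wait, a 3-dimensional \emph{soliton} that splits as $\mathbb{S}^2 \times \mathbb{R}$ is not steady gradient Ricci (the cylinder is shrinking, not steady), so the only remaining possibility at the level of 3-dimensional steady solitons with $Rm \geq 0$ is the Bryant soliton itself or a quotient; the quotients of the Bryant soliton are handled by orientability.

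The heart of the matter is thus to confirm that the weak dimension reduction $(N_\infty, h_\infty)$ is genuinely a 3-dimensional steady gradient Ricci soliton with bounded nonnegative curvature. This is already built into the hypotheses: the statement assumes $(M,g,f)$ weakly dimension reduces to a steady gradient Ricci 3-soliton $(N_\infty, h_\infty)$. So the work is to pin down \emph{which} steady 3-soliton. By Chen's result \cite{Ch}, $N_\infty$ has $Rm \geq 0$; by Hamilton's strong maximum principle a 3-dimensional steady soliton with $Rm \geq 0$ either has positive sectional curvature or locally splits off a line. If it splits off a line, then $N_\infty = \mathbb{R} \times \Sigma^2$ with $\Sigma^2$ a 2-dimensional steady soliton; the only 2-dimensional steady gradient Ricci soliton with bounded curvature is the cigar (Hamilton), but the cigar is $\kappa$-collapsed at infinity, and since $(M^4, g, f)$ is $\kappa$-noncollapsed (or strongly so in the singularity-model case), any pointed limit of rescalings is $\kappa$-noncollapsed — contradiction. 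Hence $N_\infty$ has positive sectional curvature. A 3-dimensional complete steady gradient Ricci soliton with positive sectional curvature and bounded curvature that is additionally $\kappa$-noncollapsed is, by Brendle's theorem \cite{Br1} (the rotational symmetry of 3-dimensional $\kappa$-noncollapsed steady solitons), the Bryant soliton.

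I would organize the proof as follows: first, invoke Chen's theorem to get $Rm_{N_\infty} \geq 0$ and note $N_\infty$ is nonflat and has bounded curvature; second, apply the strong maximum principle to split into the "positive curvature" and "splits a line" cases; third, dispose of the splitting case using the cigar's collapsing versus the inherited $\kappa$-noncollapsing from $M^4$ (here I would cite Definition~\ref{def-1} and the $\kappa$-noncollapsing assumption on $(M^4,g,f)$); fourth, in the positive-curvature case apply Brendle's classification \cite{Br1} to conclude rotational symmetry and hence that $(N_\infty, h_\infty)$ is the Bryant 3-soliton. The main obstacle I anticipate is making the $\kappa$-noncollapsing argument airtight: one must verify that the rescaled sequence $(M, K_i g(K_i^{-1}t), p_i)$ defining the weak dimension reduction indeed passes $\kappa$-noncollapsing to the limit $N_\infty \times \mathbb{R}$, and then that this forces $\Sigma^2$ to be $\kappa$-noncollapsed as a 2-soliton — but a 2-dimensional soliton is the cigar or flat, and the cigar being collapsed gives the contradiction; the flat case contradicts nonflatness of $N_\infty$. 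The only subtlety is whether the soliton structure on $N_\infty$ coming from "weak dimension reduction" has bounded curvature, which I would extract from inequality \eqref{inequality-1} of Lemma~\ref{lem-estimates from asymptotical geometry}, so that Hamilton's and Brendle's classifications (which require bounded curvature) apply.
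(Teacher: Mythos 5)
Your proposal is essentially correct if one takes the paper's blanket assumption that $(M^4,g,f)$ is $\kappa$-noncollapsed as part of the hypotheses, but it follows a genuinely different route from the paper. You pass the (all-scales) $\kappa$-noncollapsing from $M$ to the limit $N_\infty\times\mathbb{R}$ and hence to $N_\infty$, kill the locally split case because the cigar (and any quotient of cigar$\,\times\mathbb{R}$) is collapsed, and then invoke Brendle's rotational symmetry theorem \cite{Br1} for non-flat $\kappa$-noncollapsed steady $3$-solitons; in fact, once noncollapsing is in hand, \cite{Br1} already covers the split case, so your Chen/strong-maximum-principle dichotomy is optional. The paper never invokes \cite{Br1}. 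Instead it shows that $(N,g_N,f_N)$ itself dimension reduces to $2$-manifolds (Lemma \ref{lem-reduction}), rules out the cigar to obtain uniform curvature decay (Lemma \ref{lem-uniform decay}), proves two-sided bounds on $(\Delta_N R_N+2|{\rm Ric}_N|^2)/R_N^2$ outside a compact set (Lemma \ref{lem-estimate for linear decay}), feeds these into Theorem \ref{theo-decay} and Theorem \ref{theo-decay lower bound} to get two-sided linear curvature decay, and then concludes by the Deng--Zhu classification \cite{DZ3} of $3$-dimensional steady solitons with linear curvature decay. What your route buys is brevity; what the paper's route buys is independence from the global noncollapsing hypothesis: Theorem \ref{theorem-1} and all lemmas of Sections \ref{section-AG}--\ref{section-limit soliton is Bryant} are stated without $\kappa$-noncollapsedness, and the remark after Lemma \ref{lem-volume of geodesic ball} stresses that only noncollapsing at the curvature scale, which is a consequence of the dimension-reduction hypothesis itself, is used.

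This is also where the one genuine soft spot in your write-up lies: Brendle's theorem needs $\kappa$-noncollapsedness of $N_\infty$ at all admissible scales, whereas the dimension-reduction hypothesis by itself only yields noncollapsing at the curvature scale of points (Lemma \ref{lem-volume of geodesic ball} and the volume bound in the proof of Lemma \ref{lem-reduction}). If you wish to avoid the blanket assumption on $M$, you should add the bridging step: since the $3$-dimensional limit has ${\rm Ric}\ge 0$ by \cite{Ch}, Bishop--Gromov monotonicity upgrades curvature-scale noncollapsing at every point to $\kappa$-noncollapsing in the full sense, after which \cite{Br1} applies. As written, you simply cite the $\kappa$-noncollapsedness of $M^4$, which is fine under the paper's standing assumption but does not reproduce the paper's stronger, assumption-free statement. (Two minor points: the alternative to Bryant among nonflat $3$-dimensional steady solitons with ${\rm Rm}\ge 0$ is cigar$\,\times\mathbb{R}$, not $\mathbb{S}^2\times\mathbb{R}$ — you correct this yourself — and no orientability argument is needed for quotients, since Brendle's conclusion applied directly to $N_\infty$ already identifies it with the Bryant soliton.)
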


\begin{lem}\label{lem-reduction}
Let $(M,g,f)$ be a $4$-dimensional steady gradient Ricci soliton which dimension reduces to $3$-manifolds.
Suppose that $(M,g,f)$ weakly dimension reduces to a
steady gradient Ricci $3$-soliton $(N^3,g_N,f_N)$.
Then $(N,g_N,f_N)$ dimension reduces to $2$-manifolds.
\end{lem}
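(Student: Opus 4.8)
The plan is to show that if $(M^4,g,f)$ dimension reduces to $3$-manifolds and weakly dimension reduces to a steady gradient Ricci $3$-soliton $(N^3,g_N,f_N)$, then $(N,g_N,f_N)$ inherits the dimension-reduction property, i.e., every blow-up at infinity of $N$ along a sequence going to infinity splits off a line. The key observation is that a blow-up of $N$ at infinity can be realized as an iterated limit of blow-ups of $M$, so one transports the structural information one level down.

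First I would fix a sequence $\{q_j\}$ in $N$ tending to infinity with $L_j=|Rm_{g_N}|(q_j)>0$, and recall that by hypothesis $(M,K_ig(K_i^{-1}t),p_i)$ converges in the pointed $C^\infty$ Cheeger--Gromov sense to $(N\times\mathbb{R},g_N(t)+ds^2,p_\infty)$ for some sequence $\{p_i\}$ with $K_i=|Rm|(p_i)$, where $g_N(t)$ is the ancient Ricci flow generated by the steady soliton $(N,g_N,f_N)$. The idea is a diagonal argument: for each fixed $j$, the point $(q_j,0)\in N\times\mathbb{R}$ corresponds, under the Cheeger--Gromov convergence, to points $\hat p_{i,j}\in M$ with $K_i|Rm|(\hat p_{i,j})^{-1}\to |Rm_{g_N}|(q_j)^{-1}$ as $i\to\infty$ and with $(M,K_ig(K_i^{-1}t),\hat p_{i,j})$ converging to $(N\times\mathbb{R},g_N(t)+ds^2,(q_j,0))$. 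Then I would blow up this limit at scale $L_j$ and use a diagonal subsequence: extract $i(j)\to\infty$ so that $(M,L_jK_{i(j)}g((L_jK_{i(j)})^{-1}t),\hat p_{i(j),j})$ is arbitrarily close (on larger and larger balls and time intervals) to $(N\times\mathbb{R},L_jg_N(L_j^{-1}t)+ds^2,(q_j,0))$. Setting $P_j=\hat p_{i(j),j}$ and noting $P_j\to\infty$ in $M$ (since the curvature scale of $P_j$ in $M$ tends to $\infty$ and $M$ is noncompact), the dimension-reduction hypothesis on $M$ applies to $\{P_j\}$: a subsequence of $(M,|Rm|(P_j)g(|Rm|(P_j)^{-1}t),P_j)$ converges to a product $N'\times\mathbb{R}$ of a $3$-dimensional ancient flow with a line. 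But this same limit, by the diagonal construction, is also the blow-up limit of $N$ at $\{q_j\}$ times the original $\mathbb{R}$ factor — so the $N$-blow-up limit itself must split off a line, and the $3$-dimensional ancient flow $N'$ is the dimension reduction of $N$ (after quotienting out by the extra $\mathbb R$ that came along for the ride, matching dimensions; more carefully, the $4$-dimensional limit is $(\text{2-dim ancient flow})\times\mathbb{R}^2$, whence the $3$-dimensional dimension reduction of $N$ is $(\text{2-dim ancient flow})\times\mathbb{R}$).

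The main obstacle I anticipate is making the diagonal/iterated-limit argument rigorous: one must control the $C^\infty$ convergence on two nested scales simultaneously, checking that the basepoints $\hat p_{i,j}$ can be chosen so that their curvature scales $K_i|Rm|(\hat p_{i,j})^{-1}$ genuinely converge to $|Rm_{g_N}|(q_j)^{-1}>0$ (this uses that the Cheeger--Gromov limit has $|Rm_{g_N+ds^2}|(q_j,0)=|Rm_{g_N}|(q_j)>0$ and smooth convergence of curvature), and that a single diagonal subsequence works uniformly. One also has to verify that the $3$-dimensional ancient flow appearing as a dimension reduction of $N$ has bounded curvature (inherited from the bounded-curvature clause in Definition \ref{def-1} applied to $M$) and is nonflat (since the curvature normalization at the basepoint passes to the limit). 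Once the bookkeeping of scales is done, the conclusion that $(N,g_N,f_N)$ dimension reduces to $2$-manifolds is essentially formal, and this is exactly what is needed to feed into Brendle's classification (via the cylinder/Bryant trichotomy in the text) to pin down $(N,g_N,f_N)$ as the Bryant $3$-soliton in Theorem \ref{theo-uniqueness of limit soliton}.
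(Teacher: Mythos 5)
Your diagonal construction is fine as a way to get \emph{compactness} (it identifies the blow-up of $N$ at $q_j$, times $\mathbb{R}$, with a blow-up limit of $M$ at points $P_j$, so bounded curvature and noncollapsing pass down), but the crucial step fails: from the two facts that the $4$-dimensional limit equals $B\times\mathbb{R}$ (where $B$ is the blow-up of $N$) and, by the dimension-reduction hypothesis on $M$, also equals $N'\times\mathbb{R}$ for some $3$-dimensional ancient flow $N'$, you conclude that ``the $N$-blow-up limit itself must split off a line'' and that the limit is $(\text{2-dim flow})\times\mathbb{R}^2$. This is an $\mathbb{R}$-factor cancellation that is not valid: the parallel direction furnished by the dimension reduction of $M$ may simply be the $\mathbb{R}$-factor already present in $N\times\mathbb{R}$, in which case the identification only says $B\cong N'$ and gives no splitting of $B$ at all. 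For instance, if $B$ were the Bryant $3$-soliton flow, then $B\times\mathbb{R}$ does split as (3-dim flow)$\times\mathbb{R}$ while $B$ itself does not split; nothing in your argument rules this scenario out, and ruling it out is exactly the content of the lemma. (A smaller issue: your justification that $P_j\to\infty$ in $M$ via ``the curvature scale of $P_j$ tends to $\infty$'' is also not automatic, since $K_{i(j)}$ and $|Rm_{g_N}|(q_j)$ may both tend to $0$; but this is repairable, whereas the cancellation step is not.)

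The paper does something essentially different at this point: it uses the dimension-reduction hypothesis on $M$ only to transfer the estimates of Lemma \ref{lem-volume of geodesic ball} (noncollapsing at the curvature scale and bounded curvature at bounded distance) to points of $N$, which together with the soliton structure of $N$ (nonnegative sectional curvature by Chen in dimension $3$, $\partial_t R_N\ge 0$, $|Rm_N|\le CR_N$) yields convergence of the blow-ups of $N$ along any $q_k\to\infty$ (Claim \ref{claim-convergence}). The splitting of that limit (Claim \ref{claim-split}) is then proved intrinsically on $N$, by a dichotomy on $A=\lim R_N(q_k,0)$: if $A>0$, then $R_N(q_k,0)\,d^2(q_k,q_0)\to\infty$ and the splitting theorem of Morgan--Tian applies; if $A=0$, the rescaled soliton field $X_{(k)}=R_N(q_k,0)^{-1/2}\nabla f_N$ converges to a nontrivial parallel vector field, which produces the line. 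Your proposal contains neither of these mechanisms, and without one of them (or a genuinely new argument showing the two parallel directions in the $4$-dimensional limit are independent) the proof of the lemma is incomplete.
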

\begin{proof}
We may assume that $(M,g,f)$ dimension reduces to $(N,g_N,f_N)$ along points $p_i$ tending to infinity. Precisely, we have that
\begin{align}\label{convergence-1}
(M,R(p_i)g(R^{-1}(p_i)t),p_i)\to (M_{\infty},g_{\infty}(t),p_{\infty}),
\end{align}
where $(M_{\infty},g_{\infty}(t))=(N\times\mathbb{R},g_{N}(t)+{\rm d}s^2)$.

\begin{claim}\label{claim-convergence}
For any $q_k\in N$ tending to infinity, by taking a subsequence, we have the convergence
\begin{align}
(N,R_N(q_k,0)g(R^{-1}_N(q_k,0)t),q_k)\to (N_{\infty},\bar{g}_{\infty}(t),q_{\infty}),
\end{align}
where $(N_{\infty},\bar{g}_{\infty}(t))$ is a $3$-dimensional complete ancient flow.
\end{claim}
Let $\hat{q}_k=(q_k,0)\in N\times \mathbb{R}$ for $k\in \mathbb{N}$.  By convergence (\ref{convergence-1}), for any fixed $k\in \mathbb{N}$, there exists a sequence of points $q_{k,i}\in M$ such that
\begin{align}\label{convergence-2}
(M,R(p_i)g(R^{-1}(p_i)t),q_{k,i})\to (M_{\infty},g_{\infty}(t),\hat{q}_k).
\end{align}
The convergence above implies that
\begin{align}\label{curvature convergence}
\frac{R(q_{k,i})}{R(p_i)}\to R_{\infty}(\hat{q}_k,0),~{\rm as}~i\to\infty.
\end{align}
By (\ref{convergence-2}) and (\ref{curvature convergence}), we have
\begin{align}\label{convergence-3}
(M,R(q_{k,i})g(R^{-1}(q_{k,i})t),q_{k,i})\to(M_{\infty},R_{\infty}(\hat{q}_k,0)g(R_{\infty}^{-1}(\hat{q}_k,0)t),\hat{q}_k)).
\end{align}
By Lemma \ref{lem-volume of geodesic ball}, for fixed $\bar{r}>0$, we have
\begin{align}\label{curvature bound}
\frac{R(x)}{R(q_{k,i})}\le C(2\bar{r})\,\text{ for all }\,x\in B(q_{k,i},2\bar{r};R(q_{k,i})g),~\rho(q_{k,i})\ge r_0,
\end{align}
and
\begin{align}\label{volume bound}
{\rm vol}\, B(q_{k,i},1;R(q_{k,i})g)\ge \kappa.
\end{align}
By (\ref{convergence-3}), (\ref{curvature bound}) and (\ref{volume bound}), we have
\begin{align}
R_{\infty}(x)\le  C(2\bar{r})\,\text{ for all }\,x\in B(\hat{q}_k,\bar{r};R_{\infty}(\hat{q}_k,0)g_{\infty}(0)).
\end{align}
and
\begin{align}
{\rm vol} \, B(\hat{q}_k,1;R_{\infty}(\hat{q}_k,0)g_{\infty}(0))\ge \kappa.
\end{align}
It follows that
\begin{align}\label{upper bound}
R_{N}(x,0)\le C(2\bar{r})\,\text{ for all }\,x\in B(q_k,\bar{r};R_{N}(q_k,0)g_{N}(0)),
\end{align}
and
\begin{align}\label{volume}
{\rm vol} \, B(\hat{q}_k,1;R_{\infty}(\hat{q}_k,0)g_{\infty}(0))\ge \frac{\kappa}{2}.
\end{align}
Note that $(N,g_N,f_{N})$ is a $3$-dimensional steady gradient Ricci soliton. Then, $(N,g_N)$ has nonnegative sectional curvature by \cite{Ch}. Note that $g_{N}(t)$ is a Ricci flow generated by $(N,g_N,f_{N})$. Therefore,
\begin{align}\label{monotone of R}
\frac{\partial R_{N}(x,t)}{\partial t}=2{\Ric}_{N}(\nabla f_{N}, \nabla f_{N})(\varphi_t(x))\ge0,
\end{align}
where $\varphi_t$ is generated by $-\nabla f_{N}$.

 By (\ref{upper bound}) and (\ref{monotone of R}), we get
\begin{align}\label{curvature estimate}
|{\Rm}_{N}|(x,t)\le C(n)R_{N}(x,t)\le C(n)C(2\bar{r})\quad \forall \,x\in B(q_k,\bar{r};R_{N}(q_k,0)g_{N}(0)).
\end{align}
Let $h_k(t)=R_{N}(q_k,0)g_{N}(R_{N}^{-1}(q_k,0)t)$. Then, $h_{k}(t)$ satisfies the Ricci flow equation. Note that the sectional curvatue of $h_{k}(t)$ is nonnegative. Hence, 
\begin{align*}
    h_{k}(x,t)\ge h_{k}(x,0)\quad\forall~t\le0,~x\in N.
\end{align*}
Therefore,
\begin{align*}
    B(q_k,\bar{r};h_k(t))\subset  B(q_k,\bar{r};h_k(0)), ~\forall~t\le0.
\end{align*}
Finally, for $t\le 0$, (\ref{curvature estimate}) implies
\begin{align}\label{curvature estimate-new}
|{\Rm}_{N}|(x,t)\le C(n)C(2\bar{r})\,\text{ for all }\,x\in B(q_k,\bar{r};h_k(t)).
\end{align}
For any $\bar{r}>0$, we can find constant $C(2\bar{r})$ such that (\ref{curvature estimate-new}) holds. By Theorem 1.7 in \cite{To14}, Claim \ref{claim-convergence} follows from (\ref{volume}) and (\ref{curvature estimate-new}). 

Let $(N_{\infty},\bar{g}_{\infty}(t))$ be the ancient Ricci flow in Claim \ref{claim-convergence}. We are left to show the following claim.
\begin{claim}\label{claim-split}
$(N_{\infty},\bar{g}_{\infty}(t))=(S\times\mathbb{R},g_{S}(t)+{\rm d}s^2)$, where $(S,g_{S}(t))$ is a two-dimensional ancient flow with bounded curvature.
\end{claim}
For any sequence $q_k$ tending to infinity, we may assume that $R_{N}(q_k,0)\to A$ by taking a subsequence. If $A>0$, then $R_{N}(q_k,0)d^2_{g_{N}(0)}(q_k,q_0)\to \infty$ as $k\to\infty$. By Theorem 5.35 in \cite{MT}, we get $(N_{\infty},\bar{g}_{\infty}(t))=(S\times\mathbb{R},g_{S}(t)+{\rm d}s^2)$, where $(S,g_{S}(t))$ is a two-dimensional ancient flow. By (\ref{monotone of R}), $(S,g_{S}(t))$ also satisfies
\begin{align}
\frac{\partial R_{S}(x,t)}{\partial t}\ge 0\,\text{ for all }\,x\in S.
\end{align}
Now, we need to show that $R_{S}(x,t)$ has bounded curvature for all $x\in S$ and $t\le0$. It is sufficient to show $R_S(x,0)$ is bounded for $x\in S$. The idea is similar to the proof of Lemma 4.4 in \cite{DZ2}. If it is not true, then there exists $x_i\to\infty$ such that $R_S(x_i,0)\to \infty$. By Claim \ref{claim-convergence} and the argument in the proof of Claim \ref{claim-convergence}, we have the following convergence by taking a subsequence
\begin{align}
(S,R_S(x_i,0)g(R^{-1}_S(x_i,0)t),x_i)\to (S_{\infty},g^{\prime}_{\infty}(t),x_{\infty}).
\end{align}
Note that $R^{\prime}_{\infty}(x_{\infty},0)=1$.

On the other hand, we note that $R_S(x_i,0)d^2_{g_S(0)}(x_i,x_0)\to\infty$ and therefore $(S_{\infty},g^{\prime}_{\infty}(t))$ splits off a line by Theorem 5.35 of Chapter 5 in \cite{MT}. Since $S_{\infty}$ is a two-dimensional manifold, $(S_{\infty},g^{\prime}_{\infty}(t))$ must be flat. Then, $R^{\prime}_{\infty}(x_{\infty},0)=0$. This is impossible. Hence, we have shown that $R_{S}(x,t)$ has bounded curvature.

We are left to deal with the case $A=0$. We follow the notation in Claim \ref{claim-convergence} and assume the following convergence for  $q_k\in N$ tending to infinity
\begin{align}
(N,R_N(q_k,0)g(R^{-1}_N(q_k,0)t),q_k)\to (N_{\infty},\bar{g}_{\infty}(t),q_{\infty}),
\end{align}

 Let $g_k(t)=R_N(q_k,0)g(R^{-1}_N(q_k,0)t)$ and $X_{(k)}=R_N(q_k,0)^{-\frac{1}{2}}\nabla f_{N}$. For any fixed $\bar{r}>0$, we have
\begin{align}
\sup_{ B(q_{k},\bar r ;  {g_{k}(0)})}| \nabla X_{(k)}|_{g_{k}(0)}&= \sup_{ B(q_{k},\bar r ;  {g_{k}(0)})}\frac{|{\rm Ric}_{N}|_{g_N}}{\sqrt{R_N(q_{k})}}\le C\sqrt{R_{N}(q_{k})} \to 0.\notag
\end{align}
Similarly,
$$\sup_{ B(q_{k},\bar r ;  {g_{k}(0)})}| \nabla^{m}X_{(k)}|_{g_{k}(0)}\leq C(n)\sup_{ B(q_{k},\bar r ;  {g_{k}(0)})}| \nabla^{m-1}{\rm Ric}_{g_{k}(0)}|_{g_{k}(0)}\le C_1.$$
Thus  $X_{(k)}$ converges  subsequentially  to a parallel  vector field $X_{(\infty)}$ on $( M_{\infty},$ $  g_{\infty}(0))$.
 Moreover,
 \begin{align}
 |X_{(i)}|_{g_{k}(0)}( x)=|\nabla f_{N}|_{g_N}(x)
=\sqrt{R_{\rm max}}+o(1)>0\,\text{ for all }\,x\in B(p_{i},\bar r ;  {g_{i}}),\notag
 \end{align}
as long as $f(p_i)$ is large enough.  This implies that $X_{(\infty)}$ is non-trivial.
 Hence,  $( N_{\infty},\bar{g}_{\infty}(t))$ locally splits off a line along $X_{(\infty)}$. It is not hard to show that the integral curve of $X_{(\infty)}$ is a line. So $( M_{\infty},\bar{g}_{\infty}(t))$ splits off a line globally.

Now, we already have $(N_{\infty},\bar{g}_{\infty}(t))=(S\times\mathbb{R},g_{S}(t)+{\rm d}s^2)$, where $(S,g_{S}(t))$ is a two-dimensional ancient flow. We can use the argument in the case $A>0$ to show that $(S,g_{S}(t))$ has uniformly bounded curvature.

\end{proof}

\begin{lem}\label{lem-uniform decay}
$(N,g_{N},f_{N})$ has a uniform curvature decay.
\end{lem}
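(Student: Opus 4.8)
The plan is to argue by contradiction, first pinning down the sign of the curvature of $(N,g_N)$ with the help of Lemma~\ref{lem-reduction}, and then running the space--time maximum argument of Corollary~\ref{cor-non decay case} on $N$ itself.

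First I would determine the curvature sign. By B.-L.~Chen \cite{Ch}, $(N,g_N)$ has nonnegative sectional curvature, hence nonnegative curvature operator. By Hamilton's strong maximum principle, either the curvature operator of $g_N(t)$ is positive everywhere, or $(N,g_N)$ locally splits off a line; since a complete $2$-dimensional steady gradient Ricci soliton is flat or Hamilton's cigar, in the splitting case $(N,g_N)$ is, up to a quotient, flat $\mathbb{R}^3$ or $\mathbb{R}\times\Sigma_{\mathrm{cigar}}$, where $\Sigma_{\mathrm{cigar}}$ denotes the cigar. Neither of these can dimension reduce to $2$-manifolds in the sense of Definition~\ref{def-1}: $\mathbb{R}^3$ is flat, while for $\mathbb{R}\times\Sigma_{\mathrm{cigar}}$, blowing up by the curvature scale at points escaping to infinity along the cigar factor collapses the circle fibers and produces a $2$-dimensional flat limit, contradicting the non-flatness (and the dimension count) built into Definition~\ref{def-1}. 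Since $(N,g_N,f_N)$ does dimension reduce to $2$-manifolds by Lemma~\ref{lem-reduction}, it follows that $(N,g_N)$ has positive sectional curvature; in particular $R_N>0$ and $|Rm_N|\le c(n)\,R_N$ on $N$.

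Then I would suppose that $R_N$ does not decay uniformly, so that $A:=\limsup_{x\to\infty}R_N(x)>0$, and choose $q_k\to\infty$ with $R_N(q_k)\to A$ (possible since $R_N$ is bounded). By Lemma~\ref{lem-reduction} applied along $\{q_k\}$, together with the noncollapsing and bounded-curvature-at-bounded-distance estimates of Lemma~\ref{lem-volume of geodesic ball}, a subsequence of $(N,R_N(q_k)g_N(R_N^{-1}(q_k)t),q_k)$ converges to $(S\times\mathbb{R},g_S(t)+ds^2,q_\infty)$, where $(S,g_S(t))$ is a $2$-dimensional ancient Ricci flow with bounded nonnegative curvature and, writing $q_\infty=(q_\infty',0)$, the normalization gives $R_S(q_\infty',0)=1$, so $(S,g_S(0))$ is non-flat. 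Moreover $N\times\mathbb{R}$, being a pointed limit of rescalings of the $\kappa$-noncollapsed soliton $M$, is $\kappa$-noncollapsed; hence so is $N$ in dimension three, and so is $S\times\mathbb{R}$; comparing volumes across the $\mathbb{R}$-factor shows $(S,g_S(0))$ is $\kappa'$-noncollapsed in dimension two for a uniform $\kappa'>0$. Since $R_N$ is non-decreasing along the trajectories $\varphi_t$ of $-\nabla f_N$, the time-$t$ ($t\le 0$) points of the rescaled flows also escape to infinity in $N$, so the choice $R_N(q_k)\to\limsup_{x\to\infty}R_N$ forces, exactly as in the proof of Corollary~\ref{cor-non decay case},
\[
R_{S\times\mathbb{R}}(x,t)\le 1=R_{S\times\mathbb{R}}(q_\infty,0)\qquad\text{for all }(x,t)\in(S\times\mathbb{R})\times(-\infty,0].
\]
Thus the scalar curvature of the limit attains its space--time maximum at $(q_\infty,0)$, and by Hamilton \cite{H2} the limit $(S\times\mathbb{R},g_S(t)+ds^2)$ carries a steady gradient Ricci soliton structure.

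To finish, I would note that a steady gradient Ricci soliton isometric to a product $S\times\mathbb{R}$ has potential of the form $\tilde f(x,s)=a(x)+bs$ with $b$ constant, since $\mathrm{Hess}\,\tilde f$ must annihilate $\partial_s$; hence $(S,g_S(0),a)$ is a $2$-dimensional steady gradient Ricci soliton, and being non-flat it is the cigar. But the cigar is $\kappa$-collapsed at large scales, contradicting the uniform noncollapsing of $(S,g_S(0))$. This contradiction shows $R_N(x)\to 0$ as $x\to\infty$, which is the assertion of the lemma. The main obstacle is the first step: without Lemma~\ref{lem-reduction} one cannot exclude $N=\mathbb{R}\times\Sigma_{\mathrm{cigar}}$, for which the statement is simply false; once positive curvature is in hand, the rest is a routine transcription to $N$ of the arguments already run for $M$ in Section~\ref{section-AG}.
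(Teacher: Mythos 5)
Your argument is correct and follows the same skeleton as the paper's proof: argue by contradiction, pick $q_k\to\infty$ with $R_N(q_k)$ tending to the limsup of $R_N$ at infinity, use Lemma \ref{lem-reduction} to pass to a split limit $(S\times\mathbb{R},g_S(t)+ds^2)$ normalized so $R=1$ at the basepoint, observe that the scalar curvature of the limit attains its space--time maximum there, invoke Hamilton \cite{H2} to get a steady soliton structure so that $S$ is the cigar, and derive a contradiction from the cigar being collapsed while the limit is noncollapsed. Two remarks on where you deviate. First, your preliminary step establishing positive sectional curvature of $N$ is superfluous, and your closing claim that it is ``the main obstacle'' is off the mark: the paper never proves positivity of the curvature of $N$, and it is not needed, because the final collapsing-versus-noncollapsing contradiction already disposes of the cigar-split scenario (if $N$ were $\mathbb{R}\times$cigar, the blow-up limit $S$ would simply be the cigar and the same contradiction applies); what genuinely rules out cigar behavior is the noncollapsing, not curvature positivity. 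Second, the source of that noncollapsing differs: you import the standing $\kappa$-noncollapsing assumption on $M$, pass it through two pointed limits, and slice off the $\mathbb{R}$-factors, whereas the paper deduces a volume lower bound at the curvature scale for points of $N$ (and hence of $S\times\mathbb{R}$) from the dimension-reduction hypothesis itself, via Lemma \ref{lem-volume of geodesic ball} and the estimates in the proof of Claim \ref{claim-convergence}. Your route is legitimate under the paper's blanket assumption, but the paper's route shows this lemma (and Theorem \ref{theorem-1}) does not actually lean on the global noncollapsing of $M$; if you intend your version to be hypothesis-free in the same way, you should replace your appeal to $\kappa$-noncollapsedness of $M$ by the curvature-scale volume bound of Claim \ref{claim-convergence}.
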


\begin{proof}
We prove this by contradiction. If the lemma is not true, then there exists a sequence $q_i$ tending to infinity such that $R_{N}(q_i)\ge C_0$ for some positive constant $C_0$ and all $i\in\mathbb{N}$. Since $R_N(x)$ is bounded, we may assume that  $R_{N}(q_i)\to C_0$ as $i\to\infty$. By a delicate choice of $q_i$, we can even make sure that $C_0=\lim_{r\to\infty}\sup_{x\in N\setminus B(q_0,r;g_N)}R_{N}(x)$. By Lemma \ref{lem-reduction}, we may assume the following convergence
\begin{align}
(N,R_N(q_i,0)g(R^{-1}_N(q_i,0)t),q_i)\to (N_{\infty},\bar{g}_{\infty}(t),q_{\infty}),
\end{align}
where $(N_{\infty},\bar{g}_{\infty}(t))=(S\times\mathbb{R},g_{S}(t)+{\rm d}s^2)$ and $(S,g_{S}(t))$ is a two-dimensional ancient flow with bounded curvature.

Hence, $R_{N_{\infty}}(q_{\infty},0)=1$ and $R_{N_{\infty}}(x,t)\le1$ for all $x\in N_{\infty}$ and $t\in (-\infty,+\infty)$. Note that $R_{N_{\infty}}(\widehat{x},t)=R_{S}(x,t)$ for $\widehat{x}=(x,0)\in S\times\mathbb{R}$. Therefore, $(S,g_{S}(t))$ must be a Ricci flow generated by the cigar soliton. Since the cigar soliton is collapsed, for any $x_i\in S$ tending to infinity, we have
\begin{align}
\lim_{i\to\infty}{\rm vol} \, B(x_i,1;R_{S}(x_i,0)g_{S}(0))=0
\end{align}
Let $\widehat{x}_i=(x_i,0)\in S\times\mathbb{R}$. It follows that
\begin{align}
\lim_{i\to\infty}{\rm vol} \, B(\widehat{x}_i,1;R_{N_{\infty}}(\widehat{x}_i,0)g_{N_{\infty}}(0))=0
\end{align}
On the other hand, by Claim \ref{claim-convergence} as wells as the argument in the proof of Claim \ref{claim-convergence}, by taking a subsequence, we have
\begin{align}
{\rm vol} \, B(\widehat{x}_i,1;R_{N_{\infty}}(\widehat{x}_i,0)g_{N_{\infty}}(0))\ge c_0,
\end{align}
for some positive constant $c_0$. This is impossible. Hence, we have completed the proof.

\end{proof}

\begin{lem}\label{lem-estimate for linear decay}
 There exists a constant $\epsilon$ such that
\begin{align}\label{estimate for decay}
\epsilon\le \frac{\Delta_{N} R_{N}(x)+2|{\Ric}_{N}|_{N}^2(x)}{R_{N}^2(x)}\le C_4\,\text{ for all }\,x\in N\setminus K,
\end{align}
where $K$ is a compact set.
\end{lem}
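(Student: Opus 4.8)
The plan is to prove both bounds by contradiction, exploiting the fact, established in Lemma \ref{lem-reduction}, that $(N,g_N,f_N)$ dimension reduces to $2$-manifolds; this estimate is the $3$-dimensional counterpart of \eqref{inequality-2} and the argument parallels that of Lemma \ref{lem-estimates from asymptotical geometry}. Before starting, recall that since $(N,g_N)$ is a nonflat $3$-dimensional steady gradient Ricci soliton, B.-L.~Chen \cite{Ch} implies $R_N>0$ and that $(N,g_N)$ has nonnegative sectional curvature; hence $|Rm_N|$ and $R_N$ are comparable, the soliton flow $g_N(t)$ satisfies $\partial_t R_N=\Delta_N R_N+2|Ric_N|^2=2\,Ric_N(\nabla f_N,\nabla f_N)\ge 0$ everywhere, and the quantity $Q_N:=(\Delta_N R_N+2|Ric_N|^2)/R_N^2$ is invariant under parabolic rescaling. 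It remains to bound $Q_N$ from above and, away from zero, from below on $N\setminus K$ for a suitable compact $K$.

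For the upper bound, suppose $Q_N(q_i)\to\infty$ for some sequence $q_i\to\infty$. Rescaling the soliton flow by $R_N(q_i,0)$ (legitimate since $R_N$ and $|Rm_N|$ are comparable) and invoking Lemma \ref{lem-reduction} together with the noncollapsing and bounded-curvature-at-bounded-distance estimates used in its proof (cf.\ Claim \ref{claim-convergence}), we may pass to a subsequence so that $(N,R_N(q_i,0)g_N(R_N^{-1}(q_i,0)t),q_i)$ converges to $(S\times\mathbb{R},g_S(t)+ds^2,q_\infty)$, where $(S,g_S(t))$ is a $2$-dimensional ancient Ricci flow with uniformly bounded curvature and $R_S(\bar q_\infty,0)=1$, writing $q_\infty=(\bar q_\infty,0)$. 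Since the convergence is smooth and $Q_N$ is scale invariant, $Q_N(q_i)$ converges to $(\Delta R_{g_\infty(0)}+2|Ric_{g_\infty(0)}|^2)(q_\infty)=\partial_t R_{g_S(t)}(\bar q_\infty,0)$, which is finite because $g_S(t)$ is a smooth ancient flow with bounded curvature. This contradicts $Q_N(q_i)\to\infty$ and yields the upper bound $Q_N\le C_4$.

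For the lower bound, suppose instead $Q_N(q_i)\to 0$ for some $q_i\to\infty$. Arguing as above we obtain a limit $(S\times\mathbb{R},g_S(t)+ds^2)$ with $R_S(\bar q_\infty,0)=1$, where $(S,g_S(t))$ is a $2$-dimensional ancient flow with bounded nonnegative curvature satisfying $\partial_t R_S\ge 0$ (by the monotonicity recorded in the proof of Lemma \ref{lem-reduction}). The hypothesis forces $\partial_t R_{g_S(t)}(\bar q_\infty,0)=0$, so by Hamilton's eternal solutions result \cite{H2} the flow $(S,g_S(t))$ is a steady gradient Ricci soliton; being $2$-dimensional and nonflat (as $R_S(\bar q_\infty,0)=1>0$), it must be the cigar soliton. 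But the cigar is collapsed, so for any $\widehat x_j=(x_j,0)$ with $x_j\to\infty$ in $S$ one has ${\rm vol}\,B(\widehat x_j,1;R_S(x_j,0)(g_S(0)+ds^2))\to 0$, whereas the noncollapsing at the curvature scale inherited from $(M,g,f)$ through Lemma \ref{lem-reduction}, Lemma \ref{lem-volume of geodesic ball} and Claim \ref{claim-convergence}, exactly as in the proof of Lemma \ref{lem-uniform decay}, gives a uniform positive lower bound for these volumes. This contradiction proves $Q_N\ge\epsilon>0$ on $N\setminus K$.

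The main obstacle is the lower bound: one must ensure that the rescaled limit of $(N,g_N)$ splits off exactly one line, so that the surviving factor $(S,g_S(t))$ is genuinely $2$-dimensional and Hamilton's $2$-dimensional classification forces the cigar, and that the noncollapsing property survives the two successive limits. Both points are already settled by the analysis in Lemma \ref{lem-reduction} and Lemma \ref{lem-uniform decay}, so the present lemma follows by assembling those ingredients.
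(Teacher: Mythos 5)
Your proof is correct and follows essentially the same route as the paper: the upper bound comes from the smooth convergence in Claim \ref{claim-convergence}, and the lower bound is obtained by contradiction, where failure would produce (via Hamilton's eternal solutions theorem, as in Lemma \ref{lem-estimates from asymptotical geometry}) a dimension reduction of $(N,g_N,f_N)$ to a $2$-dimensional steady soliton, necessarily the cigar, which is ruled out by the collapsing-versus-noncollapsing argument of Lemma \ref{lem-uniform decay}. You merely spell out the details that the paper compresses into references to those earlier lemmas.
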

\begin{proof}
Claim \ref{claim-convergence} implies the inequality on the right-hand side of the lemma. Similar to the proof of Lemma \ref{lem-estimates from asymptotical geometry}, $(N,g_N,f_N)$ dimension reduces to a two-dimensional steady gradient Ricci soliton if the left-hand side of (\ref{estimate for decay}) is not true. However, by the argument in Lemma \ref{lem-uniform decay}, $(N,g_N,f_N)$ cannot dimension reduce to a two-dimensional steady gradient Ricci soliton. Hence, the inequality on the left-hand side of (\ref{estimate for decay}) holds.
\end{proof}

Now, we are ready to prove Theorem \ref{theo-uniqueness of limit soliton}.
\begin{proof}[Proof of Theorem \ref{theo-uniqueness of limit soliton}]
By Lemma \ref{lem-uniform decay}, Lemma \ref{lem-estimate for linear decay}, Theorem \ref{theo-decay} and Theorem \ref{theo-decay lower bound}, we have
\begin{align}
\frac{c^{\prime}}{\rho_N(x)}\le R_{N}(x)\le \frac{c}{\rho_{N}(x)}\,\text{ for all }\,\rho(x)\ge r_0,
\end{align}
for some positive constants $c^{\prime}$, $c$ and $r_0$. By the result in \cite{DZ3}, $(N,g_{N},f_{N})$ must be isometric to the Bryant $3$-soliton.
\end{proof}

\section{Volume growth of steady GRS}\label{section-volume growth}

In this section, we prove Theorem \ref{theo-maximal volume growth}.

\begin{lem}\label{set-mr-contain-1}
Let $(M,g,f)$ be a steady gradient Ricci soliton.  For  any $p\in M$ and number $k>0$, we have
\begin{align}\label{set-mr-contain}
B\left(p,\tfrac{k}{\sqrt{R_{\max}}}; M(p) g\right)\subset M_{p,k},
\end{align}
where $M_{p,k}=\left\{x\in M:f(p)-\frac{k}{\sqrt{M(p)}}\le f(x)\le f(p)+\frac{k}{\sqrt{M(p)}}\right\}$.
\end{lem}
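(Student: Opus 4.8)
The plan is to derive a uniform bound on $|\nabla f|$ from the soliton identity and then control the oscillation of $f$ by integrating $\langle\nabla f,\cdot\rangle$ along a minimal geodesic emanating from $p$.

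First I would record that on any steady gradient Ricci soliton one has the standard identity $R+|\nabla f|^2\equiv R_{\max}$, where $R_{\max}$ is the constant in \eqref{identity}, and that $R\ge 0$ by B.-L.~Chen \cite{Ch}; together these give the pointwise gradient bound $|\nabla f|\le\sqrt{R_{\max}}$ on all of $M$.

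Next, fix $p\in M$ and $k>0$ and take any $x\in B\big(p,\tfrac{k}{\sqrt{R_{\max}}};\,M(p)g\big)$. Since scaling the metric by the positive constant $M(p)$ multiplies all distances by $\sqrt{M(p)}$, this membership is equivalent to $d_g(p,x)<\tfrac{k}{\sqrt{R_{\max}\,M(p)}}$. Let $\gamma:[0,L]\to M$ be a unit-speed minimal $g$-geodesic from $p$ to $x$, where $L=d_g(p,x)$. Then
\[
|f(x)-f(p)|=\left|\int_0^L\langle\nabla f(\gamma(s)),\gamma'(s)\rangle\,ds\right|\le\int_0^L|\nabla f|\,ds\le\sqrt{R_{\max}}\,L<\frac{k}{\sqrt{M(p)}},
\]
so $f(p)-\tfrac{k}{\sqrt{M(p)}}\le f(x)\le f(p)+\tfrac{k}{\sqrt{M(p)}}$; that is, $x\in M_{p,k}$. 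This establishes \eqref{set-mr-contain}.

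The argument is entirely routine; the only points requiring a little care are the bookkeeping between distances measured in $g$ and in the rescaled metric $M(p)g$, and the appeal to the soliton identity (together with $R\ge0$) for the gradient bound $|\nabla f|\le\sqrt{R_{\max}}$. There is no genuine obstacle.
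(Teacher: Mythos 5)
Your proof is correct and follows essentially the same route as the paper's: both rest on the bound $|\nabla f|\le\sqrt{R_{\max}}$ from the soliton identity and on integrating $\langle\nabla f,\gamma'\rangle$ along a curve between $p$ and the other point, with the same bookkeeping between $g$ and $M(p)g$. The only cosmetic difference is that the paper phrases it contrapositively (a lower bound $d_g(p,q)\ge\frac{1}{\sqrt{R_{\max}}}|f(p)-f(q)|$ forcing points outside $M_{p,k}$ out of the ball), while you bound the oscillation of $f$ directly inside the ball.
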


\begin{proof}
For any $q\in M$, let $\gamma(s)$ be any curve connecting $p$ and $q$ such that $\gamma(s_{1})=q$ and $\gamma(s_{2})=p$. Then,
\begin{align*}
\mathcal{L}(q,p)=&\int_{s_{1}}^{s_{2}}\sqrt{\langle \gamma^{\prime}(s),\gamma^{\prime}(s)\rangle}ds\\
\geq& \int_{s_{1}}^{s_{2}}\frac{|\langle\gamma^{\prime}(s),\nabla f\rangle|}{|\nabla f|}ds\\
\geq& \frac{1}{\sqrt{R_{\max}}} \left|\int_{s_{1}}^{s_{2}}\langle\gamma^{\prime}(s),\nabla f\rangle ds\right| \\
=&\frac{1}{\sqrt{R_{\max}}}|f(p)-f(q)|,
\end{align*}
where we have used
$$|\nabla f|^2(x)=R_{\max}-R(x)\le R_{\max}\quad\forall~x\in M.$$
It follows that
\begin{align}
d(q,p)\geq \frac{1}{\sqrt{R_{\max}}}|f(p)-f(q)|.\notag
\end{align}
In particular,  for  $q\in M\setminus M_{p,k}$, we get
\begin{align}
d(q,p)\geq \frac{1}{\sqrt{R_{\max}}}\cdot\frac{k}{\sqrt{M(p)}}.\notag
\end{align}
Hence
\begin{align}
B\left(p,\tfrac{k}{\sqrt{R_{\max}}};M(p)g\right)\subset M_{p,k}.
\end{align}

\end{proof}

\begin{lem}\label{lem-monotone of S}
Let $(M,g,f)$ be a steady gradient Ricci soliton with uniform scalar curvature decay which satisfies condition \eqref{Ricci nonnegative outside K}. If $f(x)\ge r$, then
\begin{align}
R(x)\le \sup_{y\in \Sigma_r}R(y)\quad\forall~r\ge r_0,
\end{align}
where $r_0$ is a positive constant.
\end{lem}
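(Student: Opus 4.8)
The plan is to flow forward along $\phi_t$ from the given point until the potential $f$ reaches the value $r$, while showing that $R$ cannot decrease along the way. First I would dispose of the trivial case: if $(M,g,f)$ is Ricci flat then $R\equiv 0$ and there is nothing to prove, so assume it is not. Then \eqref{Ricci nonnegative outside K} forces ${\rm Ric}(\nabla f,\nabla f)\ge 0$ on $M\setminus K$, which is exactly condition \eqref{eq: dRdt geq 0 on M minus K} (see the Remark after Theorem \ref{theo-linear of f}); hence the entire set-up of Section \ref{section 3} is available for $(M,g,f)$ — in particular the identity $R+|\nabla f|^2=R_{\max}$ from \eqref{identity}, and the compact set $S(\varepsilon_0)$ with $K\subseteq S(\varepsilon_0)$, outside of which $|\nabla f|^2$ is bounded below by a positive constant. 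I would then set $r_0=1+\sup_{S(\varepsilon_0)}f$, which is finite because $S(\varepsilon_0)$ is compact; by construction $\{f\ge r_0\}\cap S(\varepsilon_0)=\emptyset$.

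Now fix $r\ge r_0$ and a point $x$ with $f(x)\ge r$, so that $x\in M\setminus S(\varepsilon_0)$. By Lemma \ref{lem-shrinking} there is $t_x>0$ with $\phi_{t_x}(x)\in S(\varepsilon_0)$ and $\phi_t(x)\in M\setminus S(\varepsilon_0)$ for all $t<t_x$. On the interval $[0,t_x)$ the orbit stays in $M\setminus S(\varepsilon_0)\subseteq M\setminus K$, and I would read off two monotonicities there. On one hand, $\frac{d}{dt}f(\phi_t(x))=-|\nabla f|^2(\phi_t(x))<0$, so $t\mapsto f(\phi_t(x))$ is strictly decreasing. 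On the other hand, ${\rm Ric}(\nabla f,\nabla f)\ge 0$ on $M\setminus K$ gives $\frac{d}{dt}R(\phi_t(x))\ge 0$ (again by the Remark after Theorem \ref{theo-linear of f}), so $t\mapsto R(\phi_t(x))$ is non-decreasing on $[0,t_x)$.

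Finally I would locate $\Sigma_r$ on this orbit. Since $\phi_{t_x}(x)\in S(\varepsilon_0)$, we have $f(\phi_{t_x}(x))\le\sup_{S(\varepsilon_0)}f<r_0\le r\le f(x)$, so the intermediate value theorem produces $s\in[0,t_x)$ with $f(\phi_s(x))=r$, that is, $\phi_s(x)\in\Sigma_r$. Because $R\circ\phi_t$ is non-decreasing on $[0,s]\subseteq[0,t_x)$, it follows that $R(x)=R(\phi_0(x))\le R(\phi_s(x))\le\sup_{y\in\Sigma_r}R(y)$, which is the assertion. The only point requiring genuine input is the guarantee that the forward $\phi_t$-orbit of $x$ actually reaches the level $r$ before it can leave $M\setminus S(\varepsilon_0)$; this is exactly Lemma \ref{lem-shrinking} (equivalently, the uniform positive lower bound for $|\nabla f|^2$, and hence for $-\frac{d}{dt}f(\phi_t(x))$, on $M\setminus S(\varepsilon_0)$), and given that, the remaining verifications are routine consequences of the identities collected in Section \ref{section 3}.
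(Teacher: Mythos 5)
Your proposal is correct and follows essentially the same route as the paper's proof: flow forward along $\phi_t$ until the orbit meets the level set $\Sigma_r$ (guaranteed by Lemma \ref{lem-shrinking} and the positive lower bound for $|\nabla f|^2$ outside $S(\varepsilon_0)$), and use that $R\circ\phi_t$ is non-decreasing there because ${\rm Ric}(\nabla f,\nabla f)\ge 0$ outside $K\subseteq S(\varepsilon_0)$. Your write-up merely fills in details the paper leaves implicit (the choice of $r_0$, the intermediate value argument, and the trivial Ricci-flat case), so no further comment is needed.
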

\begin{proof}
By Theorem \ref{theo-linear of f}, we may assume that there exist constants $C_1$ and $C_2$ such that
\begin{align}
C_1\rho(x)\le f(x)\le C_2 \rho(x), ~\forall~f(x)\ge r_0.
\end{align}
Let $S(\varepsilon_0)$ be the compact set in Lemma \ref{lem-remain for t negative}. We may also assume that $S(\varepsilon_0)\subseteq\{x\in M: f(x)\ge r_0\}$. Similar to the proof of Lemma \ref{lem-shrinking}, we can find $t_x\ge 0$ such that $f(\phi_{t_x}(x))=r$, where $\phi_t$ is generated by $-\nabla f$. Since the Ricci curvature is nonnegative, we have
\begin{align}
R(x)\le R(\phi_{t_x}(x))\le \sup_{y\in \Sigma_r}R(y).
\end{align}
We have completed the proof.
\end{proof}

Now, we let $M(p)=\sup_{x\in\Sigma_{\frac{f(p)}{2}}}R(x)$. By Lemma \ref{set-mr-contain-1}, we can prove:

\begin{lem}\label{lem-pointwise curvature estimate}
Let $(M,g,f)$ be a steady gradient Ricci soliton with uniform scalar curvature decay which satisfies condition \eqref{Ricci nonnegative outside K}.  Fix $\epsilon>0$. Then, for   any $p\in M$ with $M(p)\ge \frac{\epsilon}{f(p)}$ and number $k>0$, there exist constants $r_0(k,\epsilon)$ and $C(m)$ such that
\begin{align}\label{pointwise curvature estimate}
\frac{|\nabla^m {\Rm}|(q)}{M^{\frac{m+2}{2}}(p)}\le C(m)\quad\forall~q\in ~M_{p,k}, ~f(p)\ge r_0(k,\epsilon).
\end{align}
\end{lem}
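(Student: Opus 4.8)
The plan is to prove the estimate by a standard point-picking / contradiction argument combined with Shi's local derivative estimates, using Lemma \ref{set-mr-contain-1} to control the geometry of the slab $M_{p,k}$. First I would argue that it suffices to prove the case $m=0$: once a bound $|Rm|(q)\le C\,M(p)$ is established on $M_{p,2k}$ (say), the higher-order bounds $|\nabla^m Rm|(q)\le C(m) M(p)^{(m+2)/2}$ on $M_{p,k}$ follow from Shi's local estimates applied on balls of radius comparable to $M(p)^{-1/2}$, since by Lemma \ref{set-mr-contain-1} (applied with $2k$ in place of $k$) the ball $B(q,\tfrac{k}{\sqrt{R_{\max}}};M(p)g)\subset M_{p,2k}$ sits inside the region where the curvature bound holds.

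For the $m=0$ bound I would proceed by contradiction: suppose there exist $\epsilon>0$, a sequence $p_i$ with $M(p_i)\ge \epsilon/f(p_i)$ and $f(p_i)\to\infty$, and points $q_i\in M_{p_i,k}$ with $|Rm|(q_i)/M(p_i)\to\infty$. Using a point-picking lemma (Perelman's or Hamilton's) inside the slab $M_{p_i,k}$ — which, after rescaling by $M(p_i)$, has uniformly bounded ``width'' $2k/\sqrt{R_{\max}}$ in the $f$-direction by Lemma \ref{set-mr-contain-1} — I would select new centers $q_i'$ with $K_i := |Rm|(q_i')\to\infty$, $K_i/M(p_i)\to\infty$, and $|Rm|\le 4K_i$ on a parabolic neighborhood of $(q_i',0)$ whose rescaled-by-$K_i$ radius tends to infinity. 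Here I would use that $R$ (hence $|Rm|$, via nonnegativity of Ricci outside a compact set and B.-L.~Chen's theorem giving $|Rm|\le C_n R$ where Ricci is nonnegative) is essentially constant at scale $M(p_i)^{-1/2}$ on the slab because of the monotonicity of $R$ along the $-\nabla f$ flow (Lemma \ref{lem-monotone of S}) together with $\langle\nabla R,\nabla f\rangle\le 0$; the crucial point is that rescaling by $K_i\gg M(p_i)$ sends the whole slab off to infinity, so the rescaled solitons $(M,K_ig(K_i^{-1}t),q_i')$ converge in Cheeger--Gromov sense to a complete ancient limit with bounded curvature, with $|Rm|(q'_\infty,0)=1$.

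The limit is then a complete nonflat ancient solution with bounded nonnegative curvature operator (by B.-L.~Chen, since on the slab $f$ is large so Ricci is nonnegative) which, because $K_i/M(p_i)\to\infty$ forces the rescaled scalar curvature $R/K_i$ of the ``endcap'' region to vanish in the limit, in fact the $-\nabla f$ trajectories show $R$ is nondecreasing toward $q_i'$; passing to the limit one obtains that $\partial_t R = 0$ at some point (the trajectory through $q'_\infty$ being ``frozen'' because its forward curvature is controlled by $M(p_i)$), and then by Hamilton's rigidity for eternal/ancient solutions the limit splits off a line and carries a steady soliton structure — but one of even lower dimension, contradicting $|Rm|(q'_\infty,0)=1$ in dimension bookkeeping, or more directly contradicting the choice by producing a region where $R$ is bounded by $M(p_i) = o(K_i)$. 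I expect the main obstacle to be making the slab geometry precise: one must verify that after rescaling by $K_i$, the slab $M_{p_i,k}$ genuinely escapes to infinity (so no collapsing or boundary effects from $S(\varepsilon_0)$ interfere), and that the monotonicity of $R$ along $-\nabla f$ really does pin down the curvature at the smaller scale $M(p_i)^{-1/2}$ uniformly — this is where Lemma \ref{set-mr-contain-1}, Lemma \ref{lem-monotone of S}, and the linear growth of $f$ from Theorem \ref{theo-linear of f} must be combined carefully.
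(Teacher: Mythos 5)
Your reduction to the case $m=0$ followed by Shi's local estimates is in the spirit of the paper's argument, but the way you propose to prove the $m=0$ bound contains a genuine gap, and it misses where the hypothesis $M(p)\ge \epsilon/f(p)$ actually enters. The paper's proof is direct and requires no blow-up: by the argument of Lemma \ref{set-mr-contain-1}, the ball $B(q,1/\sqrt{R_{\max}};M(p)g)$ around any $q\in M_{p,k}$ lies in the slab $M_{p,k+1}$, whose width in the $f$-direction is of order $(k+1)/\sqrt{M(p)}\le (k+1)\sqrt{f(p)/\epsilon}$; hence for $f(p)\ge r_0(k,\epsilon)$ the whole slab lies in $\{f\ge f(p)/2\}$, and Lemma \ref{lem-monotone of S} (valid there since ${\rm Ric}\ge 0$ outside $K$) gives $R\le M(p)$ on the ball outright, because $M(p)$ is by definition the supremum of $R$ over $\Sigma_{\frac{f(p)}{2}}$. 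Chan's estimate \cite{Chan} converts this into $|Rm|\le C\,M(p)$; the bound persists on the parabolic neighborhood for $t\in[-1,0]$ because $\partial_t R\ge 0$ along the flow and the time-$t$ balls are contained in the time-$0$ ball for $t\le 0$; and Shi's local derivative estimates then give \eqref{pointwise curvature estimate} after rescaling back. Nothing needs to be point-picked or blown up.

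Your contradiction argument, by contrast, would not go through as written. First, Cheeger--Gromov convergence of the rescalings $(M,K_ig(K_i^{-1}t),q_i')$ requires a noncollapsing hypothesis, which is not among the hypotheses of this lemma (only uniform scalar curvature decay and \eqref{Ricci nonnegative outside K}); the lemma is later applied under maximal volume growth, but as stated it must be proved without noncollapsing. Second, the passage from a scalar curvature bound to a full curvature bound is not a consequence of nonnegative Ricci together with B.-L.~Chen \cite{Ch}: nonnegative Ricci does not control $|Rm|$ in dimension four, and the correct tool is Chan's steady-soliton estimate \cite{Chan}, exactly as used throughout the paper. Third, your concluding contradiction (``dimension bookkeeping'', or Hamilton rigidity producing a lower-dimensional soliton) is never actually derived; a $4$-dimensional ancient limit that splits off a line is perfectly compatible with $|Rm|(q_\infty',0)=1$. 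Finally, observe that your contradiction hypothesis $|Rm|(q_i)/M(p_i)\to\infty$ is already refuted by the direct bound above, so the blow-up machinery is not only unjustified under the stated hypotheses but unnecessary.
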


\begin{proof}
Fix any $q\in M_{p,k}$ with $f(p)\ge r_0 \gg 1$. Similar to the proof of Lemma \ref{set-mr-contain-1}, we have
\begin{align}
B\left(q,\tfrac{1}{\sqrt{R_{\max}}};M(p)g\right)\subseteq& \left\{x\in M:f(q)-\tfrac{1}{\sqrt{M(p)}}\le f(x)\le f(q)+\tfrac{1}{\sqrt{M(p)}}\right\}\notag\\
\subseteq& \, M_{p,k+1}.\notag
\end{align}
Since $M(p)\ge \frac{\epsilon}{f(p)}$, for $r_0(k,\epsilon)$ large enough, we have
\begin{align}
f(x)\ge \frac{f(p)}{2}\quad\forall~x\in M_{p,k+1}.\notag
\end{align}
Hence, by Lemma \ref{lem-monotone of S}, we get
\begin{align}\label{curvature bound-1}
R(x)\le M(p)\quad\forall ~x\in B\left(q,\tfrac{1}{\sqrt{R_{\max}}};M(p)g\right).
\end{align}

Let $\phi_t$ be generated by $-\nabla f$. Then $g(t)=\phi_t^{\ast}g$ satisfies the Ricci flow,
\begin{align}\label{Ricci flow equation}
         \frac{\partial g(t)}{\partial t} &= -2{\rm Ric}(g(t)).
                  \end{align}
Also, the rescaled flow  $g_p(t)=M(p) g(M^{-1}(p) t)$  satisfies (\ref{Ricci flow equation}). Since the Ricci curvature is nonnegative,
\begin{align}
B\left(q,\tfrac{1}{\sqrt{R_{\max}}};g_p(t)\right)\subseteq B\left(q,\tfrac{1}{\sqrt{R_{\max}}};g_p(0)\right),~t\in~[-1,0].\notag
\end{align}
Combining this with  (\ref{curvature bound-1}) and the estimate in \cite{Chan}, we get
\begin{align}
|{\Rm}|_{g_p(t)}(x)\le CR_{g_p(t)}(x)\le C\quad\forall~x\in B\left(q,\tfrac{1}{\sqrt{R_{\max}}};g_p(0)\right),~t\in [-1,0].
\end{align}
Thus,  by Shi's higher order local derivative of curvature estimates, we obtain
\begin{align}
|\nabla^m_{g_p(t)} {\Rm}_{g_p(t)}|(x)\le C(m)\quad\forall~x\in B\left(q,\tfrac{1}{ 2\sqrt{R_{\max}}};g_p(-1)\right),~t\in  [-\frac{1}{2},0].\notag
\end{align}
It follows that
\begin{align}
|\nabla^m {\Rm}|(x)\le C(m)M^{\frac{m+2}{2}}(p)\quad\forall~x\in B\left(q,\tfrac{1}{2\sqrt{R_{\max}}};g_p(-1)\right).\notag
\end{align}
In particular, we have
\begin{align}
|\nabla^m {\Rm}|(q)\le C(m)M^{\frac{m+2}{2}}(p).\notag
\end{align}
The lemma is proved.
\end{proof}

\begin{lem}\label{lem-curvature fast decay}
Let $(M,g,f)$ be a $4$-dimensional steady gradient Ricci soliton with uniform scalar curvature decay which satisfies condition \eqref{Ricci nonnegative outside K}. Suppose $(M,g)$ has maximal volume growth.  Suppose $\{p_i\}_{i\in \mathbb{N}_{+}}\in M$ is a sequence of points with the property $M(p_i)\ge \frac{\epsilon}{f(p_i)}$, where $\epsilon$ is a given constant. If $p_i$ tends to infinity, then for any $\varepsilon>0$, there exists a constant $r_0(\varepsilon,\epsilon)$ such that
\begin{align}
R(p_i)\le \varepsilon M(p_i)\quad\forall~f(p_i)\ge r_0(\varepsilon,\epsilon).
\end{align}
\end{lem}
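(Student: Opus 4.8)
The plan is to argue by contradiction, rescaling at the points $p_i$ and exploiting both the maximal volume growth hypothesis and the fast-decay mechanism for the rescaled Ricci flows. Suppose the conclusion fails: there exist $\varepsilon_0>0$ and a subsequence (still denoted $p_i$) tending to infinity with $M(p_i)\ge \frac{\epsilon}{f(p_i)}$ and $R(p_i)\ge \varepsilon_0 M(p_i)$ for all $i$. Consider the rescaled flows $g_i(t)=M(p_i)g(M^{-1}(p_i)t)$ based at $p_i$. The key input is Lemma \ref{lem-pointwise curvature estimate}: since $M(p_i)\ge \frac{\epsilon}{f(p_i)}$, for every fixed $k>0$ and $i$ large we have uniform $C^m$ bounds on curvature over $M_{p_i,k}$ at the scale $M(p_i)$, and by Lemma \ref{set-mr-contain-1} this region contains the metric ball $B(p_i,\frac{k}{\sqrt{R_{\max}}};g_i(0))$. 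Hence, together with the uniform noncollapsing coming from maximal volume growth (Proposition \ref{uniform vol ratio}, giving $V_x(r)/r^n\ge c_0$ for all $x,r$), we may extract a subsequential Cheeger--Gromov limit $(M_\infty,g_\infty(t),p_\infty)$, $t\in[-1,0]$, of complete pointed solutions to the Ricci flow with bounded nonnegative curvature (nonnegativity on the limit by B.-L.~Chen \cite{Ch}), and with $R_{g_\infty(0)}(p_\infty)\ge \varepsilon_0>0$ so that the limit is nonflat.

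Next I would identify the limit. By construction every point of $M_{p_i,k}$ at scale $M(p_i)$ has scalar curvature $\le M(p_i)$ (this is how Lemma \ref{lem-pointwise curvature estimate} was proved, via Lemma \ref{lem-monotone of S}), so $R_{g_\infty}\le 1$ everywhere on $M_\infty$; moreover since $M(p_i)=\sup_{\Sigma_{f(p_i)/2}}R$ and the level set $\Sigma_{f(p_i)/2}$ lands, after rescaling, inside a bounded region of the limit (by the diameter-type control: it sits at $f$-level roughly $f(p_i)/2$, distance $O(1)$ from $p_i$ at scale $M(p_i)$ by Lemma \ref{set-mr-contain-1}), the value $1=\sup R_{g_\infty}$ is attained in space-time. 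By Hamilton's result on eternal/ancient solutions \cite{H2}, the limit therefore carries a steady gradient Ricci soliton structure. Now bring in the maximal volume growth once more: the AVR is scale-invariant, so $\AVR(g_\infty)=\AVR(g)>0$, i.e., $(M_\infty,g_\infty)$ is a steady gradient Ricci soliton with maximal volume growth. If $\dim M_\infty = 4$ this contradicts Theorem \ref{theo-maximal volume growth}, which forces such a soliton to be Ricci-flat — incompatible with $R_{g_\infty(0)}(p_\infty)\ge\varepsilon_0$. (If the limit splits off one or more lines, the transverse factor is a lower-dimensional steady soliton, still with maximal volume growth; one reduces to the $3$- or lower-dimensional case, where a nonflat steady soliton with $\AVR>0$ is again impossible — e.g.\ in dimension $3$ it would have to be Ricci-flat hence flat, and in dimension $\le 2$ there is no nonflat steady soliton with positive AVR.)

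This contradiction establishes that $R(p_i)\le \varepsilon M(p_i)$ for $f(p_i)\ge r_0(\varepsilon,\epsilon)$, completing the proof. The main obstacle I anticipate is the extraction and identification of the limit: one must verify carefully that the $C^m$ curvature bounds of Lemma \ref{lem-pointwise curvature estimate} apply uniformly on balls of every fixed radius (letting $k\to\infty$), that the noncollapsing from Proposition \ref{uniform vol ratio} survives the rescaling (it does, being scale-invariant), and above all that the supremum of the limiting scalar curvature is genuinely \emph{attained in space-time} so that Hamilton's soliton criterion applies — this last point hinges on locating the maximizing level set $\Sigma_{f(p_i)/2}$ within a bounded region after rescaling, which uses Lemma \ref{set-mr-contain-1} and the linear growth of $f$. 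Once the limit is known to be a nonflat steady soliton with positive AVR, invoking Theorem \ref{theo-maximal volume growth} (and its lower-dimensional analogues) closes the argument.
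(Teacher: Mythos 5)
Your setup (argue by contradiction, rescale by $M(p_i)$, get $C^m$ curvature bounds on $B(p_i,\tfrac{k}{\sqrt{R_{\max}}};g_i(0))\subseteq M_{p_i,k}$ from Lemma \ref{set-mr-contain-1} and Lemma \ref{lem-pointwise curvature estimate}, use Proposition \ref{uniform vol ratio} for noncollapsing, and extract a complete limit with $R_{g_\infty}\le 1$ and maximal volume growth) is exactly the paper's. But your identification of the limit has a genuine gap. You claim that the level set $\Sigma_{f(p_i)/2}$, where $M(p_i)$ is attained, lands in a bounded region of the rescaled limit, so that $\sup R_{g_\infty}=1$ is attained and Hamilton's theorem gives a steady soliton structure. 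Lemma \ref{set-mr-contain-1} gives the inclusion in the wrong direction for this (rescaled balls sit inside $f$-slabs, not conversely), and in fact the opposite is true: since $|\nabla f|\le\sqrt{R_{\max}}$, any point of $\Sigma_{f(p_i)/2}$ has $g_i(0)$-distance from $p_i$ at least $\sqrt{M(p_i)}\,\frac{f(p_i)}{2\sqrt{R_{\max}}}\ge \frac{\sqrt{\epsilon f(p_i)}}{2\sqrt{R_{\max}}}\to\infty$, so the maximizing points escape to spatial infinity and the value $1$ need not be attained in the limit at all. (Hamilton's eternal-solutions criterion would also require bounded nonnegative curvature operator, which you do not have for a $4$-dimensional limit with only ${\rm Ric}\ge 0$; and the nonnegativity you cite from Chen is only of the scalar curvature in dimension $4$ — here nonnegative Ricci comes from \eqref{Ricci nonnegative outside K} plus the fact that the rescaled balls eventually avoid $K$.) A second, structural problem: you close the argument by invoking Theorem \ref{theo-maximal volume growth}, but in the paper that theorem is proved \emph{after} and \emph{by means of} the present lemma (through Lemma \ref{lem-small linear decay constant}), so your proof is circular as written.

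The paper avoids both issues by never trying to realize the limit as a soliton. Instead, since the scalar curvature decays uniformly and $p_i\to\infty$, one has $M(p_i)\to 0$ while $|\nabla f|(p_i)\to\sqrt{R_{\max}}>0$; the rescaled fields $M(p_i)^{-1/2}\nabla f$ then converge to a nontrivial parallel vector field (the argument of Claim \ref{claim-split}), so the limit splits as $N^3\times\mathbb{R}$ with $(N,g_N(t))$ a $3$-dimensional $\kappa$-solution of bounded curvature. Maximal volume growth of the limit then contradicts Perelman's theorem that nonflat $3$-dimensional $\kappa$-solutions have zero asymptotic volume ratio, forcing $N$ flat and hence $\lim_i R(p_i)/M(p_i)=R_\infty(p_\infty,0)=0$, contradicting the assumption $R(p_i)>\varepsilon M(p_i)$. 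If you replace your Hamilton step by this splitting argument and the appeal to Theorem \ref{theo-maximal volume growth} by Perelman's $3$-dimensional AVR result, your proof becomes correct and coincides with the paper's.
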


\begin{proof}
We prove this by contradiction. We only need to consider the case that $(M,g,f)$ is non-Ricci-flat. If the lemma is not true, then by taking a subsequence, we may assume that there exists a constant $\varepsilon>0$ such that
\begin{align}\label{positive-assumption}
R(p_i)>\varepsilon M(p_i)~\mbox{as}~i\to\infty.
\end{align}
Now we consider the sequence of Ricci flows $(M,M(p_i)g(M^{-1}(p_i)t),p_i)$. Let $g_i(t)=M(p_i)g(M^{-1}(p_i)t)$. Since we assume that $M(p_i)\ge \frac{\epsilon}{f(p_i)}$, by Lemma \ref{lem-pointwise curvature estimate}, we have
\begin{align}\label{8-1}
\frac{|\nabla^m {\Rm}|(x)}{M^{\frac{m+2}{2}}(p_i)}\le C(m)\quad\forall~x\in M_{p_i,k},~f(p_i)\ge r_0(k,\epsilon).
\end{align}
By Lemma \ref{set-mr-contain-1}, we also have
\begin{align}\label{8-2}
B\left(p_i,\tfrac{k}{\sqrt{R_{\max}}};g_i(0)\right)\subseteq M_{p_i,k} .
\end{align}
By (\ref{8-1}) and (\ref{8-2}), we have
\begin{align}\label{8-3}
\frac{|\nabla^m {\Rm}|(x)}{M^{\frac{m+2}{2}}(p_i)}\le C(m)\quad\forall~x\in B\left(p_i,\tfrac{k}{\sqrt{R_{\max}}};g_i(0)\right),~f(p_i)\ge r_0(k,\epsilon).
\end{align}
By Proposition \ref{uniform vol ratio}, $(M,g_i(t),p_i)$ is $\kappa$-noncollapsed. By the estimate (\ref{8-3}) and Proposition \ref{uniform vol ratio}, by taking a subsequence, $(M,g_i(t),p_i)$ converges to a limit $(M_{\infty},g_{\infty}(t),p_{\infty})$ with maximal volume growth. Since the scalar curvature of $(M,g,f)$ decays uniformly, by the same argument as in the proof of Claim \ref{claim-convergence}, we can show that $(M_{\infty},g_{\infty}(t),p_{\infty})$ splits off a line. We assume that $(M_{\infty},g_{\infty}(t))=(N\times\mathbb{R}, g_{N}(t)+ds^2)$. Note that $(\ref{8-3})$ also implies that $(M_{\infty},g_{\infty}(t))$ has bounded curvature. Hence, $g_{N}(t)$ is a three-dimensional $\kappa$-solution with bounded curvature. We also note that $g_{N}(t)$ has maximal volume growth. Hence, $(N,g_N(t))$ must be flat. It follows that $R_{\infty}(p_{\infty},0)=0$. Hence,
\begin{align}
\lim_{i\to\infty}\frac{R(p_i)}{M(p_i)}=R_{\infty}(p_{\infty},0)=0.
\end{align}
This contradicts (\ref{positive-assumption}). Hence, we have completed the proof.

\end{proof}

 \begin{lem}\label{lem-small linear decay constant}
  Let $(M,g,f)$ be a $4$-dimensional steady gradient Ricci soliton with uniform scalar curvature decay which satisfies condition \eqref{Ricci nonnegative outside K}. If $(M,g,f)$ has maximal volume growth, then there exists a constant $r_0(\epsilon)$ such that
  \begin{align}
  R(x)\le \frac{4\epsilon}{f(x)}\quad\forall~f(x)\ge r_0(\epsilon).
  \end{align}
 \end{lem}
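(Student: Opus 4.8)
The plan is to reduce the statement to controlling the single quantity $\mu(r)\doteqdot\sup_{\Sigma_r}R$ and then to iterate Lemma~\ref{lem-curvature fast decay} across dyadic scales. I would first record the elementary consequences of Lemma~\ref{lem-monotone of S}: since $(M,g,f)$ has uniform scalar curvature decay and satisfies \eqref{Ricci nonnegative outside K}, there is $r_0$ such that $R(x)\le\mu(r)$ whenever $f(x)\ge r\ge r_0$. Taking $r=f(x)$ gives $R(x)\le\mu(f(x))$; taking $r=f(x)/2$ gives $R(x)\le M(x)$, where $M(x)=\mu(f(x)/2)$ depends only on $f(x)$; and comparing two scales $r_0\le r\le r'$ shows that $\mu$ is non-increasing on $[r_0,\infty)$. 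Moreover $\mu(r)\le R_{\max}$ for every $r$, and $\mu(r)\to 0$ as $r\to\infty$ by the uniform decay.

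Next I would promote Lemma~\ref{lem-curvature fast decay} to a uniform, non-sequential statement by the standard contradiction argument: fixing once and for all some $\varepsilon\in(0,\tfrac12)$, say $\varepsilon=\tfrac14$, there is $r_1\ge r_0$ such that every $p\in M$ with $f(p)\ge r_1$ satisfies the dichotomy ``either $M(p)<\epsilon/f(p)$, or $R(p)\le\varepsilon\,M(p)$''. I would then pass to dyadic scales: let $j_1$ be the integer with $r_1\le 2^{j_1-1}<2r_1$, and for $j\ge j_1$ choose $p_j\in\Sigma_{2^j}$ realizing $\mu(2^j)$, so that $f(p_j)=2^j$, $R(p_j)=\mu(2^j)$ and $M(p_j)=\mu(2^{j-1})$. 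Writing $a_j\doteqdot 2^j\mu(2^j)$, the dichotomy applied at $p_j$ reads: either $\mu(2^{j-1})<\epsilon/2^j$, so that $a_j\le 2^j\mu(2^{j-1})<\epsilon$; or $\mu(2^j)\le\varepsilon\,\mu(2^{j-1})$, so that $a_j\le 2\varepsilon\,a_{j-1}$.

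I would finish by iterating this dichotomy downward from an arbitrary $j\ge j_1$. If the first alternative occurs at some step $j-k\ge j_1$, then $a_j\le(2\varepsilon)^k a_{j-k}<\epsilon$ since $2\varepsilon<1$; if instead the second alternative holds at every step down to $j_1$, then $a_j\le(2\varepsilon)^{j-j_1}a_{j_1}\le(2\varepsilon)^{j-j_1}\cdot 4r_1R_{\max}$, using the crude bound $a_{j_1}=2^{j_1}\mu(2^{j_1})\le 2^{j_1}R_{\max}<4r_1R_{\max}$. Since $2\varepsilon<1$, there is $j_0\ge j_1$ with $(2\varepsilon)^{j-j_1}\cdot 4r_1R_{\max}<2\epsilon$ for all $j\ge j_0$; hence $a_j<2\epsilon$, that is $\mu(2^j)<2\epsilon/2^j$, for all such $j$. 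Finally, given $x$ with $f(x)\ge r_0(\epsilon)\doteqdot 2^{j_0}$, choosing $j\ge j_0$ with $2^j\le f(x)<2^{j+1}$ yields $R(x)\le\mu(f(x))\le\mu(2^j)<2\epsilon/2^j<4\epsilon/f(x)$, which is the asserted estimate.

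The monotonicity bookkeeping and the geometric-series tail estimate are routine; the genuine point, and the main obstacle, is that Lemma~\ref{lem-curvature fast decay} only compares $R(p)$ with $M(p)=\mu(f(p)/2)$ and gives no a priori linear bound on $M(p)$ itself, so linear decay must be extracted by self-improvement across dyadic scales. The downward iteration is precisely what accomplishes this, and it closes only because the contraction factor $2\varepsilon$ is strictly less than $1$, which is why one must take $\varepsilon<\tfrac12$.
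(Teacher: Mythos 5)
Your proposal is correct and is essentially the paper's own argument: it relies on the same two ingredients (Lemma~\ref{lem-monotone of S}, giving monotonicity of $\sup_{\Sigma_r}R$ in $r$, and the uniform form of Lemma~\ref{lem-curvature fast decay}), the same dyadic maximizers $p_j\in\Sigma_{2^j}$ of the scalar curvature on level sets, and the same final doubling step that costs the factor $4$. The only difference is bookkeeping: your dichotomy for $a_j=2^j\sup_{\Sigma_{2^j}}R$ (either $a_j<\epsilon$ or $a_j\le 2\varepsilon\,a_{j-1}$ with $2\varepsilon<1$), iterated downward, replaces the paper's Claims~\ref{claim-a} and~\ref{claim-b} about ``good'' and ``bad'' dyadic indices, and the two organizations encode the same self-improvement iteration.
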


 \begin{proof}
 We only need to consider the case that $(M,g,f)$ is non-Ricci-flat.
 Let $\{p_i\}_{i\in \mathbb{N}_{+}}$ be a sequence points such that $f(p_i)=2^i$ and
 \begin{align}
 R(p_i)=\sup_{x\in \Sigma_{f(p_i)}}R(x).
 \end{align}
 We first show that
 \begin{align}\label{estimate for special points}
 R(p_i)\le \frac{2\epsilon}{f(p_i)}\quad\forall~i\ge i_0,
 \end{align}
 for some constant $i_0>0$. We need the following claim.

 \begin{claim}\label{claim-a}
 There are infinitely many $i$ such that
 \begin{align}
 R(p_i)\le \frac{\epsilon}{f(p_i)}.
 \end{align}
 \end{claim}
 \begin{proof}
 We prove the claim by contradiction. If the claim is not true, then there exists a constant $i_0>0$ such that
 \begin{align}
 R(p_i)\ge\frac{\epsilon}{f(p_i)}\quad\forall~i\ge i_0.
 \end{align}
 Let $\varepsilon=\frac{\epsilon}{4}$. We can make $i_0$ large enough such that
 \begin{align}
 f(p_i)\ge r_0(\varepsilon,\epsilon)\quad\forall~i\ge i_0,\notag
 \end{align}
 where $r_0(\varepsilon,\epsilon)$ is the constant in Lemma \ref{lem-curvature fast decay}. By Lemma \ref{lem-curvature fast decay}, we have
 \begin{align}
 \frac{R(p_{i+1})}{R(p_i)}\le \varepsilon\quad\forall~i\ge i_0.\notag
 \end{align}
 Hence,
 \begin{align}\label{8-a}
 R(p_i)\le \varepsilon^{i-i_0}R(p_{i_0})\quad\forall~i>i_0.
 \end{align}
 On the hand, by our assumption, we have
 \begin{align}\label{8-b}
 R(p_i)\ge \frac{\epsilon}{f(p_i)}=\frac{\epsilon}{2^i}\quad\forall~i>i_0.
 \end{align}
 Note that $\varepsilon=\frac{\epsilon}{4}\le \frac{1}{4}$. When $i$ is large enough, we get a contradiction by combining (\ref{8-a}) and (\ref{8-b}).
 \end{proof}
 By Claim \ref{claim-a}, there exists a constant  $i_0$ such that
 \begin{align}
 R(p_{i_0})\le \frac{\epsilon}{f(p_{i_0})}
 \end{align}
 and
 \begin{align}
 f(p_{i_0})\ge r_0(\varepsilon,\epsilon),
 \end{align}
 where $r_0(\varepsilon,\epsilon)$ is the constant in Lemma \ref{lem-curvature fast decay} and we take $\varepsilon=\frac{\epsilon}{4}\le \frac{1}{4}$. If there are only finitely many $p_i$ such that
 \begin{align}\label{decay-7}
     R(p_i)>\frac{\epsilon}{f(p_i)},
 \end{align}
 then  $(\ref{estimate for special points})$ holds for $i\ge i_0+1$, where $i_0$ is the largest number such that $p_i$ satisfies (\ref{decay-7}). Now, we assume that there are infinitely many $N_i$.

 We need the following claim.

 \begin{claim}\label{claim-b}
 Suppose $N>i_0$ and
 \begin{align}
 R(p_{N-1})\le\frac{\epsilon}{f(p_{N-1})},~R(p_{N})>\frac{\epsilon}{f(p_{N})}.
 \end{align}
 Then, we have
 \begin{align}\label{induction-1}
 R(p_{N})\le\frac{2\epsilon}{f(p_{N})}
 \end{align}
 and
 \begin{align}\label{induction-2}
 R(p_{N+1})\le\frac{\epsilon}{f(p_{N+1})}.
 \end{align}
 \end{claim}
 \begin{proof}
 By Lemma \ref{lem-monotone of S}, we have
 \begin{align}
 R(p_{N})\le R(p_{N-1})\le\frac{2\epsilon}{f(p_N)}.
 \end{align}
 We have proved (\ref{induction-1}).

 By Lemma \ref{lem-curvature fast decay} and (\ref{induction-1}), we have
 \begin{align}
 R(p_{N+1})\le\varepsilon R(p_N)\le\frac{2\epsilon\varepsilon}{f(p_N)}=\frac{\epsilon}{f(p_{N+1})}.
 \end{align}
  This completes the proof of the claim.
 \end{proof}

 Now, we let $N_i$ be the number such that $p_{N_i}$ is the $i$-th point such that $N_i>i_0$
 and
 \begin{align}
 R(p_{N_i})>\frac{\epsilon}{f(p_{N_i})}.
 \end{align}
 By Claim \ref{claim-b} and an induction argument, it is easy to see that
 $N_{i+1}\ge N_i+1$ and therefore
 \begin{align}\label{estimate for bad points}
 R(p_{N_i})\le \frac{2\epsilon}{f(p_{N_i})} \quad  \forall~i\ge1.
 \end{align}
 By our definition of $N_i$ and the estimate (\ref{estimate for bad points}), we have completed the proof of (\ref{estimate for special points}).

 For any $x\in M$ such that $f(x)\ge 2^{i_0}$, we assume that $f(x)\in[2^i,2^{i+1})$. By Lemma \ref{lem-monotone of S}  and (\ref{estimate for special points}), we get
 \begin{align}
 R(x)\le R(p_i)\le \frac{2\epsilon}{f(p_i)}=\frac{2\epsilon}{2^i}<\frac{4\epsilon}{f(x)} \quad  \forall~f(x)\ge 2^{i_0}.
 \end{align}
 \end{proof}

 By the curvature estimate in \cite{Chan}, we see that $|{\rm Rm}|(x)\le C R(x)$, for all $x\in M$ and some constant $C$. By Theorem 6.1 in \cite{DZ6} and Theorem \ref{theo-linear of f}, we have the following theorem.
 \begin{theo}\label{theo-linear decay lower bound}
 Let $(M,g,f)$ be a $4$-dimensional $\kappa$-noncollapsed steady gradient Ricci soliton which satisfies condition \eqref{Ricci nonnegative outside K} and the scalar curvature $R(x)$ satisfies
  \begin{align}
  R(x)\le \frac{C}{\rho(x)}\quad \forall~\rho(x)\ge r_0.
  \end{align}
 If $(M,g,f)$ is non-Ricci-flat, then there exist constants $c_1, r_0>0$ such that
 \begin{align}
R(x)\ge\frac{c_1}{\rho(x)}\quad \forall~ \rho(x)\ge  r_0.
\end{align}
\end{theo}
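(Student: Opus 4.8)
The plan is to deduce the lower bound from Theorem~\ref{theo-decay lower bound}. Since $(M,g,f)$ is non-Ricci-flat, $R>0$ on $M$ by \cite{Ch}, so
\[
\frac{1}{R^{2}(p)}\left.\frac{{\rm d}}{{\rm d}t}\right|_{t=0}R(\phi_{t}(p))=\frac{\Delta R(p)+2|Ric|^{2}(p)}{R^{2}(p)}=\frac{2Ric(\nabla f,\nabla f)(p)}{R^{2}(p)}
\]
is well defined, and it suffices to show that it lies in $[0,C']$ on $M\setminus K$ for some compact set $K$ and some constant $C'$. The lower bound is immediate from \eqref{Ricci nonnegative outside K}, which gives $Ric(\nabla f,\nabla f)\ge0$ on $M\setminus K$. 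Moreover, the hypothesis $R(x)\le C/\rho(x)$ forces uniform scalar curvature decay, so \eqref{Ricci nonnegative outside K} implies \eqref{eq: dRdt geq 0 on M minus K} (see the Remark following Theorem~\ref{theo-linear of f}), Theorem~\ref{theo-linear of f} applies and yields $C_{1}\rho(x)\le f(x)\le C_{2}\rho(x)$ for $\rho(x)\ge r_{0}$, and the remaining hypotheses of Theorem~\ref{theo-decay lower bound} are then in force once the upper bound $C'$ is established.

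The upper bound $\frac{\Delta R+2|Ric|^{2}}{R^{2}}\le C'$ outside a compact set is the substantive content of Theorem~6.1 of \cite{DZ6}, and I would prove it by a blow-up at the curvature scale. Suppose, for contradiction, that there are $p_{i}\to\infty$ with $\frac{\Delta R+2|Ric|^{2}}{R^{2}}(p_{i})\to\infty$, and rescale by the level-set maximum $M(p_{i})=\sup_{\Sigma_{f(p_{i})/2}}R$, setting $g_{i}(t)=M(p_{i})g(M(p_{i})^{-1}t)$. By Lemma~\ref{set-mr-contain-1} and Lemma~\ref{lem-pointwise curvature estimate} -- which is precisely where the a priori bound $R\le C/\rho$ is used -- for each fixed $\bar r>0$ one has $R(x)/M(p_{i})\le C(\bar r)$ on $B(p_{i},\bar r;g_{i}(0))$ for large $i$ (bounded curvature at bounded distance); Chan's estimate $|Rm|\le CR$ for $4$-dimensional steady gradient Ricci solitons \cite{Chan} then gives a uniform bound on $|Rm|_{g_{i}(t)}$ on a space-time neighborhood, and the $\kappa$-noncollapsing hypothesis gives a uniform injectivity-radius lower bound. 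By Hamilton's compactness theorem, $(M,g_{i}(t),p_{i})$ subconverge to a complete ancient solution $(M_{\infty},g_{\infty}(t),p_{\infty})$ with bounded curvature that splits off a line, $M_{\infty}=N^{3}\times\mathbb{R}$, by the argument in the proof of Lemma~\ref{lem-curvature fast decay}. Now $\frac{\Delta R+2|Ric|^{2}}{R^{2}}$ is scale-invariant and invariant under parabolic rescaling, and it converges under Cheeger--Gromov convergence, so its values at the $p_{i}$ tend to $\frac{\Delta R+2|Ric|^{2}}{R^{2}}$ of the smooth limit at $(p_{\infty},0)$. If $R_{g_{\infty}(0)}(p_{\infty})=\lim_{i}R(p_{i})/M(p_{i})>0$, this limiting value is finite, contradicting $\frac{\Delta R+2|Ric|^{2}}{R^{2}}(p_{i})\to\infty$; the degenerate case $R(p_{i})/M(p_{i})\to0$ is excluded by noting that Chen's nonnegativity of curvature \cite{Ch} and the strong maximum principle force $N^{3}$ to be flat, and then a point-picking refinement (choosing $p_{i}$ with $R(p_{i})$ comparable to the maximum of $R$ on a controlled neighborhood) reduces to the nondegenerate case. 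This proves the upper bound on $M\setminus K$ for a suitable compact $K$.

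With the two-sided bound in place, Theorem~\ref{theo-decay lower bound} supplies constants $c_{1},r_{0}>0$ with $R(x)\ge c_{1}/\rho(x)$ for $\rho(x)\ge r_{0}$, which is the assertion. I expect the main obstacle to be the blow-up step: one must guarantee that the rescaled metrics have uniformly bounded geometry on fixed-size balls about the base points -- this hinges on the a priori linear \emph{upper} bound on $R$, via the potential-function estimates of Lemmas~\ref{set-mr-contain-1} and \ref{lem-pointwise curvature estimate}, and on Chan's estimate $|Rm|\le CR$ \cite{Chan} -- and one must rule out the degenerate case in which the blow-up limit has vanishing scalar curvature at the base point, where Chen's result \cite{Ch} and a careful selection of the $p_{i}$ are needed.
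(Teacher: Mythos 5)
Your reduction is fine as far as it goes: with the identity $\Delta R+2|{\rm Ric}|^2=2{\rm Ric}(\nabla f,\nabla f)$ and the uniform decay coming from $R\le C/\rho$, the hypotheses of Theorem~\ref{theo-decay lower bound} would all be met once you establish the upper bound $\frac{\Delta R+2|{\rm Ric}|^2}{R^2}\le C'$ outside a compact set. (For comparison, the paper does not argue this way at all: it converts $R\le C/\rho$ into linear decay of $|{\rm Rm}|$ via Chan's estimate and then quotes Theorem~6.1 of \cite{DZ6} together with Theorem~\ref{theo-linear of f}; the theorem is essentially a citation there.) The problem is that your key estimate is not an easier intermediate step: by Chan's bound and Shi's local estimates at the scale $\rho(x)^{-1}$ one always has $\Delta R+2|{\rm Ric}|^2\le C\rho^{-2}$, so the bound $\frac{\Delta R+2|{\rm Ric}|^2}{R^2}\le C'$ at $x$ is, under the standing hypothesis $R\le C/\rho$, essentially equivalent to $R(x)\ge c/\rho(x)$ --- the very conclusion. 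So the whole weight of the proof falls on your blow-up argument, and that argument has two genuine gaps.

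First, Lemma~\ref{lem-pointwise curvature estimate} (and the containment argument via Lemma~\ref{set-mr-contain-1} and Lemma~\ref{lem-monotone of S}) requires the \emph{lower} bound $M(p_i)\ge \epsilon/f(p_i)$ on the level-set maximum of $R$; it does not use the upper bound $R\le C/\rho$ at all, contrary to what you assert. No such lower bound on $M(p_i)$ is among the hypotheses of the theorem, and producing one is again close to the statement being proved; without it the slab $M_{p_i,k}$ may reach into the region of unit-size curvature and the ``bounded curvature at bounded distance'' input for Hamilton compactness fails. Second, the degenerate case $R(p_i)/M(p_i)\to 0$ is not excluded by your sketch: flatness of the blow-up limit is not by itself a contradiction, because the points where the curvature is comparable to $M(p_i)$ lie on $\Sigma_{f(p_i)/2}$, whose rescaled distance to $p_i$ can tend to infinity, so they simply escape in the limit. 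The ``point-picking refinement'' you invoke would have to produce points at which the Harnack ratio remains unbounded \emph{and} $R$ is comparable to the local supremum simultaneously, and no construction is given; note that ruling out exactly this kind of degeneracy is the content of Lemma~\ref{lem-curvature fast decay} and feeds into Theorem~\ref{theo-maximal volume growth} and Theorem~\ref{theo-compactness}, which in the paper are proved \emph{using} Theorem~\ref{theo-linear decay lower bound}, so appealing to that later machinery here would be circular. As it stands, the proposal restates the theorem as the Harnack-type bound and leaves that bound unproved.
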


 Now, we prove Theorem \ref{theo-maximal volume growth}.

 \begin{proof}[Proof of Theorem \ref{theo-maximal volume growth}]
 We prove this by contradiction. Suppose $(M,g,f)$ is not Ricci-flat.

 \textbf{Case 1:} If the scalar curvature does not have uniform decay, then $(M,g,f)$ dimension reduces to a $3$-dimensional $\kappa$-noncollapsed and non-flat steady Ricci soliton with maximal volume growth by Theorem \ref{theo-dimension reduce without decay} (which is proved in Section \ref{section-dimension reduction}). However,  $3$-dimensional non-flat ancient $\kappa$-solutions cannot have maximal  volume growth (see \cite{Pe1}). Hence, the steady Ricci soliton must be Ricci flat.

 \textbf{Case 2:} In this case, we assume that the scalar curvature has uniform decay. By Lemma \ref{lem-small linear decay constant}, there exists a constant $r_0$ such that
 \begin{align}
 R(x)\le \frac{1}{f(x)} \quad \forall~f(x)\ge r_0.\notag
 \end{align}
By Proposition \ref{uniform vol ratio}, we see that $(M,g,f)$ is $\kappa$-noncollapsed.  Hence, $(M,g,f)$ is a $\kappa$-noncollapsed steady gradient Ricci soliton with linear curvature decay. By Theorem \ref{theo-linear decay lower bound} and Theorem \ref{theo-linear of f}, there exist constants $c_1, r_1>0$ such that
 \begin{align}
R(x)\ge\frac{c_1}{f(x)} \quad  \forall~ f(x)\ge  r_1.\notag
\end{align}
On the other hand, by taking $\epsilon=\frac{c_1}{2}$ in Lemma \ref{lem-small linear decay constant}, we have
\begin{align}
R(x)\le \frac{c_1}{2f(x)} \quad \forall~f(x)\ge r_0\left(\frac{c_1}{2}\right).\notag
\end{align}
Hence, we get a contradiction. We have completed the proof.

 \end{proof}

\section{Noncollapsed GRS with nonnegtive Ricci curvature}\label{section-compactness theorem on solitons}

In this section, we will deal with $4$-dimensional $\kappa$-noncollapsed steady gradient Ricci solitons with uniform scalar curvature decay and satisfying condition (\ref{Ricci nonnegative outside K}).
\begin{prop}\label{prop-section 7}
Let $(M,g,f)$ be a $4$-dimensional steady gradient Ricci soliton which is not Ricci flat and  satisfies the hypotheses of Theorem \ref{theo-nonnegative Ricci}.
Given any $\gamma>0$, if $p_i\in M$ and $r_i>0$ satisfy ${\rm Vol} \,B(p_i,r_i;g)\ge \gamma r_i^4$, then there exists  a constant $C(\gamma)$ such that $r_i^2R(q)\le C(\gamma)$ for all $q\in B(p_i,r_i;g)$, where $C(\gamma)$ is independent of $p_i$ and $r_i$.
\end{prop}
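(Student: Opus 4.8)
The plan is to argue by contradiction and blow up. Suppose the estimate fails: there are $\gamma>0$ and sequences $p_i\in M$, $r_i>0$ with $\mathrm{Vol}\,B(p_i,r_i;g)\ge\gamma r_i^4$ but $Q_i:=r_i^2\sup_{B(p_i,r_i;g)}R\to\infty$. Since the scalar curvature decays uniformly it is bounded, say by $R_{\max}:=\sup_M R$, which is finite and, by B.-L.~Chen \cite{Ch} (as $(M,g)$ is not Ricci flat), positive; hence $Q_i\le R_{\max}r_i^2$ forces $r_i\to\infty$. The first dichotomy I would set up is on the ratio $d(p_i,K)/r_i$. If this ratio stays bounded along a subsequence, then $B(p_i,r_i;g)$ lies in a ball $B(x_0,Cr_i;g)$ about a fixed $x_0\in K$, so $V_{x_0}(Cr_i)\ge\gamma r_i^4\gtrsim (Cr_i)^4$ with $Cr_i\to\infty$; by the Bishop--Gromov-type monotonicity of Section \ref{section-volume growth} and Lemma \ref{AVR well defined} this gives $\AVR(g)>0$, so by Theorem \ref{theo-maximal volume growth} $(M,g)$ is Ricci flat, a contradiction. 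Thus one may assume $d(p_i,K)/r_i\to\infty$, so that $B(p_i,Nr_i;g)$ is disjoint from $K$ (hence $\mathrm{Ric}\ge0$ there) for every fixed $N$ and all large $i$.

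In this remaining case I would rescale $\tilde g_i=r_i^{-2}g$, which is again a steady gradient Ricci soliton. Since $\mathrm{Ric}\ge0$ on $B(p_i,2r_i;g)$, the classical Laplacian/Bishop--Gromov comparison applies on $B(p_i,r_i;g)$ and upgrades the hypothesis to $V_{p_i}(r;\tilde g_i)\ge\gamma r^4$ for all $0<r\le1$, while $\sup_{B(p_i,1;\tilde g_i)}R_{\tilde g_i}=Q_i\to\infty$ and $R_{\tilde g_i}\le R_{\max}r_i^2$ globally. A standard Perelman-type point-selection inside $B(p_i,1;\tilde g_i)$ — iterating from a near-maximum of $R_{\tilde g_i}$ with step lengths comparable to $Q_i^{1/4}R(\cdot)^{-1/2}$; it terminates because $R_{\tilde g_i}$ is globally bounded, and the total $\tilde g_i$-distance moved is $O(Q_i^{-1/4})\to0$ — yields points $x_i\in B(p_i,2;\tilde g_i)$ with $\mu_i:=R_{\tilde g_i}(x_i)\ge Q_i\to\infty$ and $R_{\tilde g_i}\le C\mu_i$ on $B(x_i,\beta_i\mu_i^{-1/2};\tilde g_i)$ for some $\beta_i:=Q_i^{1/4}\to\infty$; moreover $B(x_i,2;\tilde g_i)\supseteq B(p_i,\tfrac12;\tilde g_i)$, so $V_{x_i}(2;\tilde g_i)\ge\gamma/16$ and Bishop--Gromov at $x_i$ gives $V_{x_i}(\rho;\tilde g_i)\ge c(\gamma)\rho^4$ for $0<\rho\le2$. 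Then I rescale once more by $\mu_i$, $\hat g_i:=\mu_i\tilde g_i=R_g(x_i)g$, still a steady soliton, with $R_{\hat g_i}(x_i)=1$, with $R_{\hat g_i}\le C$ and $\mathrm{Ric}\ge0$ on a ball of radius $\beta_i\to\infty$ about $x_i$, $\kappa$-noncollapsed, $V_{x_i}(\rho;\hat g_i)\ge c(\gamma)\rho^4$ for all $\rho\le\mu_i^{1/2}\to\infty$, and — by Chan's estimate \cite{Chan} — $|\mathrm{Rm}_{\hat g_i}|\le CR_{\hat g_i}\le C$ on that ball.

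To pass to a limit I would use the eternal soliton flow $\hat g_i(t)$: since $x_i$ and the $\beta_i$-ball about it lie outside the compact set $S(\varepsilon_0)$ of Lemma \ref{lem-remain for t negative} for large $i$ and stay there under the backward flow of $-\nabla f$ (Lemma \ref{lem-remain for t negative}), along which $R$ is non-decreasing in $t$ and $\partial_t\hat g_i(t)=-2\mathrm{Ric}\le0$ (so time-$t$ balls shrink into the time-$0$ ball), one gets $|\mathrm{Rm}_{\hat g_i(t)}|\le C$ on $B_{\hat g_i(t)}(x_i,\beta_i/2)$ for all $t\le0$; Shi's estimates and Hamilton--Cheeger--Gromov compactness then produce a complete ancient limit $(M_\infty,g_\infty(t),x_\infty)$, $t\le0$, of bounded curvature with $\mathrm{Ric}\ge0$, $\kappa$-noncollapsed, $R_{g_\infty(0)}(x_\infty)=1$, and $\AVR(g_\infty(0))\ge c(\gamma)>0$. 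Because $R_g(x_i)\,d_g(x_i,x_0)^2\gtrsim\mu_i\,(d_g(p_i,K)/r_i)^2\to\infty$, the splitting theorem \cite{MT} (as used in Section \ref{section-limit soliton is Bryant}) applies and $M_\infty=\mathbb{R}\times N^3$ with $g_\infty(t)=ds^2+g_N(t)$, where $(N^3,g_N(t))$ is a $3$-dimensional complete ancient flow of bounded curvature; by \cite{Ch} it has nonnegative sectional curvature, it inherits $\kappa'$-noncollapsing for some $\kappa'>0$, it is nonflat since $R_{g_N(0)}(y_\infty)=1$, and $\AVR(g_\infty(0))>0$ forces $N$ to have maximal volume growth. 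So $(N,g_N(t))$ is a nonflat $3$-dimensional ancient $\kappa'$-solution with positive asymptotic volume ratio, contradicting Perelman \cite{Pe1}, which finishes the proof.

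The main obstacle I expect is the construction of the blow-up limit as a genuinely \emph{ancient} (not merely time-zero) Ricci flow of bounded curvature after the double rescaling, i.e.\ the backward-in-time curvature control. The key point is that the soliton structure does the work: trajectories of $-\nabla f$ move points toward larger $f$, hence (by Lemma \ref{lem-remain for t negative} and the Remark after Theorem \ref{theo-linear of f}) into the region where $\mathrm{Ric}\ge0$ and $R$ is monotone in time, so the scalar curvature along the backward flow never exceeds its time-zero value; combined with $\partial_t\hat g_i\le0$ and Chan's bound $|\mathrm{Rm}|\le CR$, this is exactly what makes Shi's estimates and compactness applicable. A secondary point needing care is to carry out the point-selection so that the chosen $x_i$ stay within bounded $\tilde g_i$-distance of $p_i$ (keeping the good balls in the $\mathrm{Ric}\ge0$ region) while still having $\beta_i\to\infty$; the choice of step length $\sim Q_i^{1/4}R(\cdot)^{-1/2}$ achieves both, since the total displacement is $O(Q_i^{-1/4})$.
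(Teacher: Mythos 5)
Your overall strategy is the same as the paper's: argue by contradiction, use Perelman/Morgan--Tian point selection to find points $x_i$ with $R$ controlled on balls of rescaled radius $\beta_i\to\infty$, rule out proximity to the compact set by showing it would force $\AVR(g)>0$ and hence Ricci flatness via Theorem \ref{theo-maximal volume growth}, use the monotonicity of $R$ along the backward soliton flow (Lemma \ref{lem-remain for t negative}) together with Chan's bound $|{\rm Rm}|\le CR$ to get an ancient limit of bounded curvature with positive asymptotic volume ratio, split off a line, and contradict the fact that nonflat $3$-dimensional $\kappa$-solutions have zero asymptotic volume ratio. Your bookkeeping (the $d(p_i,K)/r_i$ dichotomy, the double rescaling, the containment of all relevant balls in $B(p_i,3r_i;g)$, the backward-in-time curvature control) is essentially the paper's Steps 1--3 in a slightly different order, and those parts are fine.

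The one step that does not work as written is the splitting of the blow-up limit. You invoke ``the splitting theorem \cite{MT}'' on the grounds that $R_g(x_i)\,d_g(x_i,x_0)^2\to\infty$, citing its use in Section \ref{section-limit soliton is Bryant}. But that splitting-at-infinity result is a Toponogov-type argument and needs nonnegative \emph{sectional} curvature of the manifold carrying the basepoints; in Section \ref{section-limit soliton is Bryant} it is applied to a $3$-dimensional steady soliton, which has nonnegative sectional curvature by \cite{Ch}. Here the ambient space is the $4$-dimensional soliton $(M,g)$, which is only assumed to have ${\rm Ric}\ge 0$ outside $K$ (Chen's theorem gives $R\ge 0$, not sectional curvature control, in dimension four), and the limit itself is only known to have ${\rm Ric}\ge 0$; so the cited theorem's hypotheses are not met and the step would fail as justified. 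The correct mechanism --- and the one the paper uses at this point --- is the soliton structure plus the uniform scalar curvature decay: since $d(x_i,K)\ge d(p_i,K)-3r_i\to\infty$, one has $R(x_i)\to 0$, and the rescaled fields $X_{(i)}=R(x_i)^{-1/2}\nabla f$ satisfy $|\nabla X_{(i)}|\le C\,R(x_i)^{-1/2}|{\rm Ric}|\le C' R(x_i)^{1/2}\to 0$ on the selected balls (where $R\le C\mu_i r_i^{-2}=CR(x_i)$), while $|X_{(i)}|^2=|\nabla f|^2=R_{\max}-R\to R_{\max}>0$; hence the limit carries a nontrivial parallel vector field and splits off a line, exactly as in the $A=0$ case of Claim \ref{claim-split}. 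You have all the ingredients for this argument in your setup, so the gap is a mis-citation rather than a structural flaw, but as stated the splitting is not proved.
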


\begin{proof}
We prove the proposition by contradiction. If the proposition is not true, then by taking a subsequence, we may assume that
\begin{align}
{\rm vol} B(p_i,r_i;g)\ge \gamma r_i^4,\\
r_i^2Q_i\to \infty,~ \mbox{as} ~i\to\infty,\label{7-1}
\end{align}
where $Q_i=R(q_i)$ and $q_i\in B(p_i,r_i;g)$.

By 
Lemma 9.37 
in \cite{MT}, we can find points $q_i^{\prime}\in B(p_i,2r_i;g)$ and constants $s_i\le r_i$ such that
\begin{align}\label{7-2}
R(q_i^{\prime})s_i^2=Q_ir_i^2,
\end{align}
 and
 \begin{align}\label{7-3}
 R(q)\le 4R(q_i^{\prime})\,\text{ for all }\,q\in B(q_i^{\prime},s_i;g).
 \end{align}
We set $Q_i^{\prime}=R(q_i^{\prime})$. Then, (\ref{7-1}) and (\ref{7-2}) imply that
\begin{align}\label{7-4}
s_i^2Q_i^{\prime}\to \infty,~ \mbox{as} ~i\to\infty.
\end{align}
We also note that
\begin{align*}
B(p_i,r_i;g)\subseteq B(q_i^{\prime},3r_i;g).
\end{align*}
It follows that
\begin{align}\label{volume growth of q i prime}
{\rm vol} \, B(q_i^{\prime},3r_i;g)\ge {\rm vol} \, B(p_i,r_i;g)\ge \gamma r_i^4=\frac{\gamma}{81}\cdot (3r_i)^4.
\end{align}

\textbf{Step 1:} We show that $\{q_{i}^{\prime}\}_{i\in \mathbb{N}}$  tends to infinity. If it is not true, then there exists a subsequence of $\{q_{i}^{\prime}\}_{i\in \mathbb{N}}$ that stays in a bounded subset of $M$. By taking a subsequence, we may assume that $q_{i}^{\prime}\to q_{\infty}^{\prime}$ as $i\to\infty$.  Now, we consider the volume growth of $(M,g)$. Let $l_i=3r_i$. By (\ref{7-4}), we have
\begin{align}
R(q_i^{\prime})\cdot \frac{s_i^2}{r_i^2}\cdot(\frac{l_i}{3})^2=Q_i^{\prime}s_i^2\to\infty.
\end{align}
Note that $R(q_i^{\prime})\le R_{\max}$ and $s_i/r_i\le 1$. Hence, we get $l_i\to\infty$. Let $\varepsilon_i=d_g(q_i^{\prime},q_{\infty}^{\prime})$. Then, $\varepsilon_i\to0$. For $i$ large, by (\ref{volume growth of q i prime}), we have
\begin{align*}
{\rm vol} \, B(q_{\infty}^{\prime},l_i+\varepsilon_i;g)&\ge {\rm vol} \, B(q_i^{\prime},l_i;g)\ge \frac{\gamma}{162}\cdot(l_i+\varepsilon_i)^4.
\end{align*}
As $l_i+\varepsilon_i$ tends to infinity, the inequality above implies that $(M,g)$ has maximal volume growth. Therefore, $(M,g)$ is Ricci flat by Theorem \ref{theo-maximal volume growth}. However, $(M,g)$ is not Ricci flat by our assumption. Hence, $\{q_{i}^{\prime}\}_{i\in \mathbb{N}}$  tends to infinity.

\textbf{Step 2:} We show that $B(q_i^{\prime},3r_i;g)\cap K=\varnothing$ when $i$ large. If it is not true, we may assume that $B(q_i^{\prime},3r_i;g)\cap K\neq\varnothing$ by taking a subsequence. Let $d={\rm Diam}(K,g)$ and fix a point $o\in K$. Let $l_i=3r_i$ as in \textbf{Step 1}. Hence, $d_g(q_i^{\prime},o)\le d+l_i$. Then, $B(q_i^{\prime},l_i;g)\subseteq B(o,2(d+l_i);g)$. By (\ref{volume growth of q i prime}), for $i$ large, we have
\begin{align}\label{volume growth around o}
\frac{{\rm vol} \, B(o,2(d+l_i);g)}{[2(l_i+d)]^4}\ge \frac{\gamma}{81}\cdot\frac{l_i^4}{16(l_i+d)^4}\ge\frac{\gamma}{2\cdot 6^4} .
\end{align}
As in \textbf{Step 1}, $l_i\to\infty$. Hence, (\ref{volume growth around o}) implies that $(M,g)$ has maximal volume growth. Hence, $(M,g)$ is Ricci flat. This contradicts our assumption. Hence, $B(q_i^{\prime},3r_i;g)\cap K=\varnothing$ when $i$ is sufficiently large.

\textbf{Step 3:} Let $\phi_t$ be generated by $-\nabla f$ and $g(t)=\phi_{t}^{\ast}g$. Let $g_i(t)=Q_i^{\prime} g((Q_i^{\prime})^{-1}t)$. Since $B(q_i^{\prime},3r_i;g)\cap K=\varnothing$ for large $i$, we have $\phi_t(x)\in M\setminus K$, for all $x\in B(q_i^{\prime},3r_i;g)$. Hence, ${\rm Ric}(x,t)\ge0$ for all $x\in B(q_i^{\prime},3r_i;g)$.

By the Bishop-Gromov volume comparison theorem and (\ref{volume growth of q i prime}),
\begin{align}\label{7-5}
{\rm vol} \, B(q_i^{\prime},s,g)\ge \frac{\gamma}{81}\cdot s^4\,\text{ for all }\,s\le s_i.
\end{align}
Since the Ricci curvature is nonnegative, we have $\frac{\partial}{\partial t}R(x,t)\ge 0$, for all $x\in B(q_i^{\prime},3r_i;g)$.  Then, by (\ref{7-3}), we have
\begin{align}
R_{g_i(t)}(q)\le R_{g_i(0)}(q)\le4\,\text{ for all }\,t\le 0,~q\in B(q_i^{\prime},s_i;g).\notag
\end{align}
By \cite{Chan}, there exists a constant $C$ such that
 \begin{align}\label{7-6}
|{\rm Rm}_{g_{i}(t)}(q)|\le C R_{g_i(t)}(q)\le 4C\,\text{ for all }\,t\le 0,~q\in B(q_i^{\prime},s_i;g)
\end{align}
Note that $B(q_i^{\prime},s_i\sqrt{Q_i^{\prime}};g_i(0))=B(q_i^{\prime},s_i;g)$. By (\ref{7-4}), (\ref{7-5}) and (\ref{7-6}), we obtain that $(M,g_i(t),q_i^{\prime})$ converges subsequentially to a complete limit $(M_{\infty},g_{\infty}(t),q_{\infty})$ for $t\le0$. Moreover, (\ref{7-4}) and (\ref{7-5}) implies that the asymptotic volume ratio of $(M_{\infty}, g_{\infty}(0))$ is greater than $\frac{\gamma}{81}$.

Note that $q_{i}^{\prime}$ tends to infinity. Then, $R(q_{i}^{\prime})\to0$ as $i\to\infty$ by assumption. Hence, we see that the limit $(M_{\infty},g_{\infty}(t),q_{\infty})$ splits off a line. Hence, $(M_{\infty},g_{\infty}(t),q_{\infty})$ is the product of a line and a $3$-dimensional $\kappa$-solution. Since the asymptotic volume ratio of any $3$-dimensional $\kappa$-solution is zero, the asymptotic volume ratio of $(M_{\infty},g_{\infty}(t),q_{\infty})$ must be zero, too.  This contradicts the volume growth of $(M_{\infty}, g_{\infty}(0))$  that we have obtained.

The proof of the proposition is complete.

\end{proof}

The following lemma is similar to Corollary 2.4 in \cite{DZ2}.

\begin{lem}\label{lem-ASR}
Let $(M,g,f)$ be a steady gradient Ricci soliton satisfying the condition in Proposition \ref{prop-section 7}. Then, its
asymptotic scalar curvature ratio
$\mathcal{R}(M,g)= \limsup_{x\rightarrow\infty} R(x)d(x,x_0)^2=\infty$.
\end{lem}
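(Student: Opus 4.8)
The plan is to argue by contradiction: assume $\mathcal{R}(M,g)<\infty$, deduce from this that $(M,g)$ has maximal volume growth, and then invoke Theorem \ref{theo-maximal volume growth} to conclude that $(M,g)$ is Ricci flat, contradicting the standing assumption in Proposition \ref{prop-section 7} that $(M,g,f)$ is not Ricci flat.

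So suppose $\mathcal{R}(M,g)=\limsup_{x\to\infty}R(x)\rho(x)^2<\infty$, where $\rho(x)=d(x,x_0)$. Then there exist $C_0>0$ and $r_1\ge \bar r_{x_0}+1$ with $R(x)\le C_0\rho(x)^{-2}$ for all $x$ with $\rho(x)\ge r_1$; note that $\{\rho\ge r_1\}\subseteq M\setminus K$, so $\mathrm{Ric}\ge 0$ there. The next step is to upgrade this scalar decay to a full curvature bound: by Chan's curvature estimate \cite{Chan}, applied along the Ricci flow $g(t)=\phi_t^*g$ on backward parabolic neighborhoods as in the proof of Lemma \ref{lem-pointwise curvature estimate} (points remain in $M\setminus K$, where $\mathrm{Ric}\ge0$, and $R$ stays controlled by the quadratic bound together with Lemma \ref{lem-monotone of S}), one gets a constant $C_1>0$ with $|Rm|(x)\le C_1R(x)\le C_0C_1\,\rho(x)^{-2}$ for all $x$ with $\rho(x)\ge r_2$, for some $r_2\ge r_1$.

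With quadratic curvature decay in hand, I would extract maximal volume growth from $\kappa$-noncollapsing at the curvature scale. Fix $c=\min\{1/2,(C_0C_1)^{-1/2}\}$. For $\lambda\ge 2r_2$ pick $p_\lambda\in M$ with $\rho(p_\lambda)=\tfrac32\lambda$; then $B(p_\lambda,c\lambda;g)\subseteq\{x:\lambda\le\rho(x)\le 2\lambda\}\subseteq M\setminus K$, and on this ball $|Rm|\le C_0C_1\lambda^{-2}\le (c\lambda)^{-2}$, so $\kappa$-noncollapsedness gives $\mathrm{vol}\,B(p_\lambda,c\lambda;g)\ge\kappa(c\lambda)^4$. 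Since $B(p_\lambda,c\lambda)\subseteq B(x_0,2\lambda)$, this yields $\mathrm{vol}\,B(x_0,2\lambda)\ge \kappa c^4\lambda^4=(\kappa c^4/16)(2\lambda)^4$ for all $\lambda\ge 2r_2$. By Lemma \ref{AVR well defined} the limit defining $\AVR(g)$ exists, and the bound above forces $\AVR(g)\ge\kappa c^4/16>0$; that is, $(M,g)$ has maximal volume growth. Theorem \ref{theo-maximal volume growth}, applied to $(M,g,f)$ (which satisfies \eqref{Ricci nonnegative outside K}), then says $(M,g)$ is Ricci flat, a contradiction. Hence $\mathcal{R}(M,g)=\infty$.

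The one genuinely delicate point is the passage from scalar-curvature decay to the pinching $|Rm|\le CR$ near infinity; this is exactly where Chan's estimate \cite{Chan} and the hypothesis $\mathrm{Ric}\ge 0$ outside $K$ (needed to keep the backward Ricci flow in the region where the estimate applies) are essential. Everything after that is the standard argument that noncollapsed quadratic curvature decay implies maximal volume growth, combined with the rigidity Theorem \ref{theo-maximal volume growth}.
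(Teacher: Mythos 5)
Your proof is correct and follows essentially the same route as the paper: assuming $\mathcal{R}(M,g)<\infty$, one places balls of radius comparable to the distance in a far annulus where quadratic curvature decay holds, uses $\kappa$-noncollapsing to get a uniform volume ratio lower bound, concludes $\AVR(g)>0$, and contradicts Theorem \ref{theo-maximal volume growth} since the soliton is not Ricci flat. The only difference is cosmetic: you spell out the passage from scalar decay to $|Rm|\le CR$ via Chan's estimate (which under the standing hypotheses of uniform scalar curvature decay holds directly on $M$, no backward-flow argument needed), a step the paper leaves implicit here but invokes in the same way in neighboring lemmas.
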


\begin{proof}
We prove the corollary by contradiction. Suppose $\mathcal{R}(M,g)<A$ for some positive constant $A>1$. For a fixed point $p\in M$,
we have $R(x)\leq Ar^{-2}$ for all $x\in M\setminus B(p,r)$ when $r>r_{0}$. Fix any $q\in B(p, 3\sqrt{A}r)\setminus B(p, 2\sqrt{A}r)$. Then, we have $R(x)\leq r^{-2}$ for all $x\in B(q,r)$.
Since $(M,g)$ is $\kappa$-noncollapsed, we get ${\rm vol} \, B(q,r)\geq \kappa r^{4}$. Hence,
\begin{align}{\rm vol}\,B(p, (3\sqrt{A}+1)r) &\ge {\rm vol}\,B(q, r)\notag\\
&\ge \kappa(3\sqrt{A}+1)^{-n}(3\sqrt{A}+1)r)^{n}\,\text{ for all }\,r>r_{0}.\notag
\end{align}
It follows that
$$\mathcal{V}(M,g)\geq \kappa(3\sqrt{A}+1)^{-n}.$$
This contradicts our assumption that $\mathcal{V}(M,g)=0$.
\end{proof}

Now, we begin to prove Theorem \ref{theo-compactness}.
The proof of Theorem \ref{theo-compactness} is similar to the proof of the compactness theorem of $3$-dimensional $\kappa$-solutions in \cite{Pe1} (see also \cite{MT}). Under the hypotheses of Theorem \ref{theo-compactness}, we  let $g_i(t)=R(p_i)g(R^{-1}(p_i)t)$.  By Lemma \ref{lem-ASR}, we can always find $q_i$ such that
\begin{align}\label{assumption-r}
d_{g_i(0)}(p_i,q_i)^{2}R_{g_i(0)}(q_i)=1.
\end{align}
We first note that the following lemma holds.

\begin{lem}\label{lem-q-i infinity}
$q_i$ tends to infinity.
\end{lem}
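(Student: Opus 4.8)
The plan is to argue by contradiction. Suppose that $\{q_i\}$ does not tend to infinity; then, after passing to a subsequence, we may assume $q_i \to q_\infty$ for some $q_\infty \in M$. Recall that $p_i$ tends to infinity and $R(p_i) \to 0$, since by hypothesis $(M,g,f)$ has uniform scalar curvature decay. The normalization \eqref{assumption-r} says $d_{g_i(0)}(p_i,q_i)^2 R_{g_i(0)}(q_i) = 1$, which in the original metric reads $R(p_i) d_g(p_i,q_i)^2 \cdot R(q_i)/R(p_i) = 1$, i.e. $d_g(p_i,q_i)^2 R(q_i) = 1$. Since $q_i \to q_\infty$ in a fixed bounded region, $R(q_i) \to R(q_\infty)$, which is a finite number (bounded by $R_{\max}$); it cannot be zero, since otherwise $d_g(p_i,q_i)^2 R(q_i) \to 0$, contradicting \eqref{assumption-r}. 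Hence $R(q_\infty) > 0$ and $d_g(p_i,q_i)^2 = 1/R(q_i) \to 1/R(q_\infty)$ is bounded. But $d_g(p_i,q_i) \ge d_g(p_i, q_\infty) - d_g(q_\infty, q_i) \to \infty$ because $p_i$ tends to infinity and $q_i$ stays near $q_\infty$. This forces $d_g(p_i,q_i) \to \infty$, contradicting the boundedness just obtained.

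Concretely, the steps are: (1) assume a subsequence with $q_i \to q_\infty$; (2) rewrite \eqref{assumption-r} in the unrescaled metric as $d_g(p_i,q_i)^2 R(q_i) = 1$ using $g_i(0) = R(p_i) g$ and the scaling of distance and scalar curvature; (3) observe $R(q_i) \to R(q_\infty) < \infty$ by continuity of $R$ and the curvature bound; (4) deduce $d_g(p_i,q_i)^2 = 1/R(q_i)$ stays bounded; (5) derive the contradiction from $d_g(p_i,q_i) \to \infty$, which holds because $p_i \to \infty$ while $q_i$ remains in a compact set.

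I do not expect any serious obstacle here; the argument is elementary once one tracks the rescaling factors correctly. The only point requiring a little care is confirming that $R(q_\infty) \ne 0$: this uses only that the left side of \eqref{assumption-r} equals $1$, so if $R(q_i) \to 0$ then $d_g(p_i,q_i)^2 \to \infty$ at a rate exactly compensating, which is not in itself contradictory — so instead one should simply note that if $R(q_\infty) = 0$ we may directly run the distance estimate: $1 = d_g(p_i,q_i)^2 R(q_i)$, and since $q_i$ is bounded and $p_i \to \infty$ we have $d_g(p_i,q_i) \to \infty$, hence $R(q_i) = d_g(p_i,q_i)^{-2} \to 0$ is consistent but then $q_\infty$ would be a point where $R$ vanishes; by B.-L. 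Chen's theorem \cite{Ch} a non-Ricci-flat steady soliton has $R > 0$ everywhere, a contradiction. Either way the conclusion $q_i \to \infty$ follows.
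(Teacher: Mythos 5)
Your proposal is correct and follows essentially the same route as the paper: pass to a convergent subsequence $q_i\to q_\infty$, rewrite the normalization \eqref{assumption-r} in the unrescaled metric as $d_g(p_i,q_i)^2R(q_i)=1$, invoke B.-L.~Chen's theorem (a non-Ricci-flat steady soliton has $R>0$ everywhere) to get $R(q_\infty)>0$, and derive the contradiction from $d_g(p_i,q_i)\to\infty$ while $1/R(q_i)$ stays bounded. The only caveat is that your first justification that $R(q_\infty)\neq 0$ (claiming the product would tend to $0$) is not valid as stated, but the correction you give at the end via Chen's positivity is precisely the argument the paper uses, so the final proof stands.
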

\begin{proof}
If the lemma is not true, then we may assume that the $q_i$ converge to a point $q_{\infty}$. Since $(M,g)$ is not Ricci flat, $R(q_{\infty})>0$. Note that (\ref{assumption-r}) implies that
\begin{align}\label{dist of p-i q-i}
d_g(p_i,q_i)^2R(q_i)=1.
\end{align}
By assumption, $p_i$ tends to infinity. Therefore, $d_g(p_i,q_i)\to\infty$ as $i\to\infty$.  Hence, $R(q_i)\to 0$ as $i\to\infty$ by (\ref{dist of p-i q-i}). This contradicts the fact that the $q_i$ converge to $q_{\infty}$ and $R(q_{\infty})>0$.
\end{proof}

Let $d_i=d_{g_i(0)}(p_i,q_i)$. Then, we have the following curvature estimate.

\begin{lem}\label{lem-local bound}
There is a uniform constant $C>0$  such that $R_{g_i(0)}(x)\leq C R_{g_{i}(0)}(q_i)$ for all $x\in B(q_i,2d_i;g_i(0))$.
\end{lem}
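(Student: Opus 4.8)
The plan is to follow Perelman's proof of the compactness theorem for $3$-dimensional $\kappa$-solutions \cite{Pe1} (see also \cite{MT}), replacing the internal structure results invoked there by Theorem \ref{theo-maximal volume growth} and by the point-selection scheme already employed in the proof of Proposition \ref{prop-section 7}.

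First I would normalize at $q_i$. Set $\hat g_i(t):=R_{g_i(0)}(q_i)\,g_i\!\bigl(R_{g_i(0)}(q_i)^{-1}t\bigr)$, which is the Ricci flow with $\hat g_i(0)=R(q_i)\,g$, so that $R_{\hat g_i(0)}(q_i)=1$ and, by the defining relation \eqref{assumption-r}, $d_{\hat g_i(0)}(p_i,q_i)=1$. Since $\hat g_i(0)=d_i^{-2}g_i(0)$ one has $B(q_i,2d_i;g_i(0))=B(q_i,2;\hat g_i(0))$, so the lemma is equivalent to a bound $\sup_{B(q_i,2;\hat g_i(0))}R_{\hat g_i(0)}\le C$ with $C$ independent of $i$; note that, evaluated at $p_i$ (where $R_{\hat g_i(0)}(p_i)=d_i^2$), such a bound also forces $d_i\le\sqrt C$. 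Suppose it fails; passing to a subsequence there are $y_i\in B(q_i,2;\hat g_i(0))$ with $R_{\hat g_i(0)}(y_i)\to\infty$.

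Next I would apply the point-selection lemma of \cite{MT}, as in the proof of Proposition \ref{prop-section 7}, to obtain $\bar y_i$ with $d_{\hat g_i(0)}(q_i,\bar y_i)<3$, with $Q_i:=R_{\hat g_i(0)}(\bar y_i)\ge R_{\hat g_i(0)}(y_i)\to\infty$, and with $\tau_i>0$ satisfying $\tau_i^2Q_i\to\infty$ such that $R_{\hat g_i(0)}\le 4Q_i$ on $B(\bar y_i,\tau_i;\hat g_i(0))$. The two bad cases are excluded exactly as in Steps 1 and 2 of that proof: were a subsequence of $\{\bar y_i\}$ to remain in a compact set, or were $B(\bar y_i,\tau_i;\hat g_i(0))\cap K\ne\varnothing$ for infinitely many $i$, then $\kappa$-noncollapsing together with Bishop--Gromov over a region where ${\rm Ric}\ge0$ would produce balls of radii $\to\infty$ with volume $\ge\gamma\cdot(\text{radius})^4$, so $(M,g)$ would have maximal volume growth and hence be Ricci flat by Theorem \ref{theo-maximal volume growth}, contrary to hypothesis. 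Therefore $\bar y_i\to\infty$ and, for $i$ large, $B(\bar y_i,\tau_i;\hat g_i(0))\cap K=\varnothing$; thus ${\rm Ric}\ge0$ on this ball, $\partial_tR\ge0$ along the flow there, and $|Rm|\le CR$ by \cite{Chan}. Since $Q_i\hat g_i(0)=R(\bar y_i)\,g$, the flows $\bigl(M,R(\bar y_i)\,g(R(\bar y_i)^{-1}t),\bar y_i\bigr)$ have curvature $\le C$ for $t\le 0$ on balls of radii $\to\infty$ (these balls shrink forward in time because ${\rm Ric}\ge0$) and are uniformly $\kappa$-noncollapsed, so they subconverge to a complete, nonflat, $\kappa$-noncollapsed ancient solution with bounded nonnegative curvature; since the good ball avoids $K$, one has $R(\bar y_i)\,d_g(\bar y_i,x_0)^2\gtrsim Q_i\tau_i^2\to\infty$, so, by the splitting result of \cite{MT} (as at the end of the proof of Proposition \ref{prop-section 7}), this limit splits off a line and is $\mathbb{R}\times N^3$ with $N^3$ a $3$-dimensional $\kappa$-solution.

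The main obstacle is to turn this split ancient limit into a contradiction. The point is that $q_i$ carries unit scalar curvature at scale $\hat g_i(0)$ yet lies at $\hat g_i(0)$-distance less than $3$ from $\bar y_i$, whose curvature at that scale diverges, so the curvature is concentrated near $\bar y_i$ at a bounded distance from a point of fixed curvature; following the remainder of Perelman's argument in \cite{Pe1} (as presented in \cite{MT}), the geometry of the split ancient limit is incompatible with this, forcing $R_{\hat g_i(0)}$ to be comparable to $Q_i$ on a region joining $\bar y_i$ back to $q_i$ and contradicting $R_{\hat g_i(0)}(q_i)=1$. This contradiction establishes the asserted bound (and in particular that $d_i$ is uniformly bounded). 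I would carry out this last step by quoting the relevant steps of \cite{Pe1} and \cite{MT} rather than reproducing the neck analysis.
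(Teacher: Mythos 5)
Your normalization at $q_i$ and the point-picking step are fine, but there are two genuine gaps. The first is the exclusion of the two ``bad cases''. In Steps 1 and 2 of the proof of Proposition \ref{prop-section 7}, the conclusion of maximal volume growth came from the hypothesis ${\rm vol}\,B(p_i,r_i;g)\ge\gamma r_i^4$ of that proposition, which has no counterpart here; in fact the paper's own proof of this lemma begins by showing, via Proposition \ref{prop-section 7} applied contrapositively, that ${\rm vol}\,B(q_i,2d_i;g_i(0))/(2d_i)^4\to0$, so no such lower bound is available. Your substitute --- that ``$\kappa$-noncollapsing together with Bishop--Gromov'' produces balls of radii $\to\infty$ with volume $\ge\gamma\cdot(\text{radius})^4$ --- does not work: on $B(\bar y_i,\tau_i;\hat g_i(0))$ the curvature is only bounded by a multiple of $R(\bar y_i)$, and since $\tau_i^2Q_i\to\infty$ the radius of this ball is far larger than the curvature scale $R(\bar y_i)^{-1/2}$; $\kappa$-noncollapsing therefore yields a volume lower bound only at that comparatively tiny curvature scale, and Bishop--Gromov monotonicity goes the wrong way (the ratio is non-increasing in the radius), so no lower bound at radii $\to\infty$, hence no maximal volume growth, follows. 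Without this, you cannot rule out that $\bar y_i$ stays bounded or that the selected ball meets $K$, and then neither the time-monotonicity of $R$ nor the splitting is available.

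The second gap is the final step: the contradiction from the split limit $\mathbb{R}\times N^3$ against $R_{\hat g_i(0)}(q_i)=1$ cannot simply be quoted from \cite{Pe1} or \cite{MT}. In Perelman's bounded-curvature-at-bounded-distance argument for $3$-dimensional $\kappa$-solutions the cross-section of the split limit is a $2$-dimensional $\kappa$-solution, hence a shrinking round sphere, and the contradiction comes from the resulting arbitrarily small necks near a point of unit curvature; here the cross-section is a general $3$-dimensional $\kappa$-solution, possibly the noncompact Bryant soliton, so there is no small compact cross-section to exploit, and no structure theory for the ambient $4$-dimensional flow is available at this stage. This is precisely why the paper argues differently: having forced the volume ratio of $B(q_i,2d_i;g_i(0))$ to collapse, it chooses the scale $r_i<2d_i$ at which the ratio equals $\omega/2$, notes $r_i/d_i\to0$, and blows up at $q_i$ at scale $r_i$ (keeping the relevant balls away from $K$ using Lemma \ref{lem-q-i infinity} and Theorem \ref{theo-maximal volume growth}, and getting curvature bounds from Proposition \ref{prop-section 7} applied with $\gamma=\omega/(2A^4)$); the limit then has $R(q_\infty)=\lim r_i^2/d_i^2=0$, hence is flat, hence Euclidean by $\kappa$-noncollapsing and the Cheeger--Gromov--Taylor injectivity radius estimate, contradicting the volume ratio $\omega/2$ at radius $1$. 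So the paper's route avoids the neck analysis entirely, and your proposal as written does not supply a working replacement for it.
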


\begin{proof}
Suppose that the lemma is not true. By taking a subsequence, we may assume that there exist points $q_{i}^{\prime}\in B(q_{i},2d_{i}; g_i(0))$ such that $$\lim_{i\rightarrow\infty}(2d_{i})^{2}R(q_{i}^{\prime},0)=\infty.$$
 By Proposition \ref{prop-section 7}, for any $\gamma>0$, there is an $i(\gamma)$ such that
$${\rm vol}\,B(q_{i},2d_{i}, 0)<\gamma(2d_{i})^{4}\,\text{ for all }\,i>i(\gamma).$$
Hence, by
applying the diagonal
method, we may assume that
\begin{equation}\label{eq:1}
\lim_{i\rightarrow\infty}{\rm vol}\,B(q_{i},2d_{i}, g_i(0))/(2d_{i})^{4}=0.
\end{equation}
In particular,
$${\rm vol}\, B(q_{i},2d_{i};g_i(0))<(\omega/2)(2d_{i})^{4}\,\text{ for all }\,i\ge i_0,$$
where $\omega$ is the volume of unit ball in $\mathbb{R}^{4}$ and $i_0$ is a constant.

Let $F_i(s)=\frac{{\rm vol}\, B(q_{i},s, g_i(0))}{s^4}$, for $s\in(0,2d_i]$. Note that $F_i(s)$ is continuous. Moreover,
\begin{align*}
\lim_{s\to0}F_i(s)=\omega\quad{\rm and}\quad F_i(2d_i)<\frac{\omega}{2}.
\end{align*}
Therefore, there exists an $r_{i}<2d_{i}$ for each $i\in \mathbb{N}$ such that $F_i(r_i)=\frac{\omega}{2}$, i.e.,
\begin{equation}\label{eq:2}
{\rm vol}\,B(q_{i},r_{i}, g_i(0))=(\omega/2)r_{i}^{4}.
\end{equation}
By (\ref{eq:1}) and (\ref{eq:2}) we have
\begin{align}\label{eq:3}
\lim_{i\rightarrow\infty}r_{i}/d_{i}=0.
\end{align}

Next we consider the sequence of rescaled ancient flows $(M_{i},g^{\prime}_{i}(t), q_{i})$, where $g^{\prime}_{i}(t)=r_{i}^{-2}g_{i}(r_{i}^{2}t)$. Since we want to use Proposition \ref{prop-section 7} to get the curvature estimates on geodesic balls of $(M_{i},g^{\prime}_{i}(t))$, we need to show that $B(q_{i},A; g^{\prime}_{i}(0))\cap K=\varnothing$ for any fixed $A\gg1$. It suffices to exclude the following two cases.

\textbf{Case 1:} There exist infinite many $i$ such that $B(q_{i},A; g^{\prime}_{i}(0))\cap K\neq\varnothing$ and $r_i\sqrt{R^{-1}(p_i)}$ is uniformly bounded. In this case, note that
\begin{align}\label{geodesic ball rescale}
B(q_i,Ar_i\sqrt{R^{-1}(p_i)};g)=B(q_{i},A; g^{\prime}_{i}(0)).
\end{align}
Then,
\begin{align}\label{intersection not empty}
B(q_i,Ar_i\sqrt{R^{-1}(p_i)};g)\cap K\neq\varnothing .
\end{align}
Suppose $r_i\sqrt{R^{-1}(p_i)}\le C$ for all $i$. Then, (\ref{intersection not empty}) implies that $d_g(q_i,K)\le AC$. Hence, $q_i$ stays in a bounded set of $M$ as $i\to\infty$. This contradicts Lemma \ref{lem-q-i infinity}. So this case is impossible.

\textbf{Case 2:} There exist infinitely many $i$ such that $B(q_{i},A; g^{\prime}_{i}(0))\cap K\neq\varnothing$ and $r_i\sqrt{R^{-1}(p_i)}\to \infty$ for $i\to\infty$. In this case, let $l_i=r_i\sqrt{R^{-1}(p_i)}$. Note that (\ref{geodesic ball rescale}) and (\ref{intersection not empty}) still hold. (\ref{intersection not empty}) implies that $d_g(q_i,K)\le Al_i$. Fix a point $o\in K$. We have
\begin{align}\label{balls contain}
B(q_i,Al_i;g)\subseteq B(o, Al_i+d;g),
\end{align}
 where $d={\rm Diam}(K,g)$. By (\ref{balls contain}) and (\ref{eq:2}), we have
 \begin{align}\label{volume growth-2}
 {\rm vol}\,B(o,Al_i+d;g)\ge {\rm vol}\,B(q_i,l_i;g)=\frac{\omega}{2}\cdot l_i^4.
 \end{align}
 Since $l_i\to\infty$ and $A,d$ are constants,  (\ref{volume growth-2}) implies that $(M,g)$ has maximal volume growth. This is impossible by Theorem \ref{theo-maximal volume growth}.

 \textbf{Case 1} and \textbf{Case 2} imply that for any $A\gg1$, there exists an $i(A)>0$ such that for any $i\ge i(A)$, we have
 \begin{align}
 B(q_i,A;g^{\prime}_i(0))\cap K=\varnothing.
 \end{align}
Hence, ${\rm Ric}(x,t)\ge 0$ for all $x\in B(q_i,A;g^{\prime}_i(0))$ and $t\le0$ when $i\ge i(A)$.

 By (\ref{eq:2}), we have
$${\rm vol}\,B(q_{i},A; g^{\prime}_{i}(0))\geq {\rm vol}\,B(q_{i},1, g^{\prime}_{i}(0))=\frac{\omega}{2A^{4}}\cdot A^{4},$$
where $A>0$ is any fixed constant. It follows that
\begin{align*}
{\rm vol}\, B(q_i,Al_i;g)\ge \frac{\omega}{2A^4}\cdot(Al_i)^4,
\end{align*}
where $l_i=r_i\sqrt{R^{-1}(p_i)}$ and $i\ge i(A)$. Note that $$B (q_{i},A; g^{\prime}_{i}(0))=B(q_i,Al_i;g).$$
By applying Proposition \ref{prop-section 7} to the ball $B(q_i,Al_i;g)$,
there is a constant $K(A)$ independent of $i$ such that
$$A^{2}R_{g_{i}^{\prime}(0)}(q)=(Al_i)^2R(q)\leq K(A),\,\forall\,q\in B (q_{i},A; g^{\prime}_{i}(0)).$$
Since the flow $g(t)$ is generated by a steady gradient Ricci soliton and the Ricci curvature is nonnegative on $B(q_i,A;g^{\prime}_i(0))$, its scalar curvature is non-decreasing in $t$. Hence,
the scalar curvature on $B(q_{i},A;g^{\prime}_{i}(0))\times(-\infty,0]$ is uniformly bounded by $K(A)/A^2$. By \cite{Chan}, there exists a constant $C$ such that
 \begin{align}\label{scalar control curvature}
 |{\rm Rm}|\le CR(x)\,\text{ for all }\,x\in M.
 \end{align}
 It follows that
  \begin{align}\label{eq:4}
 |{\rm Rm}_{g_i(t)}|_{g_{i}(t)}(x)\le CR_{g_{i}(t)}(x)\le CK(A)\,\text{ for all }\,(x,t)\in B(q_{i},A;g^{\prime}_{i}(0))\times(-\infty,0].
 \end{align}
 By Hamilton's Cheeger--Gromov compactness theorem, $(M_{i},g^{\prime}_{i}(t), q_{i})$ converges to a limit flow $(M_{\infty},g_{\infty}(t),q_{\infty})$.
Note by (\ref{eq:3}) that
$$R(q_{\infty}, g_{\infty}(0))=\lim_{i\rightarrow\infty}R(q_{i}, g_{i}'(0))=\lim_{i\rightarrow\infty}\frac{(r_{i})^{2}}{d_{i}^{2}}=0.$$
Therefore, the strong maximum principle implies that $(M_{\infty},g_{\infty}(t))$ is a Ricci-flat flow. (\ref{eq:4}) implies that
\begin{align}
|{\rm Rm}_{g_{\infty}(t)}|_{g_{\infty}(t)}(x)\le CR_{g_{\infty}(t)}(x)\,\text{ for all }\,x\in M_{\infty}.
\end{align}
Hence, $(M_{\infty},g_{\infty}(t))$ is flat.

At last, we prove that $(M_{\infty},g_{\infty}(t))$ is isometric to Euclidean space for any $t\le0$. Fix any $r>0$. Obviously,
$$\sup_{x\in B(q_{\infty},r; g_{\infty}(0))}|{\rm Rm}(x)|=0\le \varepsilon,$$
where $\varepsilon$ can be chosen so that
$\frac{\pi}{\sqrt{\varepsilon}}>2r.$
Note that $(M_{\infty},g_{\infty}(t))$ is $\kappa$-noncollapsed for each $t\le 0$. Thus we have
$${\rm vol}\,B (q_{\infty},r; g_{\infty}(0))\geq \kappa r^{4}.$$
It follows from the estimate of Cheeger, Gromov, and Taylor \cite{CGT} that
$${\rm inj}(q_{\infty})\geq \frac{\pi}{2\sqrt{\varepsilon}}\frac{1}{1+\frac{\omega(r/4)^{4}}{{\rm vol}(B (q_{\infty},r/4; g_{\infty}(0) ))}}\geq \frac{\kappa}{\kappa+\omega}\cdot r.$$
Hence $B(q_{\infty},\frac{\kappa}{\kappa+\omega}\cdot r; g_{\infty}(0))$ is simply connected for all $r>0$. Therefore, $M_{\infty}$ is simply connected, and consequently $g_\infty(t)$ are all isometric to the Euclidean metric.

 Since $(M_{\infty},g_{\infty}(t))$ is isometric to $4$-dimensional Euclidean space, we obtain that ${\rm vol}(B(q_{\infty},1; g_{\infty}(0)))=\omega$. On the other hand, by the convergence of $(M_{i},g^{\prime}_{i}(t);p_{i})$ and the relation (\ref{eq:2}), we get
$${\rm vol}(B (q_{\infty},1; g_{\infty}(0)))=\omega/2.$$
This is a contradiction.
\end{proof}

We still need to show that the $B(q_i,2d_i;g_i(0))$ stay outside of $K$ as $i\to\infty$.
\begin{lem}\label{lem-stay outside K}
For $A\gg 2$, there exists a constant $i(A)>0$ such that $B(q_i,Ad_i;g_i(0))\cap K=\varnothing$ for $i\ge i(A)$.
\end{lem}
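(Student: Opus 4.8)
The plan is to argue by contradiction, with the target being maximal volume growth. Suppose that, after passing to a subsequence, $B(q_i,Ad_i;g_i(0))\cap K\neq\varnothing$; I will deduce that $(M,g)$ has maximal volume growth, which contradicts Theorem \ref{theo-maximal volume growth} together with the standing assumption (from Theorem \ref{theo-compactness}) that $(M,g,f)$ is not Ricci flat.

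First I would unwind the rescalings. Put $L_i=d_g(p_i,q_i)$. Since $g_i(0)=R(p_i)g$, the normalization \eqref{assumption-r} is exactly $L_i^2R(q_i)=1$; because $q_i\to\infty$ by Lemma \ref{lem-q-i infinity} and the scalar curvature decays uniformly, $R(q_i)\to 0$ and hence $L_i\to\infty$. Also $d_i=\sqrt{R(p_i)}\,L_i$, so $B(q_i,Ad_i;g_i(0))=B(q_i,AL_i;g)$ and $B(q_i,2d_i;g_i(0))=B(q_i,2L_i;g)$; thus the assertion to be proved is simply $d_g(q_i,K)\ge AL_i$ for all large $i$, and the contradiction hypothesis becomes $d_g(q_i,K)<AL_i$.

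Next I would extract a definite volume lower bound near $q_i$ at scale $\sim L_i$. By Lemma \ref{lem-local bound}, $R\le CR(q_i)=CL_i^{-2}$ on $B(q_i,2L_i;g)$, hence $|Rm|\le C'L_i^{-2}$ there by the curvature estimate of \cite{Chan}. With $r_i:=c_0L_i$ and $c_0:=\min\{2,(C')^{-1/2}\}>0$ one has $|Rm|\le r_i^{-2}$ on $B(q_i,r_i;g)$, so the $\kappa$-noncollapsing hypothesis gives ${\rm vol}\,B(q_i,r_i;g)\ge\kappa c_0^4L_i^4$. Now, under the contradiction hypothesis, choose $o\in K$ and set $D={\rm Diam}(K,g)$; then $d_g(q_i,o)<AL_i+D$, so $B(q_i,r_i;g)\subseteq B(o,(c_0+A)L_i+D;g)\subseteq B(o,2AL_i;g)$ once $i$ is large (using $A>c_0$ and $L_i\to\infty$). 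Therefore ${\rm vol}\,B(o,2AL_i;g)\ge\kappa c_0^4L_i^4=\frac{\kappa c_0^4}{(2A)^4}(2AL_i)^4$ with $2AL_i\to\infty$, and the monotonicity of $(V_o(r)-V_o(\bar r_o))/(r-\bar r_o)^4$ then forces $\AVR(g)\ge\kappa c_0^4/(2A)^4>0$. By Theorem \ref{theo-maximal volume growth}, $(M,g)$ is Ricci flat, a contradiction; hence $d_g(q_i,K)\ge AL_i$ for $i\ge i(A)$.

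The main point requiring care is obtaining the volume lower bound on a ball of radius genuinely comparable to $L_i$ around $q_i$ (rather than a ball of vanishing normalized radius): this is exactly what Lemma \ref{lem-local bound} supplies, by controlling the curvature on all of $B(q_i,2d_i;g_i(0))$ and not merely at the point $q_i$. The remaining steps are routine bookkeeping with the rescalings, Chan's estimate $|Rm|\le CR$, $\kappa$-noncollapsing, and the triangle inequality. I would also remark that the same argument in fact gives the conclusion for every fixed $A>0$ (with $i(A)$ depending on $A$), the hypothesis $A\gg2$ being only what is convenient in the sequel.
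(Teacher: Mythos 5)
Your argument is correct and follows essentially the same route as the paper: argue by contradiction, use Lemma \ref{lem-q-i infinity} and uniform curvature decay to get $L_i=R(q_i)^{-1/2}\to\infty$, combine Lemma \ref{lem-local bound}, Chan's estimate and $\kappa$-noncollapsing for a volume lower bound at scale $L_i$ around $q_i$, transfer it to balls about a fixed $o\in K$, and contradict Theorem \ref{theo-maximal volume growth}. The only differences are cosmetic bookkeeping (you work directly in $g$ rather than through the rescaled metric $h_i$, and you are slightly more careful about applying noncollapsing at a radius where Lemma \ref{lem-local bound} actually controls the curvature), so no issues.
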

\begin{proof}
Let $h_i(t)=d_i^{-2}g_i(d_i^2t)$. Then, $h_i(0)=R(q_i)g$. So, we only need to show that $B(q_i,A\sqrt{R^{-1}(q_i)};g)\cap K=\varnothing$ when $i\ge i(A)$ for some $i(A)>0$.
If it is not true, then we may assume that $B(q_i,A\sqrt{R^{-1}(q_i)};g)\cap K\neq\varnothing$ as $i\to\infty$ by taking a subsequence. Let $l_i=A\sqrt{R^{-1}(q_i)}$. By Lemma \ref{lem-q-i infinity} and the curvature uniform decay assumption, we have $l_i\to\infty$ as $i\to\infty$. Since $h_i(t)$ is $\kappa$-noncollapsing, by Lemma \ref{lem-local bound} and \cite{Chan}, we have
\begin{align}
{\rm vol} \, B(q_i,l_i;g)\ge c\kappa l_i^4,\notag
\end{align}
where $c$ is a positive constant. Let $d={\rm Diam}(K,g)$. Fixing $o\in K$, we have
\begin{align}
{\rm vol} \, B(o,2(Al_i+d);g)\ge {\rm vol} \, B(q_i,l_i;g)\ge c\kappa l_i^4.\notag
\end{align}
Note that $l_i\to\infty$ as $i\to\infty$. Since $A$ and $d$ are constants, $(M,g)$ has maximal volume growth. This contradicts Theorem \ref{theo-maximal volume growth}.
\end{proof}

Compared with $3$-dimensional ancient $\kappa$-solutions, we do not have the Harnack inequality (see \cite{H3}) in our case. Fortunately, we have the following lemma.

\begin{lem}\label{lem-positive lower bound}
Let $(M_i,g_i(t),p_i)$ be a sequence of $\kappa$-noncollapsed ancient and complete Ricci flows. Suppose there exist a constant $C$ such that
\begin{align}
|{\rm Rm}_{g_i(t)}(x)|_{g_{i}(t)}\le CR_{g_i(t)}(x)\,\text{ for all }\,x\in M_i,~t\le 0
\end{align}
and
\begin{align}
\frac{\partial}{\partial t}R_{g_i(t)}(x)\ge0\,\text{ for all }\,x\in M_i,~t\le 0.
\end{align}
We also assume that $R_{g_{i}(0)}(p_i)=1$ and
 \begin{align}
 R_{g_{i}(0)}(x)\le C_1\,\text{ for all }\,x\in B(p_i,2;g_{i}(0)).
 \end{align}
 Then, there exists a constant $\delta>0$ independent of $i$ and $x$ such that
 \begin{align}
 R_{g_i(0)}(x)\ge \delta\,\text{ for all }\,i\in \mathbb{N}~\mbox{and}~d_{g_i(0)}(x,p_i)=1.
 \end{align}
\end{lem}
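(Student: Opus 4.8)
The plan is to argue by contradiction via a local point-picking and compactness argument, the contradiction being supplied by the strong maximum principle for the scalar curvature. Suppose the conclusion fails; then, after passing to a subsequence, there are points $x_i\in M_i$ with $d_{g_i(0)}(x_i,p_i)=1$ and $R_{g_i(0)}(x_i)\to 0$. The first task is to turn the two standing hypotheses into a genuine space-time curvature bound on a fixed region. Since $|Rm_{g_i}|\le C R_{g_i}$ forces $R_{g_i}\ge 0$, and since $\frac{\partial}{\partial t}R_{g_i}(x,t)\ge 0$ gives $R_{g_i}(x,t)\le R_{g_i}(x,0)$ for every $x$ and every $t\le 0$, on the fixed set $U_i:=B(p_i,2;g_i(0))$ we obtain $|Rm_{g_i(t)}(x)|\le C R_{g_i(t)}(x)\le C R_{g_i(0)}(x)\le C C_1$ for all $x\in U_i$ and $t\le 0$. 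Combined with the $\kappa$-noncollapsing hypothesis -- which, together with this curvature bound, yields a uniform injectivity-radius lower bound at $p_i$ in $g_i(0)$ -- Shi's local derivative estimates then produce uniform bounds on $|\nabla^k Rm_{g_i(t)}|$ on a slightly smaller region, e.g.\ on $B(p_i,\tfrac32;g_i(0))$ for all $t$ in a fixed interval $[-\tfrac12,0]$.

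Next I would extract a limit flow. Over the finite interval $[-\tfrac12,0]$ the bound $|{\rm Ric}_{g_i}|\le C' C_1$ on $U_i$ keeps the metrics $g_i(t)$ uniformly comparable to $g_i(0)$ on $U_i$, so the interior ball and the competitor point $x_i$ stay in a controlled part of $U_i$ throughout. Hamilton's compactness theorem, in its local form for Ricci flows defined on balls (as already used in the proof of Lemma \ref{lem-local bound} and Proposition \ref{prop-section 7}), then gives a subsequence converging in $C^\infty_{\rm loc}$ to a smooth Ricci flow $(\Omega,g_\infty(t),p_\infty)$, $t\in[-\tfrac12,0]$, on a connected open manifold $\Omega$ containing $\overline{B(p_\infty,1;g_\infty(0))}$. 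Passing the hypotheses to the limit yields $R_{g_\infty}\ge 0$ on $\Omega\times[-\tfrac12,0]$, $R_{g_\infty(0)}(p_\infty)=1$ (from the normalization $R_{g_i(0)}(p_i)=1$), and a point $x_\infty\in\Omega$ with $d_{g_\infty(0)}(p_\infty,x_\infty)=1$ and $R_{g_\infty(0)}(x_\infty)=0$ (from $R_{g_i(0)}(x_i)\to 0$ and convergence of distances under the smooth convergence).

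To finish, observe that on $\Omega\times[-\tfrac12,0]$ the limit scalar curvature satisfies $\frac{\partial}{\partial t}R_{g_\infty}=\Delta_{g_\infty}R_{g_\infty}+2|{\rm Ric}_{g_\infty}|^2\ge\Delta_{g_\infty}R_{g_\infty}$, so $R_{g_\infty}\ge 0$ is a supersolution of the heat equation that vanishes at the interior space-time point $(x_\infty,0)$. The strong maximum principle for scalar (super)solutions then forces $R_{g_\infty}\equiv 0$ on $\Omega\times[-\tfrac12,0]$, whence $R_{g_\infty(0)}(p_\infty)=0$, contradicting $R_{g_\infty(0)}(p_\infty)=1$; the constant $\delta$ is then obtained (ineffectively) from this contradiction. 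I expect the main technical obstacle to be this extraction of the limit: one only has curvature control on the incomplete set $U_i$, so one must check carefully -- using the monotonicity $\frac{\partial}{\partial t}R\ge 0$ together with $|Rm|\le CR$ -- that both $B(p_i,1;g_i(0))$ and the point $x_i$ stay in the interior of $U_i$ over a uniform time interval, so that Shi's estimates and the local Cheeger--Gromov--Hamilton compactness apply; everything else follows directly from the hypotheses and the scalar strong maximum principle.
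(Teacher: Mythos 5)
Your proposal is correct and follows essentially the same route as the paper's proof: argue by contradiction, extract a local Cheeger--Gromov--Hamilton limit of the flows on $B(p_i,2;g_i(0))$ using the curvature bound (via $\partial_t R\ge 0$ and $|{\rm Rm}|\le CR$) together with $\kappa$-noncollapsing, and then apply the strong maximum principle to $R_{g_\infty}\ge 0$ vanishing at $(x_\infty,0)$ to contradict $R_{g_\infty(0)}(p_\infty)=1$. The only difference is that you spell out the compactness and injectivity-radius details that the paper dismisses with ``it is easy to see,'' which is a harmless (indeed welcome) elaboration.
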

\begin{proof}
We prove this by contradiction. If the lemma is not true, we can find $x_i\in M_i$ such that $d_{g_i(0)}(x_i,p_i)=1$ and
\begin{align}\label{limit is zero}
R_{g_i(0)}(x_i)\to0,~\mbox{as}~i\to\infty.
\end{align}
By our assumption, it is easy to see that  $(B(p_i,2;g_{i}(0)),g_i(t),p_i)$ converge subsequentially to a limit $(B_{\infty}, g_{\infty}(t),p_{\infty})$. Note that $R_{g_{\infty}(0)}(p_{\infty})=1$. Since $g_i(t)$ is ancient and complete for each $i$, $R_{g_i(t)}(x)\ge0$ (See \cite{Ch}). Hence, $R_{g_{\infty}(t)}(x)\ge0$ for all $x\in B_{\infty}$ and $t\le 0$.  By (\ref{limit is zero}), we have $R_{g_{\infty}(0)}(x_{\infty})=0$, where $x_{\infty}$ is the limit of $x_i$. By the maximum principle, we see that $R_{g_{\infty}(t)}(x)$ is flat for all $x\in B_{\infty}$ and $t\le0$. This contradicts the fact that $R_{g_{\infty}(0)}(p_{\infty})=1$. Hence, we have completed the proof.
\end{proof}

Now, we complete the proof of Theorem \ref{theo-compactness}.

\begin{proof}[Proof of Theorem \ref{theo-compactness}]
By Lemma \ref{lem-local bound}, there is a uniform constant $C>0$  such that $R_{g_i(0)}(x)\leq C R_{g_{i}(0)}(q_i)$ for all $x\in B(q_i,2d_i;g_i(0))$, where $d_i^2R_{g_{i}(0)}(q_i)=1$. Let $h_{i}(t)=d_i^{-2}g_i(d_i^2t)$, for $t\le 0$. Then, $R_{h_{i}(0)}(q_i)=1$ and
 \begin{align}
 R_{h_{i}(0)}(x)\le C\,\text{ for all }\,x\in B(q_i,2;h_{i}(0))=B(q_i,2d_i;g_i(0)).
 \end{align}
 We also note that $d_{h_i(0)}(p_i,q_i)=1$. By Lemma \ref{lem-stay outside K}, ${\rm Ric}_{h_i(t)}(x)\ge0$ for all $x\in B(q_i,2;h_{i}(0))$ and $t\le 0$. Hence, $R_{h_i(t)}(x)$ is increasing in $t$ for all $x\in B(q_i,2;h_{i}(0))$.
 Applying Lemma \ref{lem-positive lower bound} to $(M,h_i(t),q_i)$ and $p_i$, there exists a positive constant $\delta>0$ such that
 \begin{align}
 R_{h_i(0)}(p_i)\ge \delta.
 \end{align}
 It follows that
 \begin{align}
 d_i^2=\frac{1}{R_{g_i(0)}(q_i)}=\frac{R_{g_i(0)}(p_i)}{R_{g_i(0)}(q_i)}=R_{h_i(0)}(p_i)\ge \delta.
 \end{align}
 Combining the above result and Lemma \ref{lem-local bound}, we have proved the following estimate
 \begin{align}
 R_{g_i(0)}(x)\leq C\delta^{-1}\,\text{ for all }\,x\in B(p_i,\sqrt{\delta};g_i(0)).
 \end{align}
 We take $\varepsilon=\frac{1}{\sqrt{C+1}}$. By the $\kappa$-noncollapsing property of $g_i(t)$, we get
$${\rm vol} \, B(p_{i},\varepsilon;g_i(0))\geq \kappa \varepsilon^{4}.$$

For any $r>0$ large enough, $B(p_{i},\varepsilon+r;g_i(0))\subseteq B(q_i,2r\delta^{-\frac{1}{2}}d_i;g_i(0))$. By Lemma \ref{lem-stay outside K}, $B(p_{i},\varepsilon+r;g_i(0))\cap K=\varnothing$ for $i\ge i(r\delta^{-\frac{1}{2}})$, where $i(r\delta^{-\frac{1}{2}})$ is a constant. Hence, $(M,g_i(t))$ has nonnegative Ricci curvature on $B(p_{i},\varepsilon+r;g_i(0))$ when $i\ge i(r\delta^{-\frac{1}{2}})$.
Therefore, we have
$${\rm vol} \, B(p_{i},\varepsilon+r;g_i(0))\geq {\rm vol} \, B(p_{i},\varepsilon;g_i(0))\geq \frac{\kappa}{(1+(r/\varepsilon))^{4}}(\varepsilon+r)^{4}.$$
Hence,
$${\rm vol} \, B(p_{i},(\varepsilon+r)\sqrt{R^{-1}(p_i)};g)\ge \frac{\kappa}{(1+(r/\varepsilon))^{4}}\cdot [(\varepsilon+r)\sqrt{R^{-1}(p_i)}]^4.$$
 Applying Proposition \ref{prop-section 7} to each ball ${\rm vol} \, B(p_{i},(\varepsilon+r)\sqrt{R^{-1}(p_i)};g)$, we see that there is a $C(r)$ independent of $i$ such that
$$R_{g_i(0)}(q)\leq C(r)(r+\varepsilon)^{-2}\,\text{ for all }\, q\in B(p_{i},\varepsilon+r;g_i(0)).$$
Since the scalar curvature is non-decreasing, we also get
\begin{align}\label{curvature estimate on geodesic bar r+}
    R_{g_i(0)}(q)\leq C(r)(r+\varepsilon)^{-2}\,\text{ for all }\, q\in B(p_{i},\varepsilon+r;g_i(0)).
\end{align}
Recall that $(M,g_i(t))$ has nonnegative Ricci curvature on $B(p_{i},\varepsilon+r;g_i(0))$ and $g_i(t)$ satisfies the Ricci flow equation. Then, 
\begin{align*}
    g_i(q,t)\ge g_i(q,0)\quad\forall~t\le0,~q\in B(p_i,\varepsilon+r;g_i(0)).
\end{align*}
Therefore,
\begin{align*}
 B(p_i,\varepsilon+r;g_i(t))\subset B(p_i,\varepsilon+r;g_i(0))\quad\forall~t\le0.
\end{align*}
By (\ref{curvature estimate on geodesic bar r+}) and (\ref{scalar control curvature}), for $t\le 0$, we have
\begin{align}\label{curvature estimate on geodesic bar r+t}
    |{\rm Rm}|_{g_i(t)}(q)\leq C(n)C(r)(r+\varepsilon)^{-2}\,\text{ for all }\, q\in B(p_{i},\varepsilon+r;g_i(t)).
\end{align}
For any $r>0$ large enough, we can find constant $C(r)$ such that (\ref{curvature estimate on geodesic bar r+t}) holds. Hence, Theorem 1.7 in \cite{To14} implies that $(M,g_{i}(t),p_{i})$ subsequentially converges to a complete Ricci flow $(M_\infty, g_\infty(t))$ for any $t\le 0$. The splitting follows from the scalar curvature decay as proved in Claim \ref{claim-split}. Note that the limit flow is a $3$-dimensional ancient flow which has monotonic scalar curvature in $t$ and distance-curvature estimate. Hence, the flow has uniformly bounded curvature at each time slice.

\end{proof}

 \section{Dimension reduction for steady GRS without curvature decay}\label{section-dimension reduction}

 In this section, we prove Theorem \ref{theo-dimension reduce without decay}.  We first introduce a lemma.

\begin{lem}\label{lem-convergence when R no decay}
 Let $(M,g,f)$ be  an $n$-dimensional $\kappa$-noncollpased steady gradient Ricci soliton  with bounded curvature. Suppose $\{p_i\}_{i\in\mathbb{N}}$ is a sequence of points tending to infinity.  Then, $(M,g,p_i)$ subsequentially converges to a gradient steady Ricci soliton $(M_{\infty},g_{\infty},p_{\infty})$.
 \end{lem}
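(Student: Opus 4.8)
The plan is to apply Hamilton's Cheeger--Gromov compactness theorem and then carry the soliton structure across to the limit. Since $(M,g,f)$ is a steady gradient Ricci soliton, the metrics $g(t)=\phi_t^{*}g$ are all isometric to $g$, so the associated eternal solution of the Ricci flow has curvature bounded by $\Lambda\doteqdot\sup_M|Rm|<\infty$ for all $t$. Applying Shi's local derivative estimates to this eternal solution (say on the time interval $[-1,0]$) yields uniform bounds $|\nabla^k Rm|\le C_k$ on all of $M$ for every $k\ge0$, with $C_k$ depending only on $n$, $k$, and $\Lambda$. Together with the $\kappa$-noncollapsing assumption, which combined with the two-sided curvature bound gives a uniform positive lower bound on ${\rm inj}(M,g)$, Hamilton's compactness theorem produces a subsequence (still written $\{p_i\}$), a complete pointed manifold $(M_\infty,g_\infty,p_\infty)$, and an exhaustion of $M_\infty$ by open sets $U_i$ together with diffeomorphisms $\psi_i\colon U_i\to M$ with $\psi_i(p_\infty)=p_i$ such that $\psi_i^{*}g\to g_\infty$ in $C^\infty_{\mathrm{loc}}(M_\infty)$.

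It remains to upgrade this to convergence of the soliton triple. Normalize the potential by $f_i\doteqdot f-f(p_i)$, so $f_i(p_i)=0$ and $\nabla f_i=\nabla f$. Recall the standard soliton identity $R+|\nabla f|^{2}=C$ for some constant $C$; since $R$ is bounded (indeed $R\ge0$ by B.-L. Chen \cite{Ch}), $|\nabla f_i|$ is uniformly bounded on $M$, hence $|f_i|$ is uniformly bounded on each fixed $g$-ball about $p_i$. For higher derivatives we use the soliton equation ${\rm Hess}\,f_i={\rm Ric}$, which together with the Shi bounds above gives $|\nabla^{k}f_i|=|\nabla^{k-2}{\rm Ric}|\le C_k'$ for all $k\ge2$, uniformly on $M$. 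Consequently the functions $\psi_i^{*}f_i$ have locally uniformly bounded $C^k$-norms for every $k$, so after passing to a further subsequence $\psi_i^{*}f_i\to f_\infty$ in $C^\infty_{\mathrm{loc}}(M_\infty)$ for some smooth $f_\infty$ with $f_\infty(p_\infty)=0$.

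Passing the soliton equation to the limit now finishes the proof: from $\psi_i^{*}({\rm Ric}_{g})=\psi_i^{*}({\rm Hess}_g f_i)$ and the two $C^\infty_{\mathrm{loc}}$ convergences, we obtain ${\rm Ric}_{g_\infty}={\rm Hess}_{g_\infty}f_\infty$, so $(M_\infty,g_\infty,f_\infty)$ is a complete steady gradient Ricci soliton and $(M,g,p_i)\to(M_\infty,g_\infty,p_\infty)$ in the pointed $C^\infty$ Cheeger--Gromov sense. The only step requiring any care is the simultaneous extraction of a convergent subsequence for the potentials compatibly with Hamilton's diffeomorphisms $\psi_i$; this presents no genuine obstacle precisely because the soliton identities make $f_i$ and all of its covariant derivatives bounded uniformly over the whole manifold, independently of $i$. (One may also pass $R+|\nabla f|^2=C$ to the limit to see that $f_\infty$ satisfies the same identity with the same constant.)
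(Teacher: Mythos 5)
Your proposal is correct and follows essentially the same route as the paper: extract a Cheeger--Gromov limit of $(M,g,p_i)$ using the curvature bound plus $\kappa$-noncollapsing (the paper leaves the derivative and injectivity-radius estimates implicit where you invoke Shi and Hamilton explicitly), then normalize $f_i=f-f(p_i)$, bound $|f_i|$ via $|\nabla f|^2\le C$ from the identity $R+|\nabla f|^2=C$ and $|\nabla^k f_i|=|\nabla^{k-2}{\rm Ric}|$ via the soliton equation, and pass ${\rm Hess}\,f_i={\rm Ric}$ to the limit. No substantive differences.
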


\begin{proof}
Since $(M,g,f)$ is $\kappa$-noncollpased and has bounded cuvature, it is easy to see that $(M,g,p_i)$ subsequentially converges  in the Cheeger-Gromov sense to a limit $(M_{\infty},g_{\infty},p_{\infty})$. Let $f_i(x)=f(x)-f(p_i)$, for $i\in\mathbb{N}$ and $R_{\max}=\sup_{x\in M}R(x)$. By integrating $f(x)$ along a minimal geodesic connecting $p_i$ and $x$, we get
\begin{align}
|f(x)-f(p_i)|\le \sqrt{R_{\max}}d(x,p_i).\notag
\end{align}
Hence, for any fixed $r>0$, we have
\begin{align}\label{C0-bound}
|f_i(x)|\le r\quad\forall~x\in B(p_i,r;g).
\end{align}
Since the curvature is bounded, we also note that
\begin{align}\label{Ck-bound}
|\nabla^kf_i|(x)=|\nabla^{k-2}{\Ric}|(x)\le C \quad \forall~x\in M.
\end{align}
By (\ref{C0-bound}) and (\ref{Ck-bound}), we see that $f_i(x)$ converges to a smooth function $f_{\infty}(x)$ defined on $M_{\infty}$. Note that
\begin{align}
\nabla\nabla f_i(x)=\nabla\nabla f(x)={\Ric}\quad\forall~x\in M.\notag
\end{align}
By the convergence of $(M,g,p_i)$ and $f_i(x)$, we have
\begin{align}
\nabla\nabla f_{\infty}(x)={{\Ric}}_{\infty}(x)\quad\forall~x\in M_{\infty}.\notag
\end{align}
We have completed the proof.
\end{proof}

 Next, we prove a special case of Theorem \ref{theo-dimension reduce without decay}.
 \begin{lem}\label{lem-splitting-1}
 Let $(M,g,f)$ be  an $n$-dimensional $\kappa$-noncollpased steady gradient Ricci soliton  with bounded curvature and nonnegative Ricci curvature. Suppose the scalar curvature $R(x)$ attain its maximum at $o\in M$ and $|\nabla f|(o)=1$. Then, $(M,g,f)$ weakly dimension reduces to an  $(n-1)$-dimensional steady gradient Ricci soliton.
 \end{lem}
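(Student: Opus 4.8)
The plan is to flow the maximum point $o$ of $R$ along $-\nabla f$, obtaining a geodesic ray which escapes to infinity and along which $R\equiv R_{\max}$, then take a pointed limit of $(M,g)$ along this ray; the limit will split off a line, and its $(n-1)$-dimensional factor will carry an induced steady soliton structure. First I would dispose of the case $R_{\max}:=\sup_M R=0$: by B.-L.~Chen \cite{Ch} one has $R\ge 0$, so then $R\equiv 0$, whence ${\rm Ric}\equiv 0$ (since ${\rm Ric}\ge 0$ and ${\rm tr}\,{\rm Ric}=R$); thus $\nabla f$ is parallel with $|\nabla f|\equiv|\nabla f|(o)=1$, and $(M,g)$ splits isometrically as $N^{n-1}\times\mathbb{R}$ with $(N,g_N)$ Ricci flat, so the conclusion is immediate (here we use that $(M,g)$ is not flat, which we may assume). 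So assume $R_{\max}>0$. From $R+|\nabla f|^2\equiv C$ and $|\nabla f|(o)=1$ we get $C=R_{\max}+1$, so $|\nabla f|\ge 1$ everywhere, with equality exactly on $\{R=R_{\max}\}$.

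Set $\gamma(t)=\phi_t(o)$. Because ${\rm Ric}\ge 0$, $t\mapsto R(\phi_t(p))$ is non-decreasing, so $R(\gamma(t))\ge R(o)=R_{\max}$, and as $R\le R_{\max}$ we get $R\equiv R_{\max}$ on $\gamma([0,\infty))$. Hence $|\gamma'|=|\nabla f|(\gamma)\equiv 1$; moreover ${\rm Ric}(\nabla f,\nabla f)=-\tfrac12\langle\nabla R,\nabla f\rangle=0$ along $\gamma$, which with ${\rm Ric}\ge 0$ forces ${\rm Ric}(\nabla f,\cdot)\equiv 0$, i.e.\ $\nabla R\equiv 0$, along $\gamma$; therefore $\nabla_{\gamma'}\gamma'=\nabla_{\nabla f}\nabla f={\rm Ric}(\nabla f,\cdot)^\sharp=0$, so $\gamma|_{[0,\infty)}$ is a unit-speed geodesic. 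Finally $f(\gamma(t))=f(o)-t\to-\infty$, and since $|\nabla f|\le\sqrt C$ makes $f$ Lipschitz (hence bounded on bounded sets), $\gamma(t)\to\infty$ as $t\to\infty$.

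Now put $p_i=\gamma(i)$, so $p_i\to\infty$. By Lemma~\ref{lem-convergence when R no decay} a subsequence of $(M,g,p_i)$ converges in the pointed Cheeger--Gromov sense to a steady gradient Ricci soliton $(M_\infty,g_\infty,f_\infty,p_\infty)$ with ${\rm Ric}_\infty\ge 0$, bounded curvature, and $\kappa$-noncollapsed; the associated Ricci flows converge, $(M,g(t),p_i)\to(M_\infty,g_\infty(t),p_\infty)$. The key point is that $R_g(p_i)\equiv R_{\max}>0$ is bounded away from $0$ while $d_g(p_i,x_0)\to\infty$ for a fixed basepoint $x_0$, so $R_g(p_i)\,d_g(p_i,x_0)^2\to\infty$; hence, exactly as in the proof of Claim~\ref{claim-split} via the splitting result of \cite{MT}, the limit splits off a line, $(M_\infty,g_\infty(t))=(N^{n-1}\times\mathbb{R},\,g_N(t)+ds^2)$. (Alternatively, the re-based limit of $\gamma$ is a geodesic $\gamma_\infty:\mathbb{R}\to M_\infty$ through $p_\infty$ along which $R_\infty\equiv R_{\max}$ and $|\nabla f_\infty|\equiv 1$; proving that $\gamma_\infty$ is minimizing on all of $\mathbb{R}$ and invoking the Cheeger--Gromoll splitting theorem, legitimate since ${\rm Ric}_\infty\ge 0$, gives the same conclusion.)

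It remains to transfer the soliton structure to the factor and match the rescaling in the definition of weak dimension reduction. Since $\partial_s$ is parallel and ${\rm Ric}_\infty(\partial_s,\cdot)=0$, ${\rm Hess}_{g_\infty}f_\infty(\partial_s,\partial_s)=0$ forces $f_\infty$ affine in $s$, and ${\rm Hess}_{g_\infty}f_\infty(\partial_s,X)=0$ for $X$ tangent to $N$ forces the slope to be constant; writing $f_\infty=as+b$ with $b$ a function on $N$, the restriction of ${\rm Ric}_\infty={\rm Hess}f_\infty$ to $N$ gives ${\rm Ric}_{g_N}={\rm Hess}_{g_N}b$, so $(N,g_N(0),b)$ is an $(n-1)$-dimensional steady gradient Ricci soliton with eternal flow $g_N(t)$. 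Finally, ${\rm Ric}\ge 0$ and $R(p_i)=R_{\max}>0$ give $cR_{\max}\le|Rm|(p_i)\le\sup_M|Rm|<\infty$, so $K_i:=|Rm|(p_i)$ lies in a fixed compact subinterval of $(0,\infty)$; after a further subsequence $K_i\to K_\infty>0$, and $(M,K_ig(K_i^{-1}t),p_i)$ converges to $K_\infty g_\infty(K_\infty^{-1}t)$, which again splits as the product of a flat line with the eternal flow of the steady gradient Ricci soliton $(N,K_\infty g_N(0),b)$. Thus $(M,g,f)$ weakly dimension reduces to an $(n-1)$-dimensional steady gradient Ricci soliton. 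The step I expect to be the main obstacle is the splitting: making rigorous that the pointed limit along the ray $\gamma$ contains a line — either by verifying the curvature-scale/distance hypothesis of the \cite{MT} splitting in this setting, or, self-containedly, by showing $\gamma_\infty$ is minimizing on $\mathbb{R}$, which requires controlling the variation of $f_\infty$ along a competing minimizing geodesic joining two points of $\gamma_\infty$ and exploiting $|\nabla f_\infty|\ge 1$ with equality along $\gamma_\infty$.
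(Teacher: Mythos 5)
Your overall scheme coincides with the paper's: flow $o$ by $\phi_t$ to get a unit-speed geodesic ray on which $R\equiv R_{\max}$ and $|\nabla f|\equiv 1$, pass to a pointed soliton limit along this ray, split off a line, and then read off the $(n-1)$-dimensional steady structure (your bookkeeping of the factors $K_i=|Rm(p_i)|$ at the end is a reasonable addition, and your choice of the $-\nabla f$ direction even avoids the re-basing footnote the paper needs). However, there is a genuine gap at exactly the step you flag, and neither of your two routes closes it. The primary route --- quoting the splitting result of \cite{MT} ``as in Claim \ref{claim-split}'' because $R(p_i)\,d^2(p_i,x_0)\to\infty$ --- is not applicable here: that splitting-at-infinity statement is a Toponogov-type result and needs nonnegative \emph{sectional} curvature. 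In Claim \ref{claim-split} it is applied to a three-dimensional steady soliton, where sectional curvature is nonnegative by \cite{Ch}; in the present lemma $(M^n,g)$ is only assumed to have ${\rm Ric}\ge 0$, and with nonnegative Ricci alone there is no theorem guaranteeing that pointed limits along an arbitrary sequence going to infinity contain a line. This is precisely why the paper replaces the comparison-geometry argument by a soliton-specific one.

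Your fallback route is the paper's actual argument, but you have not supplied its key step. Knowing $|\nabla f_\infty|\ge 1$ everywhere with equality along $\gamma_\infty$ is not enough: for a competing minimal geodesic $l$ from $\gamma_\infty(a)$ to $\gamma_\infty(b)$ of length $L$, the only a priori estimate is
\begin{align}
b-a=\bigl|f_\infty(\gamma_\infty(b))-f_\infty(\gamma_\infty(a))\bigr|\le \sup|\nabla f_\infty|\cdot L\le \sqrt{C}\,L,\qquad C=R_{\max}+1>1,\notag
\end{align}
which only gives $L\ge (b-a)/\sqrt{C}$ and does not show that $\gamma_\infty$ is minimizing. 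The missing input is that $|\nabla f_\infty|\equiv 1$ \emph{along $l$ itself}, equivalently ${\rm Ric}_\infty(l',\cdot)\equiv 0$ and $R_\infty\equiv R_{\max}$ on $l$. The paper obtains this from the translation invariance of $d_\infty(\gamma_\infty(a),\gamma_\infty(b))$ in the flow direction (Claim \ref{claim-distance function}, which uses that $d(\gamma(c+t),\gamma(d+t))=d_{g(-t)}(\gamma(c),\gamma(d))$ is monotone in $t$ because ${\rm Ric}\ge 0$), and then by differentiating the length of $l$ along the Ricci flow to conclude $\int_0^{\delta}\!\!\int_0^L {\rm Ric}_{g_\infty(\sigma)}(l',l')\,ds\,d\sigma=0$; only after that does the potential-function comparison force $L=b-a$, produce a genuine line, and allow Cheeger--Gromoll (which does only need ${\rm Ric}\ge 0$). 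Without this argument, or an equivalent one, the splitting --- and hence the lemma --- is not established, so the proposal as written is incomplete at its central step.
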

\begin{proof}
Let $\phi_t$ be a group of diffeomorphisms generated by $-\nabla f$ and $g(t)=\phi_t^{\ast}g$. Let $\gamma(s)$ be the integral curve of $\nabla f$ passing through $o$ such that $\gamma(0)=o$. Note that $\gamma(s)=\phi_{-s}(o)$. We first show that $\gamma(s)$ is a geodesic with respect to $g$.

We may assume that $R(\gamma(s))=R_{\max}$ for all $s\in\mathbb{R}$,\footnote{It is easy to check that $R(\gamma(s))=R_{\max}$ for all $s\le 0$. Then, $(M,g, \gamma(-i))$ converges to a limit $(M_{\infty},g_{\infty},p_{\infty})$ with potential $f_{\infty}$ when $i\to\infty$. Let $\varphi_t$ be generated by $-\nabla f_{\infty}$. Then, $R_{\infty}(\varphi_t(p_{\infty}))=R_{\max}$ for all $t\in \mathbb{R}$. Hence, one may replace $(M,g,f,o)$ by $(M_{\infty},g_{\infty},f_{\infty},p_{\infty})$.} where $R_{\max}=\sup_{x\in M}R(x)$. Hence,
\begin{align}
{\Ric}(\nabla f, \nabla f)(\gamma(s))=-\frac{1}{2}\cdot\frac{{\rm d}R(\gamma(s))}{{\rm d}s}=0.
\end{align}
Therefore, $\nabla f(\gamma(s))$ is a zero eigenvector of ${\Ric}(\gamma(s))$. Hence,
\begin{align}
{\Ric}(\nabla f, Y)(\gamma(s))=0\quad\forall~Y\in T_{\gamma(s)}M.\notag
\end{align}
It follows that
\begin{align}
\langle \nabla_{\gamma^{\prime}(s)}\gamma^{\prime}(s),Y\rangle={\Ric}(\nabla f, Y)=0\quad\forall~Y\in T_{\gamma(s)}M.\notag
\end{align}
Hence, $\gamma(s)$ is a geodesic.

Let $p_i=\gamma(t_i)$ for $t_i\to+\infty$. By Lemma \ref{lem-convergence when R no decay},  $(M,g(t),p_i)$ converges subsequentially to $(M_{\infty},g_{\infty},p_{\infty})$. Moreover, there exists  a smooth function $f_{\infty}$ such that
\begin{align}
{\Ric}_{\infty}={\rm Hess} f_{\infty}\notag
\end{align}
and
\begin{align}
\nabla f\to \nabla f_{\infty},~as~i\to\infty.\notag
\end{align}
Let $\gamma_{\infty}(s)$ be the integral curve of $\nabla f_{\infty}$ passing through $p_{\infty}$ such that $\gamma_{\infty}(0)=p_{\infty}$. Similar to $\gamma(s)$, we can show that $\gamma_{\infty}(s)$ is a geodesic with respect to $g_{\infty}(0)$. Actually, we want to show that $\gamma_{\infty}(s)$ is a geodesic line. We need the following claim.
\begin{claim}\label{claim-distance function}
Suppose $a<b$ and $a,b\in \mathbb{R}$. Then,
\begin{align}
d_{\infty}(\gamma_{\infty}(a),\gamma_{\infty}(b))=\sup_{r\in\mathbb{R}}d(\gamma(t),\gamma(b-a+t)),\notag
\end{align}
where $d$ and $d_{\infty}$ are the distance functions with respect to $g$ and $g_{\infty}(0)$, respectively.
\end{claim}
\begin{proof}
By the convergence of $(M,g(t),p_i)$ and $\nabla f$, we have
\begin{align}
d_{\infty}(\gamma_{\infty}(a),\gamma_{\infty}(b))=\lim_{t_i\to+\infty}d(\gamma(t_i),\gamma(b-a+t_i)).\notag
\end{align}
So, it suffices to show that $d(\gamma(t),\gamma(b-a+t))$ is increasing in $t$. More precisely, we only need to show that
\begin{align}
d(\gamma(c),\gamma(d))\le d(\gamma(c+t),\gamma(d+t))\quad\forall~c,d\in\mathbb{R},~t>0.\notag
\end{align}
Let $l(\sigma)$ be a minimal geodesic connecting $\gamma(c)$ and $\gamma(d)$ with respect to $g(-t)$. Suppose $l(0)=\gamma(t)$ and $l(L)=\gamma(d)$. Since the Ricci curvature is nonnegative, for $t\ge 0$, we have
\begin{align*}
d_{g(-t)}(\gamma(c),\gamma(d))=&\int_{0}^{L}\sqrt{\langle l^{\prime}(s),l^{\prime}(s)\rangle_{g(-t)}}ds\\
\ge& \int_{0}^{L}\sqrt{\langle l^{\prime}(s),l^{\prime}(s)\rangle_{g}}ds\\
\ge& \; d(\gamma(c),\gamma(d)).
\end{align*}
It follows that
\begin{align}
d(\gamma(c+t),\gamma(d+t))=d_{g(-t)}(\gamma(c),\gamma(d))\ge d(\gamma(c),\gamma(d))\quad\forall~t\ge0.\notag
\end{align}
We have completed the proof of the claim.
\end{proof}

As a corollary of Claim \ref{claim-distance function}, we have
\begin{align}\label{distance identity}
d_{\infty}(\gamma_{\infty}(a),\gamma_{\infty}(b))=d_{\infty}(\gamma_{\infty}(a+t),\gamma_{\infty}(b+t))\quad\forall~a,b,t\in\mathbb{R}.
\end{align}

Now, we show that $\gamma_{\infty}(s)$ is a minimal geodesic connecting $\gamma_{\infty}(a)$ and $\gamma_{\infty}(b)$ with respect to $g_{\infty}(0)$. Suppose $l(s)$ is a minimal geodesic connecting $\gamma_{\infty}(a)$ and $\gamma_{\infty}(b)$ with respect to $g_{\infty}(0)$. Suppose $l(0)=\gamma_{\infty}(a)$ and $l(L)=\gamma_{\infty}(b)$. We want to show that
\begin{align}\label{tangent vector is eigenvector}
{\Ric}_{\infty}(l^{\prime}(s),l^{\prime}(s))=0\quad\forall~s\in[0,L].
\end{align}
For $\d>0$, we have
\begin{align}\label{distance inequality}
&d_{\infty}(\gamma_{\infty}(a),\gamma_{\infty}(b))-d_{\infty}(\gamma_{\infty}(a-\d),\gamma_{\infty}(b-\d))\notag\\
=&d_{\infty}(\gamma_{\infty}(a),\gamma_{\infty}(b))-d_{g_{\infty}(\d)}(\gamma_{\infty}(a),\gamma_{\infty}(b))\notag\\
\ge&\int_{0}^{L}\sqrt{\langle l^{\prime}(s),l^{\prime}(s)\rangle_{g_{\infty}(0)}}ds-\int_{0}^{L}\sqrt{\langle l^{\prime}(s),l^{\prime}(s)\rangle_{g_{\infty}(\d)}}ds\notag\\
=&\int_{0}^{\d}\int_{0}^{L}\frac{2{\Ric}_{g_{\infty}(\sigma)}(l^{\prime}(s),l^{\prime}(s))}{\sqrt{\langle l^{\prime}(s),l^{\prime}(s)\rangle_{g_{\infty}(\sigma)}}}dsd\sigma .
\end{align}
By (\ref{distance identity}) and (\ref{distance inequality}), we get
\begin{align}\label{integration equal to zero}
\int_{0}^{\d}\int_{0}^{L}\frac{2{\Ric}_{g_{\infty}(\sigma)}(l^{\prime}(s),l^{\prime}(s))}{\sqrt{\langle l^{\prime}(s),l^{\prime}(s)\rangle_{g_{\infty}(\sigma)}}}dsd\sigma=0.
\end{align}
Since the Ricci curvature is nonnegative, (\ref{integration equal to zero}) implies (\ref{tangent vector is eigenvector}). Note that (\ref{tangent vector is eigenvector}) implies that $l^{\prime}(s)$ is a zero eigenvector of ${\Ric}_{\infty}(l(s))$. Hence,
\begin{align}
{\Ric}_{\infty}(l^{\prime}(s), Y)(l(s))=0\quad\forall~Y\in T_{l(s)}M_{\infty}.\notag
\end{align}
Therefore,
\begin{align}
\frac{{\rm d}R_{\infty}(l(s))}{{\rm d}s}=-2{\Ric}_{\infty}(\nabla f_{\infty}(l(s)),l^{\prime}(s))=0\quad\forall~s\in[0,L].\notag
\end{align}
Hence,
\begin{align}
R_{\infty}(l(s))=R_{\infty}(\gamma_{\infty}(s))=R_{\infty}(p_{\infty}).\notag
\end{align}
By the convergence of $(M,g(t),p_i)$, we have
\begin{align}
|\nabla f_{\infty}|_{g_{\infty}(0)}(p_{\infty})=1.\notag
\end{align}
Note that
\begin{align}
|\nabla f_{\infty}|^2(x)+R_{\infty}(x)\equiv C.\notag
\end{align}
We conclude that
\begin{align}
|\nabla f_{\infty}|_{g_{\infty}(0)}(l(s))=1\quad\forall~s\in[0,L].\notag
\end{align}

For any $b>a$, we have
\begin{align}
f_{\infty}(\gamma_{\infty}(b))-f_{\infty}(\gamma_{\infty}(a))=&\int_0^L\langle l^{\prime}(s),\nabla f_{\infty}(l(s)) \rangle_{g_{\infty}(0)}\,ds\notag\\
\le& \int_0^L |l^{\prime}(s)|_{g_{\infty}(0)}\cdot|\nabla f_{\infty}(l(s))|_{g_{\infty}(0)}\,ds\notag\\
\le& L.
\end{align}
Note that $l(s)$ is a minimal geodesic. Since $\gamma_{\infty}(s)$ is a curve connecting $\gamma_{\infty}(a)$ and $\gamma_{\infty}(b)$ and the length of $\gamma_{\infty}(s)|_{[a,b]}=b-a$, we get
\begin{align}\label{9-1}
f_{\infty}(\gamma_{\infty}(b))-f_{\infty}(\gamma_{\infty}(a))\le  L\le b-a.
\end{align}
On the other hand, since  $\gamma_{\infty}(s)$ is the integral curve of $\nabla f$ and $|\nabla f|(\gamma(s))=1$, we have
\begin{align}\label{9-2}
f_{\infty}(\gamma_{\infty}(b))-f_{\infty}(\gamma_{\infty}(a))=b-a.
\end{align}
By (\ref{9-1}) and (\ref{9-2}), we have $L=b-a$.

Finally, we get $\gamma_{\infty}(s)$ is a minimal geodesic connecting $\gamma_{\infty}(a)$ and $\gamma_{\infty}(b)$ for any $a,b\in\mathbb{R}$. Hence,  $\gamma_{\infty}(s)$ is a geodesic line. Since $(M_{\infty},g_{\infty}(0))$ has nonnegative Ricci curvature, it splits off a line by the Cheeger-Gromoll splitting theorem. We have completed the proof.

\end{proof}

Now, we can prove the dimension reduction result in a more general case.

\begin{lem}\label{lem-splitting-2}
 Let $(M,g,f)$ be an $n$-dimensional  $\kappa$-noncollapsed steady gradient Ricci soliton with bounded curvature and nonnegative Ricci curvature. Suppose there exists a sequence of points $p_i$ tending to infinity such that  $R(p_i)$ attains the maximum of $R(x)$ at each $p_i$. Then, $(M,g,f)$ weakly dimension reduces to an $(n-1)$-dimensional steady gradient
Ricci soliton.
\end{lem}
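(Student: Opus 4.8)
The plan is to deduce Lemma~\ref{lem-splitting-2} from the already-established special case Lemma~\ref{lem-splitting-1}. Since the two statements have exactly the same conclusion, it suffices to arrange, after a harmless constant rescaling of $g$, that the hypotheses of Lemma~\ref{lem-splitting-1} are satisfied. Throughout we may assume $(M,g,f)$ is non-Ricci-flat, so that $R>0$ on $M$ by B.-L.~Chen \cite{Ch} and the identity $R+|\nabla f|^2=C$ holds with $C>0$.

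First I would examine $|\nabla f|$ along the sequence. Since $R(p_i)=R_{\max}=\sup_M R$ for every $i$, the identity gives $|\nabla f|^2(p_i)=C-R_{\max}=:a$, a constant independent of $i$. The crucial point is that $a>0$. Suppose instead $a=0$. Then $\nabla f(p_i)=0$ for all $i$, so every $p_i$ is an equilibrium point of $f$. As ${\rm Ric}={\rm Hess}\,f\ge0$, the function $f$ is convex, hence every equilibrium point is a global minimum; in particular $f(p_i)=\min_M f=f(p_1)$ for all $i$. On the other hand, the existence of an equilibrium point together with ${\rm Ric}\ge0$ forces $f$ to grow linearly (Cao--Chen, Cao--Ni; see the discussion at the beginning of Section~\ref{section 3}), in particular $f(x)\ge c\,d(p_1,x)-C'$ for some constants $c>0$, $C'\ge0$. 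Combining these two facts gives $d(p_1,p_i)\le (f(p_1)+C')/c$ for all $i$, contradicting $p_i\to\infty$. Therefore $a>0$.

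Next I would rescale: set $\widetilde g:=a\,g$. Since the Ricci tensor and ${\rm Hess}\,f$ are unchanged as $(0,2)$-tensors, $(M,\widetilde g,f)$ is again an $n$-dimensional steady gradient Ricci soliton; it is still $\kappa$-noncollapsed and has bounded curvature and nonnegative Ricci curvature (all of these properties being scale invariant); its scalar curvature $a^{-1}R$ still attains its maximum on $M$ at $o:=p_1$; and $|\nabla_{\widetilde g}f|_{\widetilde g}(p_1)=a^{-1/2}|\nabla_g f|_g(p_1)=1$. Hence Lemma~\ref{lem-splitting-1} applies to $(M,\widetilde g,f)$ and shows that it weakly dimension reduces to an $(n-1)$-dimensional steady gradient Ricci soliton. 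Finally, weak dimension reduction is invariant under constant rescaling of the metric: the curvature-normalized rescalings occurring in the definition are $K_i g(K_i^{-1}t)$ with $K_i=|Rm(p_i)|$, and passing from $g$ to $\widetilde g=a g$ replaces $K_i$ by $\widetilde K_i=a^{-1}K_i$ in such a way that $\widetilde K_i\,\widetilde g(\widetilde K_i^{-1}t)=K_i\, g(K_i^{-1}t)$. Thus $(M,g,f)$ itself weakly dimension reduces to the same $(n-1)$-dimensional steady gradient Ricci soliton, completing the proof.

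The step I expect to be the only genuine obstacle is ruling out $a=0$: this is precisely where the hypothesis — a whole sequence $p_i\to\infty$ of maximum points of $R$, rather than a single point — is used, and it is what separates Lemma~\ref{lem-splitting-2} from Lemma~\ref{lem-splitting-1}. If one preferred not to invoke the linear growth estimate, the case $a=0$ could instead be handled directly: the equilibrium set of the convex function $f$ is totally convex and, containing the points $p_i$, noncompact, so taking minimal geodesics joining the $p_i$, re-centering at their midpoints $q_i$ (with $q_i\to\infty$ and $R(q_i)=R_{\max}$), and passing to a Cheeger--Gromov limit of $(M,g,q_i)$ via Lemma~\ref{lem-convergence when R no decay} produces a geodesic line in a limiting steady soliton $(M_\infty,g_\infty,f_\infty)$; the Cheeger--Gromoll splitting theorem then gives $M_\infty=N_\infty\times\mathbb{R}$, and $f_\infty$ (being constant along the line, with ${\rm Ric}_\infty$ vanishing in the line direction) descends to a steady soliton potential on $N_\infty$, which is the desired weak dimension reduction along the sequence $\{q_i\}$.
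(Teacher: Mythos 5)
Your handling of the case $a:=C-R_{\max}>0$ (rescale so that $|\nabla f|=1$ at a maximum point, invoke Lemma \ref{lem-splitting-1}, and note that weak dimension reduction is invariant under a constant rescaling) is correct and coincides with the paper's first reduction. The genuine gap is in how you exclude the case $a=0$. The implication you use --- ``an equilibrium point together with ${\rm Ric}\ge 0$ forces $f(x)\ge c\,d(p_1,x)-C'$\,'' --- is not available here: the linear \emph{lower} bound of Cao--Chen and Carrillo--Ni \cite{CaCh,CaNi} needs ${\rm Ric}>0$, and the paper's own linear growth result, Theorem \ref{theo-linear of f}, substitutes uniform scalar curvature decay, which is precisely what is absent in the intended application of this lemma. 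With only ${\rm Ric}\ge0$ the bound fails in directions where ${\rm Ric}$ degenerates: the product of the cigar soliton with a line, with potential pulled back from the cigar factor, is a steady soliton with ${\rm Ric}\ge0$, an unbounded set of equilibrium points, and $f$ constant along that set, so no contradiction follows from $f(p_i)=\min_M f$ together with $p_i\to\infty$. (That example is collapsed, but your argument nowhere uses $\kappa$-noncollapsing at this step, and no noncollapsed version of the growth estimate is cited.) In fact $a=0$ cannot be ruled out at this level of generality; it is the main case of the lemma, and the paper treats it directly rather than excluding it.

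Your closing alternative for $a=0$ is, in substance, the paper's proof and should be promoted to the actual argument. The paper joins the equilibrium maximum points by minimal geodesics and shows ${\rm Ric}(\gamma',\cdot)=0$ and $R\equiv R_{\max}$ along them by comparing $d_{g(t)}$ (constant in $t$, since the endpoints are fixed by $\phi_t$) with lengths (non-increasing under ${\rm Ric}\ge0$); your total-convexity observation gives the same conclusion even more directly, since along a minimizing geodesic between two minimum points of the convex function $f$ one has $f\equiv\min f$, hence $\nabla f=0$ and $R=C=R_{\max}$. Recentering at the midpoints $q_i\to\infty$, passing to a limit via Lemma \ref{lem-convergence when R no decay}, observing that the minimizing segments (whose two halves have length tending to infinity) converge to a line, and applying Cheeger--Gromoll, one splits $(M_\infty,g_\infty)=N_\infty\times\mathbb{R}$; since ${\rm Hess}\,f_\infty={\rm Ric}_\infty$ annihilates the split direction, $\partial_s f_\infty$ is constant, and it vanishes because $f\equiv\min f$ on the approximating segments, so $f_\infty$ descends to a steady soliton potential on $N_\infty$, exactly as you sketch. (As in the paper, the fact that $R(q_i)=R_{\max}>0$ with bounded curvature means the unrescaled limit differs from the curvature-normalized one in the definition of weak dimension reduction only by a fixed scale.) With that paragraph serving as the proof of the case $a=0$ and the linear-growth contradiction deleted, your argument is complete and essentially agrees with the paper's.
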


\begin{proof}
If there exists a point $x_0\in M$ such that $R(x_0)$ attains the maximum of $R(x)$ and $|\nabla f|(x_0)>0$,\footnote{By rescaling, this is equivalent to the case that $|\nabla f|(x_0)=1$.} then the lemma holds according to Lemma \ref{lem-splitting-1}. Now, it suffices to consider the case that $|\nabla f|(p)=0$ if $R(p)=R_{\max}$, where $R_{\max}=\sup_{x\in M}R(x)$. Hence, $|\nabla f|(p_i)=0$ for all $i\in\mathbb{N}$.

Let $\phi_t$ be a group of diffeomorphisms generated by $-\nabla f$ and $g(t)=\phi_t^{\ast}g$. Let $\gamma_i(s)$ be a minimal geodesic connecting $p_0$ and $p_i$ with respect to $g$. Since $\phi_t$ is an isomorphism and $|\nabla f|(p_i)=0$, we have
\begin{align}
d_{g(t)}(p_i,p_0)=d(p_i,p_0)\quad\forall~t\in \mathbb{R}.\notag
\end{align}

Let $d_i=d(p_i,p_0)$. Let $L(t)$ be the length of $\gamma_i(s)|_{[0,d_i]}$ and $s$ be the arc-parameter with respect to $g(t)$. By the nonnegativity of the Ricci curvature, we have
\begin{align}
\frac{{\rm d}L(t)}{{\rm d}t}=-\int_{0}^{d_i}\frac{2{\Ric}_{g(t)}(\gamma_i^{\prime}(s),\gamma_i^{\prime}(s))}{\langle \gamma_i^{\prime}(s),\gamma_i^{\prime}(s)\rangle_{g(t)}}ds\le 0.\notag
\end{align}
Hence,
\begin{align}
L(t)\le d_i\quad\forall~t\ge0.\notag
\end{align}
On the other hand,
\begin{align}
L(t)\ge d_{g(t)}(p_0,p_i)=d_i.\notag
\end{align}
Therefore,
\begin{align}
L(t)\equiv d_i.\notag
\end{align}
It follows that
\begin{align}
{\Ric}_{g(t)}(\gamma_i^{\prime}(s),\gamma_i^{\prime}(s))\equiv0\quad\forall~s\in [0,d_i],~t\in \mathbb{R}.\notag
\end{align}
Then,
\begin{align}
{\Ric}(\gamma_i^{\prime}(s),Y)=0\quad\forall~Y\in T_{\gamma_i(s)}M.\notag
\end{align}
Hence,
\begin{align}
\frac{{\rm d}R(\gamma_i(s))}{{\rm d}s}=2{\Ric}(\nabla f(\gamma_i(s)),\gamma_i(s) )=0.\notag
\end{align}
It follows that
\begin{align}\label{identity for geodesic}
R(\gamma_i(s))=R(\gamma_i(0))=R(p_0)=R_{\max}.
\end{align}

Let $q_i=\gamma_i(\frac{d_i}{2})$. Note that $R(q_i)=R_{\max}$ and $q_i$ tends to infinity. Moreover, $\gamma_i(s)|_{[0,d_i]}$ is a minimal geodesic passing through $q_i$ and its length $d_i$ tends to infinity.  Now, we consider $(M,g(t),q_i)$. By taking a subsequence, $(M,g(t),p_i)$ converges to $(M_{\infty},g_{\infty}(t),p_{\infty})$. This means that there exist difformorphisms $\Phi_i:U_i(\subseteq M_{\infty})\to \Phi_i(U_i)(\subseteq M_i)$ such that $\Phi_i^{\ast}(g(t))$ converges to $g_{\infty}(t)$ and $\Phi_i(p_{\infty})=p_i$, where the $U_i$ exhaust $M_{\infty}$. Let
\begin{align}\label{fix direction}
V_i=\left.\frac{{\rm d}}{{\rm d}s}\right|_{s=\frac{d_i}{2}}\Phi_i^{-1}(\gamma_i(s))\quad\forall~i\in\mathbb{N}.
\end{align}
Since $s$ is the arc-parameter, we get $|V_i|_{\Phi_i^{\ast}(g)}=1$ and $V_i\in T_{p_{\infty}}M_{\infty}$. By taking a subsequence, we may assume that $V_i\to V_{\infty}$ as $i\to\infty$. Therefore, by  the convergence of $(M,g(t),q_i)$ and $V_i$, we get that $\gamma_i(s)|_{[0,d_i]}$ converges subsequentially to a geodesic line passing through $p_{\infty}$. Note that $(M_{\infty},g_{\infty}(t))$ has nonnegative Ricci curvature. We conclude that $(M_{\infty},g_{\infty}(t))$ splits off a line. Similar to the proof of Lemma \ref{lem-splitting-1}, $(M_{\infty},g_{\infty}(t))$ is also a steady gradient Ricci soliton.

\end{proof}

We also need the following lemma.

\begin{lem}\label{lem-shrinking-nonnegative}
Let $(M,g,f)$ be a non-Ricci-flat gradient steady Ricci soliton with nonnegative Ricci curvature. Let $S=\{x\in M: \nabla f(x)=0\}$. Suppose $S$ is not empty. Then, for any $p\in M$, we have
\begin{align}
 \lim_{t\to+\infty}d(\phi_t(p),S)=0,\notag
 \end{align}
 where $\phi_t$ is generated by $-\nabla f$.
\end{lem}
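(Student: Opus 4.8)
The plan is to use the monotonicity of the scalar curvature along the flow $\phi_t$ together with compactness of superlevel sets of $R$. Since $(M,g,f)$ is a non-Ricci-flat steady gradient Ricci soliton with $\mathrm{Ric}\ge0$, B.-L.~Chen's result gives $R>0$ everywhere, and the identity $R+|\nabla f|^2=C$ holds for some constant $C>0$; note that $S=\{x:\nabla f(x)=0\}=\{x:R(x)=C\}$, since $x\in S$ iff $|\nabla f|^2(x)=0$ iff $R(x)=C$. First I would observe that, because $\mathrm{Ric}\ge0$, we have $\frac{\mathrm d}{\mathrm dt}R(\phi_t(p))=2\,\mathrm{Ric}(\nabla f,\nabla f)(\phi_t(p))\ge0$, so $t\mapsto R(\phi_t(p))$ is nondecreasing and bounded above by $C=R_{\max}$ (here $R_{\max}=\sup_M R$; one must note $C=R_{\max}$, which follows since $S\ne\varnothing$ forces $R$ to attain the value $C$). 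Hence $R(\phi_t(p))\uparrow L$ for some $L\le C$ as $t\to+\infty$.

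The second step is to show $L=C$. Suppose instead $L<C$. Then $\phi_t(p)$ remains in the set $\{R\ge R(p)\}$ for all $t\ge0$ and, more usefully, stays in $\{L-1\le R\le C\}$; but I want to locate it more precisely. The key point is that $\int_0^\infty \mathrm{Ric}(\nabla f,\nabla f)(\phi_t(p))\,\mathrm dt=\tfrac12(L-R(p))<\infty$, so $\mathrm{Ric}(\nabla f,\nabla f)(\phi_t(p))\to0$ along a sequence $t_j\to\infty$. Pass to a Cheeger--Gromov limit along $\phi_{t_j}(p)$ (using bounded curvature; if the soliton is $\kappa$-noncollapsed this limit is a manifold, otherwise one argues on the orbit itself) to get a limit steady soliton $(M_\infty,g_\infty,f_\infty)$ with a point $p_\infty$ at which $\mathrm{Ric}_\infty(\nabla f_\infty,\nabla f_\infty)=0$ and $R_\infty(p_\infty)=L$; but then along the $\phi^\infty_t$-orbit of $p_\infty$ the scalar curvature is both nondecreasing and has zero derivative at $t=0$, and monotonicity plus the structure equations force $R_\infty\equiv L$ on that orbit — contradicting $L<C=R_{\max}$ only if one can transport maximality. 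A cleaner route avoiding limits: since $R(\phi_t(p))\le L<C$ for all $t\ge0$, the backward orbit argument of Lemma \ref{lem-shrinking} applies — one shows $\phi_t(p)$ must eventually enter $\{R\ge C-\varepsilon\}$ for every $\varepsilon$, because $f(p)-f(\phi_t(p))=\int_0^t|\nabla f|^2(\phi_s(p))\,\mathrm ds\ge (C-L)t\to\infty$, forcing $\phi_t(p)\to\infty$, while $\{R\ge R(p)\}$ is compact (by uniform decay of $R$, which holds here since $R\le C$ and $S\ne\varnothing$ gives properness of $f$ — actually one uses that $\{|\nabla f|^2\le C-R(p)\}$ is compact), a contradiction. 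Hence $L=C$, i.e. $R(\phi_t(p))\to C$.

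Finally, $R(\phi_t(p))\to C$ means $|\nabla f|^2(\phi_t(p))=C-R(\phi_t(p))\to0$. To conclude $d(\phi_t(p),S)\to0$, I argue by contradiction: if not, there are $t_j\to\infty$ and $\delta>0$ with $d(\phi_{t_j}(p),S)\ge\delta$. Since $\{R\ge R(p)\}$ is compact and contains all $\phi_{t_j}(p)$, pass to a subsequence with $\phi_{t_j}(p)\to q\in M$; then $|\nabla f|^2(q)=\lim_j|\nabla f|^2(\phi_{t_j}(p))=0$, so $q\in S$, yet $d(q,S)\ge\delta>0$, a contradiction. The main obstacle is Step 2 — establishing $L=C$ rigorously — and here the contradiction-via-properness argument (mirroring Lemma \ref{lem-shrinking}) is the one I would carry out in detail, being careful that compactness of the relevant superlevel set of $R$ follows from uniform scalar curvature decay, which in turn is a consequence of $S\ne\varnothing$ together with $R+|\nabla f|^2\equiv C$.
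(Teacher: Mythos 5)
There is a genuine gap, and it sits exactly where you flagged the ``main obstacle''. Both your Step 2 (the ``cleaner route'') and your final compactness argument rest on the claim that the superlevel set $\{R\ge R(p)\}$ (equivalently the sublevel set $\{|\nabla f|^2\le C-R(p)\}$) is compact, which you justify by asserting that $S\ne\varnothing$ together with $R+|\nabla f|^2\equiv C$ yields uniform scalar curvature decay, or properness of $f$. That implication is false under the hypotheses of this lemma: the lemma assumes only that $(M,g,f)$ is a non-Ricci-flat steady soliton with ${\rm Ric}\ge0$ and $S\ne\varnothing$ --- no noncollapsing, no bounded curvature, and crucially no curvature decay. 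The product of the cigar soliton with $\mathbb{R}$, with potential pulled back from the cigar factor, satisfies every hypothesis, yet there $S$ is a noncompact line, $f$ is bounded along that line (hence not proper), $R$ does not decay uniformly, and $\{R\ge c\}$ is noncompact for every $c\le R_{\max}$. So the contradiction you want in Step 2 (``$\phi_t(p)\to\infty$ while trapped in a compact set'') and the subsequential convergence you invoke in the last step are both unsupported; your first route via a Cheeger--Gromov limit needs bounded curvature and noncollapsing, which are likewise not available, as you yourself note.

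The paper's proof supplies the missing compactness from a different source: since ${\rm Ric}\ge0$, the Ricci flow $g(t)=\phi_t^\ast g$ is distance-nonincreasing, and every point $q\in S$ is a fixed point of $\phi_t$; choosing $q\in S$ with $d(p,q)=d(p,S)$, one gets $d(\phi_t(p),q)=d_{g(t)}(p,q)\le d(p,S)$ for all $t\ge0$, so the forward orbit stays in the compact ball $\overline{B(q,d(p,S))}$. Subsequential limits $p_\infty$ then exist unconditionally, and $p_\infty\in S$ because otherwise $|\nabla f|^2(\phi_t(p))$ is bounded below by $|\nabla f|^2(p_\infty)>0$ (by monotonicity of $|\nabla f|^2$ along the orbit), forcing $f(p)-f(\phi_{t_i}(p))\to\infty$, which contradicts $f(\phi_{t_i}(p))\to f(p_\infty)$; finally the monotonicity of $t\mapsto d(\phi_t(p),p_\infty)$ upgrades subsequential convergence to the full limit. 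If you replace your compactness claim by this orbit-confinement argument, the rest of your scheme (monotonicity of $R$ along the orbit, $|\nabla f|^2\to0$ at limit points) can be made to work; as written, however, the proof does not go through under the stated hypotheses.
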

\begin{proof}
Let $g(t)=\phi_t^{\ast}g$. Then, $g(t)$ satisfies the Ricci flow equation. Note that $\phi_t$ is an isomorphism and the Ricci curvature of $(M,g(t))$ is nonnegative. For any $q\in M$, we have $d(\phi_t(q),\phi_t(p))=d_t(p,q)$ is decreasing in $t$ .

Suppose $d=d(p,S)$. Since $S$ is closed, we can find $q\in S$ such that $d(p,q)=d$. Hence, we have
\begin{align}
d(\phi_t(p),S)\le d(\phi_t(p),q)=d_t(p,q)\le d, ~\mbox{for}~t\ge0.\notag
\end{align}
Therefore, there exists a sequence of times $t_i\to+\infty$ such that $\phi_{t_i}(p)$ converges to some point $p_{\infty}$.

Now, we show that $p_{\infty}\in S$. If $p_{\infty}\notin S$, then we let $c_0=|\nabla f|^2(p_{\infty})>0$. Since the Ricci curvature is nonnegative, we have
 \begin{align}\label{monotonicity of norm}
\frac{{\rm d}}{{\rm d}t}|\nabla f|^2(\phi_{t}(p))=-\frac{{\rm d}}{{\rm d}t}R(\phi_{t}(p))\le 0.
\end{align}

 By the convergence of $\phi_{t_i}(p)$ and (\ref{monotonicity of norm}), we have
\begin{align}\label{limit lower bound}
|\nabla f|^2(\phi_{t_i}(p))\ge|\nabla f|^2(p_{\infty})= c_0~ \forall~ t_i\ge 0.
\end{align}
By (\ref{monotonicity of norm}) and (\ref{limit lower bound}), we have
\begin{align}
|\nabla f|^2(\phi_{t}(p))\ge c_0, ~\mbox{for}~t\in \mathbb{R}.\notag
\end{align}
Then,
\begin{align}
f(p)-f(p_{\infty})=\lim_{t_{i}\to+\infty}\int_{0}^{t_i}|\nabla f|^2(\phi_t(p)){\rm d}t\ge \lim_{t_{i}\to+\infty}c_0t_i=+\infty.\notag
\end{align}
This is impossible. Hence, we have proved that $p_{\infty}\in S$. Hence,
\begin{align}
 \lim_{t_i\to+\infty}d(\phi_{t_i}(p),p_{\infty})=0.\notag
 \end{align}
 Note that
 \begin{align}
 d(\phi_{t}(p),S)\le d(\phi_{t}(p),p_{\infty})\le d(\phi_{t_i}(p),p_{\infty})\quad\forall~t\ge t_i.\notag
 \end{align}
 Hence, we have completed the proof.
\end{proof}

Now, we prove Theorem \ref{theo-dimension reduce without decay} by assuming the Ricci curvature is nonnegative.

\begin{lem}\label{lem-nonnegative case-spliting}
Theorem \ref{theo-dimension reduce without decay} holds when the Ricci curvature is nonnegative.
\end{lem}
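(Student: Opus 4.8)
The plan is to reduce the statement to Lemmas~\ref{lem-splitting-1} and~\ref{lem-splitting-2}, which already yield weak dimension reduction once one has points tending to infinity at which the scalar curvature is maximal. We may assume $(M,g,f)$ is not Ricci-flat, since a Ricci-flat soliton has $R\equiv 0$ and hence trivially has uniform scalar curvature decay, contrary to hypothesis; then $R>0$ on $M$ by B.-L.~Chen~\cite{Ch}. We also use that $(M,g,f)$ is $\kappa$-noncollapsed, which holds in every situation in which this lemma will be applied. Since $R$ does not decay uniformly, $A := \lim_{r\to\infty}\sup_{M\setminus B(x_0,r)}R>0$; choose $p_i\to\infty$ with $R(p_i)\to A$. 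By bounded curvature, $\kappa$-noncollapsedness, and Lemma~\ref{lem-convergence when R no decay}, a subsequence of $(M,g,p_i)$ converges in the pointed $C^\infty$ Cheeger--Gromov sense to a complete steady gradient Ricci soliton $(M_\infty,g_\infty,f_\infty,p_\infty)$ with ${\rm Ric}_{g_\infty}\ge 0$, bounded curvature, and $\kappa$-noncollapsed. Because the reference points of the convergence all tend to infinity in $M$, we get $R_\infty\le A$ on $M_\infty$ while $R_\infty(p_\infty)=A$; thus $R_\infty$ attains its maximum $A$ at $p_\infty$, and $M_\infty$ is not Ricci-flat, hence noncompact (a closed steady soliton with ${\rm Ric}\ge 0$ is Ricci-flat).

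Suppose first that $\nabla f_\infty(p_\infty)\ne 0$. Rescaling $g_\infty$ by a positive constant we may assume $|\nabla f_\infty|_{g_\infty}(p_\infty)=1$; since $R_\infty$ attains its maximum at $p_\infty$, this is exactly the setting of Lemma~\ref{lem-splitting-1}, so $(M_\infty,g_\infty,f_\infty)$ weakly dimension reduces to an $(n-1)$-dimensional steady gradient Ricci soliton. A diagonal argument carries this back to $M$: the points realizing the weak dimension reduction of $M_\infty$ are limits of points $q_{j,i}\in M$ (as $i\to\infty$), and since $g(t)=\phi_t^{\ast}g$ is isometric to $g$ the corresponding pointed Ricci flows $(M,g(t),q_{j,i})$ converge to the same product; a diagonal subsequence $q_{j,i(j)}\to\infty$ then exhibits the weak dimension reduction of $M$.

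It remains to treat $\nabla f_\infty(p_\infty)=0$. Then $p_\infty$ is an equilibrium point with $R_\infty(p_\infty)=A=\sup R_\infty$, so the static identity $R_\infty+|\nabla f_\infty|^2\equiv A$ forces the maximum set of $R_\infty$ to coincide with $S_\infty := \{\nabla f_\infty=0\}$. If $S_\infty$ is unbounded, choose $q_j\in S_\infty$ with $q_j\to\infty$: these are points of $M_\infty$ tending to infinity at which $R_\infty$ is maximal, so Lemma~\ref{lem-splitting-2} applies to $(M_\infty,g_\infty,f_\infty)$ and we conclude as in the previous paragraph. The crux is therefore the case that $S_\infty$ is compact. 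Here I would argue that, since $M_\infty$ is noncompact and non-Ricci-flat with ${\rm Ric}_{g_\infty}\ge 0$ and $S_\infty\ne\emptyset$, Lemma~\ref{lem-shrinking-nonnegative} shows every point of $M_\infty$ flows into $S_\infty$ under the gradient flow of $-\nabla f_\infty$; combining this with the monotonicity of $g_\infty(t)$-distances along that flow (from ${\rm Ric}_{g_\infty}\ge 0$) and, crucially, with the fact that $M_\infty$ itself arises as a Cheeger--Gromov limit of $(M,g)$ at spatial infinity, one produces a geodesic line through $p_\infty$ — built from long minimal geodesics in $M$ joining a fixed basepoint to the $p_i$ (whose midpoints still run to infinity), exactly as in the $\nabla f=0$ portion of the proof of Lemma~\ref{lem-splitting-2}, using that $M$ is connected at infinity. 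Then $(M_\infty,g_\infty)$ splits off a line by the Cheeger-Gromoll splitting theorem, with the remaining factor a steady gradient Ricci soliton, and this product structure of $M_\infty$ directly yields the weak dimension reduction of $M$. I expect this compact-$S_\infty$ case — that is, producing the line in $M_\infty$ — to be the main technical difficulty.
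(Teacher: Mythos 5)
Your reduction to Lemmas \ref{lem-splitting-1} and \ref{lem-splitting-2} works in the first two situations you isolate (the limit basepoint is non-critical, or $S_\infty$ is unbounded), and your observation that $R_\infty\le A$ with $R_\infty(p_\infty)=A$ is correct. But the case you yourself flag as the crux --- $\nabla f_\infty(p_\infty)=0$ with $S_\infty$ compact --- is not proved, and the route you sketch for it would fail. The line-producing mechanism in Lemma \ref{lem-splitting-2} depends essentially on both endpoints of the minimal geodesics being critical points of $f$: that is what gives $d_{g(t)}(p_0,p_i)=d(p_0,p_i)$ for all $t$, which combined with the monotonicity of lengths under the flow forces ${\rm Ric}(\gamma_i',\gamma_i')\equiv 0$, hence $R\equiv R_{\max}$ along $\gamma_i$, so that the midpoints are again maximum points tending to infinity and the limit is recentred at those midpoints. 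Your points $p_i$ are merely points with $R(p_i)\to A$; they are not critical points of $f$, distances to a fixed basepoint are not preserved by $\phi_t$, and there is no reason $R$ should be constant (let alone maximal) along the segments from $x_0$ to $p_i$. Moreover, segments from a fixed $x_0$ to $p_i$, viewed in the limit based at $p_\infty$, only produce a geodesic ray, not a line; to get a line one must recentre at interior points whose distance to both endpoints diverges, and then one loses control of which limit soliton one is splitting. So no line in $M_\infty$ is obtained this way, and the Cheeger--Gromoll step has nothing to act on.

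The paper resolves this situation by a different and more elementary mechanism, and it performs the case analysis on $M$ itself rather than on the limit: after reducing to the case where $\sup R=R_{\max}$ is attained on $M$, if the maximum set $S'$ is bounded and consists of critical points (so $S'=S$ and $C=R_{\max}$), Lemma \ref{lem-shrinking-nonnegative} shows every flow line enters the unit neighborhood of $S$, and monotonicity of $R$ along flow lines (from ${\rm Ric}\ge 0$) then gives $R(x)\le \sup_{d(y,S)=1}R(y)=:C'<R_{\max}$ whenever $d(x,S)\ge 1$. Hence $A=\lim_{r\to\infty}\sup_{M\setminus B(x_0,r)}R\le C'<R_{\max}$, and the limit along $R(p_i)\to A$ automatically satisfies $|\nabla f_\infty|^2(p_\infty)=R_{\max}-A>0$, so Lemma \ref{lem-splitting-1} applies directly --- no line needs to be constructed, and your problematic sub-case is shown never to force one. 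This strict-gap argument ($A<R_{\max}$ when the critical maximum set is compact) is the key idea missing from your proposal.
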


\begin{proof}
We first note that Lemma \ref{lem-nonnegative case-spliting} can be reduced to the case that there exists a point $p_0\in M$ such that $R(p_0)=R_{\max}$, where $R_{\max}=\sup_{x\in M}R(x)$.

If $R(x)<R_{\max}$ for any $x\in M$, then there exists a sequence of points $p_i$ tending to infinity such that $R(p_i)\to R_{\max}$. By taking a subsequence, we see that $(M,g(t),p_i)$ converges to $(M_{\infty},g_{\infty}(t),p_{\infty})$. As in the proof of Lemma \ref{lem-splitting-1}, $(M_{\infty},g_{\infty}(t),p_{\infty})$ is a gradient steady Ricci soliton with nonnegative Ricci curvature and bounded curvature. Moreover, $R_{\infty}(x,0)$ attains its maximum at $p_{\infty}$. If $(M_{\infty},g_{\infty}(0),p_{\infty})$ weakly dimension reduces to an $(n-1)$-dimensional steady Ricci soliton, then $(M,g,f)$ also weakly dimension reduces to the same steady Ricci soliton.

Hence, we may assume there exists a point $p_0\in M$ such that $R(p_0)=R_{\max}$. Let $S^{\prime}=\{x\in M: R(x)=R_{\max}\}$. If $S^{\prime}$ is unbounded, then $(M,g,f)$ weakly dimension reduces to an $(n-1)$-dimensional steady Ricci soliton by Lemma \ref{lem-splitting-2}. Therefore, we may assume that $S^{\prime}$ is a non-empty and bounded set. By Lemma \ref{lem-splitting-1}, if there exists a point $x_0\in S^{\prime}$ such that $|\nabla f|(x_0)>0$, then $(M,g,f)$ weakly dimension reduces to an $(n-1)$-dimensional steady Ricci soliton.

Finally, we only need to consider the case that $S^{\prime}$ is a non-empty and bounded set and $|\nabla f|(x)=0$, for all $x\in S^{\prime}$. Let $S=\{x\in M: \nabla f(x)=0\}$. In this case, $S^{\prime}=S$. By Lemma \ref{lem-shrinking-nonnegative}, for any $p\in M\setminus S$, we have
\begin{align}\label{9-shrinking}
\phi_t(p)\to S,~{\rm as}~t\to+\infty.
\end{align}
 By (\ref{9-shrinking}), for any $x\in M$ such that $d(x,S)\ge 1$, there exists a constant $t_x\ge 0$
 such that $d(\phi_{t_x}(x),S)=1$. Since the Ricci curvature is nonnegative, $R(\phi_t(x))$ is increasing in $t$. Hence,
 \begin{align}
 R(x)\le R(\phi_{t_x}(x))\le \sup_{d(y,S)=1}R(y)\quad\forall~d(x,S)\ge 1.\notag
 \end{align}

Let $C=\sup_{d(y,S)=1}R(y)$. Obviously, $C<R_{\max}$. Hence, we get
 \begin{align}
 R(x)\le C<R_{\max}\quad\forall~d(x,S)\ge 1.\notag
 \end{align}

 Let $A=\lim_{r\to\infty}\sup_{x\in M\setminus B(x_0,r)}R(x)$, where $x_0$ is a fixed point. Since we have assumed that the scalar curvature does not have uniform decay, we get $A>0$. We also note that $A\le C<R_{\max}$. We can choose a sequence of points $\{p_i\}$ tending to infinity such that $R(p_i)\to A$ as $i\to \infty$. It is easy to see that $(M,g(t),p_i)$ converges subsequentially to a limit $(M_{\infty},g_{\infty}(t),p_{\infty})$. Then, $(M_{\infty},g_{\infty}(t),p_{\infty})$ is a steady gradient Ricci soliton with nonnegative Ricci curvature and bounded curvature. We also have $R_{\infty}(p_{\infty},0)$ attains the maximum of $R_{\infty}(x,0)$ at the point $p_{\infty}$. By the convergence, we also have
 \begin{align}
 |\nabla f_{\infty}|^2(p_{\infty})+R_{\infty}(p_{\infty},0)=\lim_{i\to\infty}(|\nabla f|^2(p_i)+R(p_i))=R_{\max}\notag
 \end{align}
 and
 \begin{align}
 R_{\infty}(p_{\infty},0)=\lim_{i\to\infty}R(p_i)=A<R_{\max}.\notag
 \end{align}
Hence,
\begin{align}
|\nabla f_{\infty}|^2(p_{\infty})=R_{\max}-A>0.\notag
\end{align}
By Lemma \ref{lem-splitting-1}, $(M_{\infty},g_{\infty}(0),p_{\infty})$ weakly dimension reduces to an $(n-1)$-dimensional steady Ricci soliton. Therefore, $(M,g,f)$ also dimension reduces to an $(n-1)$-dimensional steady Ricci soliton. We have completed the proof.

\end{proof}

To prove Theorem \ref{theo-dimension reduce without decay}, we need to introduce the following lemma.

\begin{lem}\label{lem-geodesic}
Let $(M,g)$ be a complete Riemannian manifold and let $\{p_j\}_{j\in \mathbb{N}_{+}}$ be a sequence of points tending to infinity. Then, for any given compact set $K$,  there exist infinitely many $i,j \in \mathbb{N}_{+}$ such that any minimal geodesic connecting $p_{i}$ and $p_{j}$ is away from $K$.
\end{lem}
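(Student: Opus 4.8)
The plan is to reduce the whole statement to a single point of $K$ together with a minimizing ray issuing from it, and then to play off the fact that a minimal geodesic passing through $K$ must be ``long'' against the fact that two of our points lying near a common ray are ``close''. Fix a point $o\in K$ (we may assume $K\neq\varnothing$) and set $D=\operatorname{diam}K$. Since $p_j\to\infty$, the numbers $L_j:=d(o,p_j)$ tend to $\infty$. First I would choose, for each $j$, a unit-speed minimal geodesic $\sigma_j\colon[0,L_j]\to M$ from $o$ to $p_j$; the initial vectors $\sigma_j'(0)$ lie in the compact unit sphere of $T_oM$ and each $\sigma_j$ is $1$-Lipschitz into balls about $o$, so after passing to a subsequence (not relabelled) the $\sigma_j$ converge, uniformly on compact subsets of $[0,\infty)$, to a geodesic $\sigma_\infty\colon[0,\infty)\to M$; this $\sigma_\infty$ is a minimizing ray, being a locally uniform limit of minimizing geodesics whose lengths tend to infinity.

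Next I would fix the scale $T:=D+1$ and the point $o_T:=\sigma_\infty(T)$, which lies at distance $T>D$ from $o$. By the locally uniform convergence there is an index $i_0$ such that $T<L_i$ and $d(\sigma_i(T),o_T)<\tfrac{1}{4}$ for all indices $i\ge i_0$ in the subsequence. For any two such indices $i,j$, using that $\sigma_i$ and $\sigma_j$ are minimizing on $[0,L_i]$ and $[0,L_j]$ respectively, I would estimate
\begin{align*}
d(p_i,p_j)
&\le d(p_i,\sigma_i(T))+d(\sigma_i(T),\sigma_j(T))+d(\sigma_j(T),p_j)\\
&\le (L_i-T)+\bigl(d(\sigma_i(T),o_T)+d(o_T,\sigma_j(T))\bigr)+(L_j-T)\\
&< L_i+L_j-2T+\tfrac{1}{2}.
\end{align*}

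Finally, suppose that for some $i,j\ge i_0$ with $i\neq j$ a minimal geodesic joining $p_i$ and $p_j$ met $K$ at a point $q$. Then $d(p_i,p_j)=d(p_i,q)+d(q,p_j)\ge\bigl(L_i-d(o,q)\bigr)+\bigl(L_j-d(o,q)\bigr)\ge L_i+L_j-2D$, since $q,o\in K$. Comparing with the previous bound gives $L_i+L_j-2D<L_i+L_j-2T+\tfrac{1}{2}$, i.e.\ $T<D+\tfrac{1}{4}$, contradicting $T=D+1$. Hence for all distinct $i,j\ge i_0$ every minimal geodesic from $p_i$ to $p_j$ is disjoint from $K$, and there are infinitely many such pairs.

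The construction of the ray $\sigma_\infty$ and the passage to a subsequence are entirely standard, so the only step needing a bit of attention is the choice of $T$ strictly larger than $\operatorname{diam}K$: this is exactly what forces a geodesic through $K$ to be longer than the ``aligned'' competitor running past $o_T$, and it is the whole content of the argument. I do not expect a genuine obstacle beyond this elementary length bookkeeping.
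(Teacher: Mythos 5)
Your proof is correct and is essentially the paper's own argument: both take minimizing geodesics from a fixed basepoint to the $p_j$, pass to a subsequence whose directions converge, and derive a contradiction by comparing the lower bound $d(p_i,p_j)\ge d(p_i,\cdot)+d(p_j,\cdot)-2(\text{size of }K)$ for a geodesic through $K$ with the shorter competitor that crosses over at a common point beyond $K$ along the two nearly parallel rays. The only differences are cosmetic (basepoint chosen in $K$ with $T=\operatorname{diam}K+1$ versus $K\subset B(p,r-1)$ with $l>2r$).
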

\begin{proof}
Let $p=p_0$. Suppose $K\subset B(p,r-1)$ for some $r>0$. Let $\gamma_j(t)$ be a minimal geodesic connecting $p$ and $p_j$, where $t$ is the arc-parameter and $\gamma_j(0)=p$. Suppose $\gamma_j(t)=\exp_p(tv_j)$ for some unit vector $v_j\in T_{p}M$. By passing to a subsequence, we may assume that $v_j\to v$ for some $v\in T_pM$. Let $\gamma(t)=\exp_p(tv)$. Fix $l>2r$. By the convergence of $v_j$, we have
\begin{align}\label{splitting lemma-3}
\gamma_j(t)\to \gamma(t)~\mbox{as}~j\to\infty\quad\forall~t\in[0,l].
\end{align}

Now, we claim that there exists a constant $j_0\in \mathbb{N}_{+}$ such that  any minimal geodesic connecting $p_{i}$ and $p_{j}$ is away from $B(p,r)$ if $i,j\ge j_0$. The lemma follows from this claim immediately.

We prove the claim by contradiction. Let $\sigma_{ij}(s)$ be a minimal geodesic connecting $p_i$ and $p_j$, where $s$ is the arc-parameter and $\sigma_{ij}(0)=p_{j}$. If the claim is not true, we may assume that $\sigma_{ij}(s_0)\in B(p,r)$ for some $s_0\in(0,d(p_i,p_j))$. Then,
\begin{align*}
d(p_i,\sigma_{ij}(s_0))\ge d(p_i,p)-d(\sigma_{ij}(s_0),p)\ge d(p_i,p)-r,\\
d(p_j,\sigma_{ij}(s_0))\ge d(p_j,p)-d(\sigma_{ij}(s_0),p)\ge d(p_j,p)-r.
\end{align*}
Therefore,
\begin{align}\label{splitting lemma-1}
d(p_i,p_j)=d(p_i,\sigma_{ij}(s_0))+d(p_j,\sigma_{ij}(s_0))\ge d(p_i,p)+d(p_j,p)-2r.
\end{align}

On the other hand, by the definition of $\gamma_i(t)$ and $\gamma_j(t)$, we have
\begin{align}\label{splitting lemma-2}
d(p_i,p_j)\le& d(p_i,\gamma_i(l))+d(\gamma_i(l),\gamma_j(l))+d(p_j,\gamma_j(l))\notag\\
=&d(\gamma_i(l),\gamma_j(l))+d(p_i,p)+d(p_j,p)-2l.
\end{align}

By (\ref{splitting lemma-1}) and (\ref{splitting lemma-2}), we get
\begin{align}\label{splitting lemma-4}
d(\gamma_i(l),\gamma_j(l))\ge 2(l-r)>2r>0.
\end{align}

However, by (\ref{splitting lemma-3})
\begin{align}
d(\gamma_i(l),\gamma_j(l))\to 0 ~\mbox{as}~i,j\to\infty.\notag
\end{align}

This contradicts (\ref{splitting lemma-4}). We have completed the proof.
\end{proof}

Now, we are ready to prove Theorem \ref{theo-dimension reduce without decay}.

\begin{proof}[Proof of Theorem \ref{theo-dimension reduce without decay}.]
Let $$R_{\max}=\sup_{x\in M}R(x),~A=\lim_{r\to\infty}\sup_{x\in M\setminus B(x_0,r)}R(x),$$ where $x_0$ is a fixed point. Since we have assumed that the scalar curvature does not have uniform decay, we get $A>0$.  We can choose a sequence of points $\{p_i\}$ tending to infinity such that $R(p_i)\to A$ as $i\to \infty$. Then, $(M,g,p_i)$ converges subsequentially to a limit $(M_{\infty},g_{\infty},p_{\infty})$, where $(M_{\infty},g_{\infty},p_{\infty})$ is a $\kappa$-noncollapsed steady gradient Ricci soliton with nonnegative Ricci curvature and bounded curvature. We also have $R_{\infty}(p_{\infty})$ attains the maximum of $R_{\infty}(x)$ at the point $p_{\infty}$.

\textbf{Case 1:}  $(M_{\infty},g_{\infty})$ does not have uniform scalar curvature decay. We apply Lemma \ref{lem-nonnegative case-spliting} to $(M_{\infty},g_{\infty},p_{\infty})$. Then,
 $(M_{\infty},g_{\infty},p_{\infty})$ weakly dimension reduces to an $(n-1)$-dimensional steady Ricci soliton. Therefore, $(M,g,f)$ also dimension reduces to an $(n-1)$-dimensional steady Ricci soliton.

\textbf{Case 2:} $(M_{\infty},g_{\infty})$ has uniform scalar curvature decay. We will exclude this case.  Note that $$R_{\infty}(p_{\infty})=A>0.$$ By the curvature decay, we can choose $r_0>0$ such that
\begin{align}
R_{\infty}(x)\le \frac{A}{2}, ~\forall~x\in ~M_{\infty}\setminus B(p_{\infty},r_0;g_{\infty}(0)).\notag
\end{align}
 By the convergence of $(M,g,p_i)$, there exists a constant $i_0>0$ such that
\begin{align}\label{inequality 1 for splitting theorem}
R(x)< \frac{3A}{4}<R(p_i)\quad\forall~i\ge i_0,~x\in \partial B(p_i,r_0;g).
\end{align}
We also assume that $K\cap B(p_i,r_0;g)=\varnothing$ for $i\ge i_0$.

Let $\phi_t$ be a group of diffeomorphisms generated by $-\nabla f$. We first claim that $\phi_t(p_i)\in B(p_i,r_0;g)$ for all $t\ge 0$ and $i\ge i_0$.

If the claim is not true, then there exists $T>0$ such that $\phi_{T}(p_i)\in \partial B(p_i,r_0;g)$ and $\phi_{t}(p_i)\in B(p_i,r_0;g)~ \forall t\in (0,T)$. Hence, $R(\phi_{T}(p_i))<R(p_i)$ by (\ref{inequality 1 for splitting theorem}). Since $\phi_{t}(p_i)\in B(p_i,r_0;g) ~\forall t\in (0,T)$ and the Ricci curvature is nonnegative on $B(p_i,r_0;g)$ for $i\ge i_0$, we know $R(\phi_t(p_i))$ is increasing for $t\in (0,T)$. Therefore, $R(\phi_{T}(p_i))\ge R(p_i)$. However, we have shown that $R(\phi_{T}(p_i))<R(p_i)$. Hence, $\phi_{t}(p_i)\in B(p_i,r_0;g)~ \forall t\ge 0$.

Next, we claim that there exists a point $q_i\in B(p_i,r_0;g)$ such that $\nabla f(q_i)=0$ for all $i\ge i_0$.

We prove the claim. Since $\phi_{t}(p_i)$ stays in  $B(p_i,r_0;g)$ and the Ricci curvature is nonnegative on $B(p_i,r_0;g)$, $R(\phi_t(p_i))$ is increasing for $t\ge0$, i.e., $|\nabla f|^2(\phi_t(p_i))=C-R(\phi_t(p_i))$ is decreasing for $t\ge0$. We assume that  $|\nabla f|^2(\phi_t(p_i))\to c$ as $t\to+\infty$. Let
$$\Delta=\sup_{x\in B(p_i,r_0;g)}f(x)-\inf_{x\in B(p_i,r_0;g)}f(x).$$
It is obvious that $\Delta>0$ is finite. Note that
\begin{align}
\Delta\ge f(p_i)-f(\phi_t(p_i))=\int_{0}^t|\nabla f|^2(\phi_s(p_i))ds\ge c^2t\quad\forall~t\ge0.\notag
\end{align}
Note that $\Delta$ is independent of $t$. By taking $t\to+\infty$, we get $c=0$. Hence, $|\nabla f|^2(\phi_t(p_i))\to 0$ as $t\to+\infty$. Since $\phi_{t}(p_i)$ stays in  $B(p_i,r_0;g)$, we may assume $\phi_{t_k}(p_i)$ converges to some point $q_i$ as $i_k\to+\infty$. Hence, $|\nabla f|(q_i)=0$ by the convergence of $\phi_{t_k}(p_i)$. We have completed the proof the claim.

By the claim, there exists a point $q_i\in B(p_i,r_0;g)$ such that $\nabla f(q_i)=0$ and $R(q_i)=R_{\max}$ for all $i\ge i_0$. By taking a subsequence, we may also assume that $B(p_i,r_0;g)\cap B(p_j,r_0;g)=\varnothing$ and $B(p_i,r_0;g)\cap K=\varnothing$ for $i,j\ge i_0$ and $i\neq j$. By Lemma \ref{lem-geodesic}, we can find $i,j\in\mathbb{N}_{+}$ such that there exists a minimal geodesic $\sigma_{ij}(s)$ connecting $q_i$ and $q_j$ such that
\begin{align}
d(\sigma_{ij}(s), K)\ge 1\quad\forall~s\in [0,d(q_i,q_j)],
\end{align}
where $s$ is the arc-parameter and $\sigma_{ij}(0)=q_i$.

Note that $\sigma_{ij}(s)$ is a minimal geodesic connecting $q_i$ and $q_j$. Moreover, $\nabla f(q_i)=\nabla f(q_j)=0$ and ${\rm Ric}(\sigma_{ij}(s))\ge0$ for all $s\in [0,d(q_i,q_j)]$. By the argument in the proof of Lemma \ref{lem-splitting-2} (see the proof of (\ref{identity for geodesic})), we get
\begin{align}\label{inequality 2 for splitting theorem}
R(\sigma_{ij}(s))=R(\sigma_{ij}(0))=R_{\max}\quad\forall~s\in [0,d(q_i,q_j)].
\end{align}

By the choice of $q_i,q_j$, we get $q_i\in B(p_i,r_0;g)$ and $q_j\notin B(p_i,r_0;g)$. Then, there exists $s_0\in (0,d(q_i,q_j))$ such that $\sigma_{ij}(s_0)\in\partial B(p_i,r_0;g)$. By (\ref{inequality 1 for splitting theorem}),
\begin{align}
R(\sigma_{ij}(s_0))<R(p_i)\le R_{\max}.\notag
\end{align}
This contradicts (\ref{inequality 2 for splitting theorem}). Hence, the scalar curvature does not have uniform decay.
\end{proof}

\section{Proofs of Theorem \ref{theo-singularity model}, Theorem \ref{theo-Kaehler-1} and Theorem \ref{theo-Kaehler-2}}\label{final section}

As we have mentioned in the introduction, Theorem \ref{theo-nonnegative Ricci} is a corollary of Theorem \ref{theorem-1} and Theorem \ref{theo-compactness}. Combining Theorem \ref{theo-nonnegative Ricci} and Theorem \ref{theo-dimension reduce without decay}, we get Theorem \ref{theo-nonnegative Ricci-bounded curvature}. Then, Theorem \ref{theo-singularity model} is a corollary of Theorem \ref{theo-nonnegative Ricci-bounded curvature}.

\begin{proof}[Proof of Theorem \ref{theo-singularity model}]
By \cite[Theorems 28.6 and 28.9]{Cetc}), $(M,g)$ is strongly $\kappa$-noncollapsed. By Theorem 1 in \cite{CFSZ}, $(M,g)$ also has bounded curvature. Hence, Theorem \ref{theo-singularity model} is true by Theorem \ref{theo-nonnegative Ricci-bounded curvature}  if  $(M,g)$ is not Ricci flat. Note that $(M,g)$ is strongly $\kappa$-noncollapsed. If $(M,g)$ ia Ricci flat, then it has maximal volume growth. By  Corollary 8.86 in \cite{CN}, $(M,g)$ must be an ALE $4$-manifold.  We have completed the proof.
\end{proof}

Next, we prove Theorem \ref{theo-Kaehler-1}.

\begin{proof}[Proof of Theorem \ref{theo-Kaehler-1}]
It suffices to exclude the case that $(M,g,f)$ is not Ricci flat. Suppose  $(M,g,f)$ is not Ricci flat. By Theorem \ref{theo-nonnegative Ricci-bounded curvature}, $(M,g_i(t),p_i)$ converges to $(N\times\mathbb{R},g_N(t)+ds^2,p_{\infty})$ for $p_i$ tending to infinity, where $g_i(t)=K_ig(K^{-1}_it)$ and $N$ are defined as in Definition \ref{def-1}. Since $(M,g)$ is a K\"{a}hler manifold, $(N\times\mathbb{R},g_N(t)+ds^2)$ is also  a K\"{a}hler manifold. Let $J_{\infty}$ be the K\"{a}hler structure of $(N\times\mathbb{R},g_N(t)+ds^2)$. Let $V$ be the parallel vector field parallel in the $\mathbb{R}$ direction, i.e., $\nabla V\equiv0$. Then, $\nabla J_{\infty}V\equiv0$. Hence, $J_{\infty}V$ is also a parallel vector field. Hence,$(N\times\mathbb{R},g_N(t)+ds^2)$ locally splits off a complex line. Since the oriented $2$-dimensional $\kappa$-solution must be a family of shrinking round spheres, $(N\times\mathbb{R},g_N(t)+ds^2)$ should be a quotient of $(\mathbb{S}^2\times\mathbb{R}^2,g_{\mathbb{S}^2}(t)+ds_1^2+ds_2^2)$. On the other hand, $N$ is either diffeomorphic to $\mathbb{S}^3/\Gamma$ or diffeomorphic to $\mathbb{R}^3$ by  Theorem \ref{theo-nonnegative Ricci-bounded curvature}. We get a contradiction. Hence, $(M,g)$ must be Ricci flat. 
This completes the proof.
\end{proof}

Finally, we prove Theorem \ref{theo-Kaehler-2}.

\begin{proof}[Proof of Theorem \ref{theo-Kaehler-2}]
By \cite[Theorems 28.6 and 28.9]{Cetc}),  Theorem 1 in \cite{CFSZ} and Theorem \ref{theo-Kaehler-1}, $(M,g)$ must be K\"{a}hler-Ricci flat. Similar to the proof of Theorem \ref{theo-singularity model}, $(M,g)$ has maximal volume growth and therefore is an ALE $4$-manifold. By Kronheimer \cite{Kr1, Kr2}, any K\"{a}hler-Ricci flat ALE of real dimension $4$ must be
hyperk\"{a}hler. Hence, we have completed the proof.
\end{proof}

\vskip30mm

\appendix
\section{Volume Comparison Theorems}

\def \R {\underline{\mathcal{R}}}

In 
this
appendix, we prove some  volume comparison theorems for 
complete
Riemannian manifolds with nonnegative Ricci curvature outside a compact set. 
These results are applicable to the setting of this paper.

Let $(M^n,g)$ be a Riemannian manifold. For 
points
$x,y\in M$, we denote by
\[
	|xy| := |x,y| := d(x,y)
\]
the distance between them. 
We write
\begin{align*}
    B_x(r)=\{y\in M:|xy|<r\}&,\quad
	\bar B_x(r)=\{y\in M:|xy|\le r\},\\
	A_x(r_1,r_2)&=B_x(r_2)\setminus \bar B_x(r_1).
\end{align*}
For any measurable set $\Omega\subset M$, we denote by $|\Omega|$ the volume of $\Omega$ induced by $g$. We write
\[
    V_x(r) := |B_x(r)|.
\]
	
We say a pointed manifold $(M^n,g,o)\in \R(\rho, \Lambda)$, 
for some constants $\rho>0$, $\Lambda\ge 0$, if $(M^n,g)$ is a complete 
noncompact
manifold 
satisfying
\[
	\Ric\ge 0\quad \text{ on } M\setminus B_{o}(\rho),
\]
and
\[
	\Ric\ge -(n-1)\Lambda/ \rho^{2}\quad \text{ on } \bar B_{o}(\rho).
\]

Clearly, if $\lambda>0,$
then
\[
	(M^n,g,o)\in \R(\rho, \Lambda)\iff
	(M^n,\lambda^2 g, o) \in \R(\rho\lambda, \Lambda).
\]

\subsection{Asymptotic volume ratio}
We shall prove that if $(M^n,g)$ has nonnegative Ricci curvature outside a compact set, we can still make sense of the notion of asymptotic volume ratio.

We first prove a variant of the Bishop--Gromov volume comparison theorem. 
\begin{prop}
\label{prop: almost mono}
Let $(M^n,g)\in \R(\rho,\Lambda).$ 
For any $x\in M,$ if $B_o(\rho)\subset B_x(a)$ for some $a>0$, then
\[
	\frac{V_x(r)-V_x(a)}{(r-a)^n}
\]
is non-increasing in $r$ for $r>a$.
\end{prop}
\begin{proof}
We write $d_x(y):=d(x,y).$ It is standard to prove that
outside of $B_{x}(a),$
\[
	\Delta d_x \le \frac{n-1}{d_x - a},
\]
in the sense of distributions. See, for example, \cite[Corollary 1.2]{SY} or \cite[Lemma 4.1]{LT87}.

Let $F(r) = {\rm vol}\big( B_x(r)\setminus B_x(a) \big).$
For $r>a,$
\begin{eqnarray*}
  F'(r)&=& \int_{\partial B_x(r)} dA\\
    &=& \frac{1}{r-a} \int_{\partial B_x(r)} (d_x-a)\frac{\partial}{\partial r} (d_x-a)dA \\
 	&=& \frac{1}{r-a}\cdot \frac{1}{2}\int_{B_x(r)\setminus B_x(a)} \Delta \big( (d_x-a)^2 \big) \\
    &=& \frac{1}{r-a} \int_{B_x(r)\setminus B_x(a)} \big( (d_x-a)\Delta (d_x-a)+|\nabla (d_x-a)|^2 \big) \\
 	&\le& \frac{n}{r-a} F(r).
\end{eqnarray*}
Hence
\[
	\frac{d}{dr} \frac{V_x(r)-V_x(a)}{(r-a)^n}
	\le \frac{nF(r)}{(r-a)^{n+1}} - \frac{nF(r)}{(r-a)^{n+1}} =0.
\]
\end{proof}

\begin{lem} \label{AVR well defined}
Let $(M^n,g,o)\in \R(\rho,\Lambda).$
Then
\[
	\AVR(g) := \lim_{r\to \infty} \frac{V_o(r)}{r^n}
\]
is well-defined and does not depend on the basepoint.
\end{lem}
\begin{proof}
By the monotonicity formula above, $\AVR(g)$ is well-defined.

For any $x,y\in M,$ put $\delta = d(x,y).$
Suppose that $B_o(\rho)\subset B_y(a)$ for some $a>0.$
For $r>0$ sufficiently large,
\[
	r^{-n}V_x(r) \le r^{-n}V_y(r+\delta) \le
	r^{-n} \big( V_y(r)-V_y(a) \big) \frac{(r+\delta - a)^n}{(r- a)^n} + r^{-n}V_y(a).
\]
Hence
\[
	\lim_{r\to \infty} \frac{V_x(r)}{r^n} \le \lim_{r\to \infty} \frac{V_y(r)}{r^n}.
\]
By the symmetry of the roles of $x,y,$ $\AVR(g)$ does not depend on the basepoint.
\end{proof}

\subsection{Volume comparison for small radii}

In 
a
previous version of our paper, we tried to apply Theorem 1 of Mahaman \cite{Ma}. However, there is a gap in the proof of Theorem 1 therein, and 
in fact 
we provide a counterexample 
below.
We are very grateful for the anonymous referee for pointing out a gap in the proof in \cite{Ma}.

In \cite{Ma}, 
Bazanfar\'{e}'s proof actually yields
the following volume comparison theorem.

\begin{theo}
Suppose that $(M^n,g,o)\in \R(\rho,\Lambda).$  
For any $x\in M\setminus B_{\rho}(o),$
	$\frac{V_x(r)}{r^n}$
is decreasing for $r<\ell-\rho, $ where $\ell:=|ox|.$
For $r\ge \ell$, $0<s<r,$
we have
\[
	\frac{V_x(r)}{V_x(s)}
	\le C(\Lambda,n)\left(1+\tfrac{\Lambda\ell}{\rho}\right)^{n-1} 
	\left(\frac{r}{s}\right)^{n}.
\]
\end{theo} 


This result was also previously asserted by Cai in \cite{Cai91}.
The constant in this comparison theorem depends on the distance to the origin and it is almost optimal as 
is
seen from the example below.
\begin{proof}[Example]
Let $N$ be a 
Riemannian 
manifold with $\Rm>0,{\rm AVR}>0.$ 
Let $M=N\# (\mathbb{S}_{\epsilon}^{n-1}\times [0,\infty))$ be the connected sum 
of $N$
with 
a thin cylinder of radius $\epsilon$. 
Pick a point $x$ on the cylinder with $\ell=|ox|\gg 10.$ Then
\[
	\frac{V_x(2\ell)}{V_x(\ell/2)} \sim 
	\frac{\epsilon^{n-1}\ell + \ell^n}{\epsilon^{n-1}\ell}
	\sim (\ell/\epsilon)^{n-1}.
\]
\phantom\qedhere
\end{proof}

This example has two ends. We may wonder if 
there is
a better volume comparison theorem
just
assuming 
in addition
that
the manifold is connected at infinity.
However, we may need the following stronger topological condition, possibly because the topology of a smooth manifold is not 
as
rigid under Ricci curvature restrictions.

We say 
that 
a pointed Riemannian manifold $(M^n,g,o)$ has \textbf{connected annuli at distances at least} $r_0>0$, if for any $r\ge r_0, $ there is an open set
$\Omega_r$ such that
\begin{equation}
\label{cond: connected annuli}
\tag{{\rm CA}}
\Omega_r \text{ is connected, }\quad
\text{ and }
A_o(r/2,2r) \subset \Omega_r \subset A_o(r/3,3r).
\end{equation}



\begin{theo}
\label{thm: annuli vol cmp}
 Suppose that $(M^n,g,o)\in \R(\rho,\Lambda)$ and satisfies \eqref{cond: connected annuli} at distances at least $r_0\ge 10(1+\rho).$
Then for any $r\ge r_0$, $x\in \partial B_o(r)$, $\alpha\in (0,1/10],$
\[
	\frac{|A_o(r/2,2r)|}{|B_x(\alpha r)|} \le C(n,\Lambda,\alpha).
\] 
\end{theo}

\begin{proof}
For any $r\ge r_0$, let $\Omega_r$ be the connected domain as in \eqref{cond: connected annuli}.
By Theorem 1 in Zhong-Dong Liu's thesis \cite{Liu91} (which is available online),
we can pick a  set of points $\{p_1,\ldots,p_N\}$ in $\Omega_r$ satisfying
\[
	\Omega_r\subset \bigcup_{i=1}^N B_{p_i}(\alpha r),
\]
where
\[
	N\le \bar N =\bar N(n,\Lambda,\alpha).
\]
\textbf{Claim:} For any $1\le a,b\le N,$ we can find a subsequence $i_0,i_1,\ldots, i_m$ such that
\[
	a=i_0,\quad b=i_m,\quad
	|p_{i_{j}} p_{i_{j+1}}| \le 2\alpha r,\quad
	m\le \bar N.
\]
\begin{proof}[Proof of Claim]
This is essentially because $\Omega_r$ is connected. c.f. \cite[Corollary 2 on p.~21]{Liu91}.
Let $W_0 = B_{p_a}(\alpha r).$ Define $W_k$ to be the union of $W_{k-1}$ with those balls $B_{p_i}(\alpha r)$ 
satisfying 
$W_{k-1}\cap B_{p_i}(\alpha r)\neq \emptyset.$ This process stops in at most $N\le \bar N$ steps. If there is any ball $B_{p_j}(\alpha r)$ that does not intersect $W_N$, then we can find two open sets in $\Omega_r$ that do not intersect. This is a contradiction to the fact that $\Omega_r$ is connected.
\end{proof}

If $y,z\in \Omega_r,$ $|yz|\le 2\alpha r,$ then
\[
	V_y(\alpha r) \le V_z(3\alpha r) \le 3^nV_z(\alpha r),
\]
since $B_z(3\alpha r)\subset M\setminus B_o(\rho)$ if $\alpha\le 1/10.$
So for any indices $1\le a,b\le N,$
\[
	V_{p_a}(\alpha r)\le 3^{n\bar N} V_{p_b}(\alpha r).
\]
For any $x\in \partial B_o(r),$ there exists $b\le N$ such that $x\in B_{p_b}(\alpha r).$
Then $B_{p_b}(\alpha r)\subset B_x(2\alpha r)$, and
\[
	V_x(\alpha r)\ge 2^{-n} V_x(2\alpha r)
	\ge 2^{-n}V_{p_b}(\alpha r).
\]
It follows that
\[
	\frac{|A_o(r/2,2r)|}{|B_x(\alpha r)|}\le 	\frac{|\Omega_r|}{|B_x(\alpha r)|}
	\le 2^n\sum_{a=1}^N \frac{V_{p_a}(\alpha r)}{V_{p_b}(\alpha r)}
	\le 2^n 3^{n\bar N}\bar N .
\]
\end{proof}

\begin{cor}\label{cor-uniform volume lower bound}
 Suppose that $(M^n,g,o)\in \R(\rho,\Lambda)$ and satisfies \eqref{cond: connected annuli} at distances at least $r_0\ge 10(1+\rho).$ 
 If $\AVR(g)>0,$ then for any $x\notin B_o(r_0)$ and any $r>0,$
 \[
    \frac{V_x(r)}{r^n} \ge c(n,\Lambda) {\rm AVR}(g).
 \]
\end{cor}
\begin{proof}
For any $x\in M\setminus B_o(r_0)$, let $\ell=|ox|$. Then, $x\in \partial B_o(\ell)$ and $\ell\ge r_0$.  By Theorem \ref{thm: annuli vol cmp}, 
\begin{align}\label{appendix-1}
    \frac{V_o(2\ell)-V_o(\ell/2)}{V_x(\ell/10)}
    \le C(n,\Lambda).
\end{align}
Note that the Ricci curvature is nonnegative on $B_x(\frac{\ell}{10})$.  By (\ref{appendix-1}), for any $r\in (0,\ell/10],$  we have 
\begin{align*}
    \frac{V_x(r)}{r^n}
    &\ge \frac{V_x(\ell/10)}{(\ell/10)^n}
    \ge \frac{V_o(2\ell)-V_o(\ell/2)}{C(n,\Lambda)(2\ell - \ell/2)^n}.
\end{align*}
For any $s\ge2\ell$, by the monotonicity formula Proposition \ref{prop: almost mono}, it follows that
\begin{align*}
    \frac{V_x(r)}{r^n}
    \ge \frac{V_o(2\ell)-V_o(\ell/2)}{C(n,\Lambda)(2\ell - \ell/2)^n}
    \ge \frac{V_o(s)-V_o(\ell/2)}{C(n,\Lambda)(s - \ell/2)^n}.
\end{align*}
By taking $s\to \infty$, for $r\in (0,\ell/10],$
\[
    \frac{V_x(r)}{r^n} \ge c(n,\Lambda)\AVR(g).
\]
For any $r\in (\ell/10,4\ell],$ 
\[
    \frac{V_x(r)}{r^n}
    \ge \frac{V_x(\ell/10)}{(4\ell)^n}
    \ge \frac{c(n,\Lambda)}{40^n}\AVR(g).
\]
Note that $B_o(\rho)\subset B_x(2\ell).$
For any $r\ge 4\ell,$  by Lemma \ref{AVR well defined} and Proposition \ref{prop: almost mono}, we have
\[
    \frac{V_x(r)}{r^n}
    \ge \frac{V_x(r)-V_x(2\ell)}{(r-2\ell)^n} \frac{(r-2\ell)^n}{r^n}
    \ge 2^{-n}\frac{V_x(r)-V_x(2\ell)}{(r-2\ell)^n}
    \ge 2^{-n} \AVR(g).
\]
    
\end{proof}


Now we give a criterion for \eqref{cond: connected annuli} that will be useful in our setting.

\begin{lem}
\label{lem: CA criterion}
Suppose that $(M^n,g)$ is a complete 
Riemannian
manifold and $M$ is connected at infinity. 
Suppose that there is a smooth positive function $\beta$ on $M$ satisfying
\[
    \lim_{x\to \infty} \frac{\beta(x)}{|ox|} = 1,
\]
for some point $o\in M$.
Suppose that there is $r_0>0$ such that $|\nabla \beta|(x)>0$ for any $x\in \{\beta(x)\ge r_0\}$. 
Then $(M^n,g,o)$ satisfies \eqref{cond: connected annuli}. 
\end{lem}
\begin{proof}
By assumption, $M=\{\beta> r_0\}\cup \{\beta\le r_0\}$. Note that 
$\{\beta\le r_0\}$ is compact and  $\{\beta> r_0\}$ is diffeomrphic to $\{\beta=r_0\}\times (0,+\infty)$. Since $M$ is connected at infinity,  $\{\beta=r_0\}$ must be connected.
For 
sufficiently large $r$, we may choose
\[
    \Omega_r := 
    \left\{x: \tfrac{5r}{12}
    < \beta(x) < \tfrac{5r}{2}\right\}.
\]
Hence, $\Omega_r$ is connected.
\end{proof}

\begin{cor}
\label{uniform vol ratio}
Suppose that $(M^n,g,o)\in \R(\rho,\Lambda)$. Suppose that either: 
\begin{itemize}
    \item $\sec\ge 0$ outside a compact set and $M$ is connected at infinity; or
    \item there is a smooth function $f$ on $M$ such that $(M,g,f)$ is a steady gradient Ricci soliton and the scalar curvature decays uniformly.
\end{itemize}
Then $(M^n,g,o)$ satisfies condition \eqref{cond: connected annuli} at distances at least $r_0$ for some large $r_0>0$.
As a consequence of Corollary \ref{cor-uniform volume lower bound} , if in addition $\AVR(g)>0,$ then for $x\notin B_o(r_0),$ and any $r>0,$
\[
    \frac{V_x(r)}{r^n}\ge c(n,\Lambda)\AVR(g).
\]
\end{cor}
\begin{proof}
 For $(M^n,g)$ with sectional curvature nonnegative outside a compact set, Li and Tam proved $(M^n,g)$ satisfies \eqref{cond: connected annuli} when $M$ is connected at infinity (see  \cite[Section 2]{LT87}).

If $(M,g,f)$ is a steady soliton with uniformly decaying scalar curvature, by Theorem \ref{theo-linear of f} and Remark \ref{rem: linear f}, $f$ has 
linear growth and 
the
level sets of $f$ are in fact diffeomorphic to each other outside a compact set. Here, we normalized the steady soliton 
so
that $R+|\nabla f|^2 = 1.$
It was proved by Munteanu and Wang in \cite[Corollary 1.1]{MW} that $M$ is connected at infinity.  Hence the conditions in Lemma \ref{lem: CA criterion} are satisfied.
\end{proof}

\section*{References}

\small

\begin{enumerate}

\renewcommand{\labelenumi}{[\arabic{enumi}]}

\bibitem{AKL} Anderson, Michael; Kronheimer, Peter; LeBrun, Claude, \textit{Complete Ricci-flat K\"{a}hler
manifolds of infinite topological type}, Comm. Math. Phys. \textbf{125} (1989), no. 4, 637-642.

\bibitem{ABDS} Angenent, Sigurd; Brendle, Simon; Daskalopoulos, Panagiota; Sesum, \linebreak Natasa, \textit{Unique asymptotics of compact ancient solutions to three-dimension\-al Ricci flow},  Comm. Pure Appl. Math., to appear; arXiv:1911.00091.

\bibitem{Appleton} Appleton, Alexander, \textit{A family of non-collapsed steady gradient Ricci solitons in even dimensions greater or equal to four}, arXiv:1708.00161.

\bibitem{Appleton2} Appleton, Alexander, \textit{Eguchi-Hanson singularities in U(2)-invariant Ricci flow}, arXiv:1903.09936.

\bibitem{Bam1} Bamler, Richard, \textit{Entropy and heat kernel bounds on a Ricci flow background}, arXiv:2008.07093.

\bibitem{Bam2} Bamler, Richard, \textit{Compactness theory of the space of super Ricci flows}, arXiv:2008.09298.

\bibitem{Bam3} Bamler, Richard, \textit{Structure theory of non-collapsed limits of Ricci flows}, arXiv:2009.03243.

\bibitem{BCDMZ} Bamler, Richard; Chow, Bennett; Deng, Yuxing; Ma, Zilu; Zhang, Yongjia, \textit{Four-dimensional steady gradient Ricci solitons with $3$-cylindrical tangent flows at infinity}, arXiv:2102.04649. Advances in Mathematics, to appear. 

\bibitem{BamKle} Bamler, Richard; Kleiner, Bruce, \textit{On the rotational symmetry of $3$-dimension\-al $\kappa$-solutions}, arXiv:1904.05388. J. Reine Angew Math., to appear. 

\bibitem{BamKle2} Bamler, Richard; Kleiner, Bruce,
\textit{Ricci flow and contractibility of spaces of metrics}, arXiv:1909.08710.

 \bibitem{BKN} Bando, Shigetoshi; Kasue, Atsushi; Nakajima, Hiraku, \textit{On a construction of coordinates at infinity on manifolds
with fast curvature decay and maximal volume growth}, Invent. Math. \textbf{97} (1989), 313-349.

\bibitem{Br1} Brendle, Simon, \textit{Rotational symmetry of self-similar solutions to the Ricci flow}, Invent. Math. \textbf{194} No.3 (2013), 731-764.

\bibitem{Br2} Brendle, Simon, \textit{Rotational symmetry of Ricci solitons in higher dimensions}, J. Diff. Geom. \textbf{97} (2014), no. 2, 191-214.

\bibitem{Br3} Brendle, Simon, \textit{Ancient solutions to the Ricci flow in dimension $3$}, Acta Mathematica \textbf{225} (2020), 1--102.

\bibitem{BDS} Brendle, Simon; Daskalopulos, Panagiota; Sesum Natasa,
\textit{Uniqueness of compact ancient solutions to three-dimensional Ricci flow}, Inventiones Mathematicae \textbf{226} (2022), 579--651.

\bibitem{Cai91} Cai, Mingliang, \emph{Ends of Riemannian manifolds with nonnegative Ricci curvature outside a compact set}, 
Bull. Amer. Math. Soc. (N.S.) \textbf{24} (1991), no. 2, 371–377.

\bibitem{Cao1} Cao, Huai-Dong, \textit{Limits of solutions to the K\"{a}hler-Ricci flow}, J. Diff. Geom. \textbf{45} (1997), no.2, 257-272.

\bibitem{Cao2} Cao, Huai-Dong, \textit{On dimension reduction in the K\"{a}hler-Ricci flow}, Comm. Anal. Geom. \textbf{12} (2004), no. 1-2, 305-320.

\bibitem{CaCh} Cao, Huai-Dong; Chen, Qiang, \textit{On locally conformally flat gradient steady Ricci solitons},
Trans. Amer. Math. Soc., \textbf{364} (2012), 2377-2391.

\bibitem{CaNi} Carrillo, Jose; Ni, Lei, \textit{Sharp logarithmic Sobolev inequalities on gradient solitons and applications}, Comm. Anal. Geom. \textbf{17} (2009), no. 4, 721-753.

\bibitem{Chan}  Chan, Pak-Yeung, \textit{ Curvature estimates for steady gradient Ricci solitons}, Trans. Amer. Math. Soc. \textbf{372} (2019), no. 12, 8985-9008.

\bibitem{CGT} Cheeger, Jeff; Gromov, Mikhail; Taylor, Michael, \textit{Finite propagation speed, kernel estimates for functions of the Laplace operator, and the geometry of complete Riemannian manifolds}, J. Diff. Geom. \textbf{17} (1982), 15-53.

\bibitem{CN} Cheeger, Jeff; Naber, Aaron,
\textit{Regularity of Einstein manifolds and the codimension 4 conjecture}, Ann. of Math.  \textbf{182} (2015), no. 3, 1093-1165.

\bibitem{Ch} Chen, Bing-Long, \textit{Strong uniqueness of the Ricci flow},  J. Diff.  Geom. \textbf{82} (2009),  363-382.

\bibitem{Cetc} Chow, Bennett; Chu, Sun-Chin; Glickenstein, David; Guenther, Christine; Isenberg,
Jim; Ivey, Tom; Knopf, Dan; Lu, Peng; Luo, Feng; Ni, Lei. \textit{The Ricci flow: techniques and applications. Part IV: long-time solutions and related topics.} Mathematical Surveys and Monographs, 206, AMS, Providence, RI, 2015.

\bibitem{CFSZ} Chow, Bennett; Freedman, Michael; Shin, Henry; Zhang, Yongjia, \textit{Curvature growth of some $4$-dimensional gradient Ricci soliton singularity models}, Adv. Math. \textbf{372} (2020), article number 107303.

\bibitem{CLY11}
Chow, Bennett; Lu, Peng; Yang, Bo,
\emph{Lower bounds for the scalar curvatures of noncompact gradient Ricci solitons}, Comptes Rendus Mathematique Ser. I, \textbf{349} (2011), 1265--1267.


\bibitem{DZ2} Deng, Yuxing; Zhu, Xiaohua, \textit{Asymptotic behavior of positively curved steady gradient Ricci solitons}, Trans. Amer. Math. Soc. \textbf{370} (2018), no.4, 2855-2877.

 \bibitem{DZ3} Deng, Yuxing; Zhu, Xiaohua,  \textit{Three-dimensional steady gradient Ricci solitons with linear curvature decay}, Int. Math. Res. Not. IMRN. (2019) no.4, 1108-1124.

\bibitem{DZ4} Deng, Yuxing; Zhu, Xiaohua,  \textit{Rigidity of $\kappa$-noncollapsed steady K\"{a}hler-Ricci solitons}, Math. Ann.  \textbf{377} (2020), 847-861.

\bibitem{DZ5} Deng, Yuxing; Zhu, Xiaohua,  \textit{Higher dimensional steady gradient Ricci solitons with linear curvature decay}, J. Eur. Math. Soc. \textbf{22} (2020), 4097-4120.

\bibitem{DZ6} Deng, Yuxing; Zhu, Xiaohua, \textit{Classification of  gradient steady Ricci solitons with linear curvature decay},  Sci. China Math. \textbf{63} (2020), no.1 135-154.


\bibitem{Deruelle2012} Deruelle, Alix, \textit{Steady gradient {R}icci soliton with curvature in $L^1$},
Comm. Anal. Geom. \textbf{20} (2012), 31--53.

\bibitem{Guo} Guo, Hongxin, \textit{Area growth rate of the level surface of the potential function on the $3$-dimensional steady gradient Ricci soliton}, Proc. Amer. Math. Soc. \textbf{137} (2009), no. 6, 2093-2097.

\bibitem{H1} Hamilton, Richard, \textit{Three-manifolds with positive Ricci curvature}, J. Diff. Geom. \textbf{17} (1982), 255-306.

\bibitem{H3} Hamilton, Richard,  \textit{The Harnack estimate for the Ricci flow}, J. Diff. Geom. \textbf{37} (1993), no. 1, 225-243.

\bibitem{H2} Hamilton, Richard, \textit{Eternal solutions to the Ricci flow}, J. Diff. Geom. \textbf{38} (1993), no. 1, 1-11.

\bibitem{Ham95} Hamilton, Richard, \emph{The formation of singularities in
the Ricci flow}, Surveys in differential geometry, Vol.\ II (Cambridge, MA,
1993), 7--136, Internat. Press, Cambridge, MA, 1995.

\bibitem{Kr1} Kronheimer, Peter, \textit{The construction of ALE spaces as hyper-K\"{a}hler quotients}, J. Diff.
Geom. \textbf{29} (1989), no. 3, 665-683.

\bibitem{Kr2}  Kronheimer, Peter, \textit{A Torelli-type theorem for gravitational instantons}, J. Diff.
Geom. \textbf{29} (1989), no. 3, 685-697.

\bibitem{Lai20} Lai, Yi, \emph{A family of 3d steady gradient solitons that are flying wings}, J. Diff. Geom., to appear, arXiv:2010.07272 (2020).

\bibitem{LT87} Li, Peter; Tam, Luen-Fai,
\emph{Positive harmonic functions on complete manifolds with nonnegative curvature outside a compact set}, Ann. of Math. (2) 125 (1987), no. 1, 171–207.

\bibitem{Liu91} 
Liu, Zhong-Dong, 
\emph{Nonnegative Ricci curvature near infinity and the geometry of ends}, Ph.D. thesis, State University of New York at Stony Brook (1991), available at https://www.math.stonybrook.edu/alumni/1991-Zhong-Dong-Liu.pdf


\bibitem{Ma} Mahaman, Bazanfar\'{e}, \textit{A volume comparison theorem and number of ends for manifolds with asymptotically nonnegative Ricci curvature}, Revista \linebreak
Matem\'{a}tica Complutense \textbf{13} (2000), no. 2, 399-409.

\bibitem{MT} Morgan, John; Tian, Gang, \textit{ Ricci flow and the Poincar\'{e} conjecture}, Clay Math. Mono., 3. Amer. Math. Soc., Providence, RI; Clay Mathematics Institute, Cambridge, MA, 2007, xlii+521 pp. ISBN: 978-0-8218-4328-4.

\bibitem{MS13}
Munteanu, Ovidiu; Sesum, Natasa, \emph{On gradient Ricci solitons}, J. Geom. Anal. \textbf{23} (2013),
no. 2, 539--561.

\bibitem{MSW19}
Munteanu, Ovidiu; Sung, Chiung-Jue Anna; Jiaping Wang,
\emph{{Poisson equation on complete manifolds}}, Adv.
Math. \textbf{348} (2019), 81--145.

\bibitem{MW} Munteanu, Ovidiu; Jiaping Wang, \textit{Smooth metric measure spaces with non-negative curvature}, Comm. Anal. Geom. \textbf{20} (2011), no. 3, 451--486.

\bibitem{Na} Naber, Aaron, \textit{Noncompact shrinking four solitons with nonnegative curvature}, J. Reine Angew Math. \textbf{645} (2010), 125--153.


\bibitem{Pe1} Perelman, Grisha, \textit{The entropy formula for the Ricci flow and its geometric applications}. arXiv:math/0211159, 2002.

\bibitem{SY} Schoen, Richard; Yau, Shing-Tung, \textit{Lectures on differential geometry}, Conf. Proc. Lecture Notes in Geom. and Topology \textbf{1} (1994).

\bibitem{Su} Suvaina, Ioana, \textit{ALE Ricci-flat K\"{a}hler metrics and deformations of quotient surface singularities},
Ann. Global Anal. Geom. \textbf{41} (2012), 109--123.

\bibitem{To14} Topping, Peter M, \emph{Remarks on Hamilton's compactness theorem for Ricci flow}, J. Reine Angew. Math. 692 (2014), 173–191.

\bibitem{Wr} Wright, Evan, \textit{Quotients of gravitational instantons}, Ann. Global Anal. Geom. \textbf{41} (2012),
91--108.

\end{enumerate}

\end{document}